\theoremstyle{definition}
\newtheorem{ssect}[subsection]{}
\newtheorem{sssect}[subsubsection]{}
\newtheorem{Definition}[subsection]{Definition}
\newtheorem{Example}[subsection]{Example}
\newtheorem{Remark}[subsection]{Remark}
\newtheorem*{sRemark}{Remark}
\theoremstyle{plain}
\newtheorem{Lemma}[subsection]{Lemma}
\newtheorem{Proposition}[subsection]{Proposition}
\newtheorem{Theorem}[subsection]{Theorem}
\newtheorem{Corollary}[subsection]{Corollary}
\newcommand{\sigle}[1]{\protect\@sigle{#1}}
\newcommand{\@sigle}[1]{\gdef\@sigleparam{#1}\aftergroup\@absorbtwo}
\newcommand{\@absorbtwo}[2]{\ifcat a#1#1#2\else \@sigleparam \fi}
\newcommand\etab{\bar \eta}
\newcommand\fS{\mathfrak{S}}
\newcommand\fp{\mathfrak{p}}
\newcommand\fq{\mathfrak{q}}
\newcommand\ET{\mathrm{ET}}
\newcommand\SEC{\mathrm{SEC}}
\newcommand\CH{\mathrm{CH}}
\newcommand\rig{\mathrm{rig}}
\newcommand\perm{\mathrm{perm}}
\newcommand\hperm{\text{-}\perm}
\newcommand\Kt{K\sptilde}
\newcommand\xt{x\sptilde}
\newcommand\St{S\sptilde}
\newcommand\resp{resp.\ }
\DeclareMathOperator{\Spec}{Spec}
\DeclareMathOperator{\Tr}{Tr}
\DeclareMathOperator{\Gr}{Gr}
\DeclareMathOperator{\rk}{rk}
\DeclareMathOperator{\Aut}{Aut}
\DeclareMathOperator{\Coker}{Coker}
\DeclareMathOperator{\Ker}{Ker}
\DeclareMathOperator{\Img}{Im}
\newcommand{\cf}{cf.\ }
\newcommand{\ie}{i.e.\ }
\newcommand{\loccit}{\emph{loc.\ cit.}}
\newcommand{\Dbc}{D^b_c}
\newcommand{\Qlb}{\overline{\Q_\ell}}
\newcommand{\et}{{\mathrm{et}}}
\newcommand{\cd}{\mathrm{cd}}
\newcommand{\cl}{\mathrm{cl}}
\newcommand{\Id}{\mathrm{Id}}
\newcommand{\trBr}{\Tr^{\mathrm{Br}}}
\newcommand{\Gal}{\mathrm{Gal}}
\newcommand{\Reg}{\mathrm{Reg}}
\newcommand{\red}{\mathrm{red}}
\newcommand{\beq}{\begin{equation}}
\newcommand{\eeq}{\end{equation}}
\newcommand{\longsimto}{\stackrel{\sim}{\longrightarrow}}
\newcommand{\Hom}{\mathrm{Hom}}
\newcommand{\Supp}{\mathrm{Supp}}
\newcommand{\Suppc}{\overline{\mathrm{Supp}}}
\newcommand{\cosk}{\mathrm{cosk}}
\newcommand{\lisse}{\mathrm{lisse}}
\newcommand\cB{{\mathcal B}}
\newcommand\cC{{\mathcal C}}
\newcommand\cD{{\mathcal D}}
\newcommand\cE{{\mathcal E}}
\newcommand\cF{{\mathcal F}}
\newcommand\cG{{\mathcal G}}
\newcommand\cO{{\mathcal O}}
\newcommand\cS{{\mathcal S}}
\newcommand\cX{{\mathcal X}}
\newcommand\cY{{\mathcal Y}}
\newcommand\fH{{\mathcal H}}
\newcommand\F{\mathbb{F}}
\newcommand\Z{{\mathbb Z}}
\newcommand\Q{{\mathbb Q}}
\newcommand\C{{\mathbb C}}
\begin{document}
%\layout
\title{Odds and Ends on Finite Group Actions and Traces}
%\shorttitle{Finite Group Actions and Traces}
\author{Luc Illusie\thanks{Universit\'{e} de Paris-Sud,
D\'{e}partement de math\'{e}matique, B\^{a}t.\ 425, 91405 Orsay
Cedex, France; \texttt{Luc.Illusie@math.u-psud.fr}} \ and Weizhe
Zheng\thanks{Columbia University, Department of Mathematics, 2990
Broadway, New York, NY 10027, USA;
\texttt{zheng@math.columbia.edu}}}
%\abbrevauthor{L. Illusie and W. Zheng}
%\headabbrevauthor{Illusie, L. and Zheng, W.}
\date{}
\maketitle

\begin{abstract}
In this article, we study several problems related to virtual traces
for finite group actions on schemes of finite type over an
algebraically closed field. We also discuss applications to fixed
point sets. Our results generalize previous results obtained by
Deligne, Laumon, Serre and others.
\end{abstract}

\renewcommand\contentsname{Summary}
\tableofcontents

\setcounter{section}{-1}
\section{Introduction}
\numberwithin{equation}{section}
Let $k$ be an algebraically closed field of characteristic~$p$, and let $X$ be a $k$-scheme, separated and of finite type,
endowed with an action of a finite group~$G$. If $\ell$ is a prime
number $\ne p$, $G$ acts on $H^*(X,{\mathbb{Q}}_{\ell})$ (resp.
$H^*_{c}(X,{\mathbb{Q}}_{\ell})$), and, for $s \in G$,  we can consider
the virtual traces
\begin{align}
t_{\ell}(s) &:= \sum (-1)^i\Tr(s,H^i(X,{\mathbb{Q}}_{\ell})),
\label{(0.1)}\\
t_{c,\ell}(s) &:= \sum (-1)^i\Tr(s,H^i_c(X,{\mathbb{Q}}_{\ell})).
\label{(0.2)}
\end{align}
These are $\ell$-adic integers. Several natural questions arise:
\begin{enumerate}
\item Is $t_{\ell}(s)$ (resp. $t_{c,\ell}(s)$) an integer independent of
$\ell$?

\item Do we have $t_{\ell}(s) = t_{c,\ell}(s)$?

\item Under suitable assumptions on the action of $G$ (freeness,
tameness), can one describe the virtual representation
\beq
\chi(X,G,{\mathbb{Q}}_{\ell}) = \sum (-1)^i[H^i(X,{\mathbb{Q}}_{\ell})]
\label{(0.3)}
\eeq
(and its analogue with compact supports)
in the Grothendieck group $R_{{\mathbb{Q}}_{\ell}}(G)$ of finite
dimensional ${\mathbb{Q}}_{\ell}$-representations of $G$, where $[-]$
denotes a class in $R_{{\mathbb{Q}}_{\ell}}(G)$?

\item How do the numbers $t_{\ell}(s)$ (resp. $t_{c,\ell}(s)$) compare
with the similar ones defined using other cohomology theories (rigid,
for example, if $p > 1$, or Betti, when $k = {\mathbb{C}}$)?
\end{enumerate}

These are old questions, and for some of them, partial answers were
obtained long ago. Recent work of Serre (\cite{Se2}, \cite{Se3}, \cite{Se4}) has
revived interest in them. The purpose of this paper is to collect
answers and discuss some applications.

For $s = 1$, $t_{\ell}(s)$ (resp. $t_{c,\ell}(s)$) is the
Euler-Poincar\'{e} characteristic $\chi(X,{\mathbb{Q}}_{\ell})$
(resp. $\chi_c(X,{\mathbb{Q}}_{\ell})$). By Grothendieck's trace
formula, $\chi_c(X,{\mathbb{Q}}_{\ell})$ is an integer independent
of $\ell$, and it is known, by a theorem of Laumon \cite{L2}, that
it is equal to $\chi(X,{\mathbb{Q}}_{\ell})$. In \S~1 we show that,
for all $s \in G$, $t_{\ell}(s) = t_{c,\ell}(s)$. Actually, we
establish a relative, equivariant form of Laumon's theorem. In \S~2
we generalize this to Deligne-Mumford stacks of finite type over a
regular base of dimension at most~$1$ under an additional
hypothesis. We also give an analogue for torsion coefficients.

By a theorem of Deligne-Lusztig \cite[3.3]{DL},
$t_{c,\ell}(s)$ is an integer $t(s)$ independent of~$\ell$. In \S~3 we
deduce vanishing theorems for $t(s)$, $s \ne 1$, and prove a
generalization (\ref{3.7}) of a divisibility theorem of Serre \cite[7.5]{I2}.

The vanishing theorem \ref{3.2} for free actions was shown by Deligne \cite{I2} to hold more
generally under a
certain tameness assumption. In \S~4 we consider actions
of $G$ on $X$ that are not necessarily free.
Using Vidal's groups $K(Y)^0_t$ (\cite{Vi1}, \cite{Vi2}) we define
a notion of \emph{virtual tameness} for the action of $G$, and
establish in this case a formula (\ref{4.7}) for $\chi(X,G,{\mathbb{Q}}_{\ell})$
\eqref{(0.3)} as a sum of certain induced characters. This is an algebraic analogue of
a formula of Verdier \cite{V}.

In \S~5 we consider Berthelot's rigid cohomology with compact
supports $H^*_{c,\rig}(X/K)$ (where $K$ is the fraction field of
$W(k)$) and, for $s \in G$, the corresponding virtual traces
\[
t_{c,\rig}(s) = \sum (-1)^i\Tr(s,H^i_{c,\rig}(X/K)). %\label{(0.4)}
\]
We show that $t_{c,\rig}(s) = t(s)$. The proof uses de Jong's
alterations to reduce to the case where $X/k$ is projective and
smooth, in which case this equality was known.

A corollary of the vanishing theorems of \S~3 is that, if $G$ is an
$\ell$-group, then $\chi(X)$ is congruent to $\chi(X^G)$ modulo
$\ell$ (\ref{6.1}), where $X^G$ denotes the fixed point scheme. When
$X$ is mod $\ell$ acyclic, \ie $H^*(X,{\mathbb{F}}_{\ell}) =
H^0(X,{\mathbb{F}}_{\ell}) = {\mathbb{F}}_{\ell}$, one can say more:
$X^G$ is also mod $\ell$ acyclic. This is an analogue of a well known
theorem of P.~Smith \cite{Smith}. Here $\ell$ may be equal to $p$.
This analogue is established by Serre \cite[7.5]{Se3}. In \S~6 we
give a different proof, based on equivariant cohomology
$H^*_G(X,{\mathbb{F}}_{\ell})$, for $G$ cyclic of order $\ell$, in
the spirit of Borel \cite{Bo}. However, in contrast with the method
in \cite{Bo}, we give a shortcut, exploiting the graded module
structure of $H^*_G(X,{\mathbb{F}}_{\ell})$ over the graded algebra
$H^*(G,{\mathbb{F}}_{\ell})$. The key point is that the restriction
homomorphism $H^*_G(X,{\mathbb{F}}_{\ell}) \rightarrow
H^*_G(X^G,{\mathbb{F}}_{\ell})$ is injective and its cokernel has
bounded degree. This was inspired by \emph{localization theorems} of
Quillen \cite[4.2, 4.4]{Q1} and of Borel-Atiyah-Segal, cf. \cite[III
\S~2]{H}, \cite[\S~6]{GKM}. We also prove, along the same lines, that
if $X$ is a mod~$\ell$ cohomology sphere, then so is $X^G$
(\ref{c.1}). Finally, in \S~7 we prove an analogue of one of
Quillen's theorems \cite[4.2]{Q1}.

In a future paper \cite{IZ}, we will prove analogues of other main
theorems of Quillen \cite[2.1, 6.2]{Q1} on the structure of the mod
$\ell$ equivariant
cohomology ring.

\section{An equivariant form of a theorem of Laumon on
Euler-Poincar\'{e} characteristics}
\numberwithin{equation}{subsection}
\begin{ssect}\label{s.Laumon}
Fix a field $k$ of characteristic $p$, an algebraic
closure $\overline{k}$ of $k$, a prime number $\ell \ne p$, an algebraic
closure $\overline{{\mathbb{Q}}_{\ell}}$ of ${\mathbb{Q}}_{\ell}$. Let $X$ be
a $k$-scheme, separated and of finite type. If $X$ is smooth, Poincar\'e duality implies
\begin{equation}\label{e.Laumon}
  \chi(X,\Q_\ell)=\chi_c(X,\Q_\ell).
\end{equation}
It is known, by a theorem of Laumon \cite{L2}, that \eqref{e.Laumon} holds more generally without the smoothness assumption on $X$. Laumon also established a relative version of this result \cite[1.1]{L2}. We refer the reader to the abstract of \cite{L2} for a history of Laumon's theorem. In this section, we generalize the relative form of Laumon's theorem to the equivariant situation.
\end{ssect}

\begin{ssect}\label{1.1} Let $X$ be
a $k$-scheme, separated and of finite type, endowed with an action of a finite
group $G$ ($G$ acting trivially on $\Spec k$). In the sequel, unless otherwise stated, groups are supposed to act on the right.
We denote by $D^b_c(X,G,\overline{{\mathbb{Q}}_{\ell}})$ the
category of $G$-equivariant $\ell$-adic complexes defined in \cite[1.3]{Z}, and $K(X,G,\overline{{\mathbb{Q}}_{\ell}})$ the corresponding
Grothendieck group. For $L \in D^b_c(X,G,\overline{{\mathbb{Q}}_{\ell}})$,
we denote by $[L]$ its class in $K(X,G,\overline{{\mathbb{Q}}_{\ell}})$.
Following Laumon \cite{L2}, we denote by $\Kt(X,G,\overline{{\mathbb{Q}}_{\ell}})$
the quotient of $K(X,G,\overline{{\mathbb{Q}}_{\ell}})$ by the ideal
generated by the image of $[\overline{{\mathbb{Q}}_{\ell}}(1)_{\Spec k}]-1$, and by $\xt$ the image in
$\Kt(X,G,\overline{{\mathbb{Q}}_{\ell}})$ of
an element $x$ of $K(X,G,\overline{{\mathbb{Q}}_{\ell}})$.

Passing from
$K$ to $\Kt $ destroys a lot of arithmetic information. For
example, the function $x \mapsto \left(\Tr(-,x_{\overline{k}}), (s,g) \in G \times \Gal(\overline{k}/k) \mapsto \Tr(sg,x_{\overline{k}})\right)$
from $K(\Spec(k),G,\overline{{\mathbb{Q}}_{\ell}})$ to the set of continuous functions from $G \times \Gal(\overline{k}/k)$ to $\overline{{\mathbb{Q}}_{\ell}}$ does not pass to
the quotient, as the case where $G = \{1\}$ and $k = {\mathbb{F}}_q$
trivially shows. However, not all is lost. For example, if $k$ is a
local field (fraction field of an excellent henselian discrete valuation ring), with
residue field $k_0$, then the restrictions of these trace functions
to $G \times I$, where $I$ is the inertia group, pass to the quotient.

Recall \cite[1.5]{Z} that, for an equivariant map $(f,u) \colon (X,G) \rightarrow
(Y,H)$, we have exact functors
\[
Rf_* \colon D^b_c(X,G,\overline{{\mathbb{Q}}_{\ell}}) \rightarrow
D^b_c(Y,H,\overline{{\mathbb{Q}}_{\ell}}), \quad Rf_! \colon
D^b_c(X,G,\overline{{\mathbb{Q}}_{\ell}}) \rightarrow
D^b_c(Y,H,\overline{{\mathbb{Q}}_{\ell}}),
\]
inducing homomorphisms
\[
f_* \colon K(X,G,\overline{{\mathbb{Q}}_{\ell}}) \rightarrow K(Y,H,\overline{{\mathbb{Q}}_{\ell}}), \quad  f_! \colon K(X,G,\overline{{\mathbb{Q}}_{\ell}})
\rightarrow K(Y,H,\overline{{\mathbb{Q}}_{\ell}}),
\]
and
\[
f_* \colon \Kt (X,G,\overline{{\mathbb{Q}}_{\ell}}) \rightarrow
\Kt (Y,H,\overline{{\mathbb{Q}}_{\ell}}), \quad  f_! \colon \Kt (X,G,\overline{{\mathbb{Q}}_{\ell}})
\rightarrow \Kt (Y,H,\overline{{\mathbb{Q}}_{\ell}})
\]
by passing to the quotients.
\end{ssect}

The following is a generalization of Laumon's theorem \cite[1.1]{L2}:

\begin{Theorem} \label{1.2}
Let $(f,u) \colon (X,G) \rightarrow (Y,H)$ be an equivariant map between
$k$-schemes separated and of finite type, endowed with finite group
actions. Then, for any $x \in  K(X,G,\overline{{\mathbb{Q}}_{\ell}})$, we
have
\[
f_*(\xt ) = f_!(\xt )
\]
in $\Kt (Y,H,\overline{{\mathbb{Q}}_{\ell}})$.
\end{Theorem}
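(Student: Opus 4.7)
My plan is to adapt Laumon's proof of \cite[1.1]{L2} to the equivariant framework of \cite{Z}, via a dévissage first on the morphism $f$ and then on the coefficient complex $L$.

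First I would compactify $f$ equivariantly. By an equivariant form of Nagata's theorem for finite group actions (\eg by taking the scheme-theoretic closure of the diagonal of $X$ in $\prod_{g\in G}\bar X$ for any Nagata compactification $X \hookrightarrow \bar X$ over $Y$), one factors $(f, u) = (\bar f, u) \circ (j, \Id_G)$ with $(j, \Id_G)\colon (X, G) \hookrightarrow (\bar X, G)$ an open immersion and $\bar f\colon \bar X \to Y$ proper and $(G, H)$-equivariant. Since $R\bar f_* = R\bar f_!$ as functors on $D^b_c(-, G, \Qlb)$ when $\bar f$ is proper, the equality $\bar f_*(\xt) = \bar f_!(\xt)$ holds already in $K(Y, H, \Qlb)$, a fortiori in $\Kt(Y, H, \Qlb)$. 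I am therefore reduced to the case where $f = j$ is a $G$-equivariant open immersion.

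Let now $j\colon U \hookrightarrow X$ be such an open immersion with $G$-stable closed complement $i\colon Z \hookrightarrow X$. For $L \in D^b_c(U, G, \Qlb)$, the localization triangle
\[
j_! L \to Rj_* L \to i_* i^* Rj_* L \to
\]
in $D^b_c(X, G, \Qlb)$ gives $[Rj_* L] - [j_! L] = i_*[i^* Rj_* L]$ in $K(X, G, \Qlb)$. By noetherian induction on $\dim X$ applied to the $G$-stable subscheme $Z$, the theorem reduces to showing that $[i^* Rj_* L]$ lies in the ideal generated by $[\Qlb(1)_{\Spec k}] - 1$ in $K(Z, G, \Qlb)$. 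A further dévissage on $L$ allows one to assume that $L$ is the extension by zero of a $G$-equivariant lisse $\Qlb$-sheaf on a smooth $G$-stable dense open subscheme of $U$; after shrinking $Y$ and stratifying, one arranges that $f$ becomes a $G$-equivariant smooth family of relative curves, in which setting $i^* Rj_* L$ is computed by equivariant nearby and vanishing cycles, and Laumon's local calculation expresses its class as a combination of Tate twists of the generic rank and Swan-conductor contributions.

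The hard part will be this last step: transporting Laumon's local ramification analysis through the equivariant six-functor formalism of \cite{Z} and verifying that the $G$-action on the nearby-cycles complex is compatible with the Swan-conductor / generic-rank decomposition, so that the resulting boundary class is annihilated after passage to $\Kt$ rather than merely non-equivariantly. The preceding steps—equivariant compactification, the localization triangle, and the noetherian reduction on $\dim X$—should be routine once one has the equivariant derived category of \cite{Z} at one's disposal.
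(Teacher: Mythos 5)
Your reduction steps at the start are essentially correct and match the paper: one factorizes $(f,u)$ through an equivariant Nagata compactification (the paper also first splits off $(\Id_Y,u)$, but that is a minor bookkeeping point, since $R(\Id_Y,u)_*\simeq R(\Id_Y,u)_!$), reduces to a $G$-equivariant open immersion $j\colon U\hookrightarrow X$ with $G$-stable closed complement $i\colon Z\hookrightarrow X$, and via the triangle $j_!L\to Rj_*L\to i_*i^*Rj_*L\to$ one is left with showing that $i^*j_*(\xt)$ vanishes in $\Kt(Z,G,\Qlb)$. That is precisely Proposition \ref{1.6} of the paper. Note, however, that this last reduction is direct and requires no noetherian induction on $\dim X$; the phrase ``by noetherian induction on $\dim X$ applied to $Z$'' in your proposal is out of place and suggests a confusion about what still needs to be proved.

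The genuine gap is in the final step, which you yourself flag as ``the hard part.'' Your plan — dévissage of $L$ to a lisse sheaf, arranging $f$ to be a $G$-equivariant smooth family of relative curves, and then invoking ``Laumon's local calculation'' in terms of Swan-conductor and generic-rank contributions — is not the mechanism Laumon (nor the paper) uses, and I do not see how it would close the argument. The Swan-conductor analysis is what one uses for Euler--Poincaré formulas of curves (Grothendieck--Ogg--Shafarevich), which is a different theorem. Laumon's actual reduction, which the paper follows, is: blow up $Y$ to make it a Cartier divisor; by cohomological descent over a $G$-stable affine cover, assume $Y$ is cut out by a global equation $F$; replace $F$ by $\prod_{s\in G}s^*F$ to make it $G$-invariant (this is the only genuinely new equivariant twist in the dévissage); this gives a $G$-equivariant map $X\to\mathbb{A}^1_k$ with $G$ acting trivially on the target; henselize at $0$ and invoke nearby cycles to reduce to an analogue of Laumon's key Lemma 2.2.1. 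That lemma (Theorem \ref{1.7} in the paper) asserts that $R\Gamma(I,L)$ has zero class in $\Kt$, and its proof — due to Deligne — uses no Swan conductors at all: one passes to $L^{P_\ell}$, then to the unipotent part $L^u$, writes $H^0(I_\ell,L)=\Ker N$ and $H^1(I_\ell,L)=\Coker N$ for the logarithm $N$ of the unipotent monodromy, and uses the \emph{monodromy filtration} to produce a canonical isomorphism $\Gr^M_i((\Coker N)(1))(i)\longsimto\Gr^M_{-i}(\Ker N)$. Because the monodromy filtration and $N$ are canonical, they are automatically $G$- and $\Gal(\eta_\ell/\eta)$-equivariant, which is exactly what resolves the ``compatibility with the $G$-action'' worry you raise at the end. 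You should replace the curves/Swan-conductor plan with the monodromy-filtration argument; as formulated, that step of your proposal would not go through.
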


In particular, taking $Y = \Spec k$, $f \colon X \rightarrow Y$ the structural morphism, $H = G$, $u =
\Id$, we get:

\begin{Corollary}\label{1.3}
Assume $k$ algebraically closed. Let $\chi_c(X,G,{\mathbb{Q}}_{\ell})$
(resp. $\chi_c(X,G,{\mathbb{Q}}_{\ell})$) be the
image of $R\Gamma_c(X,{\mathbb{Z}}_{\ell})$ (resp. $R\Gamma(X,{\mathbb{Z}}_{\ell})$) in the Grothendieck group
$R_{{\mathbb{Q}}_{\ell}}(G) = K(\Spec k,G,{\mathbb{Q}}_{\ell})$ of
finite dimensional ${\mathbb{Q}}_{\ell}$-representations of $G$, \ie
\[
\chi_c(X,G,{\mathbb{Q}}_{\ell}) = \sum (-1)^i[H^i_c(X,{\mathbb{Q}}_{\ell})]
\]
(resp.
\[
\chi(X,G,{\mathbb{Q}}_{\ell}) = \sum (-1)^i[H^i(X,{\mathbb{Q}}_{\ell})]\text{).}
\]
Then we have
\beq
\chi_c(X,G,{\mathbb{Q}}_{\ell}) = \chi(X,G,{\mathbb{Q}}_{\ell}). \label{(1.3.1)}
\eeq
In other words, with the notations of \eqref{(0.1)} and \eqref{(0.2)}, for $s \in G$, we have:
\beq
t_{\ell}(s) = t_{c,\ell}(s). \label{(1.3.2)}
\eeq
\end{Corollary}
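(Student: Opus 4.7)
The plan is to obtain the corollary as a direct specialization of Theorem~\ref{1.2}, together with a routine scalar descent. First, take $(Y,H)=(\Spec k,G)$ with $u=\Id$ and let $f\colon X\to \Spec k$ be the structural morphism. Apply Theorem~\ref{1.2} to $x=[\Qlb_X]$, the class of the $G$-equivariant constant sheaf on $X$. Since $f_*([\Qlb_X])=[R\Gamma(X,\Qlb)]=\sum(-1)^i[H^i(X,\Qlb)]$, and similarly for $f_!$, the resulting identity $f_*(\xt)=f_!(\xt)$ in $\Kt(\Spec k,G,\Qlb)$ is precisely the equality of the images of $\chi(X,G,\Qlb)$ and $\chi_c(X,G,\Qlb)$ in $\Kt(\Spec k,G,\Qlb)$.

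The next step is the observation that, when $k$ is algebraically closed, the canonical surjection $K(\Spec k,G,\Qlb)\twoheadrightarrow \Kt(\Spec k,G,\Qlb)$ is an isomorphism. Indeed, $\Gal(\overline{k}/k)=1$, so the Tate twist $\Qlb(1)_{\Spec k}$ is $G$-equivariantly isomorphic to $\Qlb_{\Spec k}$; hence the class $[\Qlb(1)_{\Spec k}]-1$ already vanishes in $K(\Spec k,G,\Qlb)$, and the ideal it generates is zero. Lifting the previous equality through this isomorphism yields
\[
\chi(X,G,\Qlb)=\chi_c(X,G,\Qlb)\quad\text{in }K(\Spec k,G,\Qlb)=R_{\Qlb}(G).
\]

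To descend to $\Q_\ell$-coefficients and establish \eqref{(1.3.1)}, use that the extension-of-scalars map $R_{\Q_\ell}(G)\to R_{\Qlb}(G)$ is injective: a virtual $\Q_\ell$-representation is determined by its character, which is preserved under $\Q_\ell\hookrightarrow\Qlb$. Since the classes $\chi(X,G,\Q_\ell)$ and $\chi_c(X,G,\Q_\ell)$ come from honest $\Q_\ell[G]$-modules (namely $H^i(X,\Q_\ell)$ and $H^i_c(X,\Q_\ell)$), their coincidence after extension to $\Qlb$ forces equality already in $R_{\Q_\ell}(G)$. Equation \eqref{(1.3.2)} follows from \eqref{(1.3.1)} by taking the trace of $s\in G$ on both sides.

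Given Theorem~\ref{1.2}, this argument is entirely formal. The only step requiring verification is the identification $\Kt(\Spec k,G,\Qlb)=K(\Spec k,G,\Qlb)$ over an algebraically closed field, which reduces to the triviality of $\Qlb(1)$ as a Galois representation; no serious obstacle lies beyond Theorem~\ref{1.2} itself.
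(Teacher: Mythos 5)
Your proof is correct and follows the same route as the paper: the corollary is obtained by specializing Theorem~\ref{1.2} to $f\colon X\to\Spec k$ with $H=G$, $u=\Id$, $x=[\Qlb_X]$. The paper leaves the two auxiliary observations implicit — that $\Kt(\Spec k,G,\Qlb)=K(\Spec k,G,\Qlb)$ when $k$ is algebraically closed (since $\Qlb(1)\cong\Qlb$ there), and the injectivity of $R_{\Q_\ell}(G)\to R_{\Qlb}(G)$ to descend coefficients — and you have correctly filled them in.
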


For $Y = \Spec k$ and  $H = \{1\}$, we get:
\begin{Corollary}\label{1.4}
Assume $k$ algebraically closed. Set
\begin{align*}
\chi_G(X,{\mathbb{Q}}_{\ell}) &= \sum (-1)^i\dim H^i_G(X,{\mathbb{Q}}_{\ell}),
\\
\chi_{c,G}(X,{\mathbb{Q}}_{\ell}) &= \sum (-1)^i\dim H^i_{c,G}(X,{\mathbb{Q}}_{\ell}),
\end{align*}
where $H^i_G(X,{\mathbb{Q}}_{\ell}) = R^if_*{\mathbb{Q}}_{\ell}$ (resp.
$H^i_{c,G}(X,{\mathbb{Q}}_{\ell}) = R^if_!{\mathbb{Q}}_{\ell}$) is the
equivariant cohomology of $X/k$ with no supports (resp. with compact
supports). Then we have:
\beq
\chi_{c,G}(X,{\mathbb{Q}}_{\ell}) = \chi_G(X,{\mathbb{Q}}_{\ell}).  \label{(1.4.1)}
\eeq
\end{Corollary}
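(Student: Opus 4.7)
The plan is to specialize Theorem~\ref{1.2} to the equivariant structural morphism $(f,u) \colon (X,G) \to (\Spec k, \{1\})$, where $u$ is the trivial homomorphism and $x = [\Qlb_X]$. Since $k$ is algebraically closed, $\Gal(\bar k/k)$ is trivial, so $K(\Spec k, \{1\}, \Qlb)$ identifies with the Grothendieck group of finite-dimensional $\Qlb$-vector spaces, namely $\Z$ via dimension. Because $\Qlb(1)$ is one-dimensional, the defining relation $[\Qlb(1)] - 1 = 0$ of $\Kt$ is tautological under this identification, so $\Kt(\Spec k, \{1\}, \Qlb) \simeq \Z$ as well.

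Under this identification, $f_*([\Qlb_X])$ and $f_!([\Qlb_X])$ become $\sum_i (-1)^i \dim R^i f_*\Qlb$ and $\sum_i (-1)^i \dim R^i f_!\Qlb$, which by the definitions of $H^i_G$ and $H^i_{c,G}$ recalled in the statement are exactly $\chi_G(X,\Q_\ell)$ and $\chi_{c,G}(X,\Q_\ell)$. Theorem~\ref{1.2} then delivers \eqref{(1.4.1)} immediately.

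Alternatively, one may deduce the statement from Corollary~\ref{1.3}: as $G$ is finite and the coefficients have characteristic zero, higher group cohomology $H^p(G,-)$ on $\Q_\ell$-modules vanishes, so the Hochschild--Serre spectral sequence degenerates and furnishes $H^i_G(X,\Q_\ell) \simeq H^i(X,\Q_\ell)^G$, and likewise with compact supports. Applying to both sides of \eqref{(1.3.1)} the linear form $V \mapsto \dim_{\Q_\ell} V^G$ on $R_{\Q_\ell}(G)$ then yields \eqref{(1.4.1)}. There is no genuine obstacle in either approach --- the only conceptual point is that the equivariant pushforwards of \cite{Z} compute, on Grothendieck groups, the alternating sum of dimensions of the $R^i f_*$ and $R^i f_!$ of the constant sheaf, which is true by construction.
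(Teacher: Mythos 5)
Your primary argument is exactly the paper's proof: specialize Theorem~\ref{1.2} to $Y = \Spec k$, $H = \{1\}$, and the class of the constant sheaf, and observe that $\Kt(\Spec k, \{1\}, \overline{\Q_\ell}) \simeq \Z$ via dimension because the Tate twist is one-dimensional. Your alternative route through Corollary~\ref{1.3} — using the degeneration of the Hochschild--Serre spectral sequence (since $\Q_\ell$ has characteristic zero) to identify $H^i_G(X,\Q_\ell)$ with $H^i(X,\Q_\ell)^G$, then applying $[V] \mapsto \dim V^G$ — is also valid and essentially equivalent, as \ref{1.3} is itself the $H = G$, $Y = \Spec k$ case of \ref{1.2}.
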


By definition, the number in \eqref{(1.4.1)} is
the Euler-Poincar\'{e} characteristic of the Deligne-Mumford stack
$[X/G]$.

\begin{Remark}\label{d.adm}
(a) If $X$ is smooth, Poincar\'e duality implies
$t_\ell(s)=t_{c,\ell}(s^{-1})$. Thus, in this case, \eqref{(1.3.2)}
follows from the fact that $t_{c,\ell}(s)$ is an integer, which is a
result of Deligne and Lusztig \cite[3.3]{DL}. See \ref{3.1}.

(b) Recall that the action of $G$ on $X$ is \emph{admissible} if $X$
is a union of $G$-stable open affine subschemes (cf. \cite[V
1]{SGA1}), which implies that $X/G$ exists as a scheme, is separated
and of finite type, and the projection $\pi \colon X \rightarrow X/G$
is finite. In this case, \eqref{(1.4.1)} brings no new information as
it boils down to the original form of Laumon's theorem. Indeed, one
has
\[
\chi_c([X/G],\overline{{\mathbb{Q}}_{\ell}}) = \chi_c(X/G,\overline{{\mathbb{Q}}_{\ell}})
\]
and similarly with $\chi$ (cf. \cite[1.7 (a)]{Z}).
\end{Remark}

Finally, here is an application to local fields, coming from \cite[4.2]{Vi1}:
\begin{Corollary}\label{1.5}
Let $K$ be the fraction field of an excellent henselian discrete valuation ring of residue field~$k$,
$\overline{K}$ an algebraic closure of $K$. Let
$\eta_1$ be a finite, normal extension of $\eta = \Spec K$,
with $\kappa(\eta_1)$ contained in~$\overline{K}$, and
$X/\eta_1$ a scheme separated and of finite type. We assume that a
finite group $G$ acts on $X \rightarrow \eta_1$ by
$\eta$-automorphisms. Let $I$ be the inertia
group of~$\eta$. Then, for $(s,g) \in G \times I$ such that $s$ and
$g$ induce the same automorphism of $\eta_1/\eta$,
\beq
\Tr((s,g), R\Gamma(X_{\overline{K}},{\mathbb{Q}}_{\ell})) =\Tr((s,g), R\Gamma_c(X_{\overline{K}},{\mathbb{Q}}_{\ell})), \label{(1.5.1)}
\eeq
and this $\ell$-adic number is an integer independent of $\ell$.
\end{Corollary}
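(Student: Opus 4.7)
The plan is to apply Theorem~\ref{1.2} to the equivariant morphism
$(f, \Id)\colon (X, G) \to (\eta_1, G)$, where $f\colon X \to \eta_1$
is the structural map and $G$ acts on $\eta_1$ through the canonical
homomorphism $G \to \Gal(\eta_1/\eta)$ (available since $G$ acts on $X$
by $\eta$-automorphisms and $\eta_1/\eta$ is normal). Theorem~\ref{1.2}
then gives $f_*(\xt) = f_!(\xt)$ in $\Kt(\eta_1, G, \Qlb)$ for $x = [\Qlb]$.

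Next, I would reinterpret this equality as an identity of trace functions
at the geometric point $\overline{\eta}_1 := \Spec\overline{K}$ of $\eta_1$
determined by the fixed embedding $\kappa(\eta_1) \hookrightarrow \overline{K}$.
For any $G$-equivariant $\ell$-adic complex $\mathcal{L}$ on $\eta_1$, the
stalk $\mathcal{L}_{\overline{\eta}_1}$ carries a continuous action of the
fiber product
\[
  \widetilde{G} := G \times_{\Gal(\eta_1/\eta)} \Gal(\overline{K}/K),
\]
whose elements are exactly the pairs $(s, g) \in G \times \Gal(\overline{K}/K)$
inducing the same automorphism of $\eta_1/\eta$. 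Specializing $\mathcal{L}$
to $Rf_*\Qlb$ and to $Rf_!\Qlb$, the stalks are canonically identified with
$R\Gamma(X_{\overline{K}}, \Qlb)$ and $R\Gamma_c(X_{\overline{K}}, \Qlb)$
respectively, and the $\widetilde{G}$-action matches the geometric action
of $s \times g$ on $X_{\overline{K}} = X \times_{\eta_1} \overline{K}$---which
is well defined precisely because of the compatibility built into $\widetilde{G}$.

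To finish, observe that when restricted to the subset of $(s, g) \in \widetilde{G}$
with $g \in I$, the trace function $(s, g) \mapsto \Tr((s, g), \mathcal{L}_{\overline{\eta}_1})$
descends from $K(\eta_1, G, \Qlb)$ to $\Kt(\eta_1, G, \Qlb)$. Indeed, since
$\ell \ne p$, the $\ell$-adic cyclotomic character is trivial on $I$, so such
a pair acts trivially on the Tate twist $\Qlb(1)_{\overline{\eta}_1}$, and
tensoring with $\Qlb(1)$ therefore preserves the trace. Evaluating the
descended trace function on the equality $f_*(\xt) = f_!(\xt)$ yields
\eqref{(1.5.1)}. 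The common value is an integer independent of $\ell$ by
Vidal's theorem \cite[4.2]{Vi1} applied to the compactly-supported side.
The most delicate step is the precise identification of the equivariant
stalks with $R\Gamma(X_{\overline{K}}, \Qlb)$ (resp.\ with compact supports)
together with the matching of the abstract $\widetilde{G}$-action with the
geometric $s \times g$-action; once this is set up, the result follows
formally from Theorem~\ref{1.2} and the triviality of $\chi_\ell|_I$.
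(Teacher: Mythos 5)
Your proposal is correct and follows essentially the same route as the paper: apply Theorem~\ref{1.2} to $f\colon X\to\eta_1$, identify $K(\eta_1,G,\Qlb)$ with representations of $G\times_{\Aut(\eta_1/\eta)}\Gal(\overline{K}/K)$, and use that the trace function at $(s,g)$ with $g\in I$ kills the ideal $([\Qlb(1)]-1)$ because $\chi_\ell|_I$ is trivial. The paper packages that last point slightly differently by first base-changing to $K^{\mathrm{ur}}$ so that $\Gal(\overline{K}/K)=I$ and $K(\eta_1,G,\Qlb)=\Kt(\eta_1,G,\Qlb)$ outright, rather than descending the trace function as you do; and for the final assertion the paper notes explicitly that Vidal's result \cite[4.2]{Vi1} is stated for admissible actions and that the general case follows by Noetherian induction on a $G$-stable affine dense open, a reduction you should include rather than citing Vidal directly.
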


By base change to the maximal unramified extension of $K$, we may
assume the residue field separably closed. We apply 1.2 to the
projection $f \colon X
\rightarrow \eta_1$ and $1 \in K(X,G,{\mathbb{Q}}_{\ell})$, observing
that the group
$K(\eta_1,G,{\mathbb{Q}}_{\ell})$ ($= \Kt (\eta_1,G,{\mathbb{Q}}_{\ell})$) can be identified with the Grothendieck group $R_{{\mathbb{Q}}_{\ell}}(\Gamma)$ of continuous ${\mathbb{Q}}_{\ell}$-linear representations of
$\Gamma = G \times_{\Aut(\eta_1/\eta)} \Gal(\overline{K}/K)$. We get \eqref{(1.5.1)}. The last assertion is proven
in \cite[4.2]{Vi1} when the action of $G$ on $X$ is admissible. The general case follows by induction since $X$ has an affine $G$-stable dense open subset. When the residue field is finite, it also follows from
\cite[1.16]{Z}.

\begin{proof}[Proof of \ref{1.2}]
Factorizing $(f,u)$ into
\[ (X,G)
\xrightarrow{(f,\Id_G)} (Y,G)
\xrightarrow{(\mathrm{Id}_Y,u)} (Y,H)
\]
and applying the formula $R(\Id_Y,u)_*\simeq R(\Id_Y,u)_!$ \cite[1.5
(ii), (iii)]{Z}, we may assume $G = H$, $u = \Id$. By Nagata's
compactification theorem (\cite{Co}, \cite{Lu}) and de Jong's
construction in \cite[7.6]{dJ}, we can find (cf. \cite[3.7]{Z}) a
$G$-equivariant compactification $f = gj$, where $g \colon Z
\rightarrow Y$ is proper and $j \colon X \rightarrow Z$ is a dense
open immersion. Let $i \colon Z-X \rightarrow Z$ be a complementary
closed immersion. As in Laumon's proof, we are then reduced to
showing that for any $x \in K(X,G,\overline{{\mathbb{Q}}_{\ell}})$,
the image of $i^*j_*x$ in $\Kt
(Z-X,G,\overline{{\mathbb{Q}}_{\ell}})$ is zero. Therefore, changing
notations, to prove 1.2 it suffices to establish the following
(equivalent) result:
\end{proof}

\begin{Proposition}\label{1.6}
Let $X$ be a $k$-scheme separated and of finite type, endowed with an action of
the finite group $G$, $i \colon Y \rightarrow X$ an equivariant closed
immersion, $j \colon U = X - Y \rightarrow X$ the (equivariant) complementary open
immersion. Then, for any $x \in K(U,G,\overline{{\mathbb{Q}}_{\ell}})$, we
have
\[
i^*j_*(\xt ) = 0
\]
in $\Kt (Y,G,\overline{{\mathbb{Q}}_{\ell}})$.
\end{Proposition}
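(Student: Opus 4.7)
The plan is to adapt Laumon's original argument \cite[1.1]{L2} to the equivariant setting, using dévissage to reduce to a geometric situation where $i^*j_*$ can be computed explicitly and every surviving summand is a Tate twist (hence trivial after passage to $\Kt$).

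First, by additivity of $i^*j_*$ on distinguished triangles, it suffices to treat the case $x = [\mathcal{F}]$ for a single $G$-equivariant $\ell$-adic sheaf $\mathcal{F}$ on $U$. I would then argue by noetherian induction on the $G$-stable closed subscheme $Y$: assuming the result known for every $G$-stable proper closed subscheme of $Y$, the localization triangle lets one freely replace $Y$ by any dense $G$-stable open $V \subset Y$ and hence $\mathcal F$ by its restriction to a suitable $G$-stable open of $U$. After such shrinkings we may assume $X$ is integral, $Y$ is integral, and $\mathcal{F}$ is lisse on $U$.

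Next, apply the $G$-equivariant alteration procedure used in \cite[3.7]{Z} (a $G$-equivariant variant of \cite[7.6]{dJ}) to obtain a proper surjective $G$-equivariant morphism $\pi\colon X' \to X$ with $X'$ regular, $\pi^{-1}(Y)$ a $G$-stable strict normal crossings divisor $Y'$, and $\pi$ generically finite étale over the generic points of~$X$. Setting $j'\colon U' \hookrightarrow X'$ and $i'\colon Y' \hookrightarrow X'$, proper base change gives $i^*\pi_* j'_* = \pi_* i'^* j'_*$, so by the projection formula (applied to write $\tilde x$ as a $\tilde K$-summand of $\pi_!\pi^*\tilde x$, using that the generic degree $d$ of $\pi$ is a nonzero rational and that we work over $\overline{\Q_\ell}$) it is enough to prove $i'^* j'_*(\widetilde{\pi^*x}) = 0$ in $\Kt(Y',G,\overline{\Q_\ell})$. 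On the regular scheme $X'$ with SNC boundary $Y'$, Abhyankar's lemma and the Kummer/Gysin description of nearby cycles along a single branch express $i'^* j'_*\mathcal F'$ as an iterated extension whose graded pieces are, up to restriction to strata and pullback from quotients trivializing the tame monodromy, $\ell$-adic sheaves tensored with Tate twists $\overline{\Q_\ell}(n)$; the wild part contributes only through its Swan filtration, whose graded pieces are again twisted by powers of $\overline{\Q_\ell}(1)$. By definition of $\Kt$, all such contributions vanish.

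The main obstacle is the descent step, i.e.\ showing that the vanishing on $X'$ implies the vanishing on $X$ in $\Kt$, when the alteration only has a generic section after base change and the $G$-action has to be preserved throughout. One must check carefully that the equivariant alteration can be chosen so that $G$ stabilizes each irreducible component of $Y'$ (after possibly replacing $X'$ by a further equivariant blow-up with $G$-stable centers), and that the trace arguments on $\pi_*\pi^*$ go through equivariantly; here the fact that we have inverted nothing of arithmetic interest except the Tate twist is what makes the cancellations possible. A secondary technical point is controlling wild ramification in a $G$-equivariant way, which I would handle by an induction on the Swan conductor filtration, isolating the Tate twist at each step.
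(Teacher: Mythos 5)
Your route—alterations to a strict normal crossings situation followed by a local analysis of $i'^*j'_*$—is genuinely different from the paper's, which instead blows up $Y$ to a Cartier divisor, reduces via a $G$-invariant equation $F$ to the case of a $G$-equivariant morphism $X\to\mathbb{A}^1_k$, passes to the henselization at $0$, and then proves a clean one-dimensional statement (Theorem~\ref{1.7}) about $R\Gamma(I,-)$ of nearby cycles, using the unipotent/non-unipotent decomposition and the monodromy filtration. However, your descent step has a real gap that you notice but do not close. The trace/unit argument makes $\mathcal{F}$ a direct summand of $\pi_*\pi^*\mathcal{F}$ over the open locus where $\pi$ is finite \'etale, so in $K(U,G,\Qlb)$ one gets $[\pi_*\pi^*\mathcal{F}]=[\mathcal{F}]+[\mathcal{G}]$ for some complement $\mathcal{G}$; but then vanishing of $i^*j_*(\widetilde{\pi_!\pi^*x})$ only gives $i^*j_*(\widetilde{x})=-i^*j_*(\widetilde{\mathcal{G}})$, not $i^*j_*(\widetilde{x})=0$. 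One cannot simply divide by the degree $d$, because $\Kt(Y,G,\Qlb)$ is a free abelian group, not a $\Q$-vector space; the averaging over the Galois group that makes alterations work in the paper's \S~5 is available there only because the target is a $\Q$-valued trace. So the reduction to the SNC model on $X'$ is not established.

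Separately, the local claim on $X'$ is imprecise. Even after reducing to tame monodromy along a strict normal crossings boundary, $i'^*j'_*\mathcal{F}'$ is not an iterated extension of ``Tate twists'': the tame inertia contributes torsion characters (which vanish under $R\Gamma$ of inertia) and a unipotent part, and for the unipotent part the classes appearing are $[\Ker N]$ in degree $0$ and $[\Coker N(-1)]$ in degree $1$. That these cancel in $\Kt$ is exactly the content of the monodromy filtration isomorphisms $\Gr^M_i\bigl((\Coker N)(1)\bigr)(i)\longsimto\Gr^M_{-i}(\Ker N)$ used in the proof of Theorem~\ref{1.7}, and nothing in your sketch supplies that cancellation (in particular ``the Swan filtration, whose graded pieces are again twisted by powers of $\overline{\Q_\ell}(1)$'' is not how wild ramification interacts with Tate twists). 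So both the global reduction and the local endgame need repair, and the paper's route via the Cartier-divisor fibration over a henselian trait and the monodromy filtration avoids both difficulties.
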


To prove \ref{1.6}, we start by imitating Laumon's reduction to
\cite[2.2.1]{L2}. Let $f\colon X'\to X$ be the blow-up of $Y$ and
consider the following commutative diagram with Cartesian square
\[\xymatrix{& X'\ar[d]^f & Y'\ar[l]_{i'}\ar[d]^{g}\\
U\ar[r]_j\ar[ru]^{j'} & X& Y\ar[l]^{i}}
\]
By proper base change,
\[i^*j_*x=i^*f_*j'_*x=g_*i'^*j'_* x.\]
We are
thus reduced to the case where $Y$ is a Cartier divisor. By
cohomological descent for a finite covering of $X$ by $G$-stable
open subsets, we may further assume that $Y$ is defined by a global
equation $F \in \Gamma(X,{\cal O}_{X})$. Up to replacing $F$ by
$\prod_{s \in G} s^*F$ (and changing the scheme structure of $Y$),
we may assume that $F$ is invariant under $G$. Then $F$ defines a
$G$-equivariant morphism $f \colon X \rightarrow {\mathbb{A}}^1_k$,
$G$ acting trivially on ${\mathbb{A}}^1_k$, with fiber $Y$ at
$\{0\}$. As in (\loccit), replacing ${\mathbb{A}}^1_k$ by its
henselization $S$ at $0$, with closed point $s$ and generic point
$\eta$, we are reduced to showing that, for any $K \in
D^b_c(X_{\eta},G,\overline{{\mathbb{Q}}}_{\ell})$, the class in $\Kt
(Y,G,\overline{{\mathbb{Q}}_{\ell}})$ of
\[
R\Gamma(I,R\Psi K) \in D^b_c(Y,G,\overline{{\mathbb{Q}}_{\ell}})
\]
is zero, where $I$ is the inertia subgroup of
$\Gal(\overline{\eta}/\eta)$, $\overline{\eta}$ a separable closure
of $\eta$. It then suffices to invoke the following analogue of
\cite[2.2.1]{L2}.

\begin{Theorem}\label{1.7}
Let $S$ be the spectrum of a henselian discrete valuation ring, with
closed point~$s=\Spec k$ and generic point $\eta$, $Y$ be a scheme
of finite type over $s$, endowed with an action of a finite group
$G$ ($G$ acting trivially on~$s$). Then for any $L\in \Dbc(Y\times_s
\eta, G, \Qlb)$, the class in $\Kt (Y,G,\Qlb)$ of \[R\Gamma(I,L)\in
\Dbc(Y,G,\Qlb)\] is zero, where $I$ is the inertia subgroup of
$\Gal(\overline{\eta}/\eta)$, $\overline{\eta}$ a separable closure
of $\eta$.
\end{Theorem}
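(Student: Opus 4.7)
The plan is to follow the structure of Laumon's proof of \cite[2.2.1]{L2} while systematically carrying the $G$-action through every step. The crucial structural point is that in $\Dbc(Y \times_s \eta, G, \Qlb)$ the actions of $G$ (on $Y$, trivially on $s$) and of $\Gal(\overline{\eta}/\eta)$ (on the Galois side) commute, so every functor involved in Laumon's argument is automatically $G$-equivariant, and the non-equivariant identities in $K(Y,\Qlb)$ transfer verbatim to $K(Y,G,\Qlb)$.

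The key reduction isolates the action of $\Z_\ell(1)$. Let $N \subset I$ be the closed normal subgroup kernel of the composition $I \to I^t \cong \prod_{\ell' \ne p} \Z_{\ell'}(1) \to \Z_\ell(1)$, so that $N$ contains the wild inertia $P$, the quotient $N/P$ is the prime-to-$\ell$ part of the tame inertia, and $I/N \cong \Z_\ell(1)$. Thus $N$ has pro-order coprime to $\ell$, its action on any constructible $\Qlb$-sheaf factors through a finite group of order prime to $\ell$, and $R\Gamma(N,-)$ is exact and coincides with the invariants functor $(-)^N$; this functor is $G$-equivariant because $G$ and $N$ act on independent factors. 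By Hochschild--Serre, $R\Gamma(I, L) \simeq R\Gamma(\Z_\ell(1), L^N)$, so it suffices to show that $[R\Gamma(\Z_\ell(1), M)] = 0$ in $\Kt(Y,G,\Qlb)$ for $M := L^N$.

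The calculation for $\Z_\ell(1)$ is canonical: $R\Gamma(\Z_\ell(1), M)$ is represented by a two-term complex in degrees $0$ and $1$ with terms $M$ and $M(-1)$. Concretely, the augmentation ideal of $\Z_\ell[[\Z_\ell(1)]]$ is canonically isomorphic, as a left module, to $\Z_\ell(1) \otimes_{\Z_\ell} \Z_\ell[[\Z_\ell(1)]]$ via the identification of its quotient by its square with $\Z_\ell(1)$, producing a canonical free resolution of the trivial module whose $\Hom$ into $M$ yields the displayed complex. Consequently
\[
[R\Gamma(\Z_\ell(1), M)] = [M] - [M(-1)] \quad \text{in } K(Y,G,\Qlb).
\]

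To finish, write $M(-1) = M \otimes_{\Qlb} f^*(\Qlb(-1)_{\Spec k})$, where $f\colon Y \to \Spec k$ is the structural morphism and $\Qlb(-1)_{\Spec k}$ carries the trivial $G$-action. By definition of $\Kt$ the class $[\Qlb(1)_{\Spec k}]$ equals $1$, and hence so does $[\Qlb(-1)_{\Spec k}]$; thus $[M(-1)] = [M]$ in $\Kt(Y,G,\Qlb)$ and $[R\Gamma(\Z_\ell(1), M)] = 0$. The main obstacle is verifying that the $N$-invariants functor and the canonical two-term resolution genuinely live in the equivariant derived category of \cite[1.3]{Z}, rather than only after forgetting the $G$-structure; this is a simplicial-resolution check that goes through because the $G$- and inertia actions commute, but it is the point requiring the most care.
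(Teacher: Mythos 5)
Your opening reduction is the same as the paper's: you pass to the kernel $P_\ell$ of $I \to \Z_\ell(1)$ (your $N$), note its pro-order is prime to $\ell$, and reduce by Hochschild--Serre to computing $R\Gamma(\Z_\ell(1), M)$ for $M = L^{P_\ell}$. The gap is in the next step. Your claim that $R\Gamma(\Z_\ell(1),M)$ is \emph{canonically} represented by a two-term complex with terms $M$ and $M(-1)$ in the equivariant category is false, and so is the supporting claim that the augmentation ideal $\mathfrak{a}$ of $\Z_\ell[[\Z_\ell(1)]]$ is canonically $\Z_\ell(1)\otimes_{\Z_\ell}\Z_\ell[[\Z_\ell(1)]]$. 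The isomorphism you describe depends on a choice of topological generator $\sigma$: if $\sigma$ is replaced by $\sigma^{a}$ with $a\in\Z_\ell^\times$, the resulting identification of $\mathfrak{a}$ with $(\mathfrak{a}/\mathfrak{a}^2)\otimes\Z_\ell[[\Z_\ell(1)]]$ changes (the images of $\sigma-1$ and $\sigma^a-1$ agree in $\mathfrak{a}/\mathfrak{a}^2$ only up to the unit $a$, but $(\sigma^a-1)/a\neq\sigma-1$ in $\mathfrak{a}$ itself). Equivalently, the differential $m\mapsto(\sigma-1)m\otimes\sigma^\vee$ is not $\Gal(\overline{s}/s)$-equivariant: conjugation by $\gamma$ carries $\sigma-1$ to $\sigma^{\chi(\gamma)}-1$, and $(\sigma^{\chi(\gamma)}-1)/\chi(\gamma)\neq\sigma-1$ unless the action is unipotent. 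A concrete counterexample to $[R\Gamma(\Z_\ell(1),M)]=[M]-[M(-1)]$ in $K(Y,G,\Qlb)$: take $Y=\Spec k$, $k=\F_q$ with $q\equiv 1\bmod\ell$, $q\neq 1$, $G$ trivial, and $M$ of rank one with $\sigma$ acting by a primitive $\ell$-th root of unity and Frobenius acting trivially. Then $R\Gamma(\Z_\ell(1),M)=0$ but $[M]\neq[M(-1)]$ in $K$, since $M\not\cong M(-1)$ as $\Gal(\overline{s}/s)$-modules.

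The missing idea is the paper's reduction to the unipotent case: the inclusion of the largest subsheaf $M^u$ on which $\Z_\ell(1)$ acts unipotently induces an isomorphism $R\Gamma(\Z_\ell(1),M^u)\longsimto R\Gamma(\Z_\ell(1),M)$, and \emph{for unipotent $M^u$} there is a canonical $G$-$\Gal(\eta_\ell/\eta)$-equivariant nilpotent $N\colon M^u\to M^u(-1)$, namely $N=(\log\sigma)\cdot t_\ell(\sigma)^{-1}$ (independent of $\sigma$), with $H^0=\Ker N$ and $H^1=\Coker N$. Once this reduction is in place, your endgame is both correct and genuinely simpler than the paper's: the equivariant four-term exact sequence $0\to\Ker N\to M^u\xrightarrow{N}M^u(-1)\to\Coker N\to 0$ gives $[\Ker N]-[\Coker N]=[M^u]-[M^u(-1)]$ in $K(Y,G,\Qlb)$, which vanishes in $\Kt$ because $[\Qlb(1)]=1$ there. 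The paper instead invokes the monodromy filtration of $N$ to produce isomorphisms $\Gr^M_i((\Coker N)(1))(i)\longsimto\Gr^M_{-i}(\Ker N)$ and deduces the $\Kt$-identity from that; your Euler-characteristic argument reaches the same conclusion with less machinery, but it does not remove the need for the unipotent reduction.
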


We refer to \cite[4.1]{Z} for the definition of $\Dbc(Y\times_s
\eta, G, \Qlb)$. In our case, it is based on the topos
$(Y\times_s\eta,G)\sptilde$ consisting of sheaves on
$Y_{\overline{s}}$ endowed with an action of
$\Gal(\overline{\eta}/\eta)\times G$ compatible with the action of
$\Gal(\overline{s}/s)$ on $Y_{\overline{s}/s}$. Here $\overline{s}$
is a separable closure of $s$.

\begin{proof}
Let $P_{\ell}$ be the kernel of the $\ell$-component $t_{\ell}
\colon I \rightarrow I_{\ell} ={\mathbb{Z}}_{\ell}(1)$ of the tame
character, and let $\eta_\ell=\overline{\eta}/P_\ell$. We consider
the topos of $G$-$\Gal(\eta_\ell/\eta)$-sheaves on
$Y_{\overline{s}}$, which is the subcategory of $(Y\times_s
\eta,G)\sptilde$ consisting of sheaves on which $P_\ell$ acts
trivially. Since
\[
R\Gamma(I,L) = R\Gamma(I_{\ell},L^{P_{\ell}}),
\]
one is reduced to showing that, for any
$G$-$\Gal(\eta_\ell/\eta)$-$\Qlb$-sheaf $L$ on $Y_{\overline{s}}$,
the class of $R\Gamma(I_{\ell},L) $ in $\Kt
(Y,G,\overline{{\mathbb{Q}}_{\ell}})$ is zero, where $\eta_{\ell} =
\overline{\eta}/P_{\ell}$. We use the argument of Deligne at the end of \cite{L2}. If
$\sigma$ is a topological generator of $I_{\ell}$, we have a
decomposition
\begin{equation}\label{1.7.1}
L = \bigoplus_{\alpha \in \overline{{\mathbb{Q}}_{\ell}}^\times}\bigcup_{n\ge
1}\Ker((\sigma -\alpha)^n,L).
\end{equation}
Let $L^u$ be the largest subsheaf of $L$ (in the category of
$G$-$\Gal(\eta_{\ell}/\eta)$-$\overline{{\mathbb{Q}}_{\ell}}$-sheaves
on $Y_{\overline{s}}$) on which the
action of $I_{\ell}$ is unipotent. In terms of the above decomposition,
\begin{equation}\label{1.7.2}
L^u = \bigcup_{n\ge 1}\Ker((\sigma-1)^n,L).
\end{equation}
As the formation of $L^u$ and of $R\Gamma(I_{\ell},-)$ commutes with taking
stalks at geometric points of $Y$, the inclusion $L^u \rightarrow L$
induces an isomorphism
\[
R\Gamma(I_{\ell},L^u) \longsimto R\Gamma(I_{\ell},L).
\]
Therefore we may assume that the action of $I_{\ell}$ on $L$ is
unipotent. Thus there exists a (twisted) nilpotent endomorphism $N \colon L
\rightarrow L(-1)$ (a morphism of $G$-$\Gal(\eta_{\ell}/\eta)$-$\overline{{\mathbb{Q}}_{\ell}}$-sheaves on $Y_{\overline{s}}$) such that the
representation $\rho \colon I_{\ell} \rightarrow \Aut(L)$ is given by $\rho(g) = \exp(Nt_{\ell}(g))$ for $g
\in I_{\ell}$. By definition, $H^0(I_{\ell},L) = L^{I_{\ell}}$, and
we have a
canonical isomorphism
\[
H^1(I_{\ell},L) \longsimto (L_{I_{\ell}})(-1),
\]
where $L_{I_{\ell}}$ is the sheaf of co-invariants of $I_{\ell}$ in
$L$ (given by $z \mapsto [z(\sigma)] \otimes \sigma\spcheck$ on
1-cocycles of the standard cochain complex, where $\sigma$ is a
generator of $I_{\ell} = {\mathbb{Z}}_{\ell}(1)$, $\sigma\spcheck
\in {\mathbb{Z}}_{\ell}(-1)$ its dual, and $[-]$ means a class in
$L_{I_{\ell}}$). In other words,
\[
H^0(I_\ell,L)\simeq \Ker N, \quad H^1(I_\ell, L)\simeq \Coker N.
\]
Moreover, $H^i(I_\ell,L)=0$ for $i\neq 0,1$. Using the monodromy
filtration \cite[1.6.14]{WeilII} $\dots \subset M_iL \subset
M_{i+1}L \subset \dotsb$ of $L$ in the category of
$G$-$\Gal(\eta_\ell/\eta)$-$\overline{{\mathbb{Q}}_{\ell}}$-sheaves
on $Y_{\overline{s}}$, one gets the isomorphisms
\[
\Gr^M_i((\Coker N)(1)) (i) \longsimto \Gr^M_{-i}(\Ker N),
\]
which implies that $\Ker N$ and $\Coker N$ have the same image in
$\Kt(I_\ell,\Qlb)$.
\end{proof}

The analogue of \ref{1.7} (and, in turn, \ref{1.6} and \ref{1.2}) with $\Qlb$ replaced by an algebraic extension~$E$ of a complete discrete valuation field of characteristic $(0, \ell)$ still holds. In fact, in this case, although we may no longer have a decomposition of $L$ as in \eqref{1.7.1}, the expression for $L^u$ still holds.

\begin{ssect}\label{1.8} With the notations and hypotheses of \ref{1.6}, assume $k$
\emph{algebraically closed} and $X$ \emph{proper}. Let $L \in D^b_c(U,G,\overline{{\mathbb{Q}}_{\ell}})$, and $s
\in G$ such that the fixed point set $X^s$ of $s$ is
contained in $Y$. By \ref{1.2} we know that
\beq
\Tr(s,R\Gamma_c(U,L)) = \Tr(s,R\Gamma(U,L)).
\label{(1.8.1)}
\eeq
Though $s$ has no fixed points on $U$, this trace can be nonzero
(for example, if $X$ is the affine line over $k$, with $p >
1$, and $G$ the cyclic group ${\mathbb{Z}}/p{\mathbb{Z}}$ acting on $X$ by
translation, with generator $s \colon x \mapsto x +1$, then $t_c(s) = 1$).
We can rewrite both sides as
\begin{align*}
\Tr(s,R\Gamma_c(U,L)) &= \Tr(s,R\Gamma(X,j_!L)),
\\
\Tr(s,R\Gamma(U,L)) &= \Tr(s,R\Gamma(X,Rj_*L)).
\end{align*}
By the Lefschetz-Verdier trace formula \cite[III 4.7]{SGA5}, each of these
traces is a sum of ``local terms at infinity'', associated with the
connected components of $X^s \subset Y$:
\begin{align*}
\Tr(s,R\Gamma(X,j_!L)) &= \sum_{Z \in \pi_0(X^s)} \Tr(s,R\Gamma(X,j_!L))_Z,
\\
\Tr(s,R\Gamma(X,Rj_*L)) &= \sum_{Z \in \pi_0(X^s)} \Tr(s,R\Gamma(X,Rj_*L))_Z,
\end{align*}
where the subscript $Z$ means the local Verdier term at $Z$ for the
correspondence defined by $s \colon X \rightarrow X$ and $s^* \colon j_!L \rightarrow
j_!L$ or $s^* \colon Rj_*L \rightarrow Rj_*L$. We have the following
refinement of \eqref{(1.8.1)}:
\end{ssect}

\begin{Corollary}\label{1.9}
With the notations and hypotheses of \ref{1.8}, for each $Z \in \pi_0(X^s)$,
we have
\[
\Tr(s,R\Gamma(X,j_!L))_Z = \Tr(s,R\Gamma(X,Rj_*L))_Z.
\]
\end{Corollary}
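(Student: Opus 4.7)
My plan is to upgrade the global equality \eqref{(1.8.1)} to a component-wise statement by combining the distinguished triangle linking $j_!L$ and $Rj_*L$ with Proposition~\ref{1.6}, and to exploit the fact that over an algebraically closed field the group $\Kt$ coincides with $K$.

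I would start from the $G$-equivariant distinguished triangle
\[
j_!L \to Rj_*L \to i_* i^* Rj_*L \xrightarrow{+1}
\]
on $X$. The equivariance equips each term with a cohomological correspondence for $s \colon X \to X$, and the three fit together into a morphism of triangles of cohomological correspondences. Because $X^s \subset Y$, the fixed loci satisfy $Y^s = X^s$. Applying the standard additivity of Verdier local terms on distinguished triangles, together with the natural identification $\Tr(s, R\Gamma(X, i_*K))_Z = \Tr(s|_Y, R\Gamma(Y,K))_Z$ for $Z \subset Y$, one obtains
\[
\Tr(s, R\Gamma(X, Rj_*L))_Z - \Tr(s, R\Gamma(X, j_!L))_Z = \Tr(s|_Y, R\Gamma(Y, i^*Rj_*L))_Z
\]
for every $Z \in \pi_0(X^s)$, so it would suffice to prove that the right-hand side vanishes.

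For fixed $s$ and $Z$, the rule $K \mapsto \Tr(s|_Y, R\Gamma(Y,K))_Z$, with the cohomological correspondence coming from equivariance, is additive on distinguished triangles and hence defines a group homomorphism $\lambda \colon K(Y, G, \Qlb) \to \Qlb$. Since $k$ is algebraically closed, $\Qlb(1) \cong \Qlb$ as $G$-equivariant sheaves on $\Spec k$, so $[\Qlb(1)_{\Spec k}] - 1 = 0$ in $K(\Spec k, G, \Qlb)$ and hence $\Kt(Y,G,\Qlb) = K(Y,G,\Qlb)$. Proposition~\ref{1.6} applied to $x = [L]$ would then yield $[i^*Rj_*L] = 0$ in $K(Y,G,\Qlb)$, and applying $\lambda$ would give the required vanishing. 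The point requiring most care is the additivity of Verdier local terms along distinguished triangles of equivariant complexes; this reduces to the classical non-equivariant statement, since the functoriality of the equivariance structure turns any distinguished triangle in the equivariant derived category into a morphism of triangles of cohomological correspondences.
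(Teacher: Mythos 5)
Your proof is correct and follows essentially the same route as the paper's: the same distinguished triangle $j_!L\to Rj_*L\to i_*i^*Rj_*L$, additivity of Verdier local terms, compatibility of local terms with pushforward along the closed immersion $i$, and an application of Proposition~\ref{1.6} to kill $[i^*Rj_*L]$. You helpfully make explicit a point the paper leaves implicit, namely that over an algebraically closed field $\Kt(Y,G,\Qlb)=K(Y,G,\Qlb)$ because $[\Qlb(1)_{\Spec k}]=1$ in $K(\Spec k,G,\Qlb)$, which is exactly what allows the $\Kt$-vanishing of \ref{1.6} to yield a vanishing in $K$ and hence of the additive functional $\lambda$.
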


By the additivity of $\Tr(s,-)_Z$, we have
\[
\Tr(s,R\Gamma(X,j_!L))_Z - \Tr(s,R\Gamma(X,Rj_*L))_Z = \Tr(s,R\Gamma(X,i_*i^*Rj_*L))_Z.
\]
By the (trivial) Lefschetz-Verdier formula for $i \colon Y \rightarrow X$
\cite[III 4.4]{SGA5},
\[
\Tr(s,R\Gamma(X,i_*i^*Rj_*L))_Z = \Tr(s,R\Gamma(Y,i^*Rj_*L))_Z.
\]
By definition \cite[III 4.7]{SGA5}, if $\Id_E$ denotes the identity
correspondence on $E = i^*Rj_*L$,
\[
\Tr(s,R\Gamma(Y,i^*Rj_*L))_Z = (a_Z)_*\langle s,\Id_E\rangle_Z,
\]
where $a_Z \colon Z \rightarrow \Spec k$ is the projection,
$\langle s,\Id_E\rangle_Z \in
H^0(Z,K_Z)$
is the Verdier term at $Z$ for the
correspondences $s$ and $\Id_E$ \cite[III (4.2.7)]{SGA5}, $K_Z = Ra_Z^!\overline{{\mathbb{Q}}_{\ell}}$, and
$(a_Z)_* \colon
H^0(Z,K_Z) \rightarrow \overline{{\mathbb{Q}}_{\ell}}$ is the trace map,
defined by the adjunction map $Ra_*K_Z \rightarrow \overline{{\mathbb{Q}}_{\ell}}$. More generally, for any $F \in D^b_c(Y,G,\overline{{\mathbb{Q}}_{\ell}})$ and $Z \in \pi_0(X^s)$, we have a Verdier term
$\langle s,\Id_F\rangle_Z
\in H^0(Z,K_Z)$. By \cite[III (4.13.1)]{SGA5}, $\langle s,\Id_F\rangle_Z$
is additive in
$F$, hence depends only on the class of $F$ in $K(Y,G,\overline{{\mathbb{Q}}_{\ell}})$. Therefore, by \ref{1.6}, we have $\langle s,\Id_E\rangle_Z = 0$, which
completes the proof.

\begin{ssect}
  Let us mention some analogues of Laumon's theorem in topology. Let $X$ be an $n$-dimensional manifold. If Poincar\'e duality holds for $X$, then we have the following analogue of \ref{s.Laumon}:
  \[
    \chi(X,\Q)=(-1)^n\chi_c(X,\Q).
  \]
  In particular, odd-dimensional compact manifolds have vanishing Euler-Poincar\'e characteristic. More generally, Sullivan \cite{Sullivan} has shown that compact stratified spaces (in the sense of Thom) with odd-dimensional strata have vanishing Euler-Poincar\'e characteristic. Weinberger, Goresky and MacPherson used this to show that $\chi(X,\Q)=\chi_c(X,\Q)$ holds for all stratified spaces $X$ with even-dimensional strata. See \cite[p.~141, Note~13]{Fulton}.
\end{ssect}

\section{A generalization to Deligne-Mumford stacks}

In the situation of \ref{1.1}, we have
\[
D^b_c(X,G,\overline{{\mathbb{Q}}_{\ell}}) \simeq D^b_c([X/G],\overline{{\mathbb{Q}}_{\ell}}),
\]
where $[X/G]$ denotes the Deligne-Mumford stack associated to the
action of $G$ on $X$. Moreover, the equivariant operations $Rf_*$,
$Rf_!$ correspond to similar operations for the associated
morphisms of Deligne-Mumford stacks. In the first half of this section we show that \ref{1.2}
extends to
morphisms of Deligne-Mumford stacks of finite type over a regular base of dimension $\le 1$ satisfying the condition (A) below.
In the second half, we establish an analogue for torsion coefficients.
The results of this section will not be used in the following ones with the exception of \ref{s.space}, where only the extension of \ref{1.2} to algebraic spaces is used.

\begin{ssect}\label{2.1} In this section, unless otherwise stated, we fix a (Noetherian) regular base scheme $S$ of dimension $\le 1$ satisfying the condition
\begin{itemize}
  \item[(A)] Every nonempty scheme of finite type over $S$ has a nonempty geometrically unibranch \cite[6.15.1]{EGAIV} open subscheme.
\end{itemize}
This condition is satisfied if $S$ is a Nagata scheme
\cite[033S]{Stacks} (by \cite[9.7.10]{EGAIV}) or if $S$ is
semi-local. We fix a prime number $\ell$ invertible on $S$ and an
algebraic extension $E$ of a complete discrete valuation field $E_0$
of characteristic $(0, \ell)$. We use the convention of
\cite[4.1]{LMB} for Deligne-Mumford stacks. In particular, the
diagonal of a Deligne-Mumford stack is assumed to be quasi-compact
and separated. For a Deligne-Mumford $S$-stack $\cX$ of finite type,
we denote by $\Dbc(\cX,E)$ the category of bounded $E$-complexes. If
$S$ is affine and excellent and if all schemes of finite type over
$S$ has finite $\ell$-cohomological dimension, the construction of
$D^b_c(\cX,E)$ and of the corresponding six operations is done in
\cite{LO2}. For the general case, see \cite{Zhengl}. Note that the
following sections do no depend on \cite{Zhengl} because there we
work over an algebraically closed field.

We denote by $K(\cX, E)$ the Grothendieck group of $D^b_c(X,E)$.
For $L\in \Dbc(\cX,E)$, we denote by $[L]$ its class in
$K(\cX,E)$. As in \ref{1.1}, we denote by $\Kt(\cX,E)$ the quotient
of $K(\cX,E)$ by the ideal generated by $[E(1)]-1$, and by $\xt$
the image in $\Kt(\cX, E)$ of an element $x$ of $K(\cX, E)$.

Recall that, for a morphism $f\colon \cX \to \cY$ of
Deligne-Mumford $S$-stacks of finite type, we have exact functors
\[Rf_*, Rf_! \colon \Dbc(\cX,E) \to \Dbc(\cY,E) \]
inducing homomorphisms
\[f_*, f_! \colon K(\cX,E) \to K(\cY, E) \]
and
\[f_*, f_! \colon \Kt(\cX,E) \to \Kt(\cY, E) \]
by passing to quotients.
\end{ssect}

The following is a generalization of \ref{1.2}. The proof will be given in \ref{2.5}.

\begin{Theorem}\label{2.2}
Let $f\colon \cX \to \cY$ be a morphism of Deligne-Mumford
$S$-stacks of finite type. Then, for any $x\in K(\cX,E)$, we have
\[f_*(\xt) = f_!(\xt) \]
in $\Kt(\cY,E)$.
\end{Theorem}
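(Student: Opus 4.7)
The plan is to imitate the proof of Theorem~\ref{1.2} in the setting of Deligne-Mumford $S$-stacks. First, using a Nagata-type compactification theorem for separated morphisms of DM $S$-stacks of finite type, factor $f = g \circ j$ with $g$ proper and $j$ an open immersion. Since $Rg_* = Rg_!$ for proper $g$, the problem reduces to the DM-stack analogue of Proposition~\ref{1.6}: for a closed immersion $i \colon \cY \hookrightarrow \cX$ with complementary open $j \colon \cU \hookrightarrow \cX$ and $y \in K(\cU,E)$, to show $i^* Rj_* \tilde y = 0$ in $\Kt(\cY,E)$.

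For this, I would follow the scheme proof of \ref{1.6}. Blow up $\cY$ in $\cX$ (a well-defined operation on DM stacks via any \'etale presentation) and invoke proper base change in the stack setting to reduce to the case where $\cY$ is an effective Cartier divisor in $\cX$. Choose a finite \'etale cover $\pi \colon X \to \cX$ by a scheme $X$, using the \'etale local structure theorem that DM stacks of finite type are \'etale locally quotients $[V/\Gamma]$ by finite groups. The vanishing of $i^* Rj_* \tilde y$ in $\Kt(\cY,E)$ may then be deduced from its pullback to $X$ by a trace argument, since $\pi$ being finite \'etale yields $\pi_* \pi^* = \deg(\pi) \cdot \mathrm{id}$ on $K$-groups, which becomes injective after tensoring with $\Q$. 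After a further \'etale localization making $\pi^{-1}\cY$ the zero locus of a function $F$, suitably averaged into a $G$-invariant equation under the Galois symmetries of the cover, the proof of \ref{1.6} goes through: $F$ provides a $G$-equivariant morphism $X \to \mathbb{A}^1_S$, and henselizing $\mathbb{A}^1_S$ along its zero section reduces to Theorem~\ref{1.7}, which applies to coefficients in $E$ by the remark immediately following its proof.

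Condition~(A) on $S$ is used to set up the Noetherian induction needed here: on a geometrically unibranch dense open substack the henselization of $\mathbb{A}^1_S$ at the zero section is a genuine trait, and the closed complement is handled inductively on $\cX$. The main obstacle I anticipate is packaging the reduction from stacks to schemes-with-group-action in sufficient generality: although DM stacks of finite type are \'etale locally global quotients by finite groups, producing a workable presentation globally---or else substituting a cohomological descent argument along the \v{C}ech nerve of an arbitrary \'etale presentation using the six-functor formalism of \cite{LO2}, \cite{Zhengl}---requires care to ensure the trace argument transfers the vanishing through the quotient $K(-,E) \to \Kt(-,E)$ and respects the ideal generated by $[E(1)]-1$. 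A secondary point is the equivariant Nagata compactification for DM $S$-stacks, which is now available in our generality via the work of Conrad-Lieblich-Olsson.
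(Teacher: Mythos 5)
Your proposal runs into a genuine gap at its central reduction step, and this is precisely the point where the paper's proof takes a fundamentally different route.

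You propose to ``choose a finite \'etale cover $\pi \colon X \to \cX$ by a scheme'' and then recover the vanishing of a class $z \in \Kt(\cY,E)$ from the vanishing of $\pi^* z$ via a transfer argument. Two problems arise. First, a Deligne-Mumford stack of finite type does not in general admit a \emph{finite} \'etale atlas by a scheme: the \'etale local structure theorem gives you, \'etale locally on the coarse space, presentations $[V/\Gamma]$, and $V\to[V/\Gamma]$ is finite \'etale, but these do not glue into a single finite \'etale scheme cover of $\cX$. Second, even when a finite \'etale $\pi$ of degree $d$ exists, the relation $\pi_*\pi^* = d\cdot\mathrm{id}$ only yields $d\cdot z = 0$, whereas you need $z=0$ itself; since nothing controls the torsion of $\Kt(\cY,E)$, tensoring with $\Q$ loses exactly the information you want. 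You flag the first issue yourself and gesture toward a cohomological descent argument along the \v{C}ech nerve, but Grothendieck groups of derived categories are not sheaves, so descent for $\Dbc$ does not descend to an injectivity statement for $K$ or $\Kt$ without further work; as stated this alternative is not developed enough to close the gap.

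The paper takes a quite different path that sidesteps the need for a global finite cover. The key ingredients are Lemma~\ref{2.3} and Lemma~\ref{2.4}: Lemma~\ref{2.3} shows that the natural map $\Kt(\cY,E)\to\prod_\xi\Kt(\cG_\xi,E)$ into the product over residue gerbes is \emph{injective} (its proof is a Noetherian induction combined with the Jordan--H\"older description of $K_{\lisse}$ via $\pi_1$-representations, and this is where condition~(A) is actually used, namely to produce a geometrically unibranch connected open on which the given class is a difference of lisse sheaves); Lemma~\ref{2.4} produces, for any $[x/H]\to\cX$, a representable \'etale map $\phi\colon[X/G]\to\cX$ with $X$ affine factoring it. Combining these, the proof of Theorem~\ref{2.2} reduces to checking the equality after pullback to $[Y/H]$ and then to the equivariant scheme case over $S$, which is Theorem~\ref{1.2} over the extended base (established in~\ref{2.4.1} via generic base change and Theorem~\ref{1.7}). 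Your overall strategy of reducing to the equivariant scheme case and invoking~\ref{1.7} is on the right track, but the mechanism for passing from stacks to schemes-with-group-action is the crux, and the residue-gerbe injectivity of Lemma~\ref{2.3} is the idea missing from your argument.
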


For a point $\xi$ of $\cY$, we denote by $i_\xi \colon \cG_\xi \to
\cY$ its residue gerbe \cite[11.1]{LMB}. It is isomorphic to a
quotient stack $[\Spec K/G]$, where $K$ is a finite type extension
field of $\kappa(s)$, $s\in S$ is the image of $\xi$, and $G$ is a
finite group acting on $K$ on the left leaving $\kappa(s)$ fixed. To
see this, we may assume, by \cite[6.1.1]{LMB}, that $\cY=[Y/H]$ is
the quotient stack of an $S$-scheme $Y$ of finite type by a finite
group $G$ acting on $Y$ on the right by $k$-automorphisms. A
representative $\Spec K_1 \to \cY$ of $\xi$ corresponds to an
$H$-torsor $T$ over $K_1$ together with an $H$-equivariant map
$t\colon T\to Y$. Let $X$ be the image of~$t$ and endow it with the
scheme structure $X=\coprod_{y\in X} \Spec \kappa(y)$. Then $t$
factorizes into $H$-equivariant maps $T\to X\to Y$. Hence $\cG_\xi
\simeq [X/H] \simeq [y/G_y]$, where $y\in X$ and $G_y < H$ is the
stabilizer of $y$. Note that $\cG_\xi$ is also the residue gerbe of
the point $\xi\to \cY_s$, where $\cY_s=\cY\times_S s$.

\begin{Lemma}\label{2.3}
The homomorphism
\[ \Kt(\cY, E) \to \prod_\xi \Kt(\cG_\xi, E) \]
induced by $i_\xi^*$, where $\xi$ runs over all points of $\cY$, is
an injection.
\end{Lemma}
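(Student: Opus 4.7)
The plan is to proceed by Noetherian induction on $\cY$, combining the localization sequence for Grothendieck groups of constructible complexes with a spreading-out argument at generic points of $\cY$.

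Let $x \in K(\cY, E)$ satisfy $i_\xi^* \tilde{x} = 0$ in $\Kt(\cG_\xi, E)$ for every point $\xi$ of $\cY$. It suffices to produce a nonempty open substack $j \colon \cU \hookrightarrow \cY$ with $\tilde{x}|_\cU = 0$ in $\Kt(\cU, E)$. Indeed, granting this and letting $i \colon \cZ \hookrightarrow \cY$ denote the reduced closed complement, the standard exact sequence
\[
K(\cZ, E) \xrightarrow{i_*} K(\cY, E) \xrightarrow{j^*} K(\cU, E) \longrightarrow 0
\]
descends to $\Kt$ (the map $i_*$ preserves the ideal $([E(1)]-1)$ by projection formula, and exactness survives the quotient), so $\tilde{x} = i_* \tilde{z}$ for some $z \in K(\cZ, E)$. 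Since residue gerbes are intrinsic and $i^* i_* = \Id$, the element $\tilde{z}$ still satisfies the vanishing hypothesis now viewed over $\cZ$, so Noetherian induction applied to $\cZ$ forces $\tilde{z} = 0$, hence $\tilde{x} = 0$.

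To produce $\cU$, choose a generic point $\xi$ of $\cY$. By the description recalled before the statement, after shrinking $\cY$ we may assume $\cY = [Y/H]$ with $Y$ of finite type over $S$ and $\xi$ corresponding to a generic point of a single irreducible component of $Y$. The hypothesis provides a relation $i_\xi^* x = ([E(1)] - 1) \cdot u$ in $K(\cG_\xi, E)$ for some single $u$, since $[E(1)] - 1$ is central. The residue gerbe $\cG_\xi$ is the cofiltered limit of $H$-stable open substacks $\cV$ of $\cY$ through $\xi$ that meet no other irreducible component of $Y$. Continuity of $\Dbc(-, E)$ along such limits then allows one to descend $u$ to some $\tilde{u} \in K(\cV, E)$ on such an open $\cV$, and to promote the equation above to an equality $x|_{\cV'} = ([E(1)] - 1) \tilde{u}|_{\cV'}$ on a smaller open $\cV' \subset \cV$; passing to $\Kt$ gives $\tilde{x}|_{\cV'} = 0$, as required.

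The main obstacle is the continuity/spreading-out step: one must show that bounded constructible $E$-adic complexes on the residue gerbe descend to open substacks in a way compatible with Grothendieck-group relations. This is a Deligne-style limit argument, somewhat delicate for $\ell$-adic coefficients on Deligne-Mumford stacks; here condition (A) and $\dim S \le 1$ ensure the existence of sufficiently small geometrically unibranch opens along which the $\Dbc(-, E)$ formalism and the localization sequence behave as in the classical scheme setting.
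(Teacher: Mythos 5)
Your dévissage is fine (and differs only cosmetically from the paper's, which uses $x = j_!j^*x + i_*i^*x$ rather than the $\Kt$-localization sequence; both work). The genuine gap is in the step that produces $\cU$, namely the ``continuity of $\Dbc(-,E)$'' claim that you yourself flag as the main obstacle. That claim does not hold for $E$-adic coefficients. The residue gerbe $\cG_\xi = [\eta/H]$ has $K(\cG_\xi,E)$ equal to the Grothendieck group of \emph{all} continuous finite-dimensional $E$-representations of $\pi_1([\eta/H],\etab)$, whereas $\operatorname{colim}_{\cV} K(\cV,E)$ only sees representations that factor through $\pi_1(\cV,\etab)$ for some open $\cV$. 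In general there are continuous $E$-representations of $\pi_1(\eta)$ (e.g.\ ones that are deeply ramified along a divisor of $Y$ not contained in any proper closed subset you have already removed) which do not extend to a lisse sheaf on any open; so the map $\operatorname{colim}_{\cV} K(\cV,E) \to K(\cG_\xi,E)$ is very far from an isomorphism, and there is no reason the element $u$ with $i_\xi^* x = ([E(1)]-1)u$ should be in its image. The equality you want is in the quotient $\Kt$, not in $K$, which means you do not get to choose $u$; some genuine input is needed to constrain it.

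Relatedly, your closing appeal to condition~(A) does not identify its actual role. Condition~(A) and $\dim S\le 1$ have nothing to do with making ``the $\Dbc(-,E)$ formalism and localization sequence behave''; those work without it. Condition~(A) is what lets one shrink to $\cY=[Y/H]$ with $Y$ geometrically unibranch, and the point of unibranchness is that it makes $\pi_1(\eta,\etab)\to\pi_1(Y,\etab)$ (hence $\pi_1(\iota,\etab)\colon\pi_1([\eta/H],\etab)\to\pi_1(\cY,\etab)$) \emph{surjective}. The paper exploits this as follows: after shrinking, one may also assume $j^*x = [\cF]-[\cF']$ with $\cF,\cF'$ lisse, so one is reduced to injectivity of $\iota^*\colon\Kt_{\lisse}(\cY,E)\to\Kt([\eta/H],E)$; by Jordan–H\"older both sides are free abelian on twist-equivalence classes of simple representations, and surjectivity of $\pi_1(\iota,\etab)$ guarantees that distinct (twist classes of) simples of $\pi_1(\cY,\etab)$ remain distinct after restriction. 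This is the content you would need to supply in place of the unsupported spreading-out claim; without it, the proposed proof does not close.
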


\begin{proof}
We prove \ref{2.3} by Noetherian induction on $\cY$. The assertion
being trivial for $\cY=\emptyset$, we assume $\cY$ nonempty. Let $x$
be an element of $K(\cY,E)$ such that $i_\xi^*(\xt)=0$ in
$\Kt(\cG_\xi,E)$ for any $\xi$. We shall show $\xt =0$. As above, by
\cite[6.1.1]{LMB}, there exists a nonempty open immersion $j\colon
[Y/H]\hookrightarrow \cY$ from the quotient stack of an $S$-scheme
$Y$ of finite type by a finite group $H$ acting on $Y$ on the right
by $S$-automorphisms. By (A), shrinking $Y$ if necessary, we may
assume $Y$ is geometrically unibranch and connected, and $j^*x=[\cF]
- [\cF']$ for lisse $E$-sheaves $\cF$ and $\cF'$ on~$[Y/H]$. Here by
\emph{lisse $E$-sheaf} we mean a sheaf of the form
$\cE\otimes_{\cO_{E_1}} E$, where $\cO_{E_1}$ is the ring of
integers of a finite extension $E_1$ of $E_0$ contained in $E$, and
$\cE$ is a lisse $\cO_{E_1}$-sheaf. Let $i\colon \cY-[Y/H]\to \cY$
be a complementary closed immersion. Since $x=j_!j^*x+i_*i^*x$ and
$i^*(\xt)=0$ by induction hypothesis, it suffices to show
$j^*(\xt)=0$. We may thus assume $\cY=[Y/H]$, $Y$ geometrically
unibranch, $x=[\cF] - [\cF']$ where $\cF$, $\cF'$ are lisse
$E$-sheaves.

Let $\etab$ is a geometric point above the
generic point $\eta$ of $Y$. The category of lisse $E$-sheaves  on
$\cY$ under our convention is equivalent to the category of (finite-dimensional)
$E$-representations of $\pi_1(\cY, \etab)$ (see \cite[\S~4]{N} for a
definition of $\pi_1(\cY, \etab)$). Let $K_{\lisse}(\cY, E)$ denote
the corresponding Grothendieck group. The $H$-equivariant map
$\epsilon\colon \eta\to Y$ induces a morphism $\iota\colon [\eta/H]\to \cY$,
which is the residue gerbe of $\cY$ at the point $\eta\to \cY$. By
hypothesis, $\iota^*(\xt) = 0$. To see $\xt=0$, it suffices to show that
\[\iota^* \colon
\Kt_{\lisse}(\cY, E) \to \Kt([\eta/H],E)\] is an injection.

By the Jordan-H\"older theorem, $K_{\lisse}(\cY, E)$ (\resp
$K([\eta/H],E)$) is a free abelian group with the set of isomorphism
classes $S_1$ (\resp $S_2$) of simple $E$-representations of
$\pi_1(\cY,\etab)$ (\resp $\pi_1([\eta/H], \etab)$) as a base. Let
$\St_i$ be the quotient set of $S_i$ by the equivalence relation
$\sim$ defined by $L \sim M$ if there exists $n\in \Z$ such that $L
\simeq M(n)$, $i=1,2$. Then $\Kt_{\lisse}(\cY, E)$ (\resp
$\Kt([\eta/H],E)$) is a free abelian group with basis $\St_1$ (\resp
$\St_2$). We have the following morphism of short exact sequences of
groups
\[\xymatrix{1\ar[r] & \pi_1(\eta,\etab)\ar[r]\ar[d]^{\pi_1(\epsilon,\etab)} &\pi_1([\eta/H],\etab)\ar[r]\ar[d]^{\pi_1(\iota,\etab)} &
H\ar[r]\ar@{=}[d] & 1\\
1\ar[r] & \pi_1(Y,\etab)\ar[r]& \pi_1(\cY,\etab)\ar[r] & H\ar[r] & 1
}
\]
Since $Y$ is geometrically unibranch, the homomorphism
$\pi_1(\epsilon, \etab)$ is surjective. Hence the same is true for
$\pi_1(\iota, \etab)$. The latter clearly induces an injection $S_1
\to S_2$. Therefore, for all $L, M \in S_1$ satisfying $\iota^* L
\simeq (\iota^* M)(n) \simeq \iota^*(M(n))$, we have $L \simeq
M(n)$. In other words, $\iota^*$ gives an injection $\St_1 \to
\St_2$. This completes the proof of \ref{2.3}.
\end{proof}

\begin{ssect}\label{2.4.1}
The preceding lemma allows us to show that
the analogue of \ref{1.2} still holds over the base scheme $S$
($G$ and $H$ acting trivially on $S$) and with
$\Qlb$ replaced by $E$. As before, we are reduced to proving the
analogue of \ref{1.6} over $S$. Note that the case where $S$ is the spectrum of a discrete valuation ring is easy (as observed by Vidal \cite[0.1]{Vi2}).

Consider first the case where $i\colon Y\to X$ comes from a closed
immersion $T\to S$ by base change. We may assume $T=s$ is a closed
point of $S$. Then, replacing $S$ by its henselization $S_{(s)}$ at
$s$, with generic point $\eta$, we are reduced to showing that, for
any $K \in D^b_c(X_{\eta},G,E)$, the class in $\Kt (Y,G,E)$ of
\[
R\Gamma(I,R\Psi K) \in D^b_c(Y,G,E)
\]
is zero, where $I$ is the inertia subgroup of
$\Gal(\overline{\eta}/\eta)$, $\overline{\eta}$ an algebraic closure
of $\eta$. It then suffices to apply \ref{1.7}.

In the general case, apply the generic base change theorem
\cite[Th.\ finitude 1.9]{SGA4d} to find a dense open subset $V$ of
$S$ such that the formation of $j_* x$ commutes with any base change
$S'\to V$. Consider the following diagram with Cartesian squares
\[\xymatrix{U_s \ar[r]^{w_U}\ar[d]^{j_s} & U_V\ar[r]^{v_U}\ar[d]^{j_V} & U\ar[d]^{j} & \ar[l]_{t_U} U_T\ar[d]^{j_T}\\
X_s\ar[r]^{w}\ar[d] & X_V\ar[r]^{v}\ar[d] & X\ar[d] & X_T\ar[l]_t\ar[d]\\
s\ar[r] & V \ar[r] &S & \ar[l] T}\]
where $T=S-V$, $s$ is an arbitrary point of $V$. We have
$x=v_{U!}x_V + t_{U*} x_T$, where $x_V=v_U^* x$, $x_T=t_U^* x$.
Applying \ref{1.6} to $j_s$, we obtain
\[w^* j_{V*}(\xt_V) = j_{s*} w_U^*(\xt_V) = j_{s!} w_U^*(\xt_V) = w^* j_{V!}(\xt_V).\]
Hence, by \ref{2.3}, $j_{V*}(\xt_V)=j_{V!}(\xt_V)$. Therefore, by
the special case above,
\[j_* v_{U!} (\xt_V) = j_* v_{U*} (\xt_V) = v_*j_{V*}(\xt_V) = v_*j_{V!}(\xt_V) = v_!j_{V!}(\xt_V) = j_! v_{U!} (\xt_V).\]
On the other hand, applying \ref{1.6} to $j_T$, we obtain
\[j_* t_{U*} (\xt_T) = t_*j_{T*}(\xt_T) = t_*j_{T!}(\xt_T) = j_! t_{U*} (\xt_T).\]
Therefore $j_*(\xt )=j_!(\xt )$.
\end{ssect}

The following is a variant of \cite[6.2, 6.3]{LMB}. We use the convention of \cite[4.1]{LMB} for Artin stacks. In particular, the diagonal of an Artin stack is assumed to be quasi-compact and separated.

\begin{Lemma}\label{2.4}
Let $S$ be a quasi-separated scheme, $\cX$ be an Artin $S$-stack,
$K$ be an
$S$-field, $x=\Spec K$, $H$ be a finite group acting on $x$ on
the right by $S$-automorphisms, and let $i\colon [x/H] \to \cX$ be a morphism.
Then there exists a 2-commutative diagram
\[\xymatrix{ &[X/G]\ar[d]^\phi \\
[x/H] \ar[ur]^s\ar[r]^i & \cX}
\]
where $X$ is an affine scheme, $G$ is a finite group acting on $X$
on the right by $S$-automorphisms and $\phi$ is a representable smooth morphism. Moreover, if $\cX$ is a Deligne-Mumford $S$-stack, we can choose the above diagram such that $\phi$ is \'etale and that the following square is 2-Cartesian
\[\xymatrix{[x/H]\ar[r]^s\ar@{=}[d] & [X/G]\ar[d]^\phi\\
[x/H]\ar[r]^i & \cX}\]
\end{Lemma}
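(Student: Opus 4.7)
The plan is to adapt the proofs of \cite[6.2, 6.3]{LMB} (existence of smooth, resp.\ étale, affine charts at a point of an Artin/DM stack) so as to carry along the equivariant data given by $i\colon [x/H] \to \cX$. I would start by applying \cite[6.2]{LMB} in the general case (resp.\ \cite[6.3]{LMB} in the DM case) to obtain a smooth (resp.\ étale) morphism $\psi\colon V \to \cX$ from an affine $S$-scheme $V$ whose image in $|\cX|$ contains the image $\xi$ of $i$. The representability of $\psi$ is automatic in the DM case, and in the Artin case follows from $V$ being a scheme and the quasi-separatedness conventions of \cite[4.1]{LMB}.

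The core of the argument is to upgrade $\psi$ to an equivariant chart. Pulling back $\psi$ along $i$ yields a smooth (resp.\ étale) morphism $[W/H] \to [x/H]$, where $W := V \times_\cX x$ is a smooth (resp.\ étale) $x$-scheme equipped with an $H$-action compatible with the action of $H$ on $x$. In the étale case, $W$ decomposes as a finite disjoint union $\coprod_\alpha \Spec L_\alpha$ of spectra of finite separable extensions $L_\alpha/K$, permuted by $H$. The aim is to replace $V$ by a finer chart so that $W$ contains $x$ itself as an $H$-stable clopen subscheme. If this is not the case initially, one refines $V$ by a suitable étale cover (essentially a Galois-type closure adapted to the stabilizer of a chosen component and the residue field extension) whose base change to $x$ produces such a component.

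Once the refinement is in place, an $H$-stable affine open neighborhood $X \subseteq V$ of the image of $x \hookrightarrow W$, together with $G := H$ acting via the transferred action, provides the required equivariant chart: the induced morphism $\phi\colon [X/G] \to \cX$ is representable smooth (resp.\ étale), and the inclusion $x \hookrightarrow W$ assembles, via the $H$-equivariance, into the section $s\colon [x/H] \to [X/G]$ with $\phi\circ s \simeq i$. In the DM case, the 2-Cartesian property of the ``moreover'' square follows from the construction: the $H$-stable clopen $x \hookrightarrow W$ identifies $[x/H]$ with a connected component of $[X/G] \times_\cX [x/H]$, and $s$ is the inclusion of this distinguished component.

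The principal obstacle is the refinement step in the second paragraph, namely producing, after passage to an appropriate étale (or smooth) cover, an $H$-equivariant $K$-point on $W$ lifting the $K$-point $x \to \cX$. This is an equivariant strengthening of the standard descent of lifts along étale morphisms, and is essentially a slice-type statement compatible with the given $H$-action on $x$. Once it is established, the verifications of representability, smoothness/étaleness of $\phi$, and (in the DM case) the 2-Cartesian property are routine consequences of descent and the universal properties of quotient stacks.
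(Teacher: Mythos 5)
Your proposal has a genuine gap, and it is located precisely at the point you flag as "the principal obstacle", but it is more fundamental than a technical refinement step: the choice $G := H$ cannot work in general.

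First, the step ``an $H$-stable affine open neighborhood $X \subseteq V$, together with $G := H$ acting via the transferred action'' is not coherent as written: $V$ is an affine scheme with no $H$-action, and the $H$-action on $W = V\times_\cX x$ (which exists via the 2-isomorphism data encoded in $i$) covers the $H$-action on $x$ but does \emph{not} descend to or induce an $H$-action on $V$ or on any open subscheme of $V$. So there is no ``transferred action'' of $H$ on an open $X\subseteq V$. Second, and more seriously, an $H$-equivariant $K$-point of $W$ need not exist even after arbitrary refinement of $V$. Take $\cX=BG'$ over $x=\Spec K$ with $H$ acting trivially on $x$, and let $i\colon [x/H]=x\times BH\to BG'$ be given by an injective homomorphism $\rho\colon H\hookrightarrow G'$. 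Any representable \'etale $V\to BG'$ with $V$ a scheme is a $G'$-torsor over its image, so $W$ is a $G'$-torsor over $x$ on which $H$ acts (via $\rho$) without fixed points; refining $V$ only produces larger torsors with the same feature. In fact for any $\cX$ which is a scheme and any nontrivial $H$, no chart of the form $[X/H]\to\cX$ representable \'etale can carry a section $s$ with $\phi\circ s\simeq i$: representability forces $H$ to act freely on $X$, which is incompatible with $s$ mapping $[x/H]$ (whose inertia contains $\ker(H\to\Aut_S K)$) into $[X/H]$.

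The paper resolves this by taking $G$ to be a symmetric group rather than $H$: following \cite[6.6, 6.7]{LMB}, it chooses an $H$-equivariant finite \'etale closed subscheme $L\subseteq T=Z\times_\cX x$ of some degree $d$ (e.g., the $H$-orbit of a closed point with separable residue field), interprets it as an $H$-equivariant section of $\ET_d(Z/\cX)\to\cX$ over $[x/H]$, and then works with $\SEC_d(Z/\cX)$ equipped with its natural $\fS_d$-action; after reducing to the Deligne--Mumford case (where $\SEC_d(Z/\cX)$ is quasi-affine and $\ET_d(Z/\cX)=[\SEC_d(Z/\cX)/\fS_d]\to\cX$ is \'etale), an $\fS_d$-stable affine open of $\SEC_d(Z/\cX)$ gives the chart $[X/G]$ with $G=\fS_d$. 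The $\fS_d$-action exists canonically on $\SEC_d$, which is exactly what your $G:=H$ variant lacks. Any repair of your proposal would need to replace $H$ by a group acting naturally on a scheme-level refinement of $V$, and the $\ET_d/\SEC_d$ formalism is the standard device for producing such a group.
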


Let us recall the constructions in \cite[6.6]{LMB}. Let $\cX \to \cY$ be
a representable \cite[3.9]{LMB} and separated morphism of $S$-stacks,
$d\ge 0$. We define an $S$-stack $\SEC_d(\cX/\cY)$ by assigning to
every affine scheme $U$ equipped with a morphism $U\to S$, the
category of arrays $(x_1,\dots, x_d)$ of disjoint sections of the
algebraic $U$-space $X=\cX\times_\cY U$. The $S$-stack
$\SEC_d(\cX/\cY)$ is equipped with a natural action of the symmetric
group $\fS_d$, compatible with the projection to $\cY$. Let
$\ET_d(\cX/\cY)$ be the quotient stack. For a quasi-separated $S$-scheme
$V$, giving a morphism $V\to \ET_d(\cX/\cY)$ is equivalent to giving a morphism $V\to \cY$ and
giving a subscheme $Z$ of the algebraic space $\cX\times_\cY V$
which is finite, \'etale of degree $d$ over $V$ (\cite[6.6.3 (i)]{LMB}).
The structural morphism $\ET_d(\cX/\cY) \to \cY$ is representable
and separated \cite[6.6.3 (ii)]{LMB}.

\begin{proof}
We prove \ref{2.4} by imitating \cite[6.7]{LMB}. Take a 2-commutative diagram
with 2-Cartesian squares
\[\xymatrix{T\ar[r]\ar[d] & \cB \ar[r]\ar[d] & Z\ar[d]^{\pi}\\
x \ar[r] & [x/H] \ar[r]^{i} & \cX}
\]
with $Z$ an affine scheme, $\pi$ smooth and $T$ non-empty. Then $T$
is a smooth algebraic space over~$x$ and an $H$-torsor over $\cB$.
Let $L$ be an $H$-equivariant closed subscheme of~$T$, finite \'etale
over $x$. For example, we can take a closed point $t$ of $T$ whose residue field is a separable extension of $K$, and take $L$ to be the $H$-orbit of $t$, endowed with the
reduced algebraic subspace structure. Let $d$ be the degree of $L$ over $x$. We have thus
an $H$-equivariant section of $\ET_d(T/x) \to x$, giving rise to a section
of $\ET_d(\cB/[x/H]) \to [x/H]$, hence a 2-commutative diagram
\[\xymatrix{&&\ET_d(Z/\cX)\ar[d] \\
x\ar[rru]^{i_1}\ar[r] & [x/H]\ar[ru]\ar[r]^i & \cX}
\]
Since $\ET_d (Z/\cX)$ is a Deligne-Mumford stack smooth over $\cX$ \cite[6.6.3 (ii)]{LMB}, up to replacing $\cX$ by $\ET_d (Z/\cX)$, we may assume that $\cX$
is a Deligne-Mumford $S$-stack. In this case, we
can take $\pi$ to be \'etale. Then $\SEC_d(Z/\cX)$ is a quasi-affine
scheme \cite[6.6.2 (iii))]{LMB} and $\ET_d(Z/\cX) =
[\SEC_d(Z/\cX)/\fS_d]$ is \'etale over $\cX$. Moreover, since $T$ is a quasi-compact \'etale algebraic space over $x$, it is finite and we can take $L=T$, so that $\ET_d(\cB/[x/H])\to [x/H]$ is an isomorphism. The point $i_1$ corresponds to an
$\fS_d$-orbit of $\SEC_d(Z/\cX)$, which is contained in an
$\fS_d$-equivariant affine open. Thus \ref{2.4} holds by taking $X$ to be
the aforementioned open and $G$ to be the group $\fS_d$.
\end{proof}

\begin{ssect}\label{2.5} \begin{proof}[Proof of \ref{2.2}]
By \ref{2.3}, it is enough to show $i_\xi^* f_*
(\xt) = i_\xi^* f_! (\xt)$ for all points $\xi$ of $\cY$. By \ref{2.4}, it
is then enough to show $\phi^* f_* (\xt) = \phi^* f_! (\xt)$ for
every representable \'etale morphism $\phi \colon [Y/H] \to \cY$ where $Y$ is an affine
$S$-scheme of finite type and $G$ is a finite group acting on $Y$.
By base change by $\phi$, it is thus enough to establish \ref{2.2} in the
case where $\cY = [Y/H]$. In particular, \ref{2.4.1} implies that \ref{2.2} holds
if $f$ is an open immersion (with no additional assumption on $\cY$).

We prove \ref{2.2} in the case where $\cY = [Y/H]$ by Noetherian
induction on $\cX$. Let $j\colon [X/G] \to \cX$ be a dominant open
immersion \cite[6.1.1]{LMB}, where $X$ is an affine $S$-scheme of
finite type and $G$ is a finite group acting on $X$. By
\cite[5.1]{Z} (which holds over general base schemes), up to
replacing $(X,G)$ by another pair with an isomorphic quotient stack,
we may assume that $fj \colon [X/G] \to [Y/H]$ is induced by an
equivariant map $(X,G) \to (Y,H)$. Let $i$ be a closed immersion
$\cX - [X/G] \to \cX$. Then $x= j_! j^* x + i_* i^* x$. Since $j_*
j^* (\xt) = j_! j^* (\xt)$ by the already proven case of open
immersion, we have
\[ f_* j_!j^* (\xt) = f_* j_* j^* (\xt) = (fj)_* j^* (\xt) = (fj)_!
j^* (\xt) = f_! j_! j^* (\xt) \]
by \ref{2.4.1} applied to $fj$. On the other hand,
\[ f_* i_* i^* (\xt) = (fi)_* i^* (\xt) = (fi)_! i^* (\xt) = f_! i_*
i^* (\xt) \]
by induction hypothesis applied to $fi$. Therefore $f_*(\xt) =
f_!(\xt)$.
\end{proof}\end{ssect}

In the rest of this section, we consider analogues of Laumon's theorem for torsion coefficients. Let $F$ be a field of characteristic $\ell$. The following is an analogue of Theorem \ref{1.7}.
%Let $X$ be an $S$-scheme separated and of finite type, endowed with an action of
%the finite group $G$, we denote by $K(X,G,\cO_\lambda)$ the Grothendieck group of $G$-equivariant $\cO_\lambda$-modules %on $X$.

\begin{Theorem}\label{2.6}
Let $S$ be the spectrum of a henselian discrete valuation ring, with closed point~$s$ and generic point $\eta$, $Y$ be a scheme of finite type over $s$, endowed with an action of a finite group $G$ ($G$ acting trivially on~$s$). Then for any $L\in \Dbc(Y\times_s \eta, G, F)$, the class in $\Kt (Y,G,F)$ of \[R\Gamma(I,L)\in \Dbc(Y,G,F)\]
is zero, where $I$ is the inertia subgroup of $\Gal(\overline{\eta}/\eta)$, $\overline{\eta}$ an algebraic closure of
$\eta$.
\end{Theorem}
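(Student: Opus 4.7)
The plan is to follow the proof of Theorem~\ref{1.7}, exploiting the fact that for torsion coefficients of characteristic~$\ell$, unipotency of the tame inertia is automatic, which renders the generalized eigenspace decomposition \eqref{1.7.1} unnecessary.

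First, by d\'evissage on the cohomology sheaves, I reduce to the case where $L$ is a single $G$-equivariant constructible $F$-sheaf on $Y_{\bar s}$ equipped with a compatible continuous $\Gal(\bar\eta/\eta)$-action. Let $P_\ell$ denote the kernel of the $\ell$-component $t_\ell\colon I \to I_\ell = \Z_\ell(1)$ of the tame character. Then $P_\ell$ is an extension of $\prod_{\ell'\neq p,\ell} \Z_{\ell'}(1)$ by the wild inertia (pro-$p$, with $p$ the residue characteristic of the DVR), so its action on $L$ factors through a finite quotient of order prime to~$\ell$; in particular $(-)^{P_\ell}$ is exact on $F$-sheaves, and Hochschild--Serre yields $R\Gamma(I, L) \simeq R\Gamma(I_\ell, L^{P_\ell})$. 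Replacing $L$ by $L^{P_\ell}$, I reduce to the case of a $G$-$\Gal(\eta_\ell/\eta)$-equivariant $F$-sheaf on $Y_{\bar s}$, where $\eta_\ell = \bar\eta/P_\ell$.

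Now fix a topological generator $\sigma$ of $I_\ell$. Two observations then drive the conclusion. First, since $I_\ell \simeq \Z_\ell$ is a free pro-$\ell$ group of rank one, $\cd_\ell(I_\ell) = 1$, so $R\Gamma(I_\ell, L)$ is concentrated in degrees $0$ and $1$ with $H^0 \simeq \Ker(\sigma - 1)$ and $H^1 \simeq \Coker(\sigma - 1)(-1)$. Second, since $L$ is an $F$-sheaf with $F$ of characteristic $\ell$ and $\sigma$ acts through a finite $\ell$-power quotient of $I_\ell$, one has $\sigma^{\ell^n} = 1$ on stalks for some $n$, whence $(\sigma - 1)^{\ell^n} = 0$: the endomorphism $\sigma - 1$ is automatically nilpotent, with no eigenspace argument required.

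The conclusion is then immediate from the pair of short exact sequences $0 \to \Ker(\sigma - 1) \to L \to \Img(\sigma - 1) \to 0$ and $0 \to \Img(\sigma - 1) \to L \to \Coker(\sigma - 1) \to 0$ in the category of $G$-$\Gal(\bar s/s)$-equivariant $F$-sheaves on $Y_{\bar s}$: they give $[\Ker(\sigma - 1)] = [\Coker(\sigma - 1)]$ in $K(Y, G, F)$, and since Tate twists are trivial in $\Kt(Y, G, F)$, this yields $[R\Gamma(I_\ell, L)] = [\Ker(\sigma - 1)] - [\Coker(\sigma - 1)] = 0$ in $\Kt(Y, G, F)$. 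The only point demanding real attention, rather than presenting a genuine obstacle, is to ensure that the torsion formalism $\Dbc(-, -, F)$ supports Hochschild--Serre in the $G$-equivariant setting and that the cohomological dimension bound applies as stated; once these foundational checks are granted, the proof is strictly shorter than in the $\Qlb$ case of \ref{1.7}.
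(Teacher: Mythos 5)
Your reduction steps and the observation that $\sigma-1$ is automatically nilpotent are both fine, and they match the paper. The gap is in the final paragraph, at the claim that the two short exact sequences live ``in the category of $G$-$\Gal(\bar s/s)$-equivariant $F$-sheaves'' and therefore give $[\Ker(\sigma-1)]=[\Coker(\sigma-1)]$ in $K(Y,G,F)$. The map $u=\sigma-1\colon L\to L$ is $G$- and $I_\ell$-equivariant, but it is \emph{not} $\Gal(\eta_\ell/\eta)$-equivariant: for $\gamma$ in the Galois group one has $\gamma u\gamma^{-1}=\sigma^{\chi(\gamma)}-1 = \overline{\chi(\gamma)}\,u+u^2P(u)$, which differs from $u$ unless $\chi(\gamma)\equiv1\pmod\ell$. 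Consequently the sequence $0\to\Ker u\to L\xrightarrow{u}\Img u\to 0$ is not an exact sequence of equivariant sheaves. (The subobjects $\Ker u$ and $\Img u$ are Galois-stable, since $\gamma u\gamma^{-1}$ is $u$ times a unit, but the arrow $u$ itself is not a morphism in the equivariant category.) In fact the equality $[\Ker u]=[\Coker u]$ in $K(Y,G,F)$ that you assert is false in general: already for $Y$ a point, $G=\{1\}$, $L$ a $2$-dimensional $F$-space with $\sigma$ acting by a single Jordan block, the Galois action on $\Ker u$ is a twist by the mod-$\ell$ cyclotomic character of the action on $\Coker u$, so they have different classes in $K$ even though they agree in $\Kt$.

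What one actually needs is that $[\Ker u]$ and $[\Coker u]$ agree \emph{after} killing Tate twists, and establishing this equivariantly is precisely where the content of the proof lies. The paper handles it by working with the twisted operator $N_\sigma\colon L(1)\to L$ and showing the key estimate that $N_\sigma\gamma-\gamma N_\sigma$ has image in $\Img N_\sigma^2$ (hence $N_\sigma^m\gamma-\gamma N_\sigma^m$ lands in $\Img N_\sigma^{m+1}$). This suffices to make the monodromy filtration $M_\bullet L$ Galois-stable even though $N_\sigma$ itself is not equivariant, and then the isomorphism $N_\sigma^i\colon\Gr^M_i L(i)\longsimto\Gr^M_{-i}L$ \emph{is} $\Gal(\eta_\ell/\eta)$-equivariant, yielding $\Gr^M_i(\Coker N_\sigma)(i)\simeq\Gr^M_{-i}(\Ker N_\sigma(-1))$ equivariantly and hence the desired identity in $\Kt$. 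Your sketch bypasses the monodromy filtration entirely, which is exactly the step that compensates for the failure of $u$ to commute with the Galois action; without it the argument does not close.
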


As in \ref{1.7}, one is reduced to showing that, for any
$G$-${\Gal}(\eta_\ell/\eta)$-$F$-sheaf, the class of
$R\Gamma(I_{\ell},L) $ in $\Kt (Y,G,F)$ is zero, where $\eta_{\ell}
= \overline{\eta}/P_{\ell}$. As before, we may assume that the
action of $I_{\ell}$ on $L$ is unipotent. Fix a topological
generator $\sigma$ of $I_\ell = \Z_\ell(1)$ and define a (nilpotent)
$G$-$I_\ell$-equivariant operator
\[N_\sigma\colon L(1)\to L\]
by the formula $N_\sigma(\bar \sigma \otimes a) =u a$, where
$\bar \sigma\in F(1)=F\otimes_{\Z_\ell}\Z_\ell(1)$ is the image of $\sigma$, $u=\sigma-1\colon L\to L$. For $\gamma \in
\Gal(\eta_\ell/\eta)$, we have
\[(N_\sigma\gamma -\gamma N_\sigma)(\bar \sigma \otimes a) = N_\sigma((\bar\sigma)^{\chi(\gamma)}
\otimes \gamma a) - \gamma (\sigma-1)a =
\overline{\chi(\gamma)}u\gamma a - (\sigma^{\chi(\gamma)}-1)\gamma
a, \] where $\chi\colon \Gal(\eta_\ell/\eta)\to \Z^\times_\ell$ is
the cyclotomic character, $\overline{\chi(\gamma)}\in
\F_\ell^\times$ is the image of $\chi(\gamma)\in \Z_\ell^\times$.
Since
\[\sigma^{\chi(\gamma)}-1 = (1+u)^{\chi(\gamma)}-1 = \overline{\chi(\gamma)}u+u^2 P(u),\]
where $P(u)$ is a polynomial in $u$, we have
\[(N_\sigma\gamma -\gamma N_\sigma)(\bar \sigma \otimes a)
=\overline{\chi(\gamma)} u \gamma a - [\overline{\chi(\gamma)} u
\gamma a + u^2 P(u) \gamma a] = -u^2 P(u) \gamma a \in \Img u^2 =
\Img N_\sigma^2.\] It follows that $\Img (N_\sigma^m \gamma - \gamma
N_\sigma^m)\subset \Img(N_\sigma^{m+1})$ for $m\ge 0$. Let $\dots
\subset M_i L \subset M_{i+1} L \subset \dotsb$ be the filtration in
the category of $G$-$F$-sheaves on $Y_{\overline{s}}$ characterized
by $N_\sigma M_i(1) \subset M_{i-2}$ and the property that
$N_\sigma^i$ induces an isomorphism of $G$-$F$-sheaves
\begin{equation}
\Gr^M_i L(i)
\xrightarrow{\sim} \Gr^M_{-i} L. \label{(2.6.1)}
\end{equation}
As $\dots
\subset \gamma M_i L \subset \gamma M_{i+1} L \subset \dotsb$
satisfies the same condition for any $\gamma \in
\Gal(\eta_\ell/\eta)$, the filtration is
$\Gal(\eta_\ell/\eta)$-stable. Moreover, \eqref{(2.6.1)} is
$\Gal(\eta_\ell/\eta)$-equivariant, hence the same holds for the
isomorphism
\[\Gr^M_i(\Coker N_\sigma)(i) \simeq \Gr^M_{-i}(\Ker N_\sigma (-1)).\]
Therefore $H^0(I_\ell,L) = L^{I_\ell}=\Ker N_\sigma(-1)$ and
$H^1(I_\ell,L) = L_{I_\ell} (-1)=\Coker N_\sigma (-1)$ have the same class in
$\Kt(Y,G,F)$.

\begin{sRemark}
Note that the filtration $M_i L$ in the proof does not depend on the
choice of $\sigma$. In fact, if $\tau=\sigma^r$ is another topological generator of $\Z_\ell(1)$, $r\in \Z_\ell^\times$, then
\[\bar r N_\tau (\bar \sigma \otimes a) = N_\tau(\bar \tau \otimes a) = (\tau -1) a = [(1+u)^r-1]a = \bar r ua + u^2 Q(u) a,\]
where $Q(u)$ is a polynomial in $u=\sigma-1$, hence
\[(N_\tau - N_\sigma)(\bar \sigma \otimes a)= (\bar r)^{-1} u^2 Q(u) a \in \Img N_\sigma^2.\]
\end{sRemark}

To state an analogue of \ref{2.2} for $F$-sheaves, we need a condition on inertia.

\begin{Proposition}\label{2.7}
Let $S$ be a quasi-separated scheme, $f\colon \cX\to \cY$ be a morphism of Deligne-Mumford $S$-stacks, $m\in \Z$ be an integer. The following two conditions are equivalent:
\begin{enumerate}
\item For any algebraically closed field $\Omega$ and any point $x\in
\cX(\Omega)$, the order of the group $\Ker(\Aut_{\cX}(x)\to \Aut_{\cY}(y))$ is prime to $m$, where $y\in \cY(\Omega)$ is the image of $x$ under~$f$;

\item For any algebraically closed field $\Omega$ and any point $y\in \cY(\Omega)$ and any lifting $x\in \cX(\Omega)$ of $y$, the order of the group $\Aut_{\cX_y}(x)$ is prime to $m$.
\end{enumerate}
\end{Proposition}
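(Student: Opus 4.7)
The plan is to observe that both (a) and (b) express the same numerical condition on the same group, once one identifies the automorphism group of a lifting in $\cX_y$ with the kernel appearing in~(a). The whole proof reduces to unwinding the definition of a 2-fibre product of Deligne-Mumford stacks.

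First I would spell out the 2-fibre product $\cX_y := \cX\times_\cY \Spec\Omega$, where $\Spec\Omega\to \cY$ represents the point~$y$. By definition, an object of $\cX_y(\Omega)$ is a pair $(x,\beta)$ with $x\in\cX(\Omega)$ and $\beta\colon f(x)\xrightarrow{\sim} y$ an isomorphism in $\cY(\Omega)$, while an automorphism of $(x,\beta)$ is an $\alpha\in\Aut_\cX(x)$ such that $\beta\circ f(\alpha)=\beta$, i.e.\ such that $f(\alpha)$ becomes $\Id_y$ after transport through~$\beta$. Since conjugation by $\beta$ is an inner automorphism of $\Aut_\cY(y)$, its kernel has a well-defined cardinality that is independent of the choice of~$\beta$. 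This yields a canonical identification
\[
\Aut_{\cX_y}(x) \;\simeq\; \Ker\bigl(\Aut_\cX(x)\xrightarrow{f}\Aut_\cY(f(x))\bigr).
\]

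With the displayed identification in hand, both implications become formal. For (a) $\Rightarrow$ (b), given $y\in\cY(\Omega)$ and a lifting $(x,\beta)$, apply (a) to the underlying point $x\in\cX(\Omega)$: its image under $f$ is (isomorphic to)~$y$, so the order of the kernel is prime to~$m$, and the identification transports this to $\Aut_{\cX_y}(x)$. For (b) $\Rightarrow$ (a), given $x\in\cX(\Omega)$, take the canonical lifting $(x,\Id_{f(x)})$ of $y := f(x)$ and apply~(b); again the identification gives the conclusion.

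I do not anticipate any genuine obstacle. The proposition is purely formal; the only point requiring care is bookkeeping the 2-morphisms that enter the definition of the 2-fibre product, in particular noting that the kernel of $\Aut_\cX(x)\to \Aut_\cY(f(x))$ does not depend on how one identifies $f(x)$ with~$y$ since it is invariant under inner automorphisms of the target.
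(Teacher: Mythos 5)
Your proof is correct, and it reaches the key identification
\[
\Aut_{\cX_y}(x)\;\simeq\;\Ker\bigl(\Aut_\cX(x)\to\Aut_\cY(f(x))\bigr)
\]
by directly unwinding the definition of the $2$-fibre product. The paper takes a different, more abstract route: it establishes exactly this isomorphism by first developing general $2$-categorical machinery about inertia---that the inertia of a composite $X\to Y\to Z$ satisfies $I_{X/Y}\simeq\Ker(I_{X/Z}\to X\times_Y I_{Y/Z})$, and that inertia is stable under $2$-base change---via a series of ``cube'' and ``square of squares'' lemmas (\ref{2.8}, \ref{p.cube}, \ref{p.ICart}, \ref{p.Icomp}), and then specializes to $y\in\cY(\Omega)$. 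Your direct computation is shorter and more transparent for this particular statement; the paper's inertia formalism is more systematic and reusable, which is presumably why the authors set it up, but for proving \ref{2.7} alone your approach is entirely adequate.

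One small remark on the detail about $\beta$: the condition on $\alpha\in\Aut_\cX(x)$ in the fibre product is $\beta\circ f(\alpha)=\beta$, which already forces $f(\alpha)=\Id_{f(x)}$ outright (since $\beta$ is invertible), so the group in question is literally $\Ker\bigl(\Aut_\cX(x)\to\Aut_\cY(f(x))\bigr)$ with no dependence on $\beta$ at all. Your appeal to inner automorphisms of $\Aut_\cY(y)$ is not wrong, but it is a slight detour: you are comparing $\Ker\bigl(\alpha\mapsto\beta f(\alpha)\beta^{-1}\bigr)$ to $\Ker\bigl(\alpha\mapsto f(\alpha)\bigr)$, and these are the same subgroup because conjugation by $\beta$ is an isomorphism of $\Aut_\cY(f(x))$ onto $\Aut_\cY(y)$. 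Either way the conclusion stands.
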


Note that the morphisms satisfying (a) are closed under composition while the morphisms satisfying (b) are closed under base change.

\begin{Definition}\label{d.inert}
Morphisms satisfying the conditions of \ref{2.7} are called \emph{of prime to $m$ inertia}.
\end{Definition}

\begin{Example}
Let $(X,G)\to (Y,H)$ be an equivariant morphism of $S$-schemes (the finite groups $G$ and $H$ acting trivially on $S$). If for all geometric points $x\to X$, of image $y\to Y$, the order of the group $\Ker(G_x\to H_y)$ is prime to $m$, where $G_x$ and $H_y$ are the inertia groups, then the induced morphism $[X/G]\to [Y/H]$ of quotient $S$-stacks is of prime to $m$ inertia. Indeed, $\Aut_{[X/G]}(\xi)=G_x$, $\Aut_{[Y/H]}(\upsilon)=H_y$, where $\xi$ is the composition $x\to X\to [X/G]$, $\upsilon$ is the composition $y\to Y\to [Y/H]$.
\end{Example}

We will deduce \ref{2.7} in \ref{s.inert} from some general facts
about inertia. Let $\cC$ be a 2-category. The 2-commutative squares
in $\cC$ form a 2-category $\cC^{\square}$ in an obvious way. Let
$S$ be the partially ordered set
\[\{0,1\}^2=\{(i,j)\mid 0\le i, j \le 1\}.\]
A pseudofunctor $F\colon S\to \cC$
is a 2-commutative diagram in $\cC$ of the form
\begin{equation}\label{e.square}
\xymatrix{F_{00}\ar[r]\ar[d]\ar[rd] & F_{01}\ar[d]\\
F_{10}\ar[r] & F_{11}}
\end{equation}
The 2-category $\cC^\square$ can be identified with the
2-subcategory of the 2-category of pseudofunctors $S\to \cC$,
spanned by those pseudofunctors $F$ for which the lower-left
triangle is strictly commutative. A \emph{2-Cartesian square} is a 2-commutative square of the form \eqref{e.square} which is a 2-limit diagram, namely which exhibits $F_{00}$ as the 2-limit of the diagram indexed by $T=( (1,0)\to (1,1)\gets (0,1))\subset S$.

\begin{Lemma}\label{2.8}
Consider a 2-commutative square
\[\xymatrix{A\ar[r]\ar[d]&B\ar[d]\\
X\ar[r]&Y}
\]
in $\cC^\square$. Suppose that the squares $A_{ij}B_{ij}X_{ij}Y_{ij}$, $0\le i,j
\le 1$ and the squares $B$, $X$, $Y$ are 2-Cartesian. Then the
square $A$ is 2-Cartesian.
%\[\xymatrix@!0@=2em{&&&&&&\cX'_2\ar[rrrrrrrrr]\ar@{-->}[ddddddddd]\ar@{-->}[ddddr] &&&&&&&&& \cX_2 %\ar[ddddddddd]\ar@{-->}[ddddlllll]\\
%\\
%\\
%\cX'_{12}\ar[rrrrrrrrr]\ar[ddddddddd]\ar[rrrrrruuu]\ar@{-->}[ddrrrrr]&&&&&&&&&\cX_{12}\ar[ddddddddd]\ar[rrrrrruuu]\ar@{-->}[ddl]\\
%&&&&&&&\cX'\ar@{-->}[rrr]\ar@{-->}[ddd]&&&\cX\ar@{-->}[ddd]\\
%&&&&&\cX'_1\ar@{-->}[rrr]\ar@{-->}[rru]\ar@{-->}[ddd]&&&\cX_1\ar@{-->}[ddd]\ar@{-->}[rru]\\
%\\
%&&&&&&&\cY'\ar@{-->}[rrr]&&&\cY\\
%&&&&&\cY'_1\ar@{-->}[rrr]\ar@{-->}[rru]&&&\cY_1\ar@{-->}[rru]\\
%&&&&&&\cY'_2\ar@{-->}[rrrrrrrrr]\ar@{-->}[uur] &&&&&&&&& \cY_2 \ar@{-->}[uulllll]\\
%\\
%\\
%\cY'_{12}\ar[rrrrrrrrr]\ar@{-->}[rrrrrruuu]\ar@{-->}[uuuurrrrr]&&&&&&&&&\cY_{12}\ar[rrrrrruuu]\ar@{-->}[uuuul]}\]
%Suppose that the squares $\cY_{12}\cY_1\cY\cY_2$, $\cX_{12}\cX_1\cX\cX_2$, $\cY'_{12}\cY'_1\cY'\cY'_2$, %$\cX'_{12}\cX'_1\cX'\cX'_2$, $\cX'\cX\cY\cY'$, $\cX'_1\cX_1\cY_1\cY'_1$, and $\cX'_2\cX_2\cY_2\cY'_2$ are 2-Cartesian. %Then the square $\cX'_{12}\cX_{12}\cY_{12}\cY'_{12}$ is 2-Cartesian.
\end{Lemma}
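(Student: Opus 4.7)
The plan is to identify $A_{00}$ with the 2-pullback $A_{01}\times_{A_{11}} A_{10}$ via a Fubini-type argument for 2-limits. View the whole configuration as a pseudofunctor $F\colon I\times J\to \cC$, where $I$ is the cospan $(0,1)\to(1,1)\gets(1,0)$ (the `outer' shape, indexing the square in $\cC^\square$) and $J$ is the cospan $B\to Y\gets X$ (the `inner' shape of each square in $\cC^\square$). The 2-limit of $F$ can be computed in either order.

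First, fix $i\in I$ and form $\lim_J F(i,\cdot)$, which is the 2-pullback $B_i\times_{Y_i} X_i$. For each such $i$, the vertical 2-Cartesian hypothesis identifies this with $A_i$. Taking the 2-limit over $I$ then gives
\[
\lim_I\lim_J F\simeq A_{01}\times_{A_{11}} A_{10}.
\]

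Second, interchange the order: fix $j\in J$ and form $\lim_I F(\cdot,j)$; by the horizontal 2-Cartesian hypotheses on $B$, $Y$, $X$, this yields $B_{00}$, $Y_{00}$, $X_{00}$ for $j=B,Y,X$ respectively. Taking the 2-limit over $J$ and applying the vertical hypothesis at $(0,0)$ yields
\[
\lim_J\lim_I F\simeq B_{00}\times_{Y_{00}} X_{00}\simeq A_{00}.
\]

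The Fubini principle for 2-limits equates the two iterated limits, so $A_{00}\simeq A_{01}\times_{A_{11}} A_{10}$. By the universal property of 2-limits, this equivalence matches the canonical comparison map provided by the 2-commutative structure of the outer square in $\cC^\square$, so $A$ is 2-Cartesian. The main technical ingredient is the Fubini reshuffling of nested 2-pullbacks; once granted, the chain of identifications is immediate. A minor subtlety is that, rather than a theorem about globally existing 2-limits, we need the version: if the inner 2-pullbacks $B_i\times_{Y_i} X_i$ and the outer 2-pullbacks $B_{01}\times_{B_{11}} B_{10}$, $Y_{01}\times_{Y_{11}} Y_{10}$, $X_{01}\times_{X_{11}} X_{10}$ all exist as given, then one iterated 2-pullback exists if and only if the other does, and they agree---an easy verification from the universal property.
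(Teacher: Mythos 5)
Your proof is correct and is essentially identical to the paper's: both restrict the square of squares to the cospan $T\times T'$ (your $I\times J$), compute $A_{00}$ and $A_{01}\times_{A_{11}}A_{10}$ as the two iterated 2-limits, and invoke the Fubini interchange for 2-limits. The paper is terser but the argument is the same.
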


\begin{proof}
The restriction $X|T\to Y|T \gets B|T$ of the given square corresponds to a diagram in $\cC$ indexed by $T\times T'$, which we denote by $D$. Here $T'=T$. Then $A_{00}$ is $\lim_{T'}\lim_{T} D$, where $\lim$ stands for 2-limit. The assertion then follows from the canonical identification of $\lim_T\lim_{T'}D$ and $\lim_{T'}\lim_T D$.
\end{proof}
%We say that the square $\cX'_{12}\cX_{12}\cY_{12}\cY'_{12}$ is the 2-fiber product of the squares $\cX'_1\cX_1\cY_1\cY'_1$ and $\cX'_1\cX_1\cY_1\cY'_1$ over %$\cX'\cX\cY\cY'$.

\begin{Corollary}\label{p.cube}
  Let
  \[\xymatrix{&A'\ar[rr] \ar[ld]\ar[dd]|\hole && A\ar[dd]\ar[ld]\\
  B'\ar[rr]\ar[dd]&& B\ar[dd]\\
  &X'\ar[rr]|\hole\ar[ld]&&X\ar[ld]\\
  Y'\ar[rr]&&Y}\]
  be a 2-commutative cube in a 2-category. If the front, back and bottom squares are 2-Cartesian, then the top square is also 2-Cartesian.
\end{Corollary}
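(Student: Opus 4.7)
The plan is to deduce the 2-Cartesianness of the top face from the other three via the pasting lemma for 2-Cartesian squares, which in our context follows from Lemma \ref{2.8} applied with appropriately degenerate corner squares. Concretely, the key observation is that both the back+bottom and top+front pastings realize the same rectangle over $Y'\to Y$, and then one runs the pasting lemma twice in opposite directions.

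First I will paste the back face and the bottom face along their common edge $X'\to X$. Since both are 2-Cartesian by hypothesis, the pasting lemma yields that the resulting rectangle with vertices $A',A,Y',Y$ (whose vertical maps are the composites $A'\to X'\to Y'$ and $A\to X\to Y$) is 2-Cartesian. The same rectangle also arises by pasting the top face and the front face along their common edge $B'\to B$; there the vertical maps factor as $A'\to B'\to Y'$ and $A\to B\to Y$. The 2-commutativity of the cube (encoded by the left and right faces) identifies these two factorizations up to a coherent 2-isomorphism, so the two pasted rectangles coincide as 2-commutative squares.

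Applying the converse direction of the pasting lemma to the rectangle formed by the top and front faces -- using that this rectangle is 2-Cartesian by the previous step, and that the front face is 2-Cartesian by hypothesis -- we conclude that the top face is 2-Cartesian. The main obstacle will be formulating the pasting lemma and the identification of the two rectangles cleanly in the 2-categorical setting, keeping track of the coherence 2-isomorphisms witnessing commutativity along the edge $A\to Y$; this is precisely the iterated 2-limit interchange carried out in Lemma \ref{2.8}, so the proof reduces to a careful bookkeeping exercise rather than new content.
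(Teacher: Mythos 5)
Your argument is correct, but it takes a genuinely different route from the paper. The paper proves the statement in one stroke: it applies Lemma~\ref{2.8} to the square of squares with corners $A'AB'B$ (the top face), $AABB$, $X'XY'Y$ (the bottom face), and $XXYY$. The cleverness is in inserting the two degenerate squares $AABB$ and $XXYY$ as the right-hand column: the column squares $A_{ij}B_{ij}X_{ij}Y_{ij}$ then come out as the back face, the front face, and two identity squares, all 2-Cartesian, and the three auxiliary squares $B$, $X$, $Y$ of Lemma~\ref{2.8} are the degenerate one $AABB$, the bottom face, and the degenerate one $XXYY$, also all 2-Cartesian; the lemma then directly delivers that $A = A'AB'B$ is 2-Cartesian.

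Your route instead pastes the back and bottom faces to see that the long rectangle $A'\,A\,Y'\,Y$ is 2-Cartesian, identifies it (via the left and right faces of the cube) with the rectangle obtained by pasting the top and front faces, and then cancels the front face using the converse pasting lemma. This is a legitimate and classical argument; its cost is that you must first formulate and justify, in this 2-categorical setting, (i) the forward pasting lemma (vertical composite of 2-Cartesian squares is 2-Cartesian), (ii) its converse (cancellation), and (iii) the fact that the cube's coherence data really do identify the two pasted rectangles as objects of $\cC^\square$. You correctly note that all three ultimately rest on the same interchange-of-2-limits argument as Lemma~\ref{2.8}; indeed, (i) and (ii) can themselves be extracted from Lemma~\ref{2.8} by choosing degenerate corner squares, in the spirit of the paper's proof. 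So the two arguments are equivalent in content, but the paper's single application of Lemma~\ref{2.8} is more economical, bypassing the intermediate pasting lemmas and the rectangle identification entirely.
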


\begin{proof}
  It suffices to apply \ref{2.8} to the square of squares
  \[\xymatrix{A'AB'B\ar[r]\ar[d]& AABB\ar[d]\\
  X'XY'Y\ar[r]& XXYY}\]
\end{proof}

\begin{ssect}
Let $\cC$ be a 2-category. We say that a morphism $f\colon X\to Y$ in $\cC$ is \emph{faithful} if for every object $W$ of $\cC$, the functor
\[\Hom_{\cC}(W,X)\to \Hom_{\cC}(W,Y)\]
is faithful. Assume $\cC$ admits 2-fiber products. For any morphism $f\colon X\to Y$ in $\cC$, we define the \emph{inertia} of $f$ to be
\begin{equation}
I_f =X\times_{\Delta_{f},X\times_Y X, \Delta_{f}}X.
\end{equation}
The two projections $X\times_Y X\to X$ induce two isomorphisms
between the two projections $I_f\to X$. To fix ideas, we endow $I_f$
with the first projection to $X$, which is a faithful morphism
because it admits the diagonal morphism $\delta_f\colon X\to I_f$ as a section. For a morphism
$g\colon W\to X$ in $\cC$, $\Hom_X(W,I_f)$ is equivalent to the
group $\Aut_{\cD}(g)$, with the diagonal section $\delta_f g$
corresponding to the identity element of the group. Here $\cD$ is
the category $\Hom_Y(W,X)$. In the case where $\cC$ is the
2-category of categories fibered over a given category
$\mathcal{A}$, an explicit description of $I_f$ can be found in
\cite[034H]{Stacks}.
\end{ssect}

\begin{Lemma}\label{p.ICart}
Let
\[\xymatrix{X'\ar[r]\ar[d] & X\ar[d]\\
Y'\ar[r] & Y}\]
be a 2-Cartesian square in a 2-category admitting 2-fiber products. Then the square
\[\xymatrix{I_{X'/Y'} \ar[r]\ar[d] & I_{X/Y}\ar[d]\\
X'\ar[r]& X}\]
is 2-Cartesian.
\end{Lemma}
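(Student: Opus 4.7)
The plan is to deduce \ref{p.ICart} from Corollary \ref{p.cube} by exhibiting an appropriate cube. The essential content is that inertia commutes with base change, and this can be packaged by assembling the two defining 2-Cartesian squares of inertia (one for $f' \colon X' \to Y'$ and one for $f \colon X \to Y$) into opposite faces of a cube, with the base-change data providing the remaining faces.

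Explicitly, I would form the cube with top-back-left $I_{X'/Y'}$, top-back-right $X'$, top-front-left $I_{X/Y}$, top-front-right $X$, and bottom-back-left $X'$, bottom-back-right $X' \times_{Y'} X'$, bottom-front-left $X$, bottom-front-right $X \times_Y X$; the arrows are the two projections of each inertia, the diagonals $\Delta_{f'}$ and $\Delta_f$, the given morphism $X' \to X$, and the induced morphism $X' \times_{Y'} X' \to X \times_Y X$. With this arrangement, the back face
\[\xymatrix{I_{X'/Y'}\ar[r]\ar[d]& X'\ar[d]^{\Delta_{f'}}\\ X'\ar[r]^-{\Delta_{f'}}&X'\times_{Y'}X'}\]
and the front face
\[\xymatrix{I_{X/Y}\ar[r]\ar[d]& X\ar[d]^{\Delta_f}\\ X\ar[r]^-{\Delta_f}&X\times_Y X}\]
are the defining 2-Cartesian squares of the respective inertia objects. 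The bottom face
\[\xymatrix{X'\ar[r]^-{\Delta_{f'}}\ar[d]& X'\times_{Y'}X'\ar[d]\\ X\ar[r]^-{\Delta_f}&X\times_Y X}\]
is 2-Cartesian once one identifies $X' \times_{Y'} X' \simeq (X \times_Y X) \times_Y Y' \simeq (X \times_Y X) \times_X X'$ (using the hypothesis $X' \simeq X \times_Y Y'$ and the commutativity of 2-fiber products), because under this identification $\Delta_{f'}$ corresponds to $\Delta_f \times_X \mathrm{id}_{X'}$, exhibiting the square as the base change along $X' \to X$ of a tautologically 2-Cartesian square.

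Applying \ref{p.cube} then yields that the top face
\[\xymatrix{I_{X'/Y'}\ar[r]\ar[d]& X'\ar[d]\\ I_{X/Y}\ar[r]& X}\]
is 2-Cartesian; transposing rows and columns, which preserves 2-Cartesianness by the symmetry of 2-fiber products, gives the statement of \ref{p.ICart}. The only step requiring genuine verification is the identification $X' \times_{Y'} X' \simeq (X \times_Y X) \times_Y Y'$ together with the compatibility of diagonals under this isomorphism; everything else (the 2-commutativities of the faces of the cube and the final transposition) is formal from the universal properties of the inertia and the 2-fiber product.
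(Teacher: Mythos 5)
Your proof is correct and uses the same key cube as the paper's—the one with $I_{X'/Y'}, X', I_{X/Y}, X$ on top and $X', X'\times_{Y'}X', X, X\times_Y X$ on the bottom—so the core approach is the same. The only difference is in how the bottom face of that cube is shown to be 2-Cartesian: the paper applies Corollary~\ref{p.cube} to two further auxiliary cubes (one identifying $X'\times_{Y'}X'$ as a 2-fiber product of $X\times_Y X$ over $X$ along $X'\to X$, and one identifying the diagonals), while you compress this into a direct manipulation of 2-fiber products using the hypothesis $X'\simeq X\times_Y Y'$; both arguments carry the same content and are equally valid.
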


\begin{proof}
  It suffices to apply \ref{p.cube} successively to the following cubes:
  \[\xymatrix@!C{&X'\times_{Y'}X'\ar[rr] \ar[ld]\ar[dd]|\hole && X'\ar[dd]\ar[ld]\\
  X\times_{Y}X\ar[rr]\ar[dd]&& X\ar[dd]\\
  &X'\ar[rr]|\hole\ar[ld]&&Y'\ar[ld]\\
  X\ar[rr]&&Y}\]
  \[\xymatrix{&X'\ar[rr] \ar[ld]\ar@{=}[dd]|\hole && X\ar@{=}[dd]\ar[ld]\\
  X'\times_{Y'}X'\ar[rr]\ar[dd]&& X\times_Y X\ar[dd]\\
  &X'\ar[rr]|\hole\ar[ld]&&X\ar@{=}[ld]\\
  X'\ar[rr]&&X}\]
  \[\xymatrix@!C{&I_{X'/Y'}\ar[rr] \ar[ld]\ar[dd]|\hole && X'\ar[dd]\ar[ld]\\
  I_{X/Y}\ar[rr]\ar[dd]&& X\ar[dd]\\
  &X'\ar[rr]|\hole\ar[ld]&&X'\times_{Y'}X'\ar[ld]\\
  X\ar[rr]&&X\times_Y X}\]
\end{proof}

\begin{Lemma}\label{p.Icomp}
  Let $X\to Y\to Z$ be a sequence of morphisms in a 2-category admitting 2-fiber products. Then the canonical morphism $I_{X/Y}\to I_{X/Z}$ induces an isomorphism
  \[I_{X/Y} \longsimto K=\Ker(I_{X/Z} \to X\times_{Y} I_{Y/Z}). \]
\end{Lemma}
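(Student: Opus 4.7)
My plan is to verify the isomorphism $I_{X/Y}\longsimto K$ by a Yoneda-type argument on test objects, and then to indicate how the same identification can be obtained formally via the cube lemma \ref{p.cube}. For a test object $W$ of the 2-category with a morphism $g\colon W\to X$, I would unpack the universal properties of the 2-fiber products defining each side. Using the description of $\Hom_X(W,I_f)$ recalled before the statement, the $W$-points of $I_{X/Y}$ lying over $g$ are automorphisms of $g$ in $\Hom_Y(W,X)$; the $W$-points of $I_{X/Z}$ over $g$ are automorphisms in $\Hom_Z(W,X)$; and the $W$-points of $X\times_Y I_{Y/Z}$ over $g$ are automorphisms of $fg$ in $\Hom_Z(W,Y)$, with the morphism $I_{X/Z}\to X\times_Y I_{Y/Z}$ given by post-composition with $f$ and with the unit section sending $g$ to $\mathrm{id}_{fg}$.

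A $W$-point of $K$ over $g$ therefore consists of an automorphism $\alpha\in\Aut_{\Hom_Z(W,X)}(g)$ equipped with a 2-isomorphism $f\alpha\simeq\mathrm{id}_{fg}$. This is precisely the datum of an automorphism of $g$ in $\Hom_Y(W,X)$, i.e., a $W$-point of $I_{X/Y}$ over $g$. By Yoneda, this natural identification gives a canonical isomorphism $I_{X/Y}\simeq K$ factoring the natural morphism $I_{X/Y}\to I_{X/Z}$.

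For a formal derivation in the language of \ref{p.cube}, the key ingredient is the 2-Cartesian square
\[\xymatrix{X\times_Y X\ar[r]\ar[d]& X\times_Z X\ar[d]^{(f,f)}\\ Y\ar[r]^-{\Delta_g}& Y\times_Z Y}\]
expressing $X\times_Y X$ as the pullback of $\Delta_g$, together with the resulting identification $X\times_Y I_{Y/Z}\simeq X\times_{(f,f),Y\times_Z Y,\Delta_g}Y$ coming from $I_{Y/Z}=Y\times_{Y\times_Z Y}Y$. Assembling the defining 2-Cartesian squares for $I_{X/Y}$ and $I_{X/Z}$ together with the square above into a single cube and applying \ref{p.cube}, one reads off the desired isomorphism. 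The main obstacle is purely combinatorial, namely arranging the cube so that the hypotheses of \ref{p.cube} are visibly met --- in particular matching the diagonals $\Delta_f$ and $\Delta_{gf}$ correctly under the identification $\Delta_f = (\Delta_{gf},f)$ afforded by the 2-Cartesian square above. Once the cube is in place, the conclusion is formal.
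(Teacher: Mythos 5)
Your proposal is correct, and the first half (the Yoneda argument on test objects) is a genuinely different route from the paper's. The paper does not unpack anything on test objects: it works entirely internally with the formal machinery of Lemma~\ref{2.8}, applying it twice to show that the outer rectangle $[I_{X/Y}, Y, I_{X/Z}, I_{Y/Z}]$ is 2-Cartesian (the first application produces exactly your intermediate square $[X\times_Y X, Y, X\times_Z X, Y\times_Z Y]$), and only then invoking \ref{p.cube} once for the pasting step that extracts the left square. Your Yoneda argument, by contrast, reduces the claim to the elementary observation that an automorphism of $g$ over $Z$ whose image under $f$ is the identity is the same thing as an automorphism of $g$ over $Y$. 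This is more transparent, at the cost of having to justify that the fibers in question are genuinely discrete (which follows because $I_{X/Y}\to X$, $I_{X/Z}\to X$, and $X\times_Y I_{Y/Z}\to X$ are all faithful, so the ``2-isomorphism $f\alpha\simeq\mathrm{id}_{fg}$'' you invoke is in fact a condition rather than extra data) and that the resulting bijection is pseudonatural in $W$; you should say a word about these points to make the argument watertight.

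On the second half: your identification of the key 2-Cartesian square $[X\times_Y X, X\times_Z X, Y, Y\times_Z Y]$ and of $X\times_Y I_{Y/Z}$ as a pullback along $(f,f)\colon X\to Y\times_Z Y$ is correct and agrees with what the paper establishes. However, the claim that one can then conclude by ``assembling... into a single cube and applying \ref{p.cube}'' is too optimistic: the defining squares of $I_{X/Y}$ and $I_{X/Z}$ and your intermediate square do not fit together as three faces of a single cube whose remaining 2-Cartesian face is the desired one (the vertices simply do not match). The paper needs two separate applications of the square-of-squares Lemma~\ref{2.8}, followed by one application of the cube Lemma~\ref{p.cube} for the final pasting step, and this extra layer is not merely combinatorial bookkeeping. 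If you wish to run the formal argument rather than the Yoneda one, you would have to reproduce that two-stage structure.
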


Here $K$ is defined by the following 2-commutative diagram with
2-Cartesian squares
  \[\xymatrix{K \ar[r]\ar[d] & X\ar[r]\ar[d] & Y\ar[d]^{\delta_{Y/Z}}\\
  I_{X/Z} \ar[r] & X \times_Y I_{Y/Z} \ar[r] & I_{Y/Z}}\]

\begin{proof}
  Applying \ref{2.8} successively to the squares of squares
  \[\xymatrix{X\times_Y X, Y, X\times_{Z}X, Y\times_Z Y\ar[r]\ar[d] & XYXY\ar[d]\\
  XYXY\ar[r] & YYZZ}\]
  and
  \[\xymatrix{I_{X/Y}, Y, I_{X/Z}, I_{Y/Z} \ar[r]\ar[d] & XYXY\ar[d]\\
  XYXY\ar[r] & X\times_Y X, Y, X\times_{Z}X, Y\times_Z Y}\]
  we see that the outer square of the 2-commutative diagram
  \[\xymatrix{I_{X/Y} \ar[r]\ar[d] & X\ar[r]\ar[d] & Y\ar[d]\\
  I_{X/Z} \ar[r] & X \times_Y I_{Y/Z} \ar[r] & I_{Y/Z}}\]
  is 2-Cartesian. Since the square on the right of the above diagram is 2-Cartesian, it follows from \ref{p.cube} that the square on the left is also 2-Cartesian.
\end{proof}

%\begin{Lemma}\label{2.9} Given a diagram of Artin $S$-stacks
%\[\xymatrix{X\ar[r] & \cX_Y \ar[r]\ar[d] & \cX\ar[d]\\
%&Y\ar[r] & \cY}\]
%where the square is 2-Cartesian and $X$, $Y$ are $S$-algebraic spaces, the canonical homomorphism $(I_{\cX_Y}|X) \to (I_{\cX}|X)$ induces an %isomorphism
%\[(I_{\cX_Y}|X) \to \Ker ((I_{\cX}|X) \to (I_{\cY}|X)).\]
%\end{Lemma}
%It follows from the lemma that ${\bf Aut}_{\cX_Y}(X) \simto {\rm \Ker} ({\bf Aut}_{\cX}(X) \to {\bf Aut}_{\cY}(X))$.
%\begin{proof}
%Applying \ref{2.8} successively to the squares of squares,
%\[\xymatrix{\cX_Y\times_S\cX_Y, \cX\times_S\cX, Y\times_S Y, \cY\times_S \cY \ar[r]\ar[d]&\cX_Y\cX Y\cY\ar[d]\\
%\cX_Y\cX Y\cY\ar[r] & SSSS\\
%I_{\cX_Y}, I_\cX, Y, I_\cY\ar[r]\ar[d] &\cX_Y\cX Y\cY\ar[d]\\
%\cX_Y\cX Y\cY \ar[r]&  \cX_Y\times_S\cX_Y, \cX\times_S\cX, Y\times_S Y, \cY\times_S \cY\\
%(I_{\cX_Y}|X), (I_{\cX}|X), {\bf 1}_X, (I_{\cY}|X) \ar[r]\ar[d]& I_{\cX_Y}, I_\cX, Y, I_\cY\ar[d]\\
%XXXX \ar[r] & \cX_Y\cX Y\cY}\]
%the following diagram is 2-Cartesian:
%\[\xymatrix{\cX_Y\times_{\cX_Y\times\cX_Y}\cX_Y \ar[r]\ar[d]& \cX\times_{\cX\times \cX}\cX\ar[d] \\
%Y \ar[r] & \cY_{\cY\times\cY}\cY}\]
%Taking product with $X$ over the 2-Cartesian square
%\[\xymatrix{\cX_Y \ar[r]\ar[d] & \cX\ar[d]\\
%Y\ar[r] & \cY}\]
%we obtain the 2-Cartesian square
%\[\xymatrix{(I_{\cX_Y}|X)\ar[r]\ar[d]&\ar[d] (I_{\cX}|X)\\
%{\bf 1}_X \ar[r]&(I_{\cY}|X)}\]
%from which the lemma follows.
%\end{proof}

\begin{ssect}\label{s.inert}
\begin{proof}[Proof of \ref{2.7}]
By \ref{p.Icomp},
\[I_{\cX/\cY}\simeq \Ker (I_{\cX/S} \to \cX\times_\cY I_{\cY/S}).\]
By \ref{p.ICart},
\[I_{\cX_y/y}\simeq y\times_{\cY} I_{\cX/\cY}\]
for $y\in \cY(\Omega)$.
Therefore,
\[\Aut_{\cX_y}(x) \simeq \Hom_{\cX_y}(x,I_{\cX_{y}/y})\simeq \Hom_\cX(x,I_{\cX/\cY})\simeq \Ker(\Aut_\cX(x) \to \Aut_{\cY}(y))\]
for $x\in \cX(\Omega)$ lifting $y$.
\end{proof}
\end{ssect}

\begin{ssect}\label{d.fin}
Now let $S$ be as in \ref{2.1}, $f\colon \cX\to \cY$ be a morphism of finite type Deligne-Mumford
$S$-stacks of prime to $\ell$ inertia. Recall that $F$ is a field of characteristic~$\ell$. Then we have functors \cite[\S~2]{Zhengl} %(\ref{p.fin}, \ref{p.finO}, \ref{d.fshr}, \ref{d.fshrO})
\[Rf_*, Rf_! \colon \Dbc(\cX,F) \to \Dbc(\cY,F).\]
%Moreover, the cohomological amplitude of $Rf_*$ (resp.\ $Rf_!$) is contained in $[0, 2\dim \cX]$ (resp.\ $[0, 2\dim f]$). Here $\dim f=\max_{y} \dim \cX_y$, where $y$ runs over all points of $\cY$.
%The functors in \ref{2.7}
They induce homomorphisms
\[f_*, f_! \colon K(\cX,F) \to K(\cY, F) \]
and
\[f_*, f_! \colon \Kt(\cX,F) \to \Kt(\cY, F) \]
by passing to quotients.
\end{ssect}

\begin{Theorem}\label{2.10}
Let $f\colon \cX\to \cY$ be a morphism of finite type Deligne-Mumford
$S$-stacks of prime to $\ell$ inertia.
For any $x\in K(\cX,F)$, $f_*(\xt) = f_!(\xt) $ in
$\Kt(\cY,F)$.
\end{Theorem}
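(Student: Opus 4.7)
The plan is to adapt the proof of \ref{2.2} given in \ref{2.5}, substituting \ref{2.6} for \ref{1.7} at the crucial step. The hypothesis that $f$ is of prime to $\ell$ inertia guarantees that the functors $Rf_*$, $Rf_!$ and the Grothendieck groups $K(-,F)$, $\Kt(-,F)$ used in the argument are well-defined as in \ref{d.fin}, and since this hypothesis is stable under composition and base change (by \ref{p.Icomp} and \ref{p.ICart} together with \ref{2.7}), every morphism of stacks appearing in the reduction inherits it.

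First I would prove the $F$-coefficient analogue of \ref{2.3}: for any Deligne-Mumford $S$-stack $\cY$ of finite type, the map $\Kt(\cY,F)\to \prod_\xi \Kt(\cG_\xi,F)$ induced by pullback to residue gerbes is injective. The argument is verbatim that of \ref{2.3}: Noetherian induction together with condition~(A) reduces to $\cY=[Y/H]$ with $Y$ geometrically unibranch and $x=[\cF]-[\cF']$ for lisse $F$-sheaves; the Jordan--H\"older theorem identifies $\Kt_{\lisse}(\cY,F)$ and $\Kt([\eta/H],F)$ with free abelian groups indexed by twist-classes of simple $F$-representations of $\pi_1(\cY,\etab)$ and $\pi_1([\eta/H],\etab)$, and surjectivity of $\pi_1(\iota,\etab)$ (supplied by the geometrically unibranch hypothesis) yields an injection on these bases.

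Next I would establish the $F$-coefficient analogue of \ref{2.4.1}, namely that for any $G$-equivariant open immersion $j\colon U\to X$ of $S$-schemes of finite type and any $x\in K(U,G,F)$, one has $j_*(\xt)=j_!(\xt)$ in $\Kt(X,G,F)$. Following the reductions used in the proofs of \ref{1.2} and \ref{2.4.1}, one blows up the complement to make it a Cartier divisor, chooses a $G$-invariant global equation, and henselizes to reduce to showing that a class of the form $R\Gamma(I,R\Psi K)$ vanishes in $\Kt(Y,G,F)$; at this point \ref{2.6} applies in place of \ref{1.7}. The passage from the case where the closed stratum comes from a fibre over $s\in S$ to the general case uses the generic base change theorem \cite[Th.\ finitude 1.9]{SGA4d} combined with the $F$-analogue of \ref{2.3} just obtained, precisely as in \ref{2.4.1}.

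With these two ingredients, the Noetherian induction of \ref{2.5} goes through word for word. The $F$-analogue of \ref{2.3} reduces the assertion to checking $i_\xi^* f_*(\xt)=i_\xi^* f_!(\xt)$ at each residue gerbe of $\cY$; \ref{2.4} and base change by a representable \'etale morphism reduce further to $\cY=[Y/H]$ with $H$ finite and $Y$ an affine $S$-scheme of finite type; finally, Noetherian induction on $\cX$ via a dominant open immersion $j\colon [X/G]\hookrightarrow \cX$ (with $fj$ induced by an equivariant map of schemes, cf.\ \cite[5.1]{Z}) reduces the theorem to the open immersion case (handled by the $F$-analogue of \ref{2.4.1}) and to the statement for $f\circ i$ on the closed complement (handled by the induction hypothesis). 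The main obstacle is the bookkeeping needed to confirm that each of these successive reductions preserves the prime to $\ell$ inertia hypothesis so that the six operations over $F$ remain available throughout; once this is in place the entire cohomological content of \ref{2.10} is concentrated in \ref{2.6}.
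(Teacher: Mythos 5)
Your overall plan — mirror \ref{2.5} with \ref{2.6} in place of \ref{1.7}, after establishing $F$-analogues of \ref{2.3} and \ref{2.4.1} — is the right skeleton, and the treatment of the $F$-analogue of \ref{2.3} is fine. But there is a genuine gap at the step you characterize as ``bookkeeping.''

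Your $F$-analogue of \ref{2.4.1} is stated only for a $G$-equivariant \emph{open immersion} $j\colon U\to X$, i.e.\ the analogue of \ref{1.6} over $S$. But in the Noetherian induction of \ref{2.5}, one also invokes \ref{2.4.1} for the composite $fj\colon [X/G]\to [Y/H]$, which is a general morphism induced by an equivariant map $(X,G)\to (Y,H)$, not an open immersion. Running the proof of \ref{1.2} over $S$ with $F$-coefficients requires factoring $(f,u)$ as $(X,G)\xrightarrow{(f,\Id_G)}(Y,G)\xrightarrow{(\Id_Y,u)}(Y,H)$ and using $R(\Id_Y,u)_*\simeq R(\Id_Y,u)_!$. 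For $\Qlb$-coefficients this is automatic, but over $F$ (of characteristic $\ell$) it requires $|\Ker(u)|$ to be prime to $\ell$ — otherwise invariants and coinvariants under $\Ker(u)$ differ. The prime-to-$\ell$ inertia hypothesis on $[X/G]\to[Y/H]$ only controls $\Ker(G_x\to H_y)$ for geometric points $x$ (see the Example after \ref{d.inert}), not $\Ker(G\to H)$ itself; these are not the same, so stability of the inertia hypothesis under base change and composition does not by itself license the $(\Id_Y,u)$ step. The paper closes this gap by an explicit reduction: shrink $X$ so that $G/G_0$ acts freely (where $G_0=\Ker(G\to\Aut(X))$), then factor
\[
[X/G]\longrightarrow [(X/(G_0\cap N))/(G/(G_0\cap N))]\xrightarrow{\;g\;} [(X/N)/(G/N)]\longrightarrow [Y/H],\qquad N=\Ker(G\to H),
\]
observing that $g$ is an isomorphism because $N/(G_0\cap N)$ acts freely, and that $G_0\cap N$ has order prime to $\ell$ by the inertia hypothesis, so that one reduces to the situation where $\Ker(G\to H)$ itself is prime to $\ell$. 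This factorization is the actual mathematical content that makes the $F$-coefficient argument work, and it is missing from your proposal. Once it is inserted, your plan succeeds.
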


The proof follows the same line as the proof of \ref{2.2}. In
\ref{2.5}, up to shrinking $X$, we may assume in addition that
$G/G_0$ acts freely on $X$, where $G_0=\Ker (G\to \Aut(X))$. As
before, by \cite[5.1]{Z}, we are reduced to showing \ref{2.10} in
the case where $f\colon [X/G]\to [Y/H]$ is induced by an equivariant
morphism $(X,G)\to (Y,H)$ of affine schemes of finite type over $S$,
with $G/G_0$ acting freely on $X$. Then $f$ is the composite
morphism
\[[X/G] \to [(X/G_0\cap N)/(G/G_0\cap N)]\xrightarrow{g} [(X/N)/(G/N)] \to [Y/H],\]
where $N=\Ker (G\to H)$. Since $N/G_0\cap N$ acts freely on $X$, $g$
is an isomorphism. By assumption, $G_0\cap N$ has order prime to
$\ell$, hence we are reduced to showing \ref{2.10} in the case where
$f\colon [X/G]\to [Y/H]$ is induced by an equivariant morphism
$(X,G)\to (Y,H)$ of affine schemes of finite type over $S$, with
$\ell$ prime to the order of $\Ker (G\to H)$. We prove the analogues
of \ref{1.6}, \ref{1.2} and \ref{2.4.1} as before, with \ref{1.7}
replaced by \ref{2.6}. (For the analogues of \ref{1.2} and
\ref{2.4.1}, we assume that $\Ker(G\to H)$ has order prime to
$\ell$.)

\section{Free actions and vanishing theorems: $\ell$-adic and
Betti cohomologies}
\begin{ssect}\label{s.space}
Let $k$ be an algebraically closed field of characteristic exponent
$p$, $\ell$ a prime number $\ne p$, $X$ an algebraic $k$-space separated and of
finite type, endowed with an action of a
finite group~$G$. We extend the notations $t_\ell(s)$, $t_{c,\ell}(s)$ and $\chi(X,G,\Q_\ell)$ (\eqref{(0.1)} through \eqref{(0.3)}) to this situation. By \ref{2.2}, $t_\ell(s)=t_{c,\ell}(s)$ for $s\in G$.
\end{ssect}

\begin{Theorem}\label{3.1}
Under the assumptions of \ref{s.space}, we have:
\begin{enumerate}
\item For $s \in G$, $t_{\ell}(s)$ is an integer $t(s)$
    independent of $\ell$;

\item If $G$ acts freely on $X$,
    $R\Gamma_c(X,{\mathbb{Z}}_{\ell})$ (resp.
    $R\Gamma(X,{\mathbb{Z}}_{\ell})$) is a perfect complex of
    ${\mathbb{Z}}_{\ell}[G]$-modules (\ie is isomorphic to a
    bounded complex of finitely generated projective
    ${\mathbb{Z}}_{\ell}[G]$-modules);

\item If $G$ acts freely on $X$, then $t(s) = 0$ for every $s \in
    G$ whose order is not a power of~$p$.
\end{enumerate}
\end{Theorem}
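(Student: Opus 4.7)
The plan is to handle the three parts in turn, reducing (a) and (c) to known theorems on schemes and spending the real work on the perfectness statement (b).

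For part (a), the equality $t_\ell(s) = t_{c,\ell}(s)$ is already recorded in \ref{s.space} (coming from the extension of \ref{2.2} to algebraic spaces), so it suffices to show that $t_{c,\ell}(s)$ is an integer independent of $\ell$. For $X$ a separated scheme of finite type this is \cite[3.3]{DL}. To handle the algebraic space case, I would pick a nonempty $G$-stable open subscheme $U \subset X$ (obtained by intersecting the $G$-translates of any nonempty open subscheme of $X$) and run noetherian induction using the excision triangle
\[
R\Gamma_c(U, \Q_\ell) \to R\Gamma_c(X, \Q_\ell) \to R\Gamma_c(X \setminus U, \Q_\ell).
\]

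For part (b), I would exploit that a free $G$-action produces a torsor. The quotient $Y := X/G$ exists as a separated algebraic space of finite type over $k$, and $\pi\colon X \to Y$ is a finite étale $G$-torsor. Writing $\Lambda_n = \Z/\ell^n\Z$, the sheaf $\pi_!\Lambda_n = \pi_*\Lambda_n$ on $Y$ is a locally constant constructible sheaf of $\Lambda_n[G]$-modules, étale-locally isomorphic to the regular representation $\Lambda_n[G]$, hence with stalks free of rank one over $\Lambda_n[G]$. One has, in $D(\Lambda_n[G])$,
\[
R\Gamma_c(X, \Lambda_n) \simeq R\Gamma_c(Y, \pi_*\Lambda_n), \qquad R\Gamma(X, \Lambda_n) \simeq R\Gamma(Y, \pi_*\Lambda_n).
\]
Combining the stalkwise projectivity with the cohomological finiteness of étale cohomology (bounded cohomological dimension of $Y$ over the algebraically closed field $k$ and finite generation of each cohomology group), one obtains bounded complexes of finitely generated projective $\Lambda_n[G]$-modules representing both. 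Passing to the inverse limit in $n$ then yields perfectness of $R\Gamma_c(X, \Z_\ell)$ and $R\Gamma(X, \Z_\ell)$ as complexes of $\Z_\ell[G]$-modules.

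For part (c), I would use the Deligne--Lusztig trace formula together with the Jordan decomposition. Write $s = s_u s_{ss}$ inside $\langle s \rangle$, where $s_u$ has $p$-power order and $s_{ss}$ has order prime to $p$; the hypothesis forces $s_{ss} \ne 1$. Since $s_u$ acts trivially on $X^{s_u}$, $s$ acts on $X^{s_u}$ as $s_{ss}$, and by \cite[3.3]{DL},
\[
t(s) = t(s_{ss}, X^{s_u}) = \chi_c\bigl((X^{s_u})^{s_{ss}}, \Q_\ell\bigr) = \chi_c(X^s, \Q_\ell),
\]
the middle equality being the Deligne--Lusztig identity for the semisimple element $s_{ss}$ of order prime to $p$ acting on $X^{s_u}$. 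Since $G$ acts freely and $s \ne 1$, one has $X^s = \emptyset$, so $t(s) = 0$.

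The hard step is the perfectness claim in (b): although $\pi_*\Lambda_n$ is stalkwise free over $\Lambda_n[G]$, actually presenting $R\Gamma(Y, \pi_*\Lambda_n)$ and $R\Gamma_c(Y, \pi_*\Lambda_n)$ as honest bounded complexes of finitely generated projective $\Lambda_n[G]$-modules, and checking compatibility under the inverse limit over $n$, requires carefully combining sheaf-level projectivity with Deligne's finiteness theorem for étale cohomology over an algebraically closed base. Parts (a) and (c) are comparatively routine once one accepts the standard reduction from algebraic spaces to schemes by stratification and the Deligne--Lusztig formulas.
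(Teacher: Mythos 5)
Your part (a) follows the same route as the paper (reduce from algebraic spaces to schemes by stratification and apply the Deligne--Lusztig integrality result, which itself rests on Grothendieck's trace formula), so no comment is needed there.

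For part (b), your approach genuinely differs from the paper's, and the step you yourself flag as ``the hard step'' is exactly where a gap remains. Stalkwise projectivity of $\pi_*\Lambda_n$ over $\Lambda_n[G]$, together with cohomological finiteness of $Y$, does not by itself produce a bounded complex of finitely generated projective $\Lambda_n[G]$-modules computing $R\Gamma(Y,\pi_*\Lambda_n)$; one needs an argument to propagate the local freeness to a global perfectness statement, and your proposal does not supply it. The paper's device is the projection formula: since $\pi_*\Z_\ell$ is locally free of rank one over $\Z_\ell[G]$, for every finitely generated $\Z_\ell[G]$-module $M$ one has a canonical isomorphism $R\Gamma(X/G,\pi_*\Z_\ell)\otimes^L_{\Z_\ell[G]}M\simeq R\Gamma(X/G,\pi_*\Z_\ell\otimes_{\Z_\ell[G]}M)$; the right side is bounded by $\cd_\ell(X/G)$, so the left side has finite tor-amplitude, and combined with $R\Gamma(X/G,\pi_*\Z_\ell)\in D^b_c(\Z_\ell)$ this yields perfectness over $\Z_\ell[G]$ directly, with no passage to finite levels or inverse limits. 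Your inverse-limit scheme also raises a compatibility issue (one must control the transition maps in $n$ to conclude perfectness over $\Z_\ell[G]$) that the paper's single-shot tor-dimension argument sidesteps entirely.

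For part (c) you take a genuinely different, and perfectly valid, route: you invoke the Deligne--Lusztig fixed-point formula (which is really \cite[3.2]{DL} for the semisimple part, combined with \cite[3.3]{DL} for the Jordan decomposition), obtaining $t(s)=\chi_c(X^s)=0$ since the action is free. This gives (c) independently of (b). The paper instead deduces (c) from (a) and (b): by (b), $\chi_c(X,G,\Q_\ell)$ is the class of a bounded complex of finitely generated projective $\Z_\ell[G]$-modules, whose character vanishes on $\ell$-singular elements by modular character theory; by (a) one may pick any $\ell\neq p$ dividing the order of $s$. The paper's argument is more economical once (b) is in hand and is in fact the reason (b) is singled out, whereas your argument buys nothing from (b) and relies on a separate deep input (the tame fixed-point formula).

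So: (a) matches; (c) is correct but routed differently; (b) has a real gap, and the missing ingredient is precisely the projection-formula/tor-dimension argument the paper uses.
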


\begin{proof}
Assertion (a) for $t_{c,\ell}(s)$ in the case of  a separated scheme
is a result of Deligne-Lusztig \cite[3.3]{DL}. The general case is
similar: by additivity of $t_{c,\ell}$ with respect to $X$, we may
assume $X$ affine; by spreading out one reduces to the case where $k$
is the algebraic closure of a finite field ${\mathbb{F}}_q$ and $X$
with its action of $G$ is defined over ${\mathbb{F}}_q$; in this
case, if $F$ is the geometric Frobenius of $\Spec
k/\Spec{\mathbb{F}}_q$, for any $n \ge 1$, $s \times_{{\mathbb{F}}_q}
F^n$ is identified with $\Id_{X'_n} \times_{{\mathbb{F}}_{q^n}} F^n$
for a suitable $X'_n/{\mathbb{F}}_{q^n}$, and the assertion follows
from Grothendieck's trace formula.

The argument for (b) and (c) is analogous to that of \cite[proof of
2.5]{I1}. We have \beq R\Gamma(X,{\mathbb{Z}}_{\ell}) =
R\Gamma(X/G,\pi_*{\mathbb{Z}}_{\ell}), \label{(3.1.1)} \eeq where
$\pi \colon X \rightarrow X/G$ is the projection. Here $X/G$ is an
algebraic space of finite type over~$k$. As $\pi$ is an \'{e}tale
Galois cover of group $G$, $\pi_*{\mathbb{Z}}_{\ell}$ is a lisse
sheaf, locally free of rank one over ${\mathbb{Z}}_{\ell}[G]$. For
any ${\mathbb{Z}}_{\ell}[G]$-module $M$ of finite type, we have, by
the projection formula, \beq\label{(3.1.2)}
R\Gamma(X/G,\pi_*{\mathbb{Z}}_{\ell})
\otimes^L_{{\mathbb{Z}}_{\ell}[G]} M \longsimto
R\Gamma(X/G,\pi_*{\mathbb{Z}}_{\ell} \otimes_{{\mathbb{Z}}_{\ell}[G]}
M), \eeq which implies that $R\Gamma(X/G,\pi_*{\mathbb{Z}}_{\ell})$
is of finite tor-dimension,
hence perfect (as $R\Gamma(X/G,\pi_*{\mathbb{Z}}_{\ell})$ belongs to $D^b_c({\mathbb{Z}}_{\ell})$ by \eqref{(3.1.1)}% and \ref{p.finO}
). (This type of argument
seems to have appeared for the first time in Grothendieck's proof of the
Euler-Poincar\'{e} and Lefschetz formulas for curves,
cf. \cite[III, X]{SGA5}, \cite[Rapport]{SGA4d}.)
The proof for $R\Gamma_c$ is analogous, except that we may assume $X/G$ to be a separated scheme by induction on $\dim X$.

By (a) the character $t$ of $\chi_c(X,G,{\mathbb{Q}}_{\ell})$ (=
$\chi(X,G,{\mathbb{Q}}_{\ell})$ \eqref{(1.3.2)}) has values in
${\mathbb{Z}}$ and is independent of $\ell$. Therefore (c) follows
from the theory of modular characters \cite[th.~36, p.~145]{Se1}: if
$P$ is a finitely generated projective
${\mathbb{Z}}_{\ell}[G]$-module, the character of $P \otimes
{\mathbb{Q}}_{\ell}$ vanishes on $\ell$-singular elements of $G$, \ie
elements whose order is divisible by~$\ell$.
\end{proof}

For $s = 1$, the fact that $t_c(1) = \chi_c(X,{\mathbb{Q}}_{\ell})$
is independent of $\ell$ had been known since the early 1960s, as it
is an immediate consequence of Grothendieck's trace formula (cf.
\cite[\S~1]{I2}). Thanks to Gabber's theorem \cite{Fu}, one can show
the independence of $\ell$ for $t_\ell(s)$ independently of
\ref{1.3}.

As observed in \cite[3.12]{DL}, \ref{3.1} implies:

\begin{Corollary}\label{3.2}
If $G$ acts freely on $X$ and the order of $G$
is prime to $p$, then, with the notations of \ref{1.3},
\beq
\chi(X,G,{\mathbb{Q}}_{\ell}) = \chi(X/G)\Reg_{{\mathbb{Q}}_{\ell}}(G), \label{(3.2.1)}
\eeq
where $\Reg_{{\mathbb{Q}}_{\ell}}(G)$ is the class of the
regular representation and
\[
\chi(X/G) = \sum (-1)^i\dim H^i_c(X/G,{\mathbb{Q}}_{\ell}) =
\sum (-1)^i\dim H^i(X/G,{\mathbb{Q}}_{\ell})
\]
is the
Euler-Poincar\'{e} characteristic of the algebraic space $X/G$.
\end{Corollary}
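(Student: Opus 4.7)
The plan is to verify \eqref{(3.2.1)} in $R_{\Q_\ell}(G)$ by matching characters. Since $\Q_\ell$ has characteristic zero, finite-dimensional $\Q_\ell[G]$-modules are semisimple and virtual representations are determined by their characters; it is therefore enough to check that the character $t$ of the left-hand side coincides with the character of $\chi(X/G)\Reg_{\Q_\ell}(G)$, which sends $1$ to $|G|\chi(X/G)$ and vanishes elsewhere.

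For $s\ne 1$, the order of $s$ divides $|G|$, is strictly greater than~$1$, and is coprime to~$p$; in particular it is not a power of~$p$. Since $G$ acts freely, \ref{3.1}~(c) then gives $t(s)=0$, matching the right-hand side.

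For $s=1$, I need $t(1)=|G|\chi(X/G)$. Because $G$ acts freely and $|G|$ is invertible on~$k$, the quotient $X/G$ is an algebraic space of finite type over~$k$ and $\pi\colon X\to X/G$ is a finite étale Galois cover of degree~$|G|$. From the identity $R\Gamma_c(X,\Q_\ell)\simeq R\Gamma_c(X/G,\pi_*\Q_\ell)$ (here $\pi_!=\pi_*$) together with the fact that $\pi_*\Q_\ell$ is locally constant of rank~$|G|$, and a stratification of $X/G$ trivializing $\pi$ to handle positive characteristic, I obtain $\chi_c(X,\Q_\ell)=|G|\chi_c(X/G,\Q_\ell)$. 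The algebraic-space extension of Laumon's theorem recalled in \ref{s.space} then yields $t(1)=\chi(X,\Q_\ell)=\chi_c(X,\Q_\ell)=|G|\chi_c(X/G,\Q_\ell)=|G|\chi(X/G)$, as required.

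There is no real obstacle: the argument is essentially the one sketched in \cite[3.12]{DL} and alluded to immediately before the corollary, combining \ref{3.1} with standard properties of finite étale covers and the algebraic-space form of Laumon's theorem already established in the paper.
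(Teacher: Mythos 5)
Your character-matching plan is correct in outline: elements of $R_{\Q_\ell}(G)$ are determined by their characters, and for $s\ne 1$ the order of $s$ is $>1$ and prime to $p$, so \ref{3.1}~(c) gives $t(s)=0$, matching the right-hand side. The gap is in your $s=1$ step. You claim $\chi_c(X/G,\pi_*\Q_\ell)=|G|\,\chi_c(X/G)$ by invoking ``a stratification of $X/G$ trivializing $\pi$,'' but no Zariski stratification can trivialize a nontrivial finite \'etale cover: $\pi_*\Q_\ell$ is lisse for the \'etale topology, not constant on any locally closed decomposition. More importantly, the naive multiplicativity $\chi_c(Y,\cF)=(\rk\cF)\,\chi_c(Y)$ for a lisse $\cF$ is \emph{false} in positive characteristic in general (Swan conductors contribute, e.g.\ for Artin--Schreier sheaves on $\G_m$). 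It does hold when the monodromy of $\cF$ is tame at infinity, which is the case here because it factors through $G$ of order prime to $p$, but that is precisely the content of Deligne's tameness theorem recalled in \ref{4.1}, a substantially deeper input than what \ref{3.2} is meant to rest on.

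The paper gets $t(1)=|G|\chi(X/G)$ more directly. Since $t(s)=0$ for $s\ne 1$ and $R\Gamma(X,\Z_\ell)$ is a perfect complex of $\Z_\ell[G]$-modules (\ref{3.1}~(b)), one has $\chi(X,G,\Q_\ell)=m\,\Reg_{\Q_\ell}(G)$ for some integer $m$ (the paper cites Serre for this). The projection-formula isomorphism \eqref{(3.1.2)} applied with $M=\Z_\ell$ gives $R\Gamma(X,\Z_\ell)\otimes^L_{\Z_\ell[G]}\Z_\ell\simeq R\Gamma(X/G,\Z_\ell)$; since $\Reg_{\Q_\ell}(G)\otimes_{\Q_\ell[G]}\Q_\ell\simeq\Q_\ell$, passing to Euler characteristics identifies $m=\chi(X/G)$, hence $t(1)=m|G|=|G|\chi(X/G)$. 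Replacing your stratification claim with this computation repairs the argument; the rest of your proof then goes through.
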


\begin{proof}
Indeed,
%as $\chi_c$ is additive, we may assume that $G$ acts admissibly on $X$. Then,
by \ref{3.1}, we have $t(s) = 0$ for all $s \ne 1$. Therefore, by
\cite[2.4, 12.1]{Se1}, there exists $m \in {\mathbb{Z}}$ such that
\[
\chi(X,G,{\mathbb{Q}}_{\ell}) = m\Reg_{{\mathbb{Q}}_{\ell}}(G).
\]
As, by \eqref{(3.1.2)},
\[
R\Gamma(X/G,\pi_*{\mathbb{Z}}_{\ell}) \otimes^L_{{\mathbb{Z}}_{\ell}[G]} {\mathbb{Z}}_{\ell} =
R\Gamma(X/G,{\mathbb{Z}}_{\ell}),
\]
one finds $m = \chi(X/G,{\mathbb{Q}}_{\ell})$.
\end{proof}

The following application was suggested to the first author by Serre:

\begin{Corollary}\label{3.3}
With the notations of \ref{s.space}, assume that $G$ is cyclic, generated by
$s$.  Assume moreover that the order of $G$ is prime to $p$ and that
$s$ has no fixed
points. Then $t(s) = 0$.
\end{Corollary}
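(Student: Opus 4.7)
The plan is to reduce to the free case already handled in \ref{3.1}(c) by stratifying $X$ according to stabilizers. Since $G$ is abelian, for each subgroup $H \le G$ the locus
\[ X_H = \{x \in X : \mathrm{Stab}_G(x) = H\} \]
is $G$-stable (if $x\in X_H$ and $t\in G$, then $G_{tx}=tG_xt^{-1}=H$), and it is locally closed, being the complement in the closed subspace $X^H$ of the union $\bigcup_{H\subsetneq H'} X^{H'}$ (a finite union as $G$ is finite). The family $\{X_H\}_{H\le G}$ is a finite locally closed stratification of $X$.

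The hypothesis that $s$ has no fixed points means $X^{\langle s\rangle}=X^G=\emptyset$, hence $X_G=\emptyset$. For every other subgroup $H\subsetneq G$, the subgroup $H$ acts trivially on $X_H$, so the $G$-action on $X_H$ factors through an action of $G/H$ which is, by construction, free (the stabilizer in $G/H$ of $x\in X_H$ is $G_x/H=H/H=\{1\}$). The image $\bar s$ of $s$ in $G/H$ generates $G/H$; since $H\neq G$, it is nontrivial, and its order divides $|G|$, hence is prime to $p$.

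Apply \ref{3.1}(c) to the free action of $G/H$ on the algebraic space $X_H$: the trace of $\bar s$ on $H^*(X_H,\Q_\ell)$ vanishes. Since $H$ acts trivially, this trace equals the trace of $s$, so $t_{X_H}(s)=0$. (Equivalently, one could invoke \ref{3.2} for $G/H$ acting freely on $X_H$ and observe that $\mathrm{Reg}_{G/H}(\bar s)=0$.)

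To assemble these, use additivity of $t_{c,\ell}$ under locally closed decompositions: the open-closed distinguished triangles for $R\Gamma_c$ iterated along the stratification give
\[ t(s) = t_{c,\ell}(s) = \sum_{H\le G} t_{c,X_H}(s) = 0, \]
using $X_G=\emptyset$ and the vanishing on each remaining stratum, together with $t_\ell=t_{c,\ell}=t$ from \ref{s.space} and \ref{3.1}(a). The only nontrivial point is the verification that the strata $X_H$ are well-behaved locally closed $G$-subspaces on which $G/H$ acts freely; this is essentially immediate because $G$ is finite abelian, and standard properties of actions of finite groups on algebraic spaces ensure that $X^H$ is closed and stratification by stabilizers makes sense in this generality.
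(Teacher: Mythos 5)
Your proof is correct and follows essentially the same route as the paper: stratify $X$ by the inertia subgroups $X_H$, observe that $G/H$ acts freely on $X_H$ so the trace of $\bar s$ vanishes (by \ref{3.1}(c) or \ref{3.2}), and sum using additivity of $t_{c,\ell}$, noting $X_G=\emptyset$ because $s$ has no fixed points. The only minor difference is that you spell out the $H=G$ case ($X_G=\emptyset$) explicitly, which the paper leaves implicit.
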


\begin{proof}
When $G$ acts freely, this is a particular case of \ref{3.2}. In the
general case, for any subgroup $H$ of $G$, denote by $X^H$ the fixed
point set of $G$ (a closed algebraic subset of $X$) and, as in \cite[\S~2]{V}, by $X_H$ the open
subset of $X^H$ defined by
\[
X_H = X^H - \bigcup_{H' \supset H, H' \ne H} X^{H'}.
\]
Each $X^H$ (resp. $X_H$) is $G$-stable, and the inertia group at any
point of $X_H$ is $H$. On $X_H$ the quotient $G/H$ acts freely. As
$X$ is the disjoint union of the $X_H$'s for $H$ running through the
subgroups of $G$, we have
\[
t(s) = \sum_{H} \Tr(s,H^*_c(X_H,{\mathbb{Q}}_{\ell})).
\]
As $s$ generates $G/H$, $\Tr(s,H^*_c(X_H,{\mathbb{Q}}_{\ell})) = 0$, hence $t(s) = 0$.
\end{proof}

Finally, here is an application to Betti cohomology:

\begin{Corollary}\label{3.4}
Let $X/{\mathbb{C}}$ be an algebraic space separated and of finite type over ${\mathbb{C}}$
endowed with a free action of a finite group $G$. Let
$R_{{\mathbb{Q}}}(G)$ denote the Grothendieck group of
${\mathbb{Q}}$-linear finite dimensional representations of~$G$, and
\begin{align*}
\chi_c(X,G,{\mathbb{Q}}) &= \sum (-1)^i[H^i_c(X,{\mathbb{Q}})] \in R_{{\mathbb{Q}}}(G),
\\
\chi(X,G,{\mathbb{Q}}) &= \sum (-1)^i[H^i(X,{\mathbb{Q}})] \in R_{{\mathbb{Q}}}(G),
\end{align*}
where $[-]$ denotes a class in $R_{{\mathbb{Q}}}(G)$. Then:
\beq
\chi_c(X,G,{\mathbb{Q}}) = \chi(X,G,{\mathbb{Q}}) =\chi(X/G,\Q)\Reg_{\mathbb{Q}(G)},
\label{(3.4.1)}
\eeq
where $\Reg_{{\mathbb{Q}}}(G)$ is the class of the regular
representation and
\[
\chi_c(X/G,{\mathbb{Q}}) = \chi(X/G,{\mathbb{Q}})
\]
is the Euler-Poincar\'{e}
characteristic of $X/G$. (Here by $H^i_c(Y,\mathbb{Q})$ (resp.\ $H^i(Y,\mathbb{Q})$), for
$Y/{\mathbb{C}}$ an algebraic space separated and of finite type, we mean $H^i_c(Y({\mathbb{C}}),\mathbb{Q})$ (resp.\ $H^i(Y({\mathbb{C}}), \mathbb{Q})$), where
$Y({\mathbb{C}})$ is the space of
rational points of $Y$ with the classical topology defined in \cite[1.6]{Artin}.)
\end{Corollary}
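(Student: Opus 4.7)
\emph{Proof plan.} The strategy is to reduce to the $\ell$-adic case by invoking Artin's comparison theorem and then applying \ref{3.2}.

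First, by Artin's comparison theorem \cite{Artin} (which applies to algebraic spaces separated and of finite type over $\C$, e.g.\ via an étale atlas), for any prime~$\ell$ we have canonical $G$-equivariant isomorphisms
\[
H^i(X(\C),\Q)\otimes_\Q \Q_\ell \simeq H^i(X,\Q_\ell), \quad H^i_c(X(\C),\Q)\otimes_\Q \Q_\ell \simeq H^i_c(X,\Q_\ell),
\]
and similarly for $X/G$. Consequently, the homomorphism $\iota\colon R_\Q(G)\to R_{\Q_\ell}(G)$ induced by $[V]\mapsto [V\otimes_\Q \Q_\ell]$ sends $\chi(X,G,\Q)$ to $\chi(X,G,\Q_\ell)$, $\chi_c(X,G,\Q)$ to $\chi_c(X,G,\Q_\ell)$, $\Reg_{\Q}(G)$ to $\Reg_{\Q_\ell}(G)$, and identifies the scalar $\chi(X/G,\Q)$ with $\chi(X/G,\Q_\ell)$.

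Second, the homomorphism $\iota$ is injective. Indeed, $\Q[G]$ is semisimple by Maschke's theorem, so a class in $R_\Q(G)$ is determined by its character $G\to \Q$; since characters are preserved under extension of scalars, $\iota(x)=0$ forces the character of $x$ to vanish, whence $x=0$.

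Third, apply \ref{3.2} (extended to algebraic spaces as indicated in \ref{s.space}): since the characteristic exponent of $\C$ is $p=1$, the order of $G$ is automatically prime to~$p$, and we obtain
\[
\chi(X,G,\Q_\ell) = \chi(X/G,\Q_\ell)\,\Reg_{\Q_\ell}(G)
\]
in $R_{\Q_\ell}(G)$. By \ref{s.space} (the algebraic space version of \ref{1.3}) we have $\chi(X,G,\Q_\ell)=\chi_c(X,G,\Q_\ell)$; applied to $X/G$ with trivial $G$-action (or combining Laumon's theorem with Artin), we also get $\chi(X/G,\Q)=\chi_c(X/G,\Q)$. Combining these facts, $\iota(\chi(X,G,\Q))$ and $\iota(\chi_c(X,G,\Q))$ both coincide with $\iota\bigl(\chi(X/G,\Q)\,\Reg_{\Q}(G)\bigr)$, and the equalities of \eqref{(3.4.1)} follow from the injectivity of $\iota$.

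The main (and essentially only) technical point is ensuring that Artin's comparison theorem and its equivariance under $G$ extend to algebraic spaces of finite type over~$\C$; this is handled by Artin's original treatment through étale atlases and is compatible with finite group actions by functoriality, so no real obstacle arises.
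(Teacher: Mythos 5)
Your proof takes essentially the same route as the paper's: reduce to the $\ell$-adic case via the Betti/\'etale comparison theorem, use injectivity of $R_{\Q}(G)\to R_{\Q_\ell}(G)$, and invoke Corollary~\ref{3.2}. The one point you gloss over is the comparison theorem for algebraic spaces with compact supports: cohomological descent along an \'etale atlas handles $\chi(Y,\Q)=\chi(Y,\Q_\ell)$, but does not directly give $\chi_c(Y,\Q)=\chi_c(Y,\Q_\ell)$, and the paper instead stratifies $Y$ by separated schemes and applies the scheme-theoretic comparison theorem stratum by stratum; your ``no real obstacle arises'' needs this extra argument for $\chi_c$.
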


Recall the comparison theorem between
\'{e}tale and Betti cohomologies for algebraic spaces: for a morphism $f\colon Y\to Z$ of finite type algebraic spaces over $\C$, we have a 2-commutative square of topoi
\[\xymatrix{Y(\C)_{\cl}\ar[d]\ar[r] & Y_{\et}\ar[d]\\
Z(\C)_{\cl}\ar[r] & Z_{\et}}\]
and the base change morphism $Rf_{\cl *} \epsilon_Z^* {\cal F} \to \epsilon_Y^* Rf_{{\et}*} {\cal F}$ is an isomorphism for any constructible torsion abelian sheaf $\cal F$. See \cite[XVI 4.1]{SGA4} for the case of schemes. The general case follows from this case: the problem being local, we may assume $Z$ to be a scheme; then we take an \'etale cover of $Y$ by schemes and apply cohomological descent. In particular, $\chi(Y,{\mathbb{Q}})=\chi(Y,\Q_\ell)$
(see \ref{d.fin} for finiteness).
We also have $\chi_c(Y,{\mathbb{Q}})=\chi_c(Y,\Q_\ell)$ for $Y$ separated. Indeed, it suffices to take a
stratification by separated schemes and apply the identity to each stratum \cite[Arcata IV.6.3]{SGA4d}.

The homomorphism $R_{{\mathbb{Q}}}(G) \rightarrow R_{{\mathbb{Q}}_{\ell}}(G)$
is injective \cite[14.6]{Se1}, and by the comparison theorem, $\chi_c(X,G,{\mathbb{Q}}_{\ell})$ (resp.\ $\chi(X,G,{\mathbb{Q}}_{\ell})$) is the image of $\chi_c(X,G,{\mathbb{Q}})$ (resp.\ $\chi(X,G,{\mathbb{Q}})$).
Therefore \eqref{(3.4.1)} follows from \eqref{(3.2.1)}.

\begin{Remark}\label{3.5}
(a) The equality
\beq
\chi_c(X,G,{\mathbb{Q}}) = \chi_c(X/G)\Reg_{\mathbb{Q}}(G) \label{(3.5.1)}
\eeq was established by Verdier
\cite[lemme, p.~443]{V}, using a similar method. At the time,
the equality $\chi = \chi_c$ was unknown.
Actually, Verdier proves \eqref{(3.4.1)} more generally for locally compact spaces $X$
which are of \emph{finite topological dimension} (\ie such that
there exists an integer $N$ such that $H^n_c(X,F) = 0$ for all $n> N$
and all abelian sheaves~$F$) and \emph{cohomologically of finite type}
(\ie such that, for
each $n$, $H^n_c(X,{\mathbb{Z}})$ is finitely generated), endowed with a
continuous and free action of a finite group $G$ ($X/G$ is then also
cohomologically of finite type). When the action of $G$ is no longer
assumed to be free, but the fixed point set $X^H$ is cohomologically
of finite type for every subgroup $H$ of $G$, using a decomposition of
$X$ of the type considered in the proof of \ref{3.3}, Verdier formally deduces from
\eqref{(3.4.1)} a general formula
for $\chi_c(X,G,\Q)$ as a linear combination of certain induced
representations (\loccit, p.~443). We will come back to this
and $\ell$-adic variants in \S~4.

(b) By a theorem of
Deligne reported on in \cite{I1}, if $X$ is a separated scheme, \eqref{(3.2.1)} holds more generally if the action of
$G$ is only assumed to be \emph{tame at infinity}. We will discuss this
and related results in \S~4.
\end{Remark}

\begin{ssect}\label{3.6} Let $k$ be a field, $\overline{k}$ a separable closure of $k$,
$X$ an algebraic $k$-space of finite type, endowed with a free action
of a group $G$ of order $\ell^n$, where $\ell$ is a prime different
from the characteristic of $k$. Serre proved (in the case of a
separated scheme) that, under these assumptions, for any $g \in
\Gamma = \Gal(\overline{k}/k)$, the $\ell$-adic integer
$\Tr(g,H^*_c(X_{\overline{k}},{\mathbb{Q}}_{\ell}))$ is divisible by
$\ell^n$ (\cite{Se2}, \cite[7.5]{I2}). On the other hand, by
\ref{3.1} (c), for $s \in G$, $s \ne 1$,
$\Tr(s,H^*_c(X_{\overline{k}},{\mathbb{Q}}_{\ell})) = 0$.  More
generally:
\end{ssect}

\begin{Theorem}\label{3.7}
Under the assumptions of \ref{3.6}, for any $s \in G$ and any $g \in
\Gamma$, we have
\beq
\Tr(sg,H^*_c(X_{\overline{k}},{\mathbb{Q}}_{\ell})) \equiv 0 \mod{\ell^m}, \label{(3.7.1)}
\eeq
where $\ell^m$ is the order of the centralizer of $s$ in $G$.
\end{Theorem}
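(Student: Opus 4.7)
The plan is to leverage \ref{3.1}(b) to represent $R\Gamma_c(X_{\overline k},\Z_\ell)$ by a bounded complex of free $\Z_\ell[G]$-modules, and then carry out a direct trace computation on the group ring. The divisibility by $|Z_G(s)|$ will emerge from the combinatorics of conjugation in $G$.

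First, by \ref{3.1}(b), $R\Gamma_c(X_{\overline k},\Z_\ell)$ is a perfect complex of $\Z_\ell[G]$-modules. Since $G$ is an $\ell$-group, the ring $\Z_\ell[G]$ is a (noncommutative) local ring whose residue field is $\F_\ell$, and every finitely generated projective $\Z_\ell[G]$-module is therefore free. I can thus represent $R\Gamma_c(X_{\overline k},\Z_\ell)$ by a bounded complex $P^\bullet$ of finitely generated free $\Z_\ell[G]$-modules. Because the $G$- and $\Gamma$-actions on $X_{\overline k}$ commute, each $g\in\Gamma$ acts on $R\Gamma_c$ through a $\Z_\ell[G]$-linear derived endomorphism, and hence lifts (uniquely up to homotopy) to a $\Z_\ell[G]$-linear chain endomorphism of $P^\bullet$. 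The desired trace then decomposes as
\[\Tr(sg,R\Gamma_c(X_{\overline k},\Q_\ell)) = \sum_i (-1)^i\Tr(sg,P^i\otimes_{\Z_\ell}\Q_\ell).\]

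The heart of the argument is the following local statement: for any $\Z_\ell[G]$-linear endomorphism $\phi$ of a finitely generated free $\Z_\ell[G]$-module $F$, the $\Z_\ell$-trace of $s\circ\phi$ (with $s$ acting via the $\Z_\ell[G]$-module structure) lies in $|Z_G(s)|\cdot\Z_\ell$. Since off-diagonal entries of the matrix of $\phi$ do not contribute to the trace, this reduces to the case $F=\Z_\ell[G]$, where $\phi$ is left multiplication by some $\alpha=\sum_k\alpha_k k\in\Z_\ell[G]$ and $s$ acts by right multiplication. Evaluating on the $\Z_\ell$-basis $\{h:h\in G\}$ and counting, for each $c\in[s^{-1}]$, the $|Z_G(s^{-1})|=|Z_G(s)|=\ell^m$ elements $h\in G$ with $hs^{-1}h^{-1}=c$, one obtains
\[\Tr(s\circ\phi,\Z_\ell[G]) = \sum_{h\in G}\alpha_{hs^{-1}h^{-1}} = \ell^m\cdot\sum_{c\in[s^{-1}]}\alpha_c.\]
Applying this to each $P^i$ and combining with the alternating sum above yields the required congruence.

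The main obstacle, as I see it, is not the calculation itself but the homological setup: making sure \ref{3.1}(b) delivers a genuinely \emph{free} (not merely projective) model $P^\bullet$ of $R\Gamma_c(X_{\overline k},\Z_\ell)$ over $\Z_\ell[G]$ (handled by locality of $\Z_\ell[G]$ for an $\ell$-group $G$), and that the specified $g\in\Gamma$ can be lifted to a chain-level $\Z_\ell[G]$-linear endomorphism of $P^\bullet$ compatible with the derived action. Once this model is fixed, the counting lemma on $\Z_\ell[G]$ is elementary, and $\ell$-adic divisibility of the target drops out directly from the $\Z_\ell[G]$-framework without needing to separately invoke \ref{3.1}(a).
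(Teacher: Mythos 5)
Your proof is correct, and it is genuinely different from the one in the paper. The paper follows the pattern of Serre's original argument: after reducing to $X$ affine and spreading out to a finitely generated base, it applies Chebotarev's generalized density theorem to find a specialization over a finite field $\mathbb{F}_q$, then uses the Deligne--Lusztig twist together with Grothendieck's trace formula to identify $\Tr(sF,H^*_c)$ with a count of $\mathbb{F}_q$-rational fixed points of $sF$; the free action of $Z_G(s)$ on this fixed-point set then yields divisibility by $\ell^m$. You instead stay entirely on the $\ell$-adic side: you invoke \ref{3.1}(b) for $X_{\overline k}$ to get a perfect complex model, note that $\Z_\ell[G]$ is a (noncommutative) complete local ring with residue field $\F_\ell$ when $G$ is an $\ell$-group (so projective implies free), lift the commuting Galois action of $g$ to a $\Z_\ell[G]$-linear chain endomorphism of the free model, and then the divisibility drops out of the elementary fact that for any $\Z_\ell[G]$-linear $\phi$ on $\Z_\ell[G]$ one has $\Tr(s\phi)=\ell^m\sum_{c\sim s^{-1}}\alpha_c$. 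Both the reduction to diagonal blocks and the conjugacy-orbit count are correct, and passing from traces on $P^i$ to traces on cohomology is justified since $P^\bullet\otimes\Q_\ell$ is a bounded complex of finite-dimensional $\Q_\ell$-spaces. Your argument is shorter and purely module-theoretic, needs no spreading out, no density theorem, and no auxiliary reduction to $X$ affine; it also does not depend on \ref{3.1}(a). What the paper's approach buys is a concrete geometric interpretation (the congruence is a fixed-point count over a finite field), and it keeps the proof of \ref{3.7} parallel in method to the proofs of \ref{3.1}(a) and of Serre's divisibility theorem that \ref{3.7} generalizes.

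One expository remark: when you say $g$ lifts ``uniquely up to homotopy,'' the uniqueness is not needed for the congruence — any lift works — but it does show that the quantity $\sum_i(-1)^i\Tr(sg,P^i)$ is well-defined; you might also note explicitly that you are applying \ref{3.1}(b) to $X_{\overline k}$ over the separably closed field $\overline k$, which is harmless since étale cohomology is insensitive to the passage to the algebraic closure.
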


The proof follows the pattern of that of Serre's theorem (\loccit).
We may assume $X$ affine. Choose a model ${\cal X}/S$ of $X/k$ where
$S$ is the spectrum of a finitely generated, integral, normal, sub
${\mathbb{Z}}$-algebra of $k$,  ${\cal X}/S$ affine and of finite
type, endowed with a free action of $G$ by $S$-automorphisms, such
that $X/k$, with its $G$-action, comes from ${\cal X}/S$ by base
change. Suppose that there exists a pair $(s,g)$ such that $t_c(sg)
\not \equiv 0 \mod{\ell^m}$, where $t_c(sg) =
\Tr(sg,H^*_c(X_{\overline{k}},{\mathbb{Q}}_{\ell}))$. Applying
Chebotarev's generalized density theorem as in \cite[proof of 7.1,
(1) $\Rightarrow$ (2)]{I2}, one finds a point $y$ of $S$ with value
in a finite field ${\mathbb{F}}_q$ of characteristic $p \ne \ell$
such that $t_c(sg) \equiv \Tr(sF,H^*_c({\cal
X}_{\overline{y}},{\mathbb{Q}}_{\ell})) \mod{\ell^m}$, where
$\overline{y}$ is an algebraic geometric point over $y$ and $F$ the
geometric Frobenius automorphism of $\overline{y}/y$. By
Deligne-Lusztig (\loccit), $sF$ is the geometric Frobenius
automorphism of $X'_{\overline{y}}$ for some $X'/{\mathbb{F}}_q$,
therefore, by Grothendieck's trace formula, the trace of $sF$ is the
cardinality of the set $E$ of rational points $x$ of ${\cal
X}_{\overline{y}}$ such that $sFx = x$. Since the action of $G$
commutes with $F$, the centralizer of $s$ in $G$ acts freely on $E$,
hence this cardinality is divisible by $\ell^m$, which is a
contradiction.

\begin{Remark}\label{3.8} (a) For $s \ne 1$, one can't expect $\Tr(sg,H^*_c(X_{\overline{k}},{\mathbb{Q}}_{\ell})) = 0$ for all $g \in
\Gamma$. Serre gives the following example: let $k = {\mathbb{F}}_q$
with $q-1 \equiv 0  \mod{\ell^n}$, and let $X = ({\mathbb{G}}_m)_k$; let $s$ be the translation in $X$ by a rational point of
order $\ell^n$ (a primitive $\ell^n$-th root of 1 in~$k$). Then the
trace of $sF$ is  equal to the trace of $F$, \ie $q-1$.

(b) We don't know whether \ref{3.7} holds with $H^*_c$ replaced by $H^*$.
\end{Remark}

\begin{ssect}\label{3.9} Under the assumptions of \ref{s.space}, when $G$ is not assumed to act
freely, $R\Gamma(X,{\mathbb{Z}}_{\ell})$ (resp.
$R\Gamma_c(X,{\mathbb{Z}}_{\ell})$), as an object of $D^b_c({\mathbb{Z}}_{\ell}[G])$ is not, in general, a perfect complex. It belongs,
however, to a certain full subcategory of $D^b_c({\mathbb{Z}}_{\ell}[G])$ considered by Rickard \cite{Ri}.

Recall that a ${\mathbb{Z}}_{\ell}[G]$-module is called a \emph{permutation
module} if it is isomorphic to a module of the form ${\mathbb{Z}}_{\ell}[E]$ for a finite $G$-set $E$, in other words, if it admits
a finite basis over ${\mathbb{Z}}_{\ell}$ which is set-theoretically
stable under~$G$.
Denote by ${\mathbb{Z}}_{\ell}[G]_{\perm}$ the
full subcategory of the category of ${\mathbb{Z}}_{\ell}[G]$-modules consisting of
direct summands of permutation modules. This is an additive
subcategory. By a result of Rouquier \cite{Ro},
the natural functor $K^b({\mathbb{Z}}_{\ell}[G]_{\perm}) \rightarrow D^b_c({\mathbb{Z}}_{\ell}[G])$ induces a fully faithful functor
\beq
K^b({\mathbb{Z}}_{\ell}[G]_{\perm})/K^b_0({\mathbb{Z}}_{\ell}[G]_{\perm})
\rightarrow D^b_c({\mathbb{Z}}_{\ell}[G]), \label{(3.9.1)}
\eeq
where $K^b_0({\mathbb{Z}}_{\ell}[G]_{\perm})$ denotes the full subcategory
of $K^b({\mathbb{Z}}_{\ell}[G]_{\perm})$
consisting of acyclic complexes. In particular, the essential image
$D^b({\mathbb{Z}}_{\ell}[G])_{\perm}$ of \eqref{(3.9.1)} is a triangulated
subcategory of $D^b_c({\mathbb{Z}}_{\ell}[G])$.
\end{ssect}

\begin{Theorem}\label{3.10}
Under the assumptions of 3.1, denote by $D^b({\mathbb{Z}}_{\ell}[G])_{X\hperm}$ the smallest full triangulated subcategory of
$D^b({\mathbb{Z}}_{\ell}[G])_{\perm}$ containing the direct summands of
permutation modules
of the form ${\mathbb{Z}}_{\ell}[G/I]$, where $I$ runs through the inertia
groups of~$G$. Then $R\Gamma_c(X,{\mathbb{Z}}_{\ell})$ (resp. $R\Gamma(X,{\mathbb{Z}}_{\ell})$, if $X$ is separated,) belongs to $D^b({\mathbb{Z}}_{\ell}[G])_{X\hperm}$.
\end{Theorem}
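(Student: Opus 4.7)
The plan is to argue by Noetherian induction on $X$, reducing on a dense $G$-stable open stratum to an induced-space structure to which \ref{3.1}(b) applies.

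After reducing to $X$ reduced and nonempty, I would find a dense $G$-stable open $U \subseteq X$ such that every point of $U$ has stabilizer in $G$ conjugate to a fixed subgroup $H$ (necessarily an inertia group of $X$). Setting $V = U^H$, the subgroup $H$ acts trivially on $V$, $N_G(H)/H$ acts freely on $V$, and the natural map $G \times^{N_G(H)} V \to U$ is a $G$-equivariant isomorphism. It follows that
\[R\Gamma_c(U,\mathbb{Z}_\ell) \simeq \mathrm{Ind}_{N_G(H)}^G R\Gamma_c(V,\mathbb{Z}_\ell),\]
and similarly for $R\Gamma$, as objects of $D^b(\mathbb{Z}_\ell[G])$.

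Theorem \ref{3.1}(b), applied to the free $N_G(H)/H$-action on $V$, shows that both $R\Gamma_c(V,\mathbb{Z}_\ell)$ and $R\Gamma(V,\mathbb{Z}_\ell)$ are perfect complexes of $\mathbb{Z}_\ell[N_G(H)/H]$-modules. Every finitely generated projective $\mathbb{Z}_\ell[N_G(H)/H]$-module is a direct summand of some $\mathbb{Z}_\ell[N_G(H)/H]^n$, and induction from $N_G(H)$ to $G$ carries this to a direct summand of $\mathbb{Z}_\ell[G/H]^n$. Since $H$ is an inertia group of $X$, it follows that $R\Gamma_c(U,\mathbb{Z}_\ell), R\Gamma(U,\mathbb{Z}_\ell) \in D^b(\mathbb{Z}_\ell[G])_{X\hperm}$. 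For the complement $Z = X \setminus U$ (closed, $G$-stable, with strictly smaller support), Noetherian induction yields $R\Gamma_c(Z,\mathbb{Z}_\ell), R\Gamma(Z,\mathbb{Z}_\ell) \in D^b(\mathbb{Z}_\ell[G])_{Z\hperm} \subseteq D^b(\mathbb{Z}_\ell[G])_{X\hperm}$, since every stabilizer at a point of $Z$ is also a stabilizer at a point of $X$. For $R\Gamma_c$, the excision triangle
\[R\Gamma_c(U,\mathbb{Z}_\ell) \to R\Gamma_c(X,\mathbb{Z}_\ell) \to R\Gamma_c(Z,\mathbb{Z}_\ell) \to\]
closes the induction, since $D^b(\mathbb{Z}_\ell[G])_{X\hperm}$ is triangulated.

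The main obstacle lies in the $R\Gamma$ case, as the analogous triangle $R\Gamma_Z(X,\mathbb{Z}_\ell) \to R\Gamma(X,\mathbb{Z}_\ell) \to R\Gamma(U,\mathbb{Z}_\ell) \to$ involves cohomology with support in $Z$ rather than $R\Gamma(Z,\mathbb{Z}_\ell)$. I would handle it by a combined induction on $(\dim X, \textrm{Noetherian order})$: choose $U$ additionally smooth of pure dimension $d$ (shrinking if necessary), so that Poincar\'e duality yields $R\Gamma(U,\mathbb{Z}_\ell) \simeq R\mathrm{Hom}_{\mathbb{Z}_\ell}(R\Gamma_c(U,\mathbb{Z}_\ell), \mathbb{Z}_\ell)(-d)[-2d]$. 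Since $\mathbb{Z}_\ell[G/H]$ is self-dual as a $G$-module and Tate twist is trivial on the $G$-module structure over an algebraically closed base, the subcategory $D^b(\mathbb{Z}_\ell[G])_{X\hperm}$ is closed under this duality. Then, by a $G$-equivariant Nagata compactification $j\colon X \hookrightarrow \bar X$ with $\bar X$ proper (as in \S 1) together with the triangle
\[R\Gamma_c(X,\mathbb{Z}_\ell) \to R\Gamma(X,\mathbb{Z}_\ell) \to R\Gamma(\bar X \setminus X, i^* Rj_* \mathbb{Z}_\ell) \to,\]
the problem reduces to showing that $R\Gamma(\bar X \setminus X, i^* Rj_* \mathbb{Z}_\ell)$ lies in $D^b(\mathbb{Z}_\ell[G])_{X\hperm}$, which one analyzes via a further stratification of $\bar X \setminus X$ by inertia type together with the nearby-cycles-type description of the stalks of $i^* Rj_* \mathbb{Z}_\ell$.
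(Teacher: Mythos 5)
Your treatment of $R\Gamma_c$ follows the paper's proof essentially verbatim: Noetherian induction, stratify by a dense $G$-stable open $U=U_S$ with constant inertia type $H$, write $U$ as a contracted product $X_H\wedge^{N_G(H)} G$, apply \ref{3.1}~(b) to the free $N_G(H)/H$-action on $X_H$, and observe that inducing a perfect $\Z_\ell[N_G(H)/H]$-complex up to $G$ lands in summands of $\Z_\ell[G/H]^n$. That part is fine.

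For $R\Gamma$ you diverge from the paper at the choice of triangle, and this is where the gap is. You take the support triangle $R\Gamma_Z(X)\to R\Gamma(X)\to R\Gamma(U)\to$, which indeed leads nowhere directly, and you then resort to Poincar\'e duality on a smooth open together with a $G$-equivariant compactification and an analysis of $i^*Rj_*\Z_\ell$ at the boundary. That last step is not an argument but a gesture: the stabilizers of geometric points of $\bar X\setminus X$ need not be inertia groups of $G$ on $X$ (they are inertia groups of the compactification, which can be strictly larger), so the needed stratification ``by inertia type'' does not a priori produce the right permutation modules $\Z_\ell[G/I]$, and the ``nearby-cycles-type description'' of $i^*Rj_*\Z_\ell$ is never actually exploited. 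As written, the $R\Gamma$ case does not close.

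The paper avoids all of this by using the other triangle. From $0\to u_!\Z_\ell\to \Z_\ell\to i_*\Z_\ell\to 0$ one gets $R\Gamma(X,u_!\Z_\ell)\to R\Gamma(X,\Z_\ell)\to R\Gamma(Z,\Z_\ell)\to$. By Noetherian induction it suffices to treat $R\Gamma(X,u_!\Z_\ell)$, which is \emph{not} $R\Gamma(U)$, and the point is that it pushes forward cleanly: $R\Gamma(X,u_!\Z_\ell)=R\Gamma(X/G,v_!(f_U)_*\Z_\ell)$, where $(f_U)_*\Z_\ell=\Z_\ell[G]\otimes_{\Z_\ell[N_G(H)]}(f_H)_*\Z_\ell$ and $(f_H)_*\Z_\ell$ is locally free of rank one over $\Z_\ell[N_G(H)/H]$ because the $N_G(H)/H$-action on $U_H$ is free. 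Combined with the finiteness of $\cd_\ell(X/G)$ (Lemma \ref{6.3.5}), this yields perfection over $\Z_\ell[N_G(H)/H]$ without any duality, compactification, or nearby cycles. If you replace your support triangle by the $u_!$ triangle and push down to the quotient as above, the $R\Gamma$ case goes through by the same induction as $R\Gamma_c$.
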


\begin{Remark}\label{3.11}
(a) For the result on $R\Gamma(X,\Z_\ell)$, the assumption that $X$ is separated serves only to guarantee the existence of $X/G$. One may replace it by the weaker assumption that the inertia subgroup $I(G,X)$ of $G\times X$ is finite over $X$.

(b) When $G$ acts freely, $D^b({\mathbb{Z}}_{\ell}[G])_{X\hperm}$ is
the category of perfect complexes of
${\mathbb{Z}}_{\ell}[G]$-modules: one recovers \ref{3.1} (b).

(c) When $X$ is a separated scheme and $G$ acts admissibly, the
result of \ref{3.10} for $R\Gamma_c(X,{\mathbb{Z}}_{\ell})$ is a
weak form of Rickard's theorem \cite[3.2]{Ri}. Rickard actually
constructs a bounded complex of finite sums of direct summands of
permutation modules of the form ${\mathbb{Z}}_{\ell}[G/I]$, where
$I$ is an inertia group of $G$, well defined up to homotopy,
representing $R\Gamma_c(X,{\mathbb{Z}}_{\ell})$. Rickard does not
consider the case of $R\Gamma$.
\end{Remark}

\begin{ssect}\label{3.12}
Let us introduce some notations before proving \ref{3.10}. For a
homomorphism of finite groups $f\colon G\to H$, the contracted
product is defined to be $X\wedge^G H = (X\times H)/G$, where $G$
acts on $X\times H$ by $(x,h)g=(xg, f(g)^{-1}h)$. See \cite[III
1.3.1]{Giraud} for a definition in a topos. The right translation of
$H$ induces an action of $H$ on $X\wedge^G H$. We have $(X\wedge^G
H)\wedge^H \{1\}\simeq X\wedge^G \{1\}$, namely $(X\wedge^G
H)/H\simeq X/G$. If $f$ is a monomorphism, the projection $X\wedge^G
H\to X$ is a finite \'etale cover of fibers isomorphic to $H/G$.

As in the proof of \ref{3.3}, we denote by $X_H$ the open subset of
$X^H$ which is the complement of the union of the $X^{H'}$ for $H'$
strictly containing $H$. The stabilizer of any geometric point in
$X_H$ is $H$. Let ${\cal S}$ be the set of conjugacy classes of
subgroups of $G$. For $S \in {\cal S}$, denote by $X_S$ the
(disjoint) union of the $X_H$'s for $H \in S$, with its induced
action of $G$, and $Y_S = X_S/G$. For $H \in S$, the normalizer
$N_G(H)$ acts on $X_H$ through $N_G(H)/H$. The projection $X_H
\rightarrow X_H/N_G(H) = Y_S$ is an \'{e}tale Galois cover of group
$N_G(H)/H$, and $X_S = X_H \wedge^{N_G(H)} G$.  The $Y_S$'s, for $S
\in {\cal S}$, form a decomposition of $Y=X/G$ into disjoint locally
closed algebraic subspaces of $Y$.
\end{ssect}

\begin{proof}[Proof of \ref{3.10}]
Let us first treat the case of $R\Gamma_c$. We may assume $X$
separated. We proceed by Noetherian induction on $X$. There is a
$G$-stable dense open subset $V$ of $X$ which is a disjoint union of
irreducible schemes. Take one component $W$ of $V$, and let $H$ be
the inertia group at the generic point. Then $W^H=W$ as sets. Let
$U=W_S$, where $S$ is the conjugacy class of $H$. Then $U$ is a
nonempty $G$-stable open subset of $V$, disjoint union of the
$U_{H'}=U^{H'}$, for $H'$ running through the conjugacy class $S$,
such that $G$ acts transitively on the maximal points of $U$. Up to
shrinking~$U$, we may assume $U$ affine. We have a distinguished
triangle in $D^b_c({\mathbb{Z}}_{\ell}[G])$:
\[
R\Gamma_c(U,{\mathbb{Z}}_{\ell})  \rightarrow R\Gamma_c(X,{\mathbb{Z}}_{\ell})
\rightarrow R\Gamma_c(Y,{\mathbb{Z}}_{\ell}) \rightarrow,
\]
where $Y$ is the complement of $U$ in $X$.
By the induction hypothesis, we may therefore assume that $X =
U$. The group $N_G(H)$ acts on $X_H$. The natural
map (of $D^b({\mathbb{Z}}_{\ell}[G])$)
\[
{\mathbb{Z}}_{\ell}[G] \otimes^L_{{\mathbb{Z}}_{\ell}[N_G(H)]} R\Gamma_c(X_H,{\mathbb{Z}}_{\ell}) \rightarrow R\Gamma_c(X,{\mathbb{Z}}_{\ell})
\]
is an isomorphism, as $X = G \wedge^{N_G(H)} X_H$. As $N_G(H)$ acts
on $X_H$ through $N_G(H)/H$ and the action of $N_G(H)/H$ is free,
$R\Gamma_c(X_H,{\mathbb{Z}}_{\ell})$ is perfect over
${\mathbb{Z}}_{\ell}[N_G(H)/H]$ (3.1 (b)). As ${\mathbb{Z}}_{\ell}[G]
\otimes^L_{{\mathbb{Z}}_{\ell}[N_G(H)]} {\mathbb{Z}}_{\ell}[N_G(H)/H]
= {\mathbb{Z}}_{\ell}[G/H]$, it follows that
$R\Gamma_c(X,{\mathbb{Z}}_{\ell})$ is represented by a bounded
complex of finite sums of direct summands of
${\mathbb{Z}}_{\ell}[G/H]$, which finishes the proof in this case.

The proof for the case of $R\Gamma$ is similar.
Consider the open immersion $u \colon U \rightarrow X$ as above, and its
complement $i \colon Y \rightarrow X$. The exact sequence $0 \rightarrow
u_!{\mathbb{Z}}_{\ell} \rightarrow {\mathbb{Z}}_{\ell} \rightarrow i_*{\mathbb{Z}}_{\ell} \rightarrow 0$ gives an exact triangle in $D^b_c({\mathbb{Z}}_{\ell}[G])$:
\[
R\Gamma(X,u_!{\mathbb{Z}}_{\ell})  \rightarrow R\Gamma(X,{\mathbb{Z}}_{\ell})
\rightarrow R\Gamma(Y,{\mathbb{Z}}_{\ell}) \rightarrow .
\]
By the induction hypothesis it suffices to show that $R\Gamma(X,u_!{\mathbb{Z}}_{\ell})$ belongs to $D^b({\mathbb{Z}}_{\ell}[G])_{X\hperm}$. Consider
the commutative diagram
\beq
\xymatrix{U \ar[r]^u \ar[d]_{f_U} & X \ar[d]^f\\
U/G \ar[r]^v & X/G }, \label{(3.12.1)}
\eeq
where $f$ is the canonical projection. We have, in $D^b_c({\mathbb{Z}}_{\ell}[G])$ (by \ref{d.fin}),
\beq
R\Gamma(X,u_!{\mathbb{Z}}_{\ell}) = R\Gamma(X/G, f_* u_! \Z_\ell) = R\Gamma(X/G,v_!(f_U)_*{\mathbb{Z}}_{\ell}). \label{(3.12.2)}
\eeq
As above, let $H$ be a member of $S$, and let $f_H \colon U_H \rightarrow
U/G$ be the restriction of $f_U$ to $U_H$. We have $(f_U)_*{\mathbb{Z}}_{\ell} = {\mathbb{Z}}_{\ell}[G] \otimes_{{\mathbb{Z}}_{\ell}[N_G(H)]}
(f_H)_*{\mathbb{Z}}_{\ell}$, hence $v_!(f_U)_*{\mathbb{Z}}_{\ell} = {\mathbb{Z}}_{\ell}[G] \otimes_{{\mathbb{Z}}_{\ell}[N_G(H)]}
v_!(f_H)_*{\mathbb{Z}}_{\ell}$, so that by the projection formula, we get
\beq
R\Gamma(X/G,v_!(f_U)_*{\mathbb{Z}}_{\ell}) = {\mathbb{Z}}_{\ell}[G]
\otimes^L_{{\mathbb{Z}}_{\ell}[N_G(H)]} R\Gamma(X/G,v_!(f_H)_*{\mathbb{Z}}_{\ell}). \label{(3.12.3)}
\eeq
As $N_G(H)$ acts on
$U_H$ through $N_G(H)/H$ and the action of $N_G(H)/H$ is free,
$(f_H)_*{\mathbb{Z}}_{\ell}$ is locally isomorphic to ${\mathbb{Z}}_{\ell}[N_G(H)/H]$. Therefore $v_!(f_H)_*{\mathbb{Z}}_{\ell}$ is of
finite tor-dimension over $N_G(H)/H$, and consequently,
$R\Gamma(X/G,v_!(f_H)_*{\mathbb{Z}}_{\ell})$ is perfect over $N_G(H)/H$.
Then the conclusion follows from \eqref{(3.12.2)} and \eqref{(3.12.3)} by the same
argument as for
the case of $R\Gamma_c$.
\end{proof}

\section{Tameness at infinity}
In this section we fix an algebraically closed field $k$ of
characteristic exponent $p$ and a prime number $\ell \ne p$.

\begin{ssect}\label{4.1} Let $Y$ be a normal, connected scheme, separated and of finite
type over $k$, $G$ a finite group, and $f \colon X \rightarrow Y$ an
\'{e}tale Galois cover of group $G$. We have $Y=X/G$. Let
$\overline{Y}$ be a normal compactification of $Y$ over~$k$ (\ie a
proper, normal, connected scheme over $k$ containing $Y$ as a dense
open subset). We say that $X$ is \emph{tamely ramified along
$\overline{Y} - Y$} if $G$ acts \emph{tamely} on the normalization
$\overline{X}$ of $\overline{Y}$ in $X$, \ie the inertia groups
$G_{\overline{x}} \subset G$, stabilizers of geometric points
$\overline{x}$ of $\overline{X}$ in $G$, are of order prime to $p$
(cf. \cite[2.6]{I1}). We say that $X$ is \emph{tamely ramified at
infinity over $Y/k$} if there exists a normal compactification
$\overline{Y}$ of $Y$ such that $X$ is tamely ramified along
$\overline{Y} - Y$. By \cite[2.8]{I1}, \eqref{(3.2.1)} still holds
in this case, namely, we have an equality in the Grothendieck group
$P_{\ell}(G) = K^{\bullet}({\mathbb{Z}}_{\ell}[G])$ of finitely
generated projective ${\mathbb{Z}}_{\ell}[G]$-modules ($=
K^{\bullet}({\mathbb{F}}_{\ell}[G])$): \beq
\chi_c(X,G,{\mathbb{Z}}_{\ell}) =
\chi(X/G)\Reg_{{\mathbb{Z}}_{\ell}}(G), \label{(4.1.1)} \eeq or,
equivalently, \beq \chi_c(X,G,{\mathbb{F}}_{\ell}) =
\chi(X/G)\Reg_{{\mathbb{F}}_{\ell}}(G), \label{(4.1.2)} \eeq where
$\chi_c(X,G,{\mathbb{Z}}_{\ell}) = \chi_c(X,G,{\mathbb{F}}_{\ell})$
is the class of $R\Gamma_c(X,G,{\mathbb{Z}}_{\ell})$ (or
$R\Gamma_c(X,G,{\mathbb{F}}_{\ell})$) in $P_{\ell}(G)$, and $\Reg$
denotes a regular representation. Note that, as the natural
homomorphism $P_{\ell}(G) \rightarrow R_{{\mathbb{Q}}_{\ell}}(G)$ is
injective, we have \beq \chi_c(X,G,{\mathbb{Z}}_{\ell}) =
\chi(X,G,{\mathbb{Z}}_{\ell}) = \chi(X,G,{\mathbb{F}}_{\ell}) =
\chi_c(X,G,{\mathbb{F}}_{\ell}), \label{(4.1.3)} \eeq where
$\chi(X,G,{\mathbb{Z}}_{\ell})$ (resp.
$\chi(X,G,{\mathbb{F}}_{\ell})$) is the class of
$R\Gamma(X,G,{\mathbb{Z}}_{\ell})$ (resp.
$R\Gamma(X,G,{{\mathbb{F}}_{\ell}})$).
\end{ssect}

\begin{ssect}\label{4.2} We will reformulate the notion of tameness at infinity in terms
of \emph{Vidal's group} $K(Y,{\mathbb{F}}_{\ell})^0_t$ \cite[2.3.1]{Vi1} (where the group is denoted by $K_c(Y,{\mathbb{F}}_{\ell})^0_t$).

Let us briefly recall its definition. Let $Z$ be a scheme separated
and of finite type over~$k$. We denote by $K(Z,{\mathbb{F}}_{\ell})$ the Grothendieck group of constructible ${\mathbb{F}}_{\ell}$-modules on~$Z$, and by $K_{\lisse}(Z,{\mathbb{F}}_{\ell})$ the
subgroup generated by the classes of lisse ${\mathbb{F}}_{\ell}$-sheaves,
and by $[\mathcal{F}]$ the class in $K(Z,{\mathbb{F}}_{\ell})$ of a constructible
sheaf $\mathcal{F}$.
When $Z$ is normal, connected, with geometric generic point
$\zeta$, $K_{\lisse}(Z,{\mathbb{F}}_{\ell})$ is the Grothendieck group of
finite, continuous ${\mathbb{F}}_{\ell}[\pi_1(Z,\overline{\zeta})]$-modules. In this case,
for $a \in K_{\lisse}(Z,{\mathbb{F}}_{\ell})$, we
denote by
\[
\trBr(-,a) \colon \pi_1(Z,\zeta) \rightarrow {\mathbb{Z}}_{\ell},
\ g \mapsto \trBr(g,a_{\zeta})
\]
its Brauer trace (on $\ell$-regular elements, extended by zero on
$\ell$-singular ones). Vidal defines a closed subset
\[
E_{Z/k} \subset \pi_1(Z,\zeta),
\]
stable under conjugation, called the \emph{wild part} of
$\pi_1(Z,\zeta)$: this is the intersection, over all normal
compactifications $\overline{Z}$ of $Z$, of the closures
$E_{Z/k,\overline{Z}}$ of the unions of the conjugates of the images
of the pro-$p$-Sylow subgroups of the local fundamental groups
$\pi_1(\overline{Z}_{(y)}\times_{\overline{Z}} Z,z)$, where $y$ runs
through geometric points of $\overline{Z}-Z$ and $z$ is a geometric
point of $\overline{Z}_{(y)}\times_{\overline{Z}} Z$ above $\zeta$
(see \cite[2.1]{Vi1}, \cite[1.1]{Vi2} for more details). Now, for
$Z$ only assumed to be separated and of finite type over $k$, Vidal
defines
\[
K(Z,{\mathbb{F}}_{\ell})^0_t \subset K(Z,{\mathbb{F}}_{\ell})
\]
as the subgroup generated by the classes $i_!a$, where $i \colon Y \rightarrow
Z$ is separated and quasi-finite, with $Y$ normal connected, and $a \in K_{\lisse}(Y,\F_\ell)$
has the property that, for all $g \in E_{Y/k}$, $\trBr(g,a) = 0$.

We extend this definition to algebraic spaces. More precisely, for an algebraic space $Z$ of finite type over $k$, we denote by $K(Z,\F_\ell)$ the Grothendieck group of constructible $\F_\ell$-modules on $Z$, and we define
\[
K(Z,{\mathbb{F}}_{\ell})^0_t \subset K(Z,{\mathbb{F}}_{\ell})
\]
as the subgroup generated by the classes $i_!a$, where $i \colon Y \rightarrow
Z$ is quasi-finite, with $Y$ a separated normal connected scheme, and $a \in K_{\lisse}(Y,{\mathbb{F}}_{\ell})$
has the property that, for all $g \in E_{Y/k}$, $\trBr(g,a) = 0$. This definition does not depend on the choice of geometric points.

Recall that, when $Z$ is a separated normal connected scheme, it follows from a
valuative criterion
of Gabber that, for $a \in K_{\lisse}(Z,{\mathbb{F}}_{\ell})$, $a$ belongs to $K(Z,\F_\ell)^0_t$ if and only if there exists a normal
compactification $\overline{Z}$ of $Z$ such that, for all $g \in
E_{Z/k,\overline{Z}}$, $\trBr(g,a) =0$
\cite[6.2 (ii)]{Vi2}.
\end{ssect}

The following is a variant of \cite[2.3.3]{Vi1} and \cite[0.1]{Vi2}:

\begin{Proposition}\label{4.2.1}
Let $f\colon Z \to W$ be a morphism of algebraic spaces of finite type over~$k$. Then
\begin{enumerate}
\item The map $f^*\colon K(W, \F_\ell)\to K(Z, \F_\ell)$ sends $K(W, \F_\ell)^0_t$ into $K(Z, \F_\ell)^0_t$.

\item The map $f_!\colon K(Z, \F_\ell)\to K(W, \F_\ell)$ sends $K(Z, \F_\ell)^0_t$ into $K(W, \F_\ell)^0_t$.

\item If $Z=\coprod_I Z_i$ is a partition of $Z$ into locally closed algebraic subspaces, then $\phi\colon a \mapsto (a | Z_i)_I$ defines an isomorphism $K(Z, \F_\ell)^0_t \simeq \oplus_I K(Z_i, \F_\ell)^0_t$.

\item $K(Z,\F_\ell)^0_t$ is an ideal of the ring $K(Z,\F_\ell)$.
\end{enumerate}
\end{Proposition}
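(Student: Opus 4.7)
The plan is to deduce \ref{4.2.1} from Vidal's scheme versions \cite[2.3.3]{Vi1} and \cite[0.1]{Vi2} by a straightforward reduction. The main tool is Knutson's theorem: every algebraic space of finite type over $k$ admits a dense open subspace which is a scheme. Combined with Noetherian induction, this yields, for any such algebraic space, a finite stratification by locally closed subschemes, which can be refined so that each stratum is separated, normal and connected, and so that any specified finite collection of constructible sheaves restricts to a lisse sheaf on it.

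For (a), let $x = i_! a$ be a generator of $K(W,\F_\ell)^0_t$, with $i\colon Y \to W$ quasi-finite, $Y$ a separated normal connected scheme, and $\trBr(g,a) = 0$ for all $g \in E_{Y/k}$. Proper base change in the Cartesian square
\[\xymatrix{Y' \ar[r]^{f'}\ar[d]_{i'} & Y\ar[d]^i\\ Z\ar[r]^f & W}\]
gives $f^* x = i'_! f'^* a$. Choose a stratification of the algebraic space $Y'$ into separated normal connected subschemes $Y'_\alpha$, with inclusions $j_\alpha$, on which $f'^* a$ is lisse; then
\[i'_! f'^* a = \sum_\alpha (i' \circ j_\alpha)_! (f'^* a|_{Y'_\alpha}),\]
with each $i' \circ j_\alpha\colon Y'_\alpha \to Z$ quasi-finite. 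Vidal's scheme case of (a) applied to the composite $Y'_\alpha \to Y$ shows that $\trBr(g, f'^*a|_{Y'_\alpha}) = 0$ for all $g \in E_{Y'_\alpha/k}$, so each term is by definition a generator of $K(Z,\F_\ell)^0_t$.

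For (b), with $x = i_! b$ as above, write $g = f \circ i\colon Y \to W$, so $f_! x = g_! b$. Stratify $W = \coprod_\beta W_\beta$ into separated normal connected locally closed subschemes, with inclusions $j_\beta$, and set $Y_\beta = g^{-1}(W_\beta)$, $g_\beta\colon Y_\beta \to W_\beta$; then
\[g_! b = \sum_\beta (j_\beta)_! g_{\beta!}(b|_{Y_\beta})\]
by proper base change. After further stratifying each $Y_\beta$ into separated normal connected subschemes over which $b$ restricts to a lisse sheaf, and propagating vanishing of the Brauer trace using Vidal's scheme case of (a), Vidal's scheme case of (b) applied to the resulting morphisms of schemes gives $g_{\beta!}(b|_{Y_\beta}) \in K(W_\beta,\F_\ell)^0_t$. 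Since $j_\beta\colon W_\beta \to W$ is quasi-finite with source a separated normal connected scheme, composing any generator $i''_! a''$ of $K(W_\beta,\F_\ell)^0_t$ with $j_\beta$ gives a generator $(j_\beta \circ i'')_! a''$ of $K(W,\F_\ell)^0_t$, completing (b).

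Statements (c) and (d) follow quickly. For (c), iterating the short exact sequence $0 \to j_!(F|U) \to F \to i_*(F|V) \to 0$ associated to an open-closed decomposition yields the identity $a = \sum_i (j_i)_!(a|Z_i)$ in $K(Z,\F_\ell)$, together with $((j_i)_! \bar a_i)|_{Z_j} = 0$ for $i \ne j$ (since $(j_i)_! \bar a_i$ has zero stalks outside $Z_i$); these exhibit the inverse of $\phi$ as $(\bar a_i) \mapsto \sum_i (j_i)_! \bar a_i$, and both maps preserve the $0_t$ subgroups by (a) and (b). For (d), bilinearity reduces the claim to showing $(i_!b)\cdot c \in K(Z,\F_\ell)^0_t$ for a generator $i_!b$ and arbitrary $c \in K(Z,\F_\ell)$: the projection formula gives $(i_!b)\cdot c = i_!(b \cdot i^* c)$, Vidal's scheme case of (d) yields $b \cdot i^* c \in K(Y,\F_\ell)^0_t$, and (b) applied to $i$ concludes. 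The principal obstacle is purely technical bookkeeping: arranging the successive stratifications coherently so that every stratum is a separated normal connected scheme on which the prescribed sheaves are lisse, which is standard by Knutson's theorem together with the openness of the normal and lisse loci.
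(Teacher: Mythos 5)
Your proof is essentially correct and follows the same broad strategy as the paper's: stratify into normal connected schemes, apply proper base change, propagate vanishing of the Brauer trace using the functoriality of the wild part $E$, and use multiplicativity of the Brauer trace for the ideal property. There are, however, two points where your route diverges from the paper's and deserve comment.

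For (b), you give a self-contained argument stratifying $W$ directly, whereas the paper first establishes the quasi-finite case of (b) from the definitions, then proves (c), and finally reduces the general case of (b) to the separated-scheme case of \cite[0.1]{Vi2} using (c). Your route requires somewhat more explicit bookkeeping of the successive stratifications, but it avoids the ordering subtlety that arises in the paper (where (c) must be proved before (b)); both are valid. Note that a further stratification to make the sheaves lisse is unnecessary in (a) and (b): the pullback of a lisse sheaf along any morphism, and the restriction of a lisse sheaf to any subscheme, is automatically lisse.

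For (d), you invoke ``Vidal's scheme case of (d)'' to conclude $b\cdot i^*c \in K(Y,\F_\ell)^0_t$. This does not appear to be a result Vidal actually proved; the paper introduces the ideal property as a new statement and proves it directly: apply the projection formula, stratify $X$ into normal connected locally closed subschemes on which $i^*c$ restricts to a lisse element, use functoriality of $E$ to propagate $\trBr(g,a|_{X_j})=0$, and then use multiplicativity of the Brauer trace \cite[18.1 iv)]{Se1} to conclude $\trBr(g,(a|_{X_j})(i^*c|_{X_j}))=0$. Your argument can be repaired by carrying out precisely this computation in place of the phantom citation, but as written it rests on an unsupported reference. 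Finally, a small imprecision in (a) and (b): you say Vidal's (a) shows that the Brauer trace of the pullback vanishes and then conclude ``each term is by definition a generator.'' Vidal's (a) as a membership statement only gives $f_j^*a \in K(Y_j,\F_\ell)^0_t$, not literal generator-hood; what you want to invoke at that step is the functoriality of $E$ (the map $E_{Y_j/k}\to E_{Y/k}$), which directly gives $\trBr(f_j(g),a)=\trBr(g,f_j^*a)=0$, as in the paper. The conclusion is the same, so this is a notational rather than mathematical gap.
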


\begin{proof}
(a) Let $X$ be a separated connected normal scheme of finite type over $k$, $i\colon X\to W$ be a quasi-finite morphism, $a\in K_{\lisse}(X,\F_\ell)$ satisfying $\trBr(g,a) = 0$ for all  $g \in E_{X/k}$. Then, by base change,
\beq\label{e.Vbc}
f^*i_! a = \sum_{j\in J} i_{Y_j!} f_j^* a,
\eeq
where $X\times_W Z = \coprod_{j\in J} Y_j$ is a partition into locally closed separated normal connected subschemes, $i_{Y_j}\colon Y_j \to Z$ and $f_j\colon Y_j\to X$ are the projections. Note that $f_j^* a\in K_\lisse(Y_j,\F_\ell)$. If we still denote by $f_j$ the map $E_{Y_j,k}\to E_{X,k}$ induced by $f_j$ \cite[2.1.1]{Vi1}, then
\[\trBr(g,f_j^* a)=\trBr(f_j(g),a)=0.\]
Thus $f^*i_! a$ belongs to $K(W, \F_\ell)^0_t$ by \eqref{e.Vbc}.

(c) By (a), the homomorphism $\phi$ in (c) is well defined. We define a homomorphism $\psi\colon \oplus_I K(Z_i, \F_\ell)^0_t \to K(Z, \F_\ell)^0_t$ by $(a_{Z_i})_I\mapsto \sum_I i_{Z_i !} a_{Z_i}$, where $i_{Z_i}$ is the immersion $Z_i \to Z$. The two homomorphisms are clearly inverse to each other.

(b) If $f$ is quasi-finite, (b) follows from the definition. For the general case, applying the quasi-finite case and (c), we may reduce to the case where $f$ is morphism of separated schemes. In this case (b) is \cite[0.1]{Vi2}.

(d) Let $i_X\colon X\to Z$ be a quasi-finite morphism where $X$ is a
separated normal connected scheme, let $a\in K_\lisse(X,\F_\ell)$
satisfying $\trBr(g,a)=0$ for all $g\in E_{X/k}$, and let $b\in
K(Z,\F_\ell)$. By projection formula, \beq\label{e.Vpf}
(i_!a)b=i_!(a(i^* b))= \sum_{j\in J} i_{X_j!}((a|X_j)(b|X_j)), \eeq
where $X=\coprod_{j\in J} X_j$ is a partition into locally closed
normal connected subschemes such that $b|X_j\in
K_\lisse(X_j,\F_\ell)$, $i_{X_j}\colon X_j\to Z$ is the composition
with $i$. As in (a), it follows from the functoriality of $E$ that
$\trBr(g,a|X_j)=0$ for all $g\in E_{X_j/k}$. Thus
$\trBr(g,(a|X_j)(b|X_j))=0$ for all $g\in E_{X_j/k}$ by the
multiplicativity of the Brauer trace \cite[18.1 iv)]{Se1}. Therefore,
$(i_!a)b$ belongs to $K(Z,\F_\ell)^0_t$ by \eqref{e.Vpf}.
\end{proof}

%By Zariski's Main Theorem \cite[16.5]{LMB}, for $a\in K_c(Z,{\mathbb{F}}_{\ell})$, $a\in K_c(Z,{\mathbb{F}}_{\ell})^0_t$ if and only if there exists a finite partition $Z=\coprod_{i\in I}Z_i$ of Z into normal connected locally closed subschemes such that for all $i\in I$, we have $a|Z_i \in K_{coh}(Z_i)$ and, for all $g \in E_{Z_i/k}$, $tr^{Br}(g,a|Z_i) = 0$.

The rank function on constructible ${\mathbb{F}}_{\ell}$-sheaves defines a
ring homomorphism
\[
\rk \colon  K(Z,{\mathbb{F}}_{\ell}) \rightarrow C(Z,{\mathbb{Z}}),
\]
where $C(Z,{\mathbb{Z}})$ is the ring of constructible functions
on $Z$ with values in ${\mathbb{Z}}$. This homomorphism has a
natural section which is a ring homomorphism, associating with a function $c \in C(Z,{\mathbb{Z}})$
the class $\langle c\rangle$ of the constructible sheaf $\oplus j_i{}_!
\F_{\ell}^{c | Z_i}$, where $Z$ is a disjoint union of locally
closed subspaces $j_i \colon Z_i \rightarrow Z$ over which $c$ is constant
($\langle c\rangle$ is independent of the choice of the stratification). Note that $K(Z,\F_\ell)^0_t$ is contained in $\Ker(\rk)$.

\begin{Definition}\label{4.3}
For an algebraic space $Z$ separated and of finite type over $k$ and $a \in K(Z,{\mathbb{F}}_{\ell})$
we will say that $a$ is \emph{virtually tame} if $a - \langle\rk(a)\rangle$
belongs to $K(Z,{\mathbb{F}}_{\ell})^0_t$.
\end{Definition}

We denote by $K(Z,\F_\ell)_t$ the subgroup of $K(Z,\F_\ell)$ consisting of virtually tame elements. As a subgroup, it is generated by $K(Z,\F_\ell)^0_t$ and the image of $\langle - \rangle$. It follows that $K(Z,\F_\ell)_t$ is a subring of $K(Z,\F_\ell)$. The rank function induces a ring isomorphism
\[K(Z,\F_\ell)_t/K(Z,\F_\ell)^0_t\longsimto C(Z,\F_\ell).\]

\begin{Remark}\label{4.3.1} When $Z$ is a normal connected scheme, it follows from the
consequence of the valuative criterion
of Gabber mentioned at the end of \ref{4.2} that, for $a \in K_{\lisse}(Z,{\mathbb{F}}_{\ell})$, $a$ is
virtually tame if and only if there exists a normal
compactification $\overline{Z}$ of $Z$ such that, for all $g \in
E_{Z/k,\overline{Z}}$, $\trBr(g,a) = \rk(a)$.
\end{Remark}

The following is an immediate consequence of \ref{4.2.1}.

\begin{Proposition}\label{p.t}
Let $f\colon Z\to W$ be a morphism of algebraic spaces of finite type over~$k$. Then
\begin{enumerate}
\item The map $f^*\colon K(W,\F_\ell)\to K(Z,\F_\ell)$ sends $K(W,\F_\ell)_t$ to $Z(W,\F_\ell)_t$.

\item If $Z=\coprod_I Z_i$ is a partition of $Z$ into locally closed algebraic subspaces, then $\phi\colon a \mapsto (a | Z_i)_I$ defines an isomorphism $K(Z, \F_\ell)_t \simeq \oplus_I K(Z_i, \F_\ell)_t$.

\item If $a,b\in K(Z,\F_\ell)$ such that $a$ and $ab$ are virtually tame, and $\rk(a)$ is invertible, then $b$ is virtually tame.
\end{enumerate}
\end{Proposition}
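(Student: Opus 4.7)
The plan is to reduce each assertion to the corresponding property of the ideal $K^0_t$ established in \ref{4.2.1}, using the canonical decomposition $a = \langle\rk(a)\rangle + (a - \langle\rk(a)\rangle)$ of a virtually tame element, whose second summand lies in $K(\cdot,\F_\ell)^0_t$ by the definition of virtual tameness. The key auxiliary observations are that $\rk$ is a ring homomorphism, that $\langle -\rangle$ is a ring-theoretic section of $\rk$ compatible with pullbacks (i.e.\ $f^*\langle c\rangle = \langle c\circ f\rangle$) and with stratifications, and that $K^0_t\subset\Ker(\rk)$.

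For (a), I would write $a = \langle\rk(a)\rangle + a_0$ with $a_0\in K(W,\F_\ell)^0_t$; using $f^*\langle c\rangle = \langle c\circ f\rangle$ and $\rk(f^*a) = \rk(a)\circ f$, one gets $f^*a - \langle\rk(f^*a)\rangle = f^*a_0 \in K(Z,\F_\ell)^0_t$ by \ref{4.2.1}(a). For (b), the partition is finite by noetherianity, and the usual mutually inverse maps $a\mapsto(a|Z_i)_I$ and $(a_i)_I\mapsto\sum_I i_{Z_i !}a_i$ give an isomorphism $K(Z,\F_\ell)\simeq\bigoplus_I K(Z_i,\F_\ell)$. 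One direction of the refinement to virtually tame elements uses (a) on the immersions $i_{Z_i}$; for the other, decomposing $a_i = \langle\rk(a_i)\rangle + a_{i,0}$ gives $\sum_I i_{Z_i !}a_i = \langle c\rangle + \sum_I i_{Z_i !}a_{i,0}$, where $c\in C(Z,\Z)$ glues the $\rk(a_i)$ by extension by zero. The second sum lies in $K(Z,\F_\ell)^0_t$ by \ref{4.2.1}(b), while $c$ coincides with the rank of the whole expression.

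The substantive step is (c). The key identity is
\[\langle\rk(a)\rangle\bigl(b - \langle\rk(b)\rangle\bigr) = \bigl(ab - \langle\rk(ab)\rangle\bigr) - \bigl(a - \langle\rk(a)\rangle\bigr) b,\]
which holds because $\langle -\rangle$ and $\rk$ are ring homomorphisms, so $\langle\rk(a)\rangle\langle\rk(b)\rangle = \langle\rk(ab)\rangle$. The first term on the right lies in $K(Z,\F_\ell)^0_t$ by virtual tameness of $ab$, and the second lies there because $a - \langle\rk(a)\rangle \in K^0_t$ and $K^0_t$ is an ideal by \ref{4.2.1}(d). The invertibility of $\rk(a)$ makes $\langle\rk(a)\rangle$ invertible in $K(Z,\F_\ell)$ with inverse $\langle\rk(a)^{-1}\rangle$; multiplying by this inverse and invoking the ideal property one more time yields $b - \langle\rk(b)\rangle \in K^0_t$, i.e.\ $b$ is virtually tame. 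No real obstacle arises: the whole argument is formal bookkeeping which transfers the content of \ref{4.2.1} from the ideal $K^0_t$ to the subring $K_t$.
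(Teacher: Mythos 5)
Your proof is correct and is precisely the unpacking the paper has in mind when it says the proposition is ``an immediate consequence of \ref{4.2.1}'': each part transfers the corresponding item of \ref{4.2.1} from the ideal $K^0_t$ to the subring $K_t$ via the decomposition $a = \langle\rk(a)\rangle + (a - \langle\rk(a)\rangle)$, using that $\rk$ is a ring homomorphism with $\langle - \rangle$ as a multiplicative section compatible with $f^*$ and with stratifications. The key identity in (c), $\langle\rk(a)\rangle\bigl(b - \langle\rk(b)\rangle\bigr) = \bigl(ab - \langle\rk(ab)\rangle\bigr) - \bigl(a - \langle\rk(a)\rangle\bigr)b$, combined with the ideal property \ref{4.2.1}(d), is exactly the right observation.
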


\begin{Proposition}\label{4.4}
Let $Y$ be a normal, connected scheme, separated and of finite type
over $k$, with a generic geometric point $y$, and let
$F$ be a lisse ${\mathbb{F}}_{\ell}$-sheaf on $Y$. Denote by $\rho \colon
\pi_1(Y,y) \rightarrow \Aut(\cF_{y})$
the representation defined by $\cF$. The following
conditions are equivalent:
\begin{enumerate}
\item $[\cF]$ is virtually tame (\ref{4.3}).

\item There exists a normal compactification $\overline{Y}$ of $Y$ over
$k$ such that, for all $g \in E_{Y/k,\overline{Y}}$, $\rho(g) = 1$.
\end{enumerate}
\end{Proposition}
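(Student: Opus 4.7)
The plan is to translate both conditions into statements about the Brauer character of $\rho$ on the wild set $E_{Y/k,\overline{Y}}$, and then exploit the fact that every element of this set is a pro-$p$-element, whose image in $\Aut(\cF_y)$ is therefore $\ell$-regular.

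First, I would apply the consequence of Gabber's valuative criterion recalled in \ref{4.3.1}: since $\trBr(g,\langle\rk(\cF)\rangle) = \rk(\cF)$ identically, $[\cF]$ is virtually tame if and only if there exists a normal compactification $\overline{Y}$ of $Y$ such that $\trBr(g,\cF) = \rk(\cF)$ for all $g \in E_{Y/k,\overline{Y}}$. The implication (b) $\Rightarrow$ (a) is then immediate: if $\rho(g)=1$ on $E_{Y/k,\overline{Y}}$, every eigenvalue of $\rho(g)$ is $1$, so $\trBr(g,\cF) = \rk(\cF)$, and one may take the same~$\overline{Y}$.

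For the converse, I would first verify that every $g \in E_{Y/k,\overline{Y}}$ is a pro-$p$-element of $\pi_1(Y,y)$. By the definition recalled in \ref{4.2}, $E_{Y/k,\overline{Y}}$ is the closure of a union of conjugates of pro-$p$-Sylow subgroups of local fundamental groups; conjugates still consist of pro-$p$-elements, and the set of pro-$p$-elements in a profinite group is closed (its complement is a union of cosets $gN$ for those open normal subgroups $N$ such that the image of $g$ in $\pi_1(Y,y)/N$ has order divisible by some prime different from $p$). Hence $\rho(g) \in \Aut(\cF_y)$ has order a power of $p$; since $p \ne \ell$, this order is prime to $\ell$, so $\rho(g)$ is semisimple over $\Flb$.

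Granted (a), the compactification $\overline{Y}$ furnished by \ref{4.3.1} yields $\trBr(g,\cF) = n := \rk(\cF)$ for every such $g$. By definition, the Brauer trace equals $\sum_i \tilde\lambda_i$, where $\tilde\lambda_1,\dots,\tilde\lambda_n \in \Qlb$ are the Teichm\"uller lifts of the eigenvalues $\lambda_i \in \Flb^\times$ of $\rho(g)$, each of complex absolute value~$1$. The equality $\sum_i \tilde\lambda_i = n$ forces, by the equality case of the triangle inequality in $\C$, $\tilde\lambda_i = 1$ for all~$i$; hence $\lambda_i = 1$, and semisimplicity of $\rho(g)$ gives $\rho(g) = 1$. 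The only technical nuance is verifying that pro-$p$-elements form a closed subset so that this property propagates to the closure in the definition of $E_{Y/k,\overline{Y}}$; everything else is routine bookkeeping with the Brauer character.
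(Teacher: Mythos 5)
Your proof is correct, and the outline coincides with the paper's: both directions reduce, via Remark~\ref{4.3.1}, to the statement that $\trBr(g,\cF) = \rk\cF$ for all $g$ in the wild set $E_{Y/k,\overline{Y}}$ if and only if $\rho(g) = 1$ there. Where you diverge is in how the implication ``Brauer trace equals rank on $E$'' $\Rightarrow$ ``$\rho$ trivial on $E$'' is established. The paper passes to a finite quotient $G$ of $\pi_1(Y,y)$ through which $\rho$ factors, notes that the image $E_{Y/k,\overline{Y}}(G)$ is a union of $p$-subgroups, and invokes Serre's theorem that the Brauer character determines the class in the Grothendieck group (\cite[18.2, Cor.~1]{Se1}), combined with semisimplicity of $\Flb[H]$-modules for $H$ a $p$-group, $p\ne\ell$. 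You instead argue element by element: each $g\in E_{Y/k,\overline{Y}}$ is a pro-$p$-element, so $\rho(g)$ has $p$-power order, hence is $\ell$-regular and semisimple; its Brauer trace is a sum of $n=\rk\cF$ roots of unity, and the equality case of the triangle inequality in $\C$ forces all of them to equal~$1$. This is in effect a direct proof, in the cyclic case, of the instance of Serre's theorem the paper cites — more self-contained, at the modest price of having to check that pro-$p$-elements form a closed subset of a profinite group (which you do correctly, and which the paper sidesteps by working with finite quotients). Both routes are sound; yours buys independence from the citation, the paper's buys brevity. One small point of hygiene: your remark that $\trBr(g,\langle\rk\cF\rangle)=\rk\cF$ ``identically'' holds only on $\ell$-regular $g$ (on $\ell$-singular elements both sides of the comparison are~$0$ by convention), but since every $g\in E_{Y/k,\overline{Y}}$ is pro-$p$ and $p\ne\ell$ this causes no trouble.
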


\begin{proof}
The implication (b) $\Rightarrow$ (a) is trivial. Conversely, by \ref{4.3.1}, (a) implies the existence of a normal
compactification $\overline{Y}$ of $Y$ over $k$ such that for all $g\in E_{Z/k, \overline{Z}}$, $\trBr(g,a)=\rk(a)$.
The representation $\rho$ factors through $\rho'\colon G\to \Aut(\cF_y)$ for some finite quotient $G$ of $\pi_1(Y,y)$. As in \cite[2.1.1]{Vi1}, let $E_{Y/k,\overline{Y}}(G)$
be the image of $E_{Y/k,\overline{Y}}$ in $G$. By \cite[18.2, Cor. 1]{Se1}, the restriction of $\rho'$ to any subgroup of $G$ contained in $E_{Y/k,\overline{Y}}(G)$ is
the trivial representation. Thus (b) follows from the fact that $E_{Y/k,\overline{Y}}(G)$ is a union of $p$-subgroups of $G$.
\end{proof}

In particular:
\begin{Corollary}\label{4.5}
Let $f \colon X \rightarrow Y$ be as in \ref{4.1}. The following conditions are
equivalent:
\begin{enumerate}
\item $[f_*{\mathbb{F}}_{\ell}]$ is virtually tame (\ref{4.3}).

\item $X$ is tamely ramified at infinity over $Y/k$ (\ref{4.1}).
\end{enumerate}
\end{Corollary}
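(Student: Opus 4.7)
The plan is to apply \ref{4.4} to the lisse $\F_\ell$-sheaf $\cF=f_*\F_\ell$ on $Y$. Choose a generic geometric point $\etab$ of $Y$. Since $f$ is a $G$-Galois \'etale cover, the stalk $\cF_\etab$ is canonically identified with $\F_\ell[G]$, and the associated monodromy representation $\rho\colon \pi_1(Y,\etab)\to \Aut(\cF_\etab)$ is the composition of the surjection $p_f\colon \pi_1(Y,\etab)\twoheadrightarrow G$ classifying $f$ with the regular representation $\Reg_{\F_\ell}(G)$. Since the regular representation is faithful, for any subset $E\subset \pi_1(Y,\etab)$ we have $\rho(E)=\{1\}$ if and only if $p_f(E)=\{1\}$. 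Combined with \ref{4.4}, this shows that (a) is equivalent to the existence of a normal compactification $\overline{Y}$ of $Y$ such that $p_f(E_{Y/k,\overline{Y}})=\{1\}$.

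I would then compute $p_f(E_{Y/k,\overline{Y}})$ directly from the definition of $E_{Y/k,\overline{Y}}$ recalled in \ref{4.2}. For each geometric point $y$ of $\overline{Y}-Y$ and each geometric point $z$ of $\overline{Y}_{(y)}\times_{\overline{Y}} Y$ lying above $\etab$, the composition
\[\pi_1(\overline{Y}_{(y)}\times_{\overline{Y}} Y,z)\longrightarrow \pi_1(Y,\etab) \xrightarrow{p_f} G\]
has image the inertia group $G_{\overline{x}}\subset G$ at the geometric point $\overline{x}$ of the normalization $\overline{X}$ of $\overline{Y}$ in $X$ determined by $z$: indeed, by the standard dictionary between $\pi_1$ and Galois covers, the connected component of $\overline{X}\times_{\overline{Y}} \overline{Y}_{(y)}^{\mathrm{sh}}$ containing $\overline{x}$ is Galois over $\overline{Y}_{(y)}^{\mathrm{sh}}$ with group $G_{\overline{x}}$, and the assertion follows on restricting to the generic fiber. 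Since surjections of profinite groups carry pro-$p$-Sylows onto pro-$p$-Sylows, the pro-$p$-Sylow of the local fundamental group maps onto a $p$-Sylow of $G_{\overline{x}}$. Taking $\pi_1(Y,\etab)$-conjugates (whose $p_f$-images are $G$-conjugates) and noting that taking closures is automatic as $G$ is finite, we conclude that $p_f(E_{Y/k,\overline{Y}})$ is the union of $G$-conjugates of $p$-Sylow subgroups of the $G_{\overline{x}}$, as $\overline{x}$ runs over geometric points of $\overline{X}$ above $\overline{Y}-Y$.

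This union is trivial if and only if every such $G_{\overline{x}}$ has trivial $p$-Sylow, i.e., is of order prime to $p$, which by \ref{4.1} is precisely the condition that $X$ be tamely ramified along $\overline{Y}-Y$. Quantifying over the normal compactifications $\overline{Y}$ of $Y$ therefore yields (a)~$\Leftrightarrow$~(b). The main technical point, I expect, is the identification of the image in $G$ of the local fundamental group with the inertia group $G_{\overline{x}}$; once this is granted, the rest is a formal translation through \ref{4.4}.
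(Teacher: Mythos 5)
Your proof is correct, and it is in substance the argument that the paper leaves implicit by introducing \ref{4.5} with ``In particular:'' right after \ref{4.4}. You simply spell out the two ingredients the authors take for granted: that the monodromy representation of $f_*\F_\ell$ factors as the classifying surjection $\pi_1(Y,\etab)\twoheadrightarrow G$ followed by the (faithful) regular representation, so triviality of $\rho$ on $E_{Y/k,\overline{Y}}$ is equivalent to triviality of its image in $G$; and that, for each boundary point, the image in $G$ of the local fundamental group is the inertia group $G_{\overline{x}}$, so the image of $E_{Y/k,\overline{Y}}$ in $G$ is the union of the $G$-conjugates of the $p$-Sylows of the $G_{\overline{x}}$. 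This is exactly the reduction the authors intend, so there is nothing to add beyond the (correct) details you supply.
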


\begin{ssect}\label{4.6}
Let $X$ be a an algebraic $k$-space of finite type, endowed
with an action of a finite group~$G$. Assume that the inertia $I(G,X)$ is finite over $X$ and let $f \colon X \rightarrow
Y = X/G$ be the projection. Here $Y$ is an algebraic space of finite type over $k$. We say that $G$ acts \emph{virtually
tamely} on $X$ if $[f_*{\mathbb{F}}_{\ell}]$ is virtually tame (\ref{4.3}).

To give a more concrete characterization of this property, we assume
for simplicity that $X$ is separated. We adopt the notations of
\ref{3.12}. For each $S \in {\cal S}$, write $Y_S$ as a finite
disjoint union of normal, connected, locally closed subschemes
$(Y_S)_i$, $i \in J_S$. Let $(f_S)_i \colon (X_S)_i \rightarrow
(Y_S)_i$ be the base change of $f \colon X \rightarrow Y$ to
$(Y_S)_i$. This is a disjoint sum of \'{e}tale Galois covers
$(X_{H})_i$ ($H \in S$) of $(Y_S)_i$ of group $N_G(H)/H$,
transitively permuted by $G$.
\end{ssect}

\begin{Proposition}\label{p.vt}
Using the above notations, $G$ acts virtually tamely on $X$ if and only
if, for all $S \in {\cal S}$, $H\in S$ and $i \in J_S$,
$(X_{H})_i$ is tamely ramified at infinity over $(Y_S)_i$.
\end{Proposition}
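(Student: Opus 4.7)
The plan is to stratify $Y = \bigsqcup_{S \in \cS,\, i \in J_S}(Y_S)_i$, reduce the virtual tameness of $[f_*\F_\ell]$ to its restriction to each stratum by means of \ref{p.t}~(b), identify that restriction with $[(f_S)_{i*}\F_\ell]$ via base change, and finally invoke \ref{4.5} on each connected component $f_{H,i}$ of $(f_S)_i$.

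First I would observe that, because $G$ is finite and $I(G,X)$ is finite over $X$, the projection $f\colon X \to Y$ is a finite morphism of algebraic spaces. Thus $f_*\F_\ell$ commutes with arbitrary base change, and if $j_{S,i}\colon (Y_S)_i \hookrightarrow Y$ denotes the locally closed immersion, then $j_{S,i}^*(f_*\F_\ell) \simeq (f_S)_{i*}\F_\ell$. Since the $(Y_S)_i$ form a finite partition of $Y$ into locally closed algebraic subspaces, \ref{p.t}~(b) gives that $[f_*\F_\ell]$ is virtually tame on $Y$ if and only if each $[(f_S)_{i*}\F_\ell]$ is virtually tame on $(Y_S)_i$.

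Next, I would fix a stratum $(Y_S)_i$ and decompose $(f_S)_i$ further. By the description in \ref{4.6}, $(X_S)_i = \coprod_{H \in S}(X_H)_i$, each piece being an étale Galois cover $f_{H,i}\colon(X_H)_i \to (Y_S)_i$ of group $N_G(H)/H$, and $G$ acts on this coproduct by transitively permuting the pieces indexed by $H \in S$. In particular $(f_S)_{i*}\F_\ell \simeq \bigoplus_{H \in S}(f_{H,i})_*\F_\ell$, with all summands mutually isomorphic as constructible $\F_\ell$-sheaves on $(Y_S)_i$ via the $G$-action. Therefore $[(f_S)_{i*}\F_\ell]$ is virtually tame if and only if $[(f_{H,i})_*\F_\ell]$ is virtually tame for one (equivalently every) $H \in S$; applying \ref{4.5} to the étale Galois cover $f_{H,i}$ then yields the tameness-at-infinity characterization of \ref{p.vt}.

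Combining these three reductions produces the stated equivalence. The main point requiring attention is the compatibility between the definition of virtual tameness on $Y$ and the stratification: one must verify that both the tame ideal $K(-,\F_\ell)^0_t$ and the rank section $\langle\rk(-)\rangle$ respect partitions into locally closed subspaces, which is exactly what \ref{4.2.1}~(c) and \ref{p.t}~(b) provide. Once this is in place, the remaining identifications are formal consequences of proper base change for the finite morphism $f$.
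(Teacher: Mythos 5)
Your proposal follows the same three-step reduction as the paper: (i) restrict to the strata $(Y_S)_i$ via \ref{p.t}~(b); (ii) pass from $(f_S)_i$ to a single component $(f_H)_i$; (iii) apply \ref{4.5}. Steps (i) and (iii) are exactly as in the paper's proof.

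The one place where your argument is thinner than the paper's is step (ii). You assert that because the summands $(f_{H,i})_*\F_\ell$, $H\in S$, are mutually isomorphic, the direct sum is virtually tame \emph{if and only if} one summand is. The ``if'' direction is clear ($K((Y_S)_i,\F_\ell)_t$ is a subgroup). But the ``only if'' direction is the nontrivial half: you need to know that $(G:N_G(H))\,[(f_{H,i})_*\F_\ell]$ virtually tame implies $[(f_{H,i})_*\F_\ell]$ virtually tame, and this is not a formal consequence of what you cite. The paper supplies exactly this via \ref{p.t}~(c) (whose proof ultimately rests on the Brauer-trace characterization in \ref{4.3.1}, giving that $K^0_t$ is saturated under multiplication by nowhere-zero integer ranks). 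Your write-up should either invoke \ref{p.t}~(c) at that point or justify the implication directly; as stated, ``therefore'' is doing unearned work.
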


In particular, the condition that $G$ acts virtually tamely does not depend on
$\ell$.

\begin{proof}
As $f_*{\mathbb{F}}_{\ell} | (Y_S)_i =
(f_S)_{i*}{\mathbb{F}}_{\ell}$, $[f_*{\mathbb{F}}_{\ell}]$ is virtually tame if
and only if $[(f_S)_{i*}{\mathbb{F}}_{\ell}]$ is
virtually tame for all $S \in {\cal S}$ and $i \in J_S$ by \ref{p.t} (b). For all $H\in S$,
\[[(f_S)_{i*}\F_\ell]=( G:N_G(H) ) [(f_H)_{i*}\F_\ell],\]
where $(f_H)_i\colon (X_H)_i\to (Y_S)_i$ is the restriction of $(f_S)_i$. By \ref{p.t} (c), it follows  that $[(f_S)_{i*}\F_\ell]$ is virtually tame if and only if $[(f_H)_{i*}\F_\ell]$ is virtually tame.
We then apply \ref{4.5} to $(f_H)_i$.
\end{proof}

The following is a generalization of \ref{3.2}, which is an analogue of
Verdier's formula \cite[Th., p.~443]{V}.

\begin{Theorem}\label{4.7}
Let $X$ be an algebraic $k$-space, separated and of finite type, endowed with a virtually tame action of a finite group $G$. Then, with the
notations of \ref{4.6}, we have, in $R_{{\mathbb{Q}}_{\ell}}(G)$,
\beq
\chi(X,G,{\mathbb{Q}}_{\ell}) = \sum_{S \in {\cal S}} \chi(X_S/G)I_S,  \label{(4.7.1)}
\eeq
where $I_S = [{\mathbb{Q}}_{\ell}[G/H]] \in R_{{\mathbb{Q}}_{\ell}}(G)$ for
$H \in S$.
\end{Theorem}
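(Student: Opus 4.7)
The plan is to decompose $X$ via the stratification $X = \coprod_{S\in{\cal S}} X_S$ of \ref{3.12}, reduce the computation on each stratum to a free action on $X_H$ (for $H \in S$), and then apply the tameness formula \eqref{(4.1.1)} to conclude. By Corollary \ref{1.3}, extended to algebraic spaces in \ref{s.space}, we have $\chi(X,G,\Q_\ell) = \chi_c(X,G,\Q_\ell)$, so it suffices to compute $\chi_c$. By additivity of compactly supported cohomology, $\chi_c(X,G,\Q_\ell) = \sum_{S\in{\cal S}} \chi_c(X_S, G, \Q_\ell)$, and the task reduces to showing $\chi_c(X_S, G, \Q_\ell) = \chi(X_S/G)\,I_S$ for each $S$.

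Fix $S \in {\cal S}$ and $H \in S$. Since $X_S = X_H \wedge^{N_G(H)} G$ (\ref{3.12}), one obtains a $G$-equivariant identification
\[
R\Gamma_c(X_S, \Q_\ell) \simeq \Q_\ell[G] \otimes_{\Q_\ell[N_G(H)]} R\Gamma_c(X_H, \Q_\ell),
\]
which gives $\chi_c(X_S, G, \Q_\ell) = \mathrm{Ind}_{N_G(H)}^{G}\, \chi_c(X_H, N_G(H), \Q_\ell)$ in $R_{\Q_\ell}(G)$. Because $H$ acts trivially on $X_H$, the $N_G(H)$-action on $R\Gamma_c(X_H,\Q_\ell)$ factors through $N_G(H)/H$, so $\chi_c(X_H, N_G(H), \Q_\ell)$ is the inflation of $\chi_c(X_H, N_G(H)/H, \Q_\ell)$.

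Decomposing $X_H = \coprod_{i\in J_S} (X_H)_i$ as in \ref{4.6}, each $(X_H)_i \to (Y_S)_i$ is an \'etale Galois cover of group $N_G(H)/H$, the group acting freely. By \ref{p.vt}, the virtual tameness hypothesis on $X$ is exactly the statement that each $(X_H)_i$ is tamely ramified at infinity over $(Y_S)_i$. Applying \eqref{(4.1.1)} componentwise and summing over $i$ yields
\[
\chi_c(X_H, N_G(H)/H, \Q_\ell) = \chi(Y_S)\,\Reg_{\Q_\ell}(N_G(H)/H),
\]
with $Y_S = X_S/G$. Inflating this equality to $N_G(H)$ converts the regular representation into $[\Q_\ell[N_G(H)/H]]$, and then inducing up to $G$ converts this in turn into $[\Q_\ell[G/H]] = I_S$. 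Hence $\chi_c(X_S, G, \Q_\ell) = \chi(X_S/G)\,I_S$, and summation over $S \in {\cal S}$ gives \eqref{(4.7.1)}.

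The main technical obstacle is that \eqref{(4.1.1)} was stated in \ref{4.1} for normal connected schemes, whereas the strata $(X_H)_i$ and $(Y_S)_i$ are only algebraic spaces in general. This should be settled by an \'etale descent argument, since $\chi$, the regular representation, and tame ramification at infinity all pass well through \'etale covers by schemes. The remaining ingredients --- additivity of $\chi_c$, the contracted-product isomorphism, and the inflation-then-induction bookkeeping --- are essentially formal.
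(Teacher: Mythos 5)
Your proof is correct and takes essentially the same route as the paper: equality of $\chi$ and $\chi_c$, additivity over the strata $X_S$, the contracted-product identity $X_S = X_H \wedge^{N_G(H)} G$, the characterization of virtual tameness from \ref{p.vt}, and then \eqref{(4.1.1)} followed by inflation and induction. The ``technical obstacle'' you raise at the end is, however, vacuous: \ref{4.6} already chooses the $(Y_S)_i$ to be normal connected locally closed \emph{subschemes} of $Y_S$, and the finite \'etale covers $(X_H)_i \to (Y_S)_i$ are then automatically schemes (being affine over a scheme), so \eqref{(4.1.1)} applies directly with no descent argument needed.
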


By the comparison between \'{e}tale and Betti cohomologies (see the remark following \ref{3.4}), we recover Verdier's formula \cite[Th., p.~443]{V} (a
generalization of \ref{3.4}):

\begin{Corollary}\label{4.8}
Let $X$ be an algebraic space separated and of finite type over ${\mathbb{C}}$
endowed with an action of a finite group $G$. Then, with the above
notations, we have, in $R_{{\mathbb{Q}}}(G)$,
\beq
\chi(X,G,{\mathbb{Q}}) = \sum_{S \in {\cal S}} \chi(X_S/G,{\mathbb{Q}})I_S,  \label{(4.8.1)}
\eeq
where $I_S = [{\mathbb{Q}}[G/H]] \in R_{{\mathbb{Q}}}(G)$
for $H \in S$.
\end{Corollary}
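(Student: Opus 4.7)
The plan is to derive \ref{4.8} from \ref{4.7} by (i) observing that every finite group action is virtually tame in characteristic zero, and (ii) descending the resulting identity from $R_{\mathbb{Q}_\ell}(G)$ to $R_\mathbb{Q}(G)$ via the comparison theorem between \'etale and Betti cohomologies and the injectivity of $R_\mathbb{Q}(G) \to R_{\mathbb{Q}_\ell}(G)$.

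First, I verify the virtual tameness hypothesis of \ref{4.7}. Since $k = \mathbb{C}$ has characteristic exponent $p = 1$, the pro-$p$-Sylow subgroups of all local fundamental groups $\pi_1(\overline{Z}_{(y)} \times_{\overline{Z}} Z, z)$ appearing in the definition of the wild part (see \ref{4.2}) are trivial. Hence for every normal connected $\mathbb{C}$-scheme $Z$ of finite type and every normal compactification $\overline{Z}$, the subset $E_{Z/k,\overline{Z}}$ reduces to $\{1\}$, and so does $E_{Z/k}$. By \ref{4.4}, any lisse $\mathbb{F}_\ell$-sheaf on such a $Z$ is virtually tame. For $X$ separated of finite type with its $G$-action, the inertia $I(G,X)$ is a closed subspace of $G \times X$, hence finite over $X$, so the framework of \ref{4.6} applies. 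Writing each stratum $X_S/G$ as in \ref{4.6}, the conclusion of \ref{p.vt} holds trivially in characteristic zero, and consequently $G$ acts virtually tamely on $X$.

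Next, I apply \ref{4.7} to obtain, for any prime $\ell$, the identity
\begin{equation*}
\chi(X,G,\mathbb{Q}_\ell) \;=\; \sum_{S \in \mathcal{S}} \chi(X_S/G)\,[\mathbb{Q}_\ell[G/H_S]]
\end{equation*}
in $R_{\mathbb{Q}_\ell}(G)$, where $H_S \in S$ is a representative. To transfer this to $R_\mathbb{Q}(G)$, I invoke the standard result \cite[14.6]{Se1} that the natural homomorphism $R_\mathbb{Q}(G) \to R_{\mathbb{Q}_\ell}(G)$ is injective. By the comparison theorem between \'etale and Betti cohomologies for algebraic spaces of finite type over $\mathbb{C}$ (recalled in the paragraph following \ref{3.4}), $\chi(X,G,\mathbb{Q}_\ell)$ is the image of $\chi(X,G,\mathbb{Q})$ under this injection, and each $[\mathbb{Q}_\ell[G/H_S]]$ is the image of $[\mathbb{Q}[G/H_S]]$. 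Moreover, the integer coefficients $\chi(X_S/G)$ coincide on both sides since they are topological Euler characteristics of algebraic spaces and equal their $\ell$-adic counterparts by the same comparison. Injectivity of $R_\mathbb{Q}(G) \hookrightarrow R_{\mathbb{Q}_\ell}(G)$ then yields \eqref{(4.8.1)}.

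The only nontrivial point is the observation that characteristic zero forces virtual tameness — once this is noted, \ref{4.8} is a mechanical translation of \ref{4.7} through the comparison theorem and the character-ring injection. There is no serious obstacle: the corollary is essentially a repackaging of \ref{4.7} in the transcendental setting, in the spirit of the proof sketch already given by the authors immediately before the statement.
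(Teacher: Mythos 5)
Your proof is correct and follows essentially the same route as the paper: the paper deduces \ref{4.8} from \ref{4.7} via the \'etale--Betti comparison theorem and the injectivity of $R_{\mathbb{Q}}(G)\hookrightarrow R_{\mathbb{Q}_\ell}(G)$, exactly as you do. You are right that the one point the paper leaves implicit is that every action is virtually tame over $\mathbb{C}$ (since $p=1$ makes the wild parts $E_{Z/k,\overline{Z}}$ trivial), and your verification of that point is accurate.
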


\begin{proof}[Proof of \ref{4.7}]
By the equality between $\chi$ and $\chi_c$ and
the additivity of $\chi_c$,
\[\chi(X,G,\Q_\ell)=\sum_{S\in \cS} \chi(X_S,G,\Q_\ell).\]
Thus we may assume $X = X_S$ for some $S\in \cS$. By the additivity
of $\chi$ again, we may assume $X/G$ is a normal connected scheme.
For $H\in S$, by \ref{p.vt}, $X_H$ is tamely ramified at infinity
over $X/G$. Then \eqref{(4.1.1)} gives
\[\chi(X,N_G(H),\Q_\ell)=\chi(X/G)[\Q_\ell[N_G(H)/H]].\]
As $X = X_H\wedge^{N_G(H)} G$, we have
\[\chi(X,G,\Q_\ell)=\chi(X/G)[\Q_\ell[G/H]].\]
\end{proof}

The following application of \ref{4.7} is a generalization of a result of
Petrie-Randall \cite[3.1]{PR} and of \ref{3.3}:

\begin{Corollary}\label{4.9}
Under the assumptions of \ref{4.7}, suppose that $G$ is cyclic, generated by~$g$. Then, with the notations of \ref{s.space},
\[
t(g) = \chi(X^G).
\]
\end{Corollary}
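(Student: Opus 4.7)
The plan is to apply \ref{4.7} directly and compute the trace of $g$ on both sides, then recognize the result as $\chi(X^G)$ via additivity.

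First, since $G$ is abelian, every conjugacy class $S\in\cS$ consists of a single subgroup $H\le G$, so $X_S=X_H$ and $\cS$ can be identified with the set of subgroups of $G$. By \ref{4.7},
\[
\chi(X,G,\Q_\ell)\;=\;\sum_{H\le G}\chi(X_H/G)\,[\Q_\ell[G/H]].
\]
Taking traces and using the fact that, for $G$ abelian, the permutation action of $g$ on $G/H$ fixes every coset if $g\in H$ and no coset otherwise, we obtain
\[
t(g)\;=\;\sum_{H\le G,\ g\in H}[G:H]\,\chi(X_H/G).
\]

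Next I would identify each term $[G:H]\chi(X_H/G)$ with $\chi(X_H)$. Since $H$ acts trivially on $X_H\subset X^H$ while $G/H$ acts freely (the stabilizer at each point of $X_H$ being exactly $H$), the map $X_H\to X_H/G$ is an étale Galois cover of group $G/H$. By \ref{p.vt} applied to the virtually tame action of $G$, after stratifying $X_H/G$ into normal connected locally closed subschemes, each piece of $X_H$ is tamely ramified at infinity over the corresponding piece of $X_H/G$. Applying \eqref{(4.1.1)} on each stratum and summing using the additivity of $\chi$ (here we rely on \eqref{(1.3.2)}/\eqref{(4.1.3)} to pass freely between $\chi$ and $\chi_c$) yields the desired identity
\[
\chi(X_H)\;=\;[G:H]\,\chi(X_H/G).
\]

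Finally, I would use the locally closed decomposition $X^g=\bigsqcup_{H\le G,\ g\in H}X_H$, which holds because any point of $X$ has a well-defined stabilizer $H$ (by finiteness of $I(G,X)$ over $X$, implicit in virtual tameness \ref{4.6}), and since $G$ is abelian this stabilizer is precisely the unique element of its conjugacy class. Additivity of $\chi$ then gives
\[
\chi(X^g)\;=\;\sum_{H\le G,\ g\in H}\chi(X_H)\;=\;\sum_{H\le G,\ g\in H}[G:H]\,\chi(X_H/G)\;=\;t(g),
\]
and we conclude by observing $X^g=X^G$ since $G=\langle g\rangle$.

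The only non-formal step is the second one: verifying that the free quotient $X_H\to X_H/G$ is tame enough to satisfy \eqref{(4.1.1)}. I expect this to be the main obstacle, but it reduces painlessly to \ref{p.vt} applied to the virtually tame $G$-action, combined with a stratification into normal connected pieces.
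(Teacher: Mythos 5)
Your proposal is correct, but it takes a noticeably longer route than the paper does, and in fact proves a slightly more general intermediate statement. The paper's proof stops at your very first display: since $G=\langle g\rangle$, the condition $g\in H$ with $H\le G$ forces $H=G$, so the sum collapses to the single term $[G:G]\,\chi(X_G/G)=\chi(X^G)$ (using $X_G=X^G$ and that $G$ acts trivially on it). That is the whole argument; no tameness beyond \ref{4.7} itself is invoked.

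What you do instead is prove, for $G$ abelian and an arbitrary element $g\in G$, the identity $t(g)=\chi(X^g)$, and then specialize via $X^g=X^G$. This costs you the extra step $\chi(X_H)=[G:H]\,\chi(X_H/G)$, which indeed requires the tameness hypothesis applied stratum by stratum via \ref{p.vt} and \eqref{(4.1.1)} (your instinct to flag this as the non-formal step is right — it would fail without virtual tameness, e.g.\ for wild Artin--Schreier covers). The stratification and the appeal to $\chi=\chi_c$ are handled correctly, and the decomposition $X^g=\bigsqcup_{g\in H}X_H$ is valid since for $G$ abelian each point of $X$ has a well-defined stabilizer. So the argument is sound; it just does more work than necessary for the stated corollary, while yielding the reusable fact that $t(g)=\chi(X^g)$ for any $g$ in a virtually tame abelian action.
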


Indeed, by \eqref{(4.7.1)} we have
\[
t(g) = \sum_{S \in {\cal S}}\chi(X_S/G)\Tr(g,I_S).
\]
Now, $\Tr(g,I_S) = 0$ unless $S = \{G\}$, in which case $X_S =
X^G$ and $\Tr(g,I_S) = 1$.

\section{The case of rigid cohomology}
The results of this section will not be used in the sequel.

\begin{ssect}\label{5.1} We use the notations of \ref{s.Laumon}.  We assume $p > 1$ and $k$
algebraically closed.  Following \cite[8.2.4]{Ls}, we define a
\emph{realization} of $X$ to be a sequence of immersions
$X\stackrel{j}{\hookrightarrow} \overline{X}\hookrightarrow P$, where $j$ is an open immersion, $\overline{X}$ is a proper $k$-scheme and $P$ is
a formal scheme over $W=W(k)$, smooth in a neighborhood of $X$. We
say $X$ is \emph{realizable} if such a realization exists. This is
the case if $X$ is quasi-projective. Indeed, if $X\subset
\mathbb{P}^n_k$, we can take $\overline{X}$ to be the closure of $X$ in $\mathbb{P}^n_k$ and
$P=\widehat{\mathbb{P}^n_{W}}$. Given a realization
$X\hookrightarrow \overline{X}\hookrightarrow P$, we can construct
$G$-equivariant immersions
\[X\hookrightarrow X'=\prod_{g\in
G}\overline{X} \hookrightarrow \prod_{g\in
G}P\]
as in \cite[3.6]{Z}. Here $G$ acts on the products by permutation of the factors. We obtain a $G$-equivariant realization by taking the closure of $X$ in $X'$.

Let $K$ be the fraction field of $W$. For $X$ realizable, we denote
by $H^*_{c,\rig}$ the rigid cohomology with compact support of $X/K$
in the sense of Berthelot (\cite[3.1]{B1}, \cite[8.2.5]{Ls}). The
action of $G$ on $X$ defines, by functoriality, an action of $G$ on
$H^*_{c,\rig}(X/K)$. As the category of $K[G]$-modules is
semisimple, this action turns the complex $R\Gamma_{c,\rig}(X/K)$
defining $H^*_{c,\rig}(X/K)$ into an object of $D^b(K[G])$. Such a
complex can also be defined directly as follows. Choose a
$G$-equivariant realization $X\hookrightarrow
\overline{X}\hookrightarrow P$. Then we have
\[R\Gamma_{c,\rig}(X/K)= R\Gamma(]\overline{X}[_P,R\underline{\Gamma}_{]X[_P}(\Omega^\bullet_{]\overline{X}[_P})).\]

By Berthelot's finiteness theorem \cite[3.9 (i)]{B2}, this complex
has finite-dimensional cohomology groups, and, when $X/k$ is
\emph{proper and smooth}, is isomorphic to $R\Gamma(X/W) \otimes_W^L
K$, where $R\Gamma(X/W) \in D^b(W[G])$ is the complex calculating the
crystalline cohomology of $X/W$ \cite[1.9]{B2}. Recall that, for any
proper and smooth $k$-scheme $X$, $R\Gamma(X/W)$ can be computed by
$R\Gamma(X,W\Omega^\bullet_X)$, where $W\Omega^\bullet_X$ is the de
Rham-Witt complex of $X/k$ \cite[I 1.15, II (2.8.2)]{IllRW}, which is
a complex of $G$-$W$-modules on (the Zariski site of) $X$. Note that
here the category of $W[G]$-modules is no longer semisimple, and this
complex can't be recovered from the mere datum of its cohomology
groups, the $W[G]$-modules $H^i(X/W)$.

One can't expect
in general that, if $G$ acts freely, $R\Gamma_{c,\rig}(X/K)$ comes
by extension of scalars from a perfect complex of $W[G]$-modules.
Indeed, if it were the case, the traces of $p$-singular elements
would be zero \cite[16.2, Th.~36]{Se1}, and in the example given
after \eqref{(1.8.1)}, the trace of $s$ on $H^*_{c,\rig}(X/K)$ can
be shown to be equal to 1. We have, however, the following results,
which complement \ref{3.2}:
\end{ssect}

\begin{Theorem}\label{5.2}
Let $X/k$ with the action of $G$ be as in \ref{1.1}, with $p >1$. With the
notations of \ref{5.1}:
\begin{enumerate}
\item If $X$ is realizable, $\Tr(s, R\Gamma_{c,\rig}(X/K)) := \sum
(-1)^i\Tr(s,H^i_{c,\rig}(X/K))$ is equal to the integer
$t(s)$ of \ref{3.1} (i), for all $s \in G$.

\item If $X/k$ is proper and smooth, and $G$ acts freely on $X$, then
$R\Gamma(X/W)$ is a perfect complex of $W[G]$-modules.
\end{enumerate}
\end{Theorem}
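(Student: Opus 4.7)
\begin{ssect}
For part (a), I would reduce to the case $X$ smooth and projective by equivariant de Jong alterations. Starting from the realizable pair $(X,G)$ and applying de Jong's theorem to a $G$-equivariant compactification of $X$ (constructed as in \cite[3.6]{Z}), then passing to a contracted product with $G$ to make the alteration $G$-equivariant, we obtain a proper surjective $G$-equivariant map $f \colon X' \to X$ with $X'$ smooth and projective over $k$. Choose a dense $G$-stable open $U \subset X$ over which (a suitable $G$-refinement of) $f$ is finite \'etale, and let $Z = X \setminus U$. The excision triangle
\[R\Gamma_{c,\rig}(U/K) \to R\Gamma_{c,\rig}(X/K) \to R\Gamma_{c,\rig}(Z/K) \to\]
together with the analogous additivity of $t(\cdot)$ reduce, by Noetherian induction on $\dim X$, to the case $X$ smooth and projective. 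There, $R\Gamma_{c,\rig}(X/K) \simeq R\Gamma(X/W) \otimes^L_W K$, and the equality $\Tr(s, R\Gamma_{c,\rig}(X/K)) = t(s)$ is classical: spread $(X,s)$ over a finitely generated subring of $k$ and specialize to a closed point with residue field $\F_q$; for each $n \ge 1$, the composite $sF^n$ is the geometric Frobenius of a twisted $\F_{q^n}$-form of $X$, and the Lefschetz trace formulas in rigid and $\ell$-adic cohomology both compute $\Tr(sF^n, -)$ as the same point count. The equality for $s$ alone then follows from the Katz--Messing compatibility of the Frobenius characteristic polynomials on $\ell$-adic and crystalline cohomology, together with a standard linear-algebra separation into $F$-eigencomponents.
\end{ssect}

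\begin{ssect}
For part (b), I would imitate the proof of \ref{3.1} (b). Since $G$ acts freely on the smooth proper scheme $X$, the quotient $Y := X/G$ is smooth and proper over $k$, and $\pi \colon X \to Y$ is a finite \'etale Galois cover with group $G$. Because $\pi$ is \'etale, crystalline cohomology is compatible with $\pi$, and $\pi_* \mathcal{O}_{X/W}$ is a crystal on $Y/W$ that is \'etale-locally isomorphic, as a $G$-equivariant module, to $\mathcal{O}_{Y/W}[G]$. The projection formula then gives, for every finitely generated $W[G]$-module $M$,
\[R\Gamma(X/W) \otimes^L_{W[G]} M \simeq R\Gamma_{\mathrm{cris}}(Y/W,\, \pi_* \mathcal{O}_{X/W} \otimes^L_{W[G]} M),\]
which lies in $D^b_c(W)$ by Berthelot's finiteness theorem \cite[3.9]{B2}. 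Hence $R\Gamma(X/W)$ has finite tor-dimension over $W[G]$; since it already lies in $D^b_c(W[G])$, it is perfect.
\end{ssect}

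\begin{ssect}
The main obstacle in (a) is making the de Jong reduction rigorous in the equivariant rigid setting, namely ensuring the necessary $G$-stable finite \'etale locus together with the excision/additivity of $R\Gamma_{c,\rig}$ in the presence of a group action; this is where most of the work lies, as the Katz--Messing step in the smooth projective case is essentially known. The main obstacle in (b) is the crystalline projection formula along $\pi$. If one prefers to avoid the crystalline topos entirely, it can alternatively be obtained from the de Rham--Witt description $R\Gamma(X/W) \simeq R\Gamma(X, W\Omega^\bullet_X)$ and the compatibility of $W\Omega^\bullet$ with \'etale pullback, so that the argument parallels the $\ell$-adic one for $\pi_* \Z_\ell$ verbatim.
\end{ssect}
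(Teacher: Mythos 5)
For part (a), the overall strategy — reduce to the smooth projective case by de Jong's alterations, where the equality is a consequence of the cycle class / Lefschetz trace formula — is indeed the paper's route. However, your description of the equivariant alteration step contains a genuine gap. You claim that one can "pass to a contracted product with $G$ to make the alteration $G$-equivariant," obtaining a $G$-equivariant map $f\colon X'\to X$ with $X'$ smooth projective, and then a $G$-stable open over which $f$ is finite \'etale. This is not available. Gabber's refinement of de Jong gives a \emph{surjection} $u\colon G'\to G$, a $G'$-action on the smooth projective $Z'$, and a $u$-equivariant alteration $Z'\to Z$; contracting $Z'\wedge^{G'}G$ only recovers $Z'/\Ker(u)$, which is no longer smooth. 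One is forced to work with the larger group $G'$, and then re-express traces for $s\in G$ as averages of traces of the preimages $s'\in G'$ over the \'etale part. Moreover, the alteration does not restrict to a finite \'etale map over any dense $G$-stable open: the factorization over such an open $V$ goes through a finite flat \emph{universal homeomorphism} $V'/H\to V$, and one needs the invariance of $R\Gamma_{c,\rig}$ under finite universal homeomorphisms (the paper's \ref{l.uh}, proved via cohomological descent and a compactification argument) to absorb this factor. Both the change of group $G\rightsquigarrow G'$ (with the averaging formula over $s'\mapsto s$) and the universal-homeomorphism lemma are essential and missing from your sketch. In the smooth projective case, your invocation of Katz--Messing after spreading out is a valid alternative to the paper's \ref{5.3}, which goes directly through cycle class maps and intersection numbers; the latter is more self-contained.

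For part (b), you take a genuinely different route from the paper. You argue via a projection formula in the crystalline topos over $W$ and cite Berthelot's finiteness theorem; the paper instead invokes a base-change lemma (\ref{l.tor}): to show $R\Gamma(X/W)$ is perfect over $W[G]$ it suffices, given that it lies in $D^{[0,2d]}(W[G])$, to show that $R\Gamma(X/W)\otimes^L_W k\simeq R\Gamma(X,\Omega^\bullet_{X/k})$ has bounded tor-amplitude over $k[G]$, and this is handled entirely in characteristic $p$ by \'etale descent along $\pi\colon X\to X/G$ and the na\"ive filtration on $\pi_*\Omega^\bullet_{X/k}$. Your direct crystalline argument is plausible, but it requires checking that $\pi_*\mathcal O_{X/W}\otimes^L_{W[G]}M$ makes sense and remains a bounded complex of crystals on $Y/W$, and (since $Y$ is a $k$-scheme, not a $W$-scheme) that the tensoring with a $W[G]$-module $M$ behaves correctly across the crystalline site; your citation of \cite[3.9]{B2} is misplaced here, since that is the finiteness theorem for \emph{rigid} cohomology — what you actually need is the classical Berthelot--Ogus finiteness for $R\Gamma(X/W)$ and an explicit projection formula in the crystalline topos. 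The paper's reduction mod $p$ side-steps those subtleties and should be preferred. (Your alternative suggestion via the de Rham--Witt complex is closer in spirit to the paper's char-$p$ argument, but note the paper uses ordinary de Rham cohomology $\Omega^\bullet_{X/k}$ after reducing mod $p$, not $W\Omega^\bullet$.)
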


For the proof we need the following well known lemmas:

\begin{Lemma}\label{5.3}
Let $X/k$ be a projective and smooth scheme, $s$ a $k$-endomorphism
of~$X$, $\ell$ a prime $\ne p$. Then we have an equality
of rational integers:
\beq\label{e.rigid}
\Tr(s,H^*(X/W) \otimes K) = \Tr(s,H^*(X,{\mathbb{Q}}_{\ell})).
\eeq
\end{Lemma}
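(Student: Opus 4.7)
The plan is to show that both traces in \eqref{e.rigid} equal a single intersection number on $X\times_k X$, reached through the Lefschetz-Verdier trace formula applied in each of the two cohomology theories. First, applying Grothendieck's Lefschetz-Verdier formula (\cf \cite[III]{SGA5}) to the smooth projective variety $X$ and the graph correspondence $\Gamma_s\subset X\times X$ yields
\[
\Tr(s,H^*(X,\Q_\ell))\;=\;\deg\bigl(\mathrm{cl}_\ell[\Gamma_s]\cdot \mathrm{cl}_\ell[\Delta_X]\bigr),
\]
where the right-hand side is a degree in $H^{2n}(X\times X,\Q_\ell)$ (with $n=\dim X$) and $\mathrm{cl}_\ell$ denotes the $\ell$-adic cycle class map. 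By compatibility of $\mathrm{cl}_\ell$ with intersection products and with the degree map, this equals the degree of the zero-cycle class $[\Gamma_s]\cdot[\Delta_X]\in\CH_0(X\times X)$, a rational integer intrinsic to $X\times X$ and to the endomorphism $s$.

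Next, I would run the analogous Lefschetz-Verdier trace formula in crystalline cohomology. Since $X/k$ is smooth and projective, such a formula is available (Berthelot, and more generally the Weil-cohomology formalism of Katz-Messing), giving
\[
\Tr(s,H^*(X/W)\otimes K)\;=\;\deg\bigl(\mathrm{cl}_{\mathrm{crys}}[\Gamma_s]\cdot \mathrm{cl}_{\mathrm{crys}}[\Delta_X]\bigr).
\]
By compatibility of the crystalline cycle class map with intersection and with degree, the right-hand side is again $\deg([\Gamma_s]\cdot [\Delta_X])\in\Z$, the same rational integer as in the $\ell$-adic case. Equating the two expressions yields \eqref{e.rigid}; as a byproduct, the crystalline trace, a priori an element of $K$, is seen to be a rational integer.

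The main obstacle is step two: securing the crystalline Lefschetz-Verdier trace formula for a general endomorphism $s$ (not merely Frobenius, as in Berthelot's original setting) together with the compatibility of $\mathrm{cl}_{\mathrm{crys}}$ with the refined intersection product in $\CH_*(X\times X)$, which is required because $\Gamma_s$ and $\Delta_X$ need not meet properly. Both points are handled uniformly by the Weil-cohomology formalism, using only that $X\times_k X$ is smooth and projective, so no further geometric input (such as resolution of singularities or alterations) is needed at this stage.
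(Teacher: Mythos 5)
Your proposal is correct and matches the paper's own argument in all essentials: both reduce $\Tr(s,H^*(X,\Q_\ell))$ and $\Tr(s,H^*(X/W)\otimes K)$ to the same cohomology-independent intersection number $\deg([\Gamma_s]\cdot[\Delta_X])$ on $X\times_k X$, via cycle class maps that are multiplicative and compatible with Gysin maps (the paper cites SGA~4$\frac{1}{2}$ Cycle and Laumon for the $\ell$-adic side and Gros for the crystalline side, rather than appealing to Lefschetz-Verdier or Katz-Messing by name, but this is the same mechanism).
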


\begin{proof}
Indeed, if $\CH^*(X)$ is the Chow ring of $X$, the theory of cycle
classes in $\ell$-adic (resp. crystalline) cohomology (\cite[Cycle]{SGA4d}, \cite{L1}) (resp. \cite{Gros}) gives a homomorphism $\CH^*(X)
\rightarrow H^*(X,{\mathbb{Q}}_{\ell})$ (resp. $\CH^*(X) \rightarrow
H^*(X/W) \otimes K)$), which is multiplicative and compatible with
Gysin maps. This implies that both sides of \eqref{e.rigid} are equal to the
intersection number of the graph of $s$ and the diagonal in $X \times
X$.
\end{proof}

\begin{Lemma}\label{l.uh}
  Let $f\colon X\to Y$ be a finite universal homeomorphism between
  $k$-schemes (resp.\ realizable $k$-schemes) separated of finite type. Then the canonical homomorphism
  \[R\Gamma_{\rig}(Y/K)\to R\Gamma_{\rig}(X/K),\quad\text{(resp.\ $R\Gamma_{c,\rig}(Y/K)\to R\Gamma_{c,\rig}(X/K)$)}\]
  is an isomorphism.
\end{Lemma}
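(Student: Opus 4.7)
The plan is to combine three ingredients from Berthelot's rigid cohomology: (i) \emph{invariance under nilpotent immersions}: the tube $]Z[_P$ coincides with $]Z_\red[_P$, so the canonical maps $R\Gamma_\rig(Z/K)\to R\Gamma_\rig(Z_\red/K)$ and (for $Z$ realizable) $R\Gamma_{c,\rig}(Z/K)\to R\Gamma_{c,\rig}(Z_\red/K)$ are isomorphisms; (ii) the \emph{absolute Frobenius} $F_Z\colon Z\to Z$ of a $k$-scheme $Z$ (realizable when we work with compact supports) induces an isomorphism on $R\Gamma_\rig(Z/K)$ and on $R\Gamma_{c,\rig}(Z/K)$; (iii) for a finite universal homeomorphism $f\colon X\to Y$ of reduced $\mathbb{F}_p$-schemes of finite type there exist an integer $n\geq 0$ and a morphism $g\colon Y\to X$ such that $f\circ g=F_Y^n$ and $g\circ f=F_X^n$.

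Granting these, the proof would be short. First I would use (i) to reduce to the case where both $X$ and $Y$ are reduced, since $f$ restricts to a finite universal homeomorphism $f_\red\colon X_\red\to Y_\red$ and realizability is preserved (one may take the same enveloping $\overline{X},\overline{Y}$ and formal scheme~$P$). Then I would invoke (iii) to produce $g\colon Y\to X$, and by functoriality obtain
\[
g^*\circ f^* = (F_Y^n)^*,\qquad f^*\circ g^* = (F_X^n)^*
\]
on $R\Gamma_\rig$, and analogously on $R\Gamma_{c,\rig}$. By (ii) both right-hand sides are isomorphisms, so $f^*$ admits both a left and a right inverse and is therefore itself an isomorphism.

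The main obstacle is to pin down (ii) in the required generality, namely Frobenius invertibility for both $R\Gamma_\rig$ and $R\Gamma_{c,\rig}$ of a possibly singular (realizable) $k$-scheme; on the tube $]\overline{X}[_P$ in a fixed realization one lifts $F$ to a morphism of the formal neighborhood and checks that the induced map of overconvergent de Rham complexes is a quasi-isomorphism, which is part of Berthelot's foundational package (\cite{B2}, \cite{Ls}). Ingredient (iii) is classical: since $f$ is a universal homeomorphism of finite type in characteristic $p$, it is integral and radicial, so for $n$ large the $p^n$-th powers of sections of $\mathcal{O}_X$ lie in the image of $\mathcal{O}_Y$ (using $Y_\red=Y$), which yields the factorization $F_Y^n=f\circ g$; a symmetric argument on the other side, using $X_\red=X$, gives $g\circ f=F_X^n$ after possibly enlarging~$n$.
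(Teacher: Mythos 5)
Your approach is genuinely different from the paper's, and it is a nice one: it is the ``Frobenius factorization'' proof of topological invariance, familiar from the \'etale setting. The paper instead treats the two cases by separate d\'evissages: for $R\Gamma_{\rig}$ it observes that the diagonal $X\to \cosk_0(X/Y)_n$ is a bijective immersion and invokes Tsuzuki's cohomological descent for finite (in particular proper) morphisms \cite{Tsuzuki}; for $R\Gamma_{c,\rig}$ it argues by induction on $\dim Y$, using the excision triangle, reduction to $Y$ normal quasi-projective, and a normal projective compactification so that one lands in the projective case where $R\Gamma_{\rig}=R\Gamma_{c,\rig}$.

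Two cautionary remarks about your route, one minor and one substantive. The minor one: the morphism $g\colon Y\to X$ with $g^\sharp(a)=a^{p^n}$ is not a $k$-morphism but only $\sigma^n$-semilinear over $k$ (equivalently a morphism of $\F_p$-schemes). This is harmless, because rigid cohomology is functorial for such semilinear morphisms --- this is precisely how the Frobenius structure arises --- but you should formulate ingredient (ii) and the identities $g^*f^*=(F_Y^n)^*$, $f^*g^*=(F_X^n)^*$ in the semilinear category, as the paper works over $K=\mathrm{Frac}\,W(k)$ with $k$ algebraically closed. The substantive one is precisely the point you flag: ingredient (ii), bijectivity of the ($\sigma$-semilinear) Frobenius on $R\Gamma_{\rig}(X/K)$ and $R\Gamma_{c,\rig}(X/K)$ for an \emph{arbitrary singular, non-proper} (realizable) $X$, is not really ``part of Berthelot's foundational package'' in \cite{B2}, \cite{Ls}. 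What is foundational is the existence of the Frobenius action; its bijectivity, for singular open $X$, is a theorem that is usually established by exactly the kind of d\'evissage (excision triangles, reduction to the smooth projective case via alterations, or cohomological descent) that the paper uses to prove this lemma directly. Your sketch of ``lift $F$ to the formal neighborhood and check the map of overconvergent de~Rham complexes is a quasi-isomorphism'' produces the Frobenius endomorphism but does not show it is invertible. So, while the strategy is correct in principle and conceptually transparent, it does not shorten the path: the difficulty is displaced into (ii) rather than removed, and as written (ii) is asserted rather than proved. If you want to keep this route, you should either cite a precise reference for Frobenius bijectivity on $R\Gamma_{\rig}$ and $R\Gamma_{c,\rig}$ of general realizable $k$-schemes, or give the d\'evissage proving it --- at which point the argument becomes comparable in length to the paper's.
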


\begin{proof}
  If $f$ is a bijective immersion, the assertions follow from the
  definitions. In the general case, the diagonal morphism $X\to \cosk_0
  (X/Y)_n$ is a bijective immersion. Thus the assertion for
  $R\Gamma_{\rig}$ follows from cohomological descent for finite
  morphisms \cite[4.5.1]{Tsuzuki}. For $R\Gamma_{\rig,c}$, we
  proceed by induction on $\dim Y$. For any closed subscheme $V$ of $Y$, we have a distinguished
  triangle \cite[3.1]{B1}
  \[R\Gamma_{c,\rig}((Y-V)/K)\to R\Gamma_{c,\rig}(Y/K)\to R\Gamma_{c,\rig}(X/K)\to.\]
  Replacing $Y$ by
  $Y_{\red}$ and shrinking $Y$, we may thus assume $Y$ quasi-projective, normal and integral.
  Let $\overline{Y}$ be
  a normal projective compactification of $Y$. Factorize $X\to \overline{Y}$ into a
  dense open immersion and a finite morphism:
  \[X\hookrightarrow \overline{X}\xrightarrow{\overline{f}} \overline{Y}.\]
  Then $\overline{f}$ is a universal homeomorphism. We are thus reduced to the case
  $Y$ projective, where the assertions for $R\Gamma_{c,\rig}$ and for $R\Gamma_{\rig}$ coincide.
\end{proof}

\begin{Lemma}\label{l.tor}
  Let $a<b$ be integers, $C\in D^{[a,b]}(W[G])$. Assume that the tor-amplitude
  of $C\otimes^L_W k\in D(k[G])$ is
  contained in $[a+1,b]$.
  Then the tor-amplitude of $C$ is contained in $[a,b]$.
\end{Lemma}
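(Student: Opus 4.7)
The plan is to leverage the distinguished triangle
\[C\xrightarrow{p} C\to C\otimes^L_W k\to C[1]\]
in $D(W[G])$ coming from $0\to W\xrightarrow{p} W\to k\to 0$, together with the fact that $W$ is a DVR with uniformizer $p$. The hypothesis on the tor-amplitude of $C\otimes^L_W k$ in $D(k[G])$ forces its cohomological amplitude into $[a+1,b]$, so in particular $H^{a-1}(C\otimes^L_W k)=H^a(C\otimes^L_W k)=0$. The long exact cohomology sequence of the triangle, combined with $H^{a-1}(C)=0$, then shows that multiplication by $p$ is an isomorphism on $H^a(C)$; in particular $H^a(C)$ is $p$-torsion free, and since $W$ is a DVR, $W$-flat.

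For the tor-amplitude of $C$ over $W[G]$, I would fix any finitely generated $W[G]$-module $M$ (it suffices to check on f.g.\ $M$, since tor-amplitude commutes with filtered colimits) and tensor the analogous triangle $M\xrightarrow{p} M\to M\otimes^L_W k$ with $C$ over $W[G]$. The third term satisfies
\[C\otimes^L_{W[G]}(M\otimes^L_W k)\simeq (C\otimes^L_W k)\otimes^L_{k[G]}(M\otimes^L_W k),\]
using $k[G]=W[G]\otimes^L_W k$. Since $M\otimes^L_W k\in D^{[-1,0]}(k[G])$ and $C\otimes^L_W k$ has tor-amplitude $\subset[a+1,b]$ over $k[G]$, the two pieces of the standard truncation triangle for $M\otimes^L_W k$ contribute in $D^{[a,b-1]}$ and $D^{[a+1,b]}$ respectively, so the third term lies in $D^{[a,b]}$. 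The long exact sequence therefore yields that $p$ acts bijectively on $H^i(C\otimes^L_{W[G]}M)$ for every $i<a$, while vanishing in degrees $>b$ is immediate from $C\in D^{\le b}$.

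The main obstacle is upgrading this $p$-bijectivity to the genuine vanishing $H^i(C\otimes^L_{W[G]}M)=0$ for $i<a$. For this I would use that $W[G]$, being finite over the local ring $W$, is semilocal with every maximal ideal contracting to $(p)\subset W$, so $p$ lies in the Jacobson radical of $W[G]$; combined with pseudo-coherence of $C$ (implicit in the finiteness context in which the lemma is applied), which makes $H^i(C\otimes^L_{W[G]}M)$ a finitely generated $W[G]$-module for f.g.\ $M$, the relation $pH^i=H^i$ forces $H^i=0$ by Nakayama. This gives tor-amplitude of $C$ in $[a,b]$, completing the proof.
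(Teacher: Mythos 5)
Your argument agrees with the paper's for the first half and then takes a genuinely different, and strictly weaker, route in the decisive final step.

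Both you and the paper set $E=C\otimes^L_{W[G]}M$, use the triangle from $0\to W\xrightarrow{p}W\to k\to 0$, identify $E\otimes^L_W k$ with $(M\otimes^L_W k)\otimes^L_{k[G]}(C\otimes^L_W k)$, deduce from $M\otimes^L_W k\in D^{[-1,0]}(k[G])$ and the tor-amplitude hypothesis that $E\otimes^L_W k\in D^{[a,b]}$, and conclude from the long exact sequence that $p$ acts bijectively on $H^q(E)$ for $q<a$. All of that is fine, as are your side remarks that $W[G]$ is semilocal with $p$ in its Jacobson radical and that tor-amplitude may be tested on finitely generated $M$ via filtered colimits.

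The divergence is in upgrading $p$-bijectivity to vanishing. The paper does not use Nakayama: it also forms $E\otimes_W K\simeq(M\otimes_W K)\otimes_{K[G]}(C\otimes_W K)$ and uses that $K[G]$ is semisimple (as $K$ has characteristic $0$), so this lies in $D^{[a,b]}(K[G])$; since $K$ is flat over $W$, this gives $H^q(E)\otimes_W K=0$ for $q<a$, i.e.\ $H^q(E)$ is $p$-power torsion, and a $p$-torsion module on which $p$ acts injectively is zero. This works for an arbitrary $W[G]$-module $M$ and an arbitrary $C\in D^{[a,b]}(W[G])$, with no finiteness assumption whatsoever.

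Your route instead needs $H^q(E)$ finitely generated so that Nakayama applies, and for that you must assume $C$ pseudo-coherent over $W[G]$. That hypothesis is not in the statement of \ref{l.tor}. You do flag it (``implicit in the finiteness context''), but it is not implicit: the lemma as stated is strictly more general than what your argument proves, and the paper's proof establishes that extra generality at no cost. In the one place the lemma is actually used---$C=R\Gamma(X/W)$, which has finitely generated cohomology---your argument does suffice, so this is a gap in the proof of \ref{l.tor} itself rather than in the downstream application; but it is a genuine gap, and the paper's two-localization device (reduce mod $p$ for the long exact sequence, then invert $p$ to kill the torsion) avoids it entirely while also being shorter.
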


\begin{proof}
  The short exact sequence
  \begin{equation}\label{e.W}
  0\to W\xrightarrow{\times p} W \to k \to 0
  \end{equation}
  is resolution of $k$ by free $W$-modules.
  Let $M$ be a
  right $W[G]$-module. Then $M\otimes^L_W k\in D^{[-1,0]}$. Moreover $-\otimes_{K[G]}-$ is an exact bifunctor. Thus
  \begin{gather*}
  (M\otimes^L_{W[G]} C)\otimes^L_W k\simeq (M\otimes^L_W k)\otimes^L_{k[G]} (C\otimes^L_W k)\in
  D^{[a,b]}(k[G]),\\
  (M\otimes^L_{W[G]} C)\otimes_W K\simeq (M\otimes_W K)\otimes_{K[G]} (C\otimes_W K)\in
  D^{[a,b]}(K[G]).
  \end{gather*}
  Putting $E=M\otimes^L_{W[G]} C$ for brevity, \eqref{e.W} induces the long exact sequence
  \[H^{q-1}(E\otimes_W^L k)\to H^q(E)\xrightarrow{\times p} H^q(E)\to H^q(E\otimes_W^L k).\]
  For $q<a$, $H^q(E)\xrightarrow{\times p} H^q(E)$ is thus an isomorphism. Moreover, $H^q(E)\otimes_W K=0$. Thus $H^q(E)=0$.
\end{proof}

We prove \ref{5.2} (a) by induction on the dimension $d$ of~$X$,
using de Jong's Galois alterations, as in \cite[4.4]{Vi1} and
\cite[\S~3]{Z}. The assertion is trivial for $d = 0$. Assume $d \ge
1$. By \ref{l.uh}, we may assume $X$ reduced. Using the inductive
hypothesis and the additivity of traces on $H^*_{c,\rig}$, we may
replace $X$ by a dense open $G$-invariant subscheme. Therefore we
may assume $X$ smooth, affine. The connected components of~$X$ are
permuted by $s$, and the trace of $s$ is the sum of the traces of
$s$ on the cohomology of those components which are stabilized
by~$s$. So we may assume furthermore $X$ integral. Choose a
$G$-equivariant dense open embedding $j \colon X \rightarrow Z$,
with $Z/k$ a projective, integral $G$-scheme. By Gabber's refinement
of de Jong's results on equivariant alterations \cite[3.8]{Z} there
exist the following data:
\begin{itemize}
\item a surjective homomorphism $u \colon G' \rightarrow G$ of finite groups,

\item a projective smooth, integral $k$-scheme $Z'$ endowed with an
action of $G'$, and a surjective
$u$-equivariant $k$-morphism $a \colon Z' \rightarrow Z$,

\item a $G$-stable dense open affine subscheme $V$ of $X$,
\end{itemize}
satisfying the following property:
\begin{itemize}
\item if $H$ is the kernel of $u \colon G'
\rightarrow G$, and $V' = a^{-1}(V)$, $a|V  \colon V' \rightarrow V$ factors into
\[
V' \rightarrow V'/H \rightarrow V,
\]
where $V' \rightarrow V'/H$ is an \'{e}tale Galois cover of
group $H/H_0$, with $H_0 = H \cap \Ker(G' \rightarrow
\Aut(k(\eta'))$ ($\eta'$ the generic point of $X'$), and $V'/H
\rightarrow V$ is a finite and flat universal homeomorphism.
\end{itemize}
The morphism $a$ is sometimes called a \emph{Galois alteration}. One
may further assume that $Z'-a^{-1}(X)$ is contained in a strict
normal crossing divisor of $Z'$. We don't need this more precise
form.

By the inductive assumption, it suffices to show the assertion for
$(V,G)$. By \ref{l.uh}, we may replace $(V,G)$ by $(U,G)$, where
$U=V'/H$. We have
\begin{align*}
\Tr(s,H^*_{c}(U,\Q_\ell)) &= \frac{1}{(H:H_0)}\sum \Tr(s',H^*_{c}(V',\Q_\ell)),\\
\Tr(s,H^*_{c,\rig}(U/K)) &= \frac{1}{(H:H_0)}\sum \Tr(s',H^*_{c,\rig}(V'/K)),
\end{align*}
the sums being extended to the classes modulo $H_0$ of elements $s'
\in G'$ above $s$. We may therefore replace
$(U,G)$ by $(V',G')$. By the inductive assumption, we may replace
$(V',G')$ by $(Z',G')$, and we conclude by \ref{5.3}.

By \ref{l.tor}, to prove \ref{5.2} (b), it is enough to show that
$R\Gamma(X/W) \otimes^L_W k$ is a perfect complex of $k[G]$-modules.
We have, by \cite[7.1, 7.24]{BerO},
\[
R\Gamma(X/W) \otimes^L_W k = R\Gamma(X/k) = R\Gamma(X, \Omega^\bullet_{X/k}) =
R\Gamma(Y,\pi_*\Omega^\bullet_{X/k})
\]
in $D^b(k[G])$, where $Y = X/G$ and $\pi \colon X \rightarrow Y$ is the projection
. As $\pi$ is an \'{e}tale Galois cover of
group~$G$, $\pi_*{\cal O}_X$ is, \'{e}tale locally on $Y$,
isomorphic to ${\cal O}_Y[G]$, in particular, is flat over $k[G]$.
The same is true of $\pi_*\Omega^i_{X/k} = \pi_* {\cal O}_X
\otimes_{{\cal O}_Y} \Omega^i_{Y/k}$. For any right $k[G]$-module
$M$, by the projection formula,
\[M\otimes_{k[G]}^L R\Gamma(Y,\pi_*\Omega^i_{X/k})\simeq R\Gamma(Y,M\otimes_{k[G]}(\pi_*\cO_X\otimes_{\cO_Y}\Omega^i_{Y/k}))\]
has cohomology concentrated in $[0,d]$, where $d=\dim(X)=\dim(Y)$.
Thus $R\Gamma(Y,\pi_*\Omega^i_{X/k})$ is a perfect complex of
$k[G]$-modules. Filtering $\pi_*\Omega^{\bullet}_{X/k}$ by the
na\"{\i}ve filtration, we get
\[
\Gr R\Gamma(Y,\pi_*\Omega^{\bullet}_{X/k}) \simeq \bigoplus_{i \in {\mathbb{Z}}} R\Gamma(Y, \pi_*\Omega^i_{X/k}[-i]),
\]
which implies that $R\Gamma(X/k)$ is perfect over $k[G]$ (even as a
filtered complex \cite[V 3.1]{IllCC}).

\begin{Remark}\label{5.4} (a) In the situation of \ref{5.2} (a), assume $X/k$ \emph{proper}.
Then, if $G$ acts freely on~$X$, $\Tr(s,
R\Gamma_{c,\rig}(X/K)) = 0$ for $s\in G$, $s\neq 1$. Indeed, $t_c(s) = 0$ by the
Lefschetz-Verdier trace formula \cite[III]{SGA5}. It seems that so far no general
Lefschetz-Verdier formula is available in rigid cohomology. For
example,  if
$u$ is a fixed point free endomorphism of $X/k$, we don't know
whether $\Tr(u, R\Gamma_{c,\rig}(X/K))= 0$. This vanishing
holds at least in the smooth case (\ref{5.3}).

(b) In the situation of \ref{5.2} (a), it is unknown whether one has
\[
\Tr(s,H^*_{c,\rig}(X/K)) = \Tr(s,H^*_{\rig}(X/K)),
\]
even when $G = \{1\}$.
\end{Remark}

\section{Fixed point sets: around a theorem of P.~Smith}
The results in this section were suggested to the first author by
Serre. They overlap with parts of \cite[\S\S~7, 8]{Se3}.

\begin{Proposition}\label{6.1} \cite[7.2]{Se3}
Let $k$ be an algebraically closed field of characteristic $p$, $\ell$
a prime number $\ne p$, and $X$
an algebraic space separated and of finite type over~$k$, equipped with an
action of an $\ell$-group $G$. Then:
\beq
\chi(X^G) \equiv \chi(X) \mod{\ell}. \label{(6.1.1)}
\eeq
\end{Proposition}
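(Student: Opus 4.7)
The plan is to prove the congruence by induction on $|G|$, reducing via the center to the case of a cyclic group of order $\ell$, where Corollary~\ref{3.2} applies directly.

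I would first dispose of the base case $G$ cyclic of order $\ell$. Decompose $X=X^G\sqcup U$ with $U=X-X^G$. Since $G$ has prime order, any stabilizer in $G$ is either trivial or all of $G$; the latter points constitute $X^G$, so the action of $G$ on the open subspace $U$ is free. Since $\ell\ne p$, Corollary~\ref{3.2} applies to $(U,G)$ and yields
\[
\chi(U,G,\Q_\ell)=\chi(U/G)\,\Reg_{\Q_\ell}(G);
\]
taking dimensions, $\chi(U)=\ell\cdot\chi(U/G)\equiv 0\pmod{\ell}$. Combined with the additivity of $\chi=\chi_c$ (the equality coming from \ref{2.2}, as already used in \ref{s.space}) for the partition $X=X^G\sqcup U$, this gives $\chi(X)=\chi(X^G)+\chi(U)\equiv\chi(X^G)\pmod{\ell}$.

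For the inductive step, suppose $|G|>\ell$. Since $G$ is a nontrivial $\ell$-group, its center $Z(G)$ is nontrivial and contains an element of order $\ell$; let $H\subset Z(G)$ be the subgroup it generates. Then $H$ is normal in $G$, the quotient $G/H$ acts on $X^H$, and one checks $(X^H)^{G/H}=X^G$. The base case applied to $H$ acting on $X$ gives $\chi(X)\equiv\chi(X^H)\pmod{\ell}$, while the induction hypothesis, applied to $G/H$ acting on $X^H$ (note $|G/H|<|G|$), gives $\chi(X^H)\equiv\chi(X^G)\pmod{\ell}$. Combining these yields the claim.

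There is no serious obstacle in this approach: the necessary ingredients — additivity of Euler characteristics via \ref{2.2}, the free-action formula of Corollary~\ref{3.2}, and the nontriviality of the center of a finite $\ell$-group — are all in place, and each step remains valid in the category of algebraic spaces, which is the framework of \ref{s.space}. The only mild point of care is to observe that, $G$ being finite, the fixed locus $X^G$ is a well-defined closed algebraic subspace of $X$, so that the stratification used for additivity makes sense.
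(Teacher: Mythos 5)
Your proof is correct and follows essentially the same route as the paper's: reduce by an induction on $|G|$ (via a normal subgroup and the fixed-point identity $X^G = (X^{G_1})^{G_2}$) to the case $G$ cyclic of order $\ell$, then use the free action on $X - X^G$ together with Corollary~\ref{3.2} and additivity. The only cosmetic difference is that you single out a central cyclic subgroup of order $\ell$ to supply the normal subgroup, which is a standard way of making the reduction explicit.
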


\begin{proof}
By additivity of $\chi$, we may assume $X$ separated. If $G$ is an extension $0 \rightarrow G_1 \rightarrow G \rightarrow
G_2 \rightarrow 0$, $X^{G_1}$ is $G$-stable, $G$ acts on it through
$G_2,$ and $X^G =
(X^{G_1})^{G_2}$. So by induction we may assume that $G$ is cyclic of
order $\ell$. Then $G$ acts freely on $U := X - X^G$, hence, by
\ref{3.2}, $\chi(U) = \ell \chi(U/G)$, and \eqref{(6.1.1)} follows by additivity of
$\chi$. One could also deduce \eqref{(6.1.1)} from \ref{4.9}: $t(g) = \chi(X^G)$
for all $g \ne 1$, and $\sum_{g \in G}t(g) \equiv 0 \mod{\ell}$.
\end{proof}

As Serre observes in (\loccit), \ref{6.1} implies that if $\chi(X)$
is not divisible by~$\ell$, then $X^G$ is not empty. If $X$ is the
affine space ${\mathbb{A}}^n_k$, we find $\chi(X^G) \equiv 1
\mod{\ell}$. In this case, Smith's theory, as recalled in (\loccit)
gives more.

\begin{ssect}\label{6.2} Let $k$ be an algebraically closed field of characteristic $p$, $X$
an algebraic space separated and of finite type over $k$, and let $\ell$ be a
prime number, possibly
equal to~$p$. We say that $X$ is mod $\ell$ \emph{acyclic} if $H^i(X,{\mathbb{F}}_{\ell}) = 0$ for $i \ne 0$ and $H^0(X,{\mathbb{F}}_{\ell}) = {\mathbb{F}}_{\ell}$. When $\ell$ is different from~$p$, by
the finiteness of $H^*(X,{\mathbb{Z}}_{\ell})$ and the exact sequence of
universal coefficients, $X$ is mod~$\ell$ acyclic if and only if
$H^i(X,{\mathbb{Z}}_{\ell}) = 0$ for $i \ne 0$ and $H^0(X,{\mathbb{Z}}_{\ell}) = {\mathbb{Z}}_{\ell}$. When $\ell = p$, if $X/k$ is proper,
connected, and $H^i(X,{\cal O}) = 0$ for $i >0$, then, by the
Artin-Schreier exact sequence, $X$ is mod $\ell$ acyclic. The
following (for schemes) is \cite[7.5 b)]{Se3}.  This result and \ref{6.6}
below (for schemes) were obtained independently by Morin \cite[Th.~2.46]{M}, assuming $G = {\mathbb{F}}_{\ell}$, $\ell\neq p$ and $X^G$ contained in an
affine open subset. His method is similar to ours and is based on a variant \`{a} la
Tate of equivariant cohomology.
\end{ssect}

\begin{Theorem}\label{6.3}
Let $X/k$ and $\ell$ be as in \ref{6.2}. Assume that an $\ell$-group $G$
acts on $X/k$ and that $X$ is mod $\ell$ acyclic. Then so
is $X^G$.
\end{Theorem}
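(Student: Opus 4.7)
The plan is to follow the Borel--Atiyah--Segal localization strategy for equivariant étale cohomology sketched in the introduction.

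First I would reduce to the case $G = \Z/\ell$: any nontrivial $\ell$-group admits a central subgroup $H$ of order $\ell$; then $X^H$ is $G$-stable, $G$ acts on $X^H$ through $G/H$, and $X^G = (X^H)^{G/H}$, so an induction on $|G|$ reduces everything to the cyclic case. Note that \ref{6.1} already yields $\chi(X^G) \equiv \chi(X) = 1 \pmod{\ell}$, so in particular $X^G$ is nonempty. Assume now $G = \Z/\ell$, and work with the equivariant étale cohomology $H^*_G(X,\F_\ell) := H^*([X/G],\F_\ell)$, regarded as a graded module over $H^*(G,\F_\ell) = H^*(BG,\F_\ell)$. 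Recall that $H^*(G,\F_\ell)$ contains a polynomial generator $u$ of positive degree (degree $2$ for $\ell$ odd, degree $1$ for $\ell = 2$) on which multiplication is injective, and that $\dim_{\F_\ell} H^n(G,\F_\ell) = 1$ for every $n \ge 0$. The Leray spectral sequence
\[
E_2^{p,q} = H^p(G, H^q(X,\F_\ell)) \Rightarrow H^{p+q}_G(X,\F_\ell)
\]
collapses by mod~$\ell$ acyclicity of $X$ and gives an isomorphism of $H^*(G,\F_\ell)$-modules $H^*_G(X,\F_\ell) \cong H^*(G,\F_\ell)$. Since $G$ acts trivially on $X^G$, we have $[X^G/G] = X^G \times BG$, and Künneth yields $H^*_G(X^G,\F_\ell) \cong H^*(X^G,\F_\ell) \otimes_{\F_\ell} H^*(G,\F_\ell)$.

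The main step is to show that the restriction $i^* \colon H^*_G(X,\F_\ell) \to H^*_G(X^G,\F_\ell)$ is injective with cokernel bounded in degree. Let $j \colon U := X \setminus X^G \hookrightarrow X$ be the open complement. Applying $R\Gamma([X/G], -)$ to the distinguished triangle $j_!\F_\ell \to \F_\ell \to i_*\F_\ell \to$ on $[X/G]$ sandwiches $i^*$, in the resulting long exact sequence, between terms of the form $H^*([X/G], j_!\F_\ell)$. Since $G$ acts freely on $U$, the quotient $[U/G]$ is an algebraic space of finite type over $k$, so by general constructibility its $\F_\ell$-cohomology, and hence the hypercohomology of $j_!\F_\ell$ on $[X/G]$, is finite-dimensional and vanishes in sufficiently high degree. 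Thus both $\Ker i^*$ and $\Coker i^*$ are bounded in degree. Injectivity now follows from the module structure: $\Ker i^*$ is a bounded-degree $H^*(G,\F_\ell)$-submodule of $H^*_G(X,\F_\ell) \cong H^*(G,\F_\ell)$, but the latter has no nonzero bounded-degree submodule since multiplication by $u$ is injective. Hence $\Ker i^* = 0$.

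The conclusion is then a dimension count. In degrees $n$ exceeding the top degree of $\Coker i^*$, injectivity and bounded cokernel give
\[
\dim_{\F_\ell} H^n(G,\F_\ell) = \dim_{\F_\ell} \bigl(H^*(X^G,\F_\ell) \otimes_{\F_\ell} H^*(G,\F_\ell)\bigr)^n,
\]
and, using $\dim_{\F_\ell} H^m(G,\F_\ell) = 1$ for all $m \ge 0$ and that this total dimension stabilizes to $\sum_j \dim_{\F_\ell} H^j(X^G,\F_\ell)$ as $n \to \infty$, this forces $\sum_j \dim_{\F_\ell} H^j(X^G,\F_\ell) = 1$. Combined with $H^0(X^G,\F_\ell) \ne 0$ (from $X^G \ne \emptyset$), this yields $H^0(X^G,\F_\ell) = \F_\ell$ and $H^j(X^G,\F_\ell) = 0$ for $j > 0$, so $X^G$ is mod~$\ell$ acyclic. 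The main obstacle is justifying the equivariant cohomological manipulations above --- the Leray/Künneth identifications, the distinguished triangle on the stack $[X/G]$, and the finite-dimensionality of the cohomology of $[U/G]$ --- uniformly in characteristic, including the wild case $\ell = p$ where one must invoke Artin--Schreier–type constructibility in place of Kummer theory.
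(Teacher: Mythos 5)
Your proposal is essentially the paper's proof: reduce to $G$ cyclic of order $\ell$, view $H^*_G(-)=H^*([-/G],\F_\ell)$ as a graded module over $R=H^*(G,\F_\ell)$, use the triangle $j_!\F_\ell\to\F_\ell\to i_*\F_\ell$ and the boundedness (in degree) of $H^*_G(X,j_!\F_\ell)$ to deduce injectivity of restriction and then do a rank/dimension count (the paper packages the last step as Lemma \ref{6.3.4}). One caution on the step you flagged: $H^*([X/G],j_!\F_\ell)$ is \emph{not} the cohomology of $[U/G]$, and for $\ell=p$ the cohomology of $U/G$ need not even be finite-dimensional in each degree --- what is actually used is only a degree bound, obtained in the paper by identifying $R\Gamma_G(X,j_!\F_\ell)\simeq R\Gamma(X/G,v_!\F_\ell)$ as in \eqref{(6.3.2)} and then invoking $\cd_\ell(X/G)<\infty$ (Lemma \ref{6.3.5}), which requires a separate argument (Chow's lemma plus the scheme case) rather than ``general constructibility.''
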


Here is a proof using equivariant cohomology, as in \cite{Bo}. By
d\'{e}vissage, as in the proof of \ref{6.1}, we may assume that $G$ is a
cyclic group of order $\ell$. If $S/k$ is an algebraic space acted on by $G$
and $F$ a $G$-${\mathbb{F}}_{\ell}$-sheaf on $S$, we denote by
$R\Gamma_G(S,F)$ the complex $R\Gamma([S/G],F)$ (where $[S/G]$ is the
associated Deligne-Mumford stack with its \'{e}tale topology), which
can be calculated as
$R\Gamma(S_{\bullet},F_{\bullet})$, where $S_{\bullet}=\cosk_0(S/[S/G])$ is the simplicial algebraic space
defined by the action of $G$ on $S$ and $F_{\bullet}$ the corresponding
simplicial sheaf on $S_{\bullet}$. We have $R\Gamma(S,F) \in D^+({\mathbb{F}}_{\ell}[G])$ and $R\Gamma_G(S,F) = R\Gamma(G,R\Gamma(S,F))$. When
$F$ is the constant sheaf ${\mathbb{F}}_{\ell}$, we write $R\Gamma_G(S)$ for $R\Gamma_G(S,F)$. The
projection $S \rightarrow \Spec k$ makes $H^*_G(S) =
\oplus_{i \ge 0} H^i_G(S)$ into a graded algebra over the graded ${\mathbb{F}}_{\ell}$-algebra
\[
R = H^*_G(\Spec k) = H^*(G,{\mathbb{F}}_{\ell}),
\]
and $H^*_G(S,F)$ into a graded module over $R$.

\begin{sssect}\label{sss}\stepcounter{equation}
Recall that when $\ell = 2$, $R$ is a polynomial algebra
${\mathbb{F}}_{\ell}[x]$ in one generator of degree 1, and when
$\ell > 2$, $R$ is the graded tensor product of the algebra of dual
numbers ${\mathbb{F}}_{\ell}[x]/(x^2)$ with $x$ of degree 1 by a
polynomial algebra ${\mathbb{F}}_{\ell}[y]$ with $y$ of degree~$2$
\cite[XII 7]{CE}.
\end{sssect}

Let $Y = X^G$, $U = X-Y$, $u \colon U \rightarrow X$ the inclusion.
The (equivariant) short exact sequence $0 \rightarrow
u_!{\mathbb{F}}_{\ell}{}_U \rightarrow {\mathbb{F}}_{\ell}{}_X
\rightarrow {\mathbb{F}}_{\ell}{}_{Y} \rightarrow 0$ gives a long
exact sequence of equivariant cohomology \beq\label{(6.3.1)} \dots
\rightarrow H^*_G(X,u_!{\mathbb{F}}_{\ell}{}) \rightarrow H^{*}_G(X)
\rightarrow H^{*}_G(Y) \rightarrow
H^{*+1}_G(X,u_!{\mathbb{F}}_{\ell}{}) \rightarrow \dotsb, \eeq where
$H^*_G = \oplus_{i}H^i_G$. This is an exact sequence of graded
$R$-modules. Consider the commutative diagram \eqref{(3.12.1)}. As
$G$ is cyclic of order $\ell$, $G$ acts freely on $U$, so
$(f_U)_*{\mathbb{F}}_{\ell}{}_U$ is locally free of rank one over
${\mathbb{F}}_{\ell}[G]$, hence
$R\Gamma(G,v_!(f_U)_*{\mathbb{F}}_{\ell}{}_U) =
v_!(\mathbb{F}_{\ell})_{U/G}$. Therefore
\begin{multline}
R\Gamma(G,R\Gamma(X,u_!{\mathbb{F}}_{\ell})) =
R\Gamma(G,R\Gamma(X/G,f_*u_!\mathbb{F}_\ell)) =
R\Gamma(X/G,R\Gamma(G,f_*u_!\mathbb{F}_\ell))\\
= R\Gamma(X/G,R\Gamma(G,v_!(f_U)_*{\mathbb{F}}_{\ell})) =
R\Gamma(X/G,v_!{\mathbb{F}}_{\ell}), \label{(6.3.2)}
\end{multline}
so that
\[
H^*_G(X,u_!{\mathbb{F}}_{\ell}) = H^*(X/G,v_!{\mathbb{F}}_{\ell})
\]
is a graded module of bounded degree, as $\cd_{\ell}(X/G)$
is finite by Lemma \ref{6.3.5} below (whether $\ell$ is different from~$p$ or not). As
$R\Gamma(X,{\mathbb{F}}_{\ell}) = {\mathbb{F}}_{\ell}$, we have $H^*_G(X) = R$.
Therefore, in \eqref{(6.3.1)} the map $H^*_G(X,u_!{\mathbb{F}}_{\ell}{}_U) \rightarrow
H^{*}_G(X)$ vanishes, and \eqref{(6.3.1)} boils down to a short exact
sequence
\[
0  \rightarrow H^{*}_G(X)
\rightarrow H^{*}_G(Y) \rightarrow H^{*+1}(X/G,v_!{\mathbb{F}}_{\ell})
\rightarrow 0,
\]
which can be rewritten
\begin{equation}
0 \rightarrow R \rightarrow
R \otimes_{{\mathbb{F}}_{\ell}} H^*(Y) \rightarrow
H^{*+1}(X/G,v_!{\mathbb{F}}_{\ell}) \rightarrow 0, \label{(6.3.3)}
\end{equation}
since, by K\"unneth's formula for $BG\times Y$,
\begin{equation}\label{e.Kun}
H^{*}_G(Y) = H^*_G \otimes_{{\mathbb{F}}_{\ell}}
H^*(Y) = R \otimes_{{\mathbb{F}}_{\ell}} H^*(Y).
\end{equation}
By Lemma \ref {6.3.4} below, this implies that $H^*(Y)$ is free of rank
1 over ${\mathbb{F}}_{\ell}$, hence $Y$ is mod~$\ell$ acyclic.

\begin{Lemma}\label{6.3.5}
Let $X$ be a separated algebraic space of dimension $d$ of finite type over $k$. Then the $\ell$-cohomological dimension $\cd_\ell X$ is at most $2d$ (\resp $d$) if $\ell\ne p$ (\resp $\ell=p$).
\end{Lemma}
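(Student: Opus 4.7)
My plan is to reduce to the case of schemes, which is classical, via Noetherian induction on $d = \dim X$. For a separated scheme of finite type over $k$ of dimension $d$, the bound $\cd_\ell \le 2d$ when $\ell \ne p$ is Artin's theorem (SGA~4 X.4.3), and the bound $\cd_p \le d$ when $\ell = p$ follows by combining the Artin-Schreier short exact sequence $0 \to \F_p \to \cO_X \xrightarrow{F-1} \cO_X \to 0$ with Grothendieck's vanishing of coherent cohomology in degrees $> d$.

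For the algebraic-space case, the plan is to proceed by Noetherian induction on $d$, the case $d \le 0$ being immediate since a $0$-dimensional separated algebraic space of finite type over $k$ is a finite disjoint union of points. By Knutson's theorem, every separated algebraic space of finite type over $k$ contains a dense open subspace $j \colon V \hookrightarrow X$ that is a quasi-projective scheme; let $i \colon Z \hookrightarrow X$ be its reduced complement, so $\dim Z < d$. For a torsion sheaf $F$ on $X_\et$, the short exact sequence $0 \to j_! F|_V \to F \to i_* F|_Z \to 0$ together with the inductive bound $H^i(Z, F|_Z) = 0$ for $i > 2(d-1)$ (resp.\ $d-1$) reduces matters to showing $H^i(X, j_! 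F|_V) = 0$ for $i > 2d$ (resp.\ $d$). For this, one uses the distinguished triangle $j_! F|_V \to Rj_* F|_V \to i_* i^* Rj_* F|_V \to$, noting that $R\Gamma(X, Rj_* F|_V) = R\Gamma(V, F|_V)$ is bounded by the scheme case for $V$, while $R\Gamma(X, i_* i^* Rj_* F|_V) = R\Gamma(Z, i^* Rj_* F|_V)$ is controlled by the hypercohomology spectral sequence $E_2^{p,q} = H^p(Z, i^* R^q j_* F|_V) \Rightarrow H^{p+q}$ combined with the classical local cohomological-dimension bounds on the stalks $(R^q j_* F|_V)_{\bar z}$ (computed as cohomology of punctured strictly-henselian neighborhoods of $\bar z$ in the scheme $V$).

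The main obstacle will be the bookkeeping at the last step: one must stratify $Z$ by local codimension in $X$ so that the support of $R^q j_* F|_V$ drops dimension sufficiently with $q$, ensuring the total degree $p + q$ in the spectral sequence is bounded by $2d$ (resp.\ $d$). Arguments of this type are standard in \'etale cohomology and appear in similar form in the literature, notably in the Deligne-Mumford stack setting.
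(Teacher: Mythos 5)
Your approach differs genuinely from the paper's, and the step you flag as ``the main obstacle'' is a real gap rather than bookkeeping. You pass from $X$ to a dense open subscheme $V$ (Knutson) and attempt to bound $H^i(X, j_!\cF|_V)$ via the triangle $j_!\cF|_V \to Rj_*\cF|_V \to i_*i^*Rj_*\cF|_V$. The middle term is controlled by the scheme case on $V$, but the third term $R\Gamma(Z, i^*Rj_*\cF|_V)$ is the crux: a naive estimate from the inductive hypothesis on $Z$ together with boundedness of $Rj_*$ overshoots $2d$ badly (roughly $2(d-1)$ from $Z$ plus the local cohomological dimension of a punctured henselization). To close the gap you need a precise local vanishing statement of the form: at a geometric point $\bar z$ of codimension $c$ in $X$, the stalk $(R^qj_*\cF|_V)_{\bar z}$ vanishes for $q > 2c-1$ (with a $p$-analogue when $\ell = p$), followed by a stratification of $Z$ by codimension. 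Such bounds on cohomology of punctured strictly henselian local schemes exist, but they are a nontrivial ingredient in their own right --- they constitute the actual content of the argument and cannot be waved through --- and in the $\ell=p$ case the degree arithmetic is tight and would need care.

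The paper bypasses $Rj_*$ entirely via a much cleaner reduction. By Chow's lemma for algebraic spaces \cite[IV.3.1]{Knutson}, there is a \emph{proper} morphism $\pi\colon X'\to X$ from a \emph{scheme} $X'$ that is an isomorphism over a dense open $U\subset X$. Since $\pi$ is proper and an isomorphism over $U$, proper base change gives $j_!\cG\cong R\pi_*(j'_!\cG)$ for every $\ell$-torsion sheaf $\cG$ on $U$: at a geometric point of $X\setminus U$, the stalk of $R\pi_*(j'_!\cG)$ is the cohomology of $j'_!\cG$ on the corresponding fiber of $\pi$, which lies entirely in $X'\setminus U$ and hence vanishes. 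Thus $H^i(X, j_!\cG)=H^i(X', j'_!\cG)$ and the scheme case (SGA 4, X 4.3, resp.\ X 5.2) applies directly, with the complement of $U$ handled by the same Noetherian induction on $d$ that you propose. No analysis of $Rj_*$ at the boundary is needed.
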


We prove this lemma by induction on $d$. By Chow's lemma \cite[IV.3.1]{Knutson}, we can choose
\[\xymatrix{&X'\ar[d]^\pi\\
U\ar@{^{(}->}[ru]^{j'}\ar@{^{(}->}[r]^j & X}\]
where $X'$ is a scheme, $\pi$ is proper and is an isomorphism over a dense open subscheme $U$ of $X$. Let $\cF$ be an $\ell$-torsion sheaf on $X$. Considering the short exact sequence
\[0\to j_!j^*\cF \to \cF \to Q \to 0\]
and applying the induction hypothesis to $Q$, it is enough to show that for any $\ell$-torsion sheaf $\cG$ on $U$, we have
\[H^i(X,j_!\cG)=0, \text{ $i>2d$ (\resp $i>d$).}\]
Since $\pi$ is proper, we have $j_!\cG \simeq R\pi_* j'_!\cG$, so we have $H^i(X,j_!\cG)=H^i(X',j'_!\cG)$. Hence the lemma follows from the scheme case, which is well-known \cite[X 4.3]{SGA4} (\resp \cite[X 5.2]{SGA4}).

\begin{Lemma}\label{6.3.4}
Let $0 \rightarrow L \rightarrow M \rightarrow N \rightarrow 0$ be an
exact sequence of graded $R$-modules, with $L$ and $M$ free,
and $N$ of bounded degree. Then $L$ and $M$
have the same rank over~$R$.
\end{Lemma}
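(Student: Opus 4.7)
The plan is to kill $N$ while preserving the ranks of $L$ and $M$ by inverting a well-chosen regular element of $R$.

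In both cases of \ref{sss}, $R$ contains a homogeneous element $z$ of strictly positive degree that is regular on $R$: take $z=x$ of degree~$1$ when $\ell=2$ (so $R=\F_\ell[x]$), and $z=y$ of degree~$2$ when $\ell>2$, since writing $R=\F_\ell[y]\otimes_{\F_\ell}\F_\ell[x]/(x^2)$ exhibits $R$ as a free graded $\F_\ell[y]$-module on $\{1,x\}$. Being regular on $R$, the element $z$ is regular on every graded free $R$-module, so localization at $z$ preserves ranks: $L[z^{-1}]$ (\resp $M[z^{-1}]$) is a graded free $R[z^{-1}]$-module of the same rank as $L$ (\resp $M$).

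Next, since $N$ is of bounded degree, multiplication by $z$ is locally nilpotent on $N$: for any homogeneous $n\in N^j$ we have $z^{k}n\in N^{j+k\deg z}$, which vanishes for $k$ large enough. Hence $N[z^{-1}]=0$.

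Applying the exact functor $-\otimes_R R[z^{-1}]$ to $0\to L\to M\to N\to 0$ therefore yields an isomorphism $L[z^{-1}]\simeq M[z^{-1}]$ of graded $R[z^{-1}]$-modules. Because rank is well defined over the nonzero graded ring $R[z^{-1}]$ (a graded Laurent polynomial algebra, tensored with the exterior factor when $\ell>2$), we conclude $\rk L=\rk M$. The proof carries no real obstacle; the only point is to exhibit, in each of the two cases of \ref{sss}, the correct homogeneous non-zero-divisor of positive degree, which is immediate from the explicit description of $R$.
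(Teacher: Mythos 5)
Your proof is correct and follows essentially the same approach as the paper: the paper's one-line proof is precisely that $N \otimes_R R[x^{-1}] = 0$ (for $\ell = 2$) or $N \otimes_R R[y^{-1}] = 0$ (for $\ell > 2$), and your argument just spells out the remaining routine steps (localization is exact, the chosen element is regular, rank is preserved and well-defined over the localized ring).
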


Indeed, if $\ell = 2$ (resp. $\ell > 2$), $N \otimes_R R[x^{-1}] =
0$ (resp. $N \otimes_R R[y^{-1}] = 0$).

\begin{Remark}
For $X$ and $\ell$ as in \ref{6.2}, $X$ is mod~$\ell$ acyclic if and only if
\[\sum_i\dim H^i(X,\F_\ell)=1.\]
One can consider the analogous condition \beq\label{e.Hc}
\sum_i\dim H^i_c(X,\F_\ell)=1.
\eeq
If $\ell\neq p$, \eqref{e.Hc} is equivalent to $R\Gamma_c(X,\F_\ell)\simeq\F_\ell[-2d]$ and to $R\Gamma_c(X,\Z_\ell)\simeq\Z_\ell[-2d]$, which imply that $X$ is irreducible. Here $d=\dim X$. If $\ell\neq p$ and $X$ is smooth over $k$, then \eqref{e.Hc} is equivalent to $X$ being mod~$\ell$ acyclic by Poincar\'e duality.

For an arbitrary $\ell$, assume that an $\ell$-group $G$ acts on $X$. Using arguments similar to the proof of \ref{6.3}, one can show that \eqref{e.Hc} implies $\sum_i \dim H^i_c(X^G,\F_\ell)=1$. The case $\ell\neq p$ was obtained by Symonds \cite[4.3]{Symonds}, using a different method based on the theorem of Rickard mentioned in \ref{3.11} (c).
\end{Remark}

\begin{Corollary}\label{6.4}
Let $X$ and $\ell$ be as in \ref{6.2}. Assume that $X$ is mod $\ell$
acyclic, and that $X$ is endowed with an
action of a finite group $G$ by $k$-automorphisms.
Then $X/G$
is mod~$\ell$ acyclic.
\end{Corollary}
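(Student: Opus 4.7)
The plan is to reduce to the case $G\simeq\Z/\ell$ by d\'evissage, and then to deduce the mod $\ell$ acyclicity of $X/G$ from that of $Y=X^G$ (given by Theorem \ref{6.3}), exploiting the short exact sequence (6.3.3) obtained in the proof of that theorem.

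Three cases will be combined for the d\'evissage. First, when $|G|$ is prime to $\ell$, the inclusion $\F_\ell=(\pi_*\F_\ell)^G\hookrightarrow \pi_*\F_\ell$ of the $G$-invariants into the finite direct image along $\pi\colon X\to X/G$ is split by the averaging operator $\frac{1}{|G|}\sum_{g\in G} g$, yielding $H^*(X/G,\F_\ell)\simeq H^*(X,\F_\ell)^G=\F_\ell$ concentrated in degree $0$. Second, when $G$ is an $\ell$-group of order $>\ell$, I would pick a normal subgroup $H\triangleleft G$ with $[G:H]=\ell$; induction on $|G|$ gives $X/H$ mod $\ell$ acyclic, and applying the $\Z/\ell$ case to the $G/H$-action on $X/H$ yields $X/G$ mod $\ell$ acyclic. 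Third, for general $G$ with $\ell\mid|G|$, I would combine the two via a Sylow $\ell$-subgroup $P\subseteq G$ with normalizer $N=N_G(P)$: $X/P$ is acyclic by the $\ell$-group case, $X/N=(X/P)/(N/P)$ is acyclic by the prime-to-$\ell$ case (as $|N/P|$ is prime to $\ell$), and since $[G:N]\equiv 1\pmod\ell$ by Sylow's theorems, a trace for the finite map $X/N\to X/G$ of generic degree invertible in $\F_\ell$ exhibits $H^*(X/G,\F_\ell)$ as a direct summand of $H^*(X/N,\F_\ell)=\F_\ell$, which forces $X/G$ to be mod $\ell$ acyclic. The hard part here will be the construction of this transfer for a potentially non-flat finite quotient morphism; I plan to invoke the general trace for finite surjective morphisms in \'etale cohomology when the generic degree is invertible in the coefficient ring.

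For the remaining case $G\simeq\Z/\ell$, Theorem \ref{6.3} gives $Y=X^G$ mod $\ell$ acyclic. Writing $v\colon U/G\hookrightarrow X/G$ for the open immersion complementary to $Y\hookrightarrow X/G$ (with $U=X-Y$ and using $Y/G=Y$), the short exact sequence (6.3.3) from the proof of \ref{6.3} reads
$$0\to R\to R\otimes_{\F_\ell}H^*(Y,\F_\ell)\to H^{*+1}(X/G,v_!\F_\ell)\to 0.$$
Substituting $H^*(Y,\F_\ell)=\F_\ell$ (concentrated in degree $0$) collapses this to $0\to R\to R\to H^{*+1}(X/G,v_!\F_\ell)\to 0$. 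The left map is the restriction $H^*_G(X)\to H^*_G(Y)$ along $i\colon Y\hookrightarrow X$; under the tautological identifications $H^*_G(X)\simeq R\simeq H^*_G(Y)$ coming from the structure maps $a_X\colon X\to\Spec k$ and $a_Y=a_X\circ i$, functoriality of $H^*_G$ forces this restriction to be the identity on $R$, whence $H^*(X/G,v_!\F_\ell)=0$. The open-closed long exact sequence
$$\ldots\to H^*(X/G,v_!\F_\ell)\to H^*(X/G,\F_\ell)\to H^*(Y,\F_\ell)\to\ldots$$
for $U/G\hookrightarrow X/G\hookleftarrow Y$ then yields $H^*(X/G,\F_\ell)\simeq H^*(Y,\F_\ell)=\F_\ell$ concentrated in degree $0$, completing the proof.
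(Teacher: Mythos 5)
Your devissage and the handling of the first two cases are essentially sound. For $G$ cyclic of order $\ell$, your route through the graded-module sequence \eqref{(6.3.3)} does work (one must also note $H^0(X/G,v_!\F_\ell)=0$, which falls out of the degree $-1$ piece of the same short exact sequence); the paper's own argument is a touch more direct, observing from \ref{6.3} that the restriction $R\Gamma(X,\F_\ell)\to R\Gamma(X^G,\F_\ell)$ is already an isomorphism, hence $R\Gamma(X,u_!\F_\ell)=0$ and, via \eqref{(6.3.2)}, $R\Gamma(X/G,v_!\F_\ell)=0$, with no need to pass through $H^*_G$. The prime-to-$\ell$ case is fine, granting $(\pi_*\F_\ell)^G\simeq\F_\ell$, which is exactly \eqref{e.Ginv} of the paper.

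The general case, however, has a real gap precisely where you flagged it. The morphism $g\colon X/N\to X/G$ is finite and surjective but in general not flat, and $X/G$ need not be normal. There is no off-the-shelf ``trace for finite surjective morphisms in \'etale cohomology'' at this level of generality; the trace in SGA~4, IX~5.1.4 / XVII is constructed for finite \emph{flat} (or finite \'etale) morphisms, and the various extensions to non-flat finite morphisms rely on normality or regularity hypotheses on the target that you do not have here. Your appeal to Sylow's congruence $[G:N]\equiv 1\pmod\ell$ is also a red herring: all one needs is that the degree be prime to $\ell$, which holds already for $[G:L]$ with $L$ an $\ell$-Sylow subgroup, so the detour through the normalizer buys nothing and creates the problematic map.

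The paper fixes exactly this issue by a different construction of the transfer. Taking $L$ an $\ell$-Sylow subgroup, one forms the contracted product $h\colon X\wedge^L G\to X$, which \emph{is} finite \'etale of degree $d=(G:L)$ prime to $\ell$, so the SGA~4 trace applies on the nose and the composition $\F_\ell\to h_*\F_\ell\to\F_\ell$ is multiplication by $d$. Pushing this forward along $f\colon X\to X/G$ and passing to $G$-invariants, and using $(f_*\F_\ell)^G\simeq\F_\ell$ twice (once for $X/G$, once for $X/L$), one exhibits $\F_{\ell,X/G}$ as a direct summand of $g_*\F_{\ell,X/L}$, hence $H^*(X/G,\F_\ell)$ as a direct summand of $H^*(X/L,\F_\ell)=\F_\ell$. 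If you replace your normalizer-plus-transfer step by this $X\wedge^L G$ argument (applied to $L$ rather than $N$, and using $X/L$ mod $\ell$ acyclic from your case two), your proof goes through; as written, the transfer you invoke does not exist in the stated generality.
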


\begin{proof}
Let $f\colon X\to Y=X/G$ be the projection. Let us first show
\begin{equation}\label{e.Ginv}
(f_*\F_\ell)^G\simeq \F_\ell.
\end{equation}
Let $Y'\to Y$ be an \'etale morphism of finite type. We have
$(f_*\F_\ell)(Y)\simeq\F_\ell^{\pi_0(X')}$, where $X'=X\times_Y Y'$.
Since the projection $X'\to Y'$ identifies $Y'$ with the quotient of
$X'$ by $G$,
\[(f_*\F_\ell)^G(Y')\simeq
\F_\ell^{\pi_0(X')/G}\simeq \F_\ell^{\pi_0(Y')}\simeq \F_\ell(Y').\]

(a) Case where $G$ is an $\ell$-group. If $H$ is a normal
subgroup of $G$, $G$ acts on $X/H$ through $G/H$ and $X/G =
(X/H)/(G/H)$. Therefore we may assume $G$ cyclic of order $\ell$. In
this case, by \ref{6.3} the restriction map
$R\Gamma(X,{\mathbb{F}}_{\ell}) \rightarrow
R\Gamma(X^G,{\mathbb{F}}_{\ell})$ is an isomorphism, hence
$R\Gamma(X,u_!{\mathbb{F}}_{\ell}) = 0$ with the notations of
\eqref{(6.3.1)}. Therefore, by \eqref{(6.3.2)}
$R\Gamma(X/G,v_!{\mathbb{F}}_{\ell}) = 0$, hence the restriction map
$R\Gamma(X/G,{\mathbb{F}}_{\ell}) \rightarrow
R\Gamma(X^G,{\mathbb{F}}_{\ell})$ is an isomorphism. Finally, by
\ref{6.3}, $R\Gamma(X^G,{\mathbb{F}}_{\ell})= {\mathbb{F}}_{\ell}$.

(b) General case. Let $L$ be an $\ell$-Sylow subgroup of $G$. In
order to relate $H^*(X/G,\F_\ell)$ to $H^*(X/L,\F_\ell)$, we
consider the commutative square
\[\xymatrix{X \wedge^L G \ar[r]^h\ar[d]^{f'} & X\wedge^G G\ar[d]^f\\
X\wedge^L \{1\}\ar[r]^g & X\wedge^G \{1\}}
\]
See \ref{3.12} for the
definition of the contracted products.
Since $h$ is a finite \'etale cover of fibers isomorphic to $G/L$,
hence of degree $d=(G:L)$ prime to~$\ell$, the composition
\[(\F_\ell)_X \to h_*(\F_\ell)_{X\wedge^L G} \to (\F_\ell)_X\]
of the adjunction map and the trace map is multiplication by $d$ %\cite[XVII 6.2.3]{SGA4}
\cite[IX 5.1.4]{SGA4}. Applying $f_*$ and taking $G$-invariants, we
see that the composition
\[(f_*(\F_\ell)_X)^G \to (f_* h_*(\F_\ell)_{X\wedge^L G})^G \to (f_*(\F_\ell)_X)^G\]
is again multiplication by $d$. Hence $(\F_\ell)_{X/G} \simeq
(f_*(\F_\ell)_X)^G$ \eqref{e.Ginv} is a direct factor of
\[g_*(\F_\ell)_{X/L}
\simeq g_*(f'_*(\F_\ell)_{X\wedge^L G})^G \simeq (f_* h_*
(\F_\ell)_{X\wedge^L G})^G.
\]
It follows that $g$ induces an injection $H^*(X/G,\F_\ell)\to
H^*(X/L, \F_\ell)$ and therefore an isomorphism
$H^*(X/G,\F_\ell)\simeq \F_\ell$ by case (a) above.
\end{proof}

There are many variants and generalizations of
\ref{6.3}. Here are two of them.

\begin{Theorem}\label{6.5} (\cf \cite[7.5 a)]{Se3})
Let $X/k$ and $\ell$ be as in \ref{6.2}. Assume that an $\ell$-group
$G$ acts on $X/k$. Let $N$ be an integer such that
$H^i(X,{\mathbb{F}}_{\ell}) = 0$ for $i > N$. Then
$H^i(X^G,{\mathbb{F}}_{\ell}) = 0$ for $i > N$.
\end{Theorem}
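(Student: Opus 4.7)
My plan is to adapt the equivariant cohomology argument used in the proof of \ref{6.3}. By the d\'evissage in \ref{6.1}---an extension $1 \to G_1 \to G \to G_2 \to 1$ of $\ell$-groups gives $X^G = (X^{G_1})^{G_2}$, so applying the theorem to $X$ with its $G_1$-action and then to $X^{G_1}$ with its $G_2$-action---we reduce to the case where $G$ is cyclic of order $\ell$. Setting $Y = X^G$, $U = X - Y$, and $u\colon U \hookrightarrow X$, I would then consider the long exact sequence of graded $R$-modules
\[ \cdots \to H^i_G(X, u_!\F_\ell) \to H^i_G(X) \to H^i_G(Y) \to H^{i+1}_G(X, u_!\F_\ell) \to \cdots \]
as in \eqref{(6.3.1)}. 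Just as in the proof of \ref{6.3}, $H^*_G(X, u_!\F_\ell) = H^*(X/G, v_!\F_\ell)$ is of bounded degree by \ref{6.3.5}, and K\"unneth \eqref{e.Kun} identifies $H^*_G(Y)$ with the free graded $R$-module $R \otimes_{\F_\ell} H^*(Y, \F_\ell)$.

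The decisive new input is the Cartan--Leray spectral sequence
\[ E_2^{p,q} = H^p(G, H^q(X, \F_\ell)) \Longrightarrow H^{p+q}_G(X, \F_\ell). \]
The hypothesis $H^q(X, \F_\ell) = 0$ for $q > N$ forces $E_2^{p,q} = 0$ in those rows, so the Leray filtration $F^\bullet$ on $H^i_G(X)$ satisfies $F^p H^i_G(X) = H^i_G(X)$ for all $p \le i - N$. The restriction morphism $H^*_G(X) \to H^*_G(Y)$ is induced by the $G$-equivariant closed immersion $Y \hookrightarrow X$, so it is compatible with the Cartan--Leray spectral sequences of $X$ and $Y$ and preserves their Leray filtrations. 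On the $Y$-side the spectral sequence degenerates, giving $F^p H^i_G(Y) = \bigoplus_{p' \ge p} R^{p'} \otimes H^{i-p'}(Y)$. In particular, $F^{i-N} H^i_G(Y) = \bigoplus_{q \le N} R^{i-q} \otimes H^q(Y)$, which is precisely the degree-$i$ part of the graded $R$-submodule $R \otimes_{\F_\ell} H^{\le N}(Y) \subset H^*_G(Y)$. Therefore the image of $H^*_G(X) \to H^*_G(Y)$ is contained in $R \otimes_{\F_\ell} H^{\le N}(Y)$.

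To conclude, I would compose with the projection onto the complementary summand and obtain a surjection
\[ \Coker\bigl(H^*_G(X) \to H^*_G(Y)\bigr) \twoheadrightarrow R \otimes_{\F_\ell} H^{>N}(Y, \F_\ell). \]
The long exact sequence embeds this cokernel into $H^{*+1}_G(X, u_!\F_\ell)$, which is of bounded degree, so $R \otimes_{\F_\ell} H^{>N}(Y)$ is also of bounded degree. But this is a free graded $R$-module, and since $R$ is nonzero in arbitrarily high degrees, a bounded free graded $R$-module must vanish. Hence $H^q(Y, \F_\ell) = 0$ for $q > N$, as required. The step demanding the most care is the filtration-preservation claim: it is a standard functoriality property of the Cartan--Leray spectral sequence under $G$-equivariant maps, but it should be checked against the definition of $H^*_G$ used here (namely $R\Gamma([-/G], \F_\ell)$).
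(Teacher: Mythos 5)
Your proposal is correct and follows essentially the same line as the paper's proof: reduction to $G$ cyclic of order $\ell$, boundedness of $H^*_G(X,u_!\F_\ell)$ via \ref{6.3.5} and \eqref{(6.3.2)}, the Cartan--Leray spectral sequences for $X$ and $Y$ with the functorial morphism between them, and the resulting filtration constraint $F^{n-N}H^n_G(X) = H^n_G(X)$. The paper phrases the endgame as ``$r$ is a TN-isomorphism, so for $n$ large $H^n_G(Y) = F^{n-N}H^n_G(Y)$,'' whereas you argue that $\Coker(r)$ surjects onto the free $R$-module $R\otimes H^{>N}(Y)$ and is bounded by the long exact sequence, forcing $H^{>N}(Y)=0$; these are equivalent reformulations of the same idea.
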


We may assume $G$ cyclic of order $\ell$. Serre's proof of \ref{6.3}
and \ref{6.5} (\loccit) makes no use of equivariant cohomology, but
instead exploits the action of $\F_\ell[G]$ on
$f_*{\mathbb{F}}_{\ell}$, with the notations above. It is also easy
to prove \ref{6.5} along the lines of the above proof of \ref{6.3}.
Again, the key point is that, by \ref{6.3.5} applied to $Y=X^G$, the
restriction homomorphism
\[
r \colon H^*_G(X) \rightarrow H^*_G(Y)=R
\otimes_{{\mathbb{F}}_{\ell}} H^*(Y)
\]
\eqref{e.Kun} is a TN-isomorphism of graded $R$-modules (in the
sense of \cite[3.4]{EGAII}), \ie there exists an integer $n_0$ such
that $r_n \colon H^n_G(X) \rightarrow H^n_G(Y)$ is an isomorphism
for $n \ge n_0$. The source and target of $r$ are the abutments of
spectral sequences
\begin{align*}
E(X) \mathpunct{:} E^{ij}_2 = H^i(G,H^j(X)) &\Rightarrow
H^{i+j}_G(X),
\\
E(Y) \mathpunct{:} E^{ij}_2 = H^i(G,H^j(Y)) &\Rightarrow
H^{i+j}_G(Y),
\end{align*}
the second one being degenerate at $E_2$, and $r$ underlies a morphism of spectral sequences $E(X) \rightarrow
E(Y)$. Take $n \ge \sup(n_0,N)$. Then $H^n_G(X) = F^{n-N}H^n_G(X)$
(where $F^{\bullet}$ denotes the filtration on the abutment). As
$r_n$ is a filtered isomorphism, this implies that $H^n_G(Y) =
F^{n-N}H^n_G(Y)$, and consequently that $H^j(Y) = 0$ for $N < j \le
n$.

\begin{Theorem}\label{6.6}
Let $k$ and $\ell$ be as in \ref{6.2}, $G$ be an $\ell$-group,
$f\colon X\to X'$ be a $G$-equivariant morphism of algebraic
$k$-spaces which are separated of finite type. We denote by
$R\Gamma(X'/X,\F_\ell)$ the complex defining the relative cohomology
of $X'$ modulo $X$. Assume that
$R\Gamma(X'/X,\F_\ell)\simeq\F_\ell[-N]$. Then there exists $M\le N$
such that $\ell(N-M)$ is even and $R\Gamma(X'^G/X^G,\F_\ell)\simeq
\F_\ell[-M]$.
\end{Theorem}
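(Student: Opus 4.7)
The plan is to extend the equivariant-cohomology method used for Theorems \ref{6.3} and \ref{6.5} to the relative setting. First, by d\'evissage --- any $\ell$-group $G$ admits a normal subgroup $G_1$ of order $\ell$ (using nontriviality of the center of a $p$-group), $X^G=(X^{G_1})^{G/G_1}$, and both the inequality $M\le N$ and the parity of $\ell(N-M)$ are preserved under such compositions --- I reduce to $G=\Z/\ell$. Set $Y=X^G$, $Y'=X'^G$, $U=X-Y$, $U'=X'-Y'$; then $G$ acts freely on $U$ and $U'$.

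The first step is to show that the restriction map
\[
r\colon R\Gamma_G(X'/X,\F_\ell)\longrightarrow R\Gamma_G(Y'/Y,\F_\ell)
\]
induces an isomorphism on cohomology in all sufficiently high degrees. The fiber of $r$ is the total fiber of the square whose columns are $f^*\colon R\Gamma_G(X')\to R\Gamma_G(X)$ and $(f|_Y)^*\colon R\Gamma_G(Y')\to R\Gamma_G(Y)$; by the octahedral axiom this total fiber also equals the fiber of $R\Gamma_G(X',u'_!\F_\ell)\to R\Gamma_G(X,u_!\F_\ell)$. Both terms of the latter are identified, as in \eqref{(6.3.2)}, with $R\Gamma(X'/G,v'_!\F_\ell)$ and $R\Gamma(X/G,v_!\F_\ell)$ respectively, and by Lemma \ref{6.3.5} both have cohomology vanishing in high degrees; hence so does the fiber of $r$.

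Next I compute the two sides of $r$. Since $G$ is an $\ell$-group and $|\F_\ell^\times|=\ell-1$ is coprime to $|G|$, the $G$-action on the one-dimensional complex $R\Gamma(X'/X,\F_\ell)\simeq\F_\ell[-N]$ is trivial, so $R\Gamma_G(X'/X,\F_\ell)\simeq R[-N]$ where $R=H^*(G,\F_\ell)$. Since $G$ acts trivially on $Y$ and $Y'$, $R\Gamma_G(Y'/Y,\F_\ell)\simeq D\otimes_{\F_\ell}R$ with $D=R\Gamma(Y'/Y,\F_\ell)$ concentrated in non-negative degrees. Using $\dim_{\F_\ell}R^p=1$ for every $p\ge 0$ (see \ref{sss}), the high-degree isomorphism forces $\sum_i\dim H^i(D)=1$, so $D\simeq\F_\ell[-M]$ for a unique $M\ge 0$. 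This already yields $R\Gamma(X'^G/X^G,\F_\ell)\simeq\F_\ell[-M]$, and $R\Gamma_G(Y'/Y,\F_\ell)\simeq R[-M]$ as graded $R$-modules.

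To finish, I will exploit the fact that $r$ is $R$-linear, hence induces a morphism of graded free rank-one $R$-modules $R[-N]\to R[-M]$. Such a map is multiplication by some element $\rho\in R^{N-M}$ --- the image of the degree-$N$ generator --- and must be an isomorphism in high degrees. If $M>N$, then $R^{N-M}=0$ forces $\rho=0$ and $r=0$ asymptotically, a contradiction; so $M\le N$. For $\ell=2$, $R=\F_2[x]$ and multiplication by $\rho=x^{N-M}$ is always an isomorphism. For $\ell$ odd, $R=\F_\ell[y]\otimes\Lambda(x)$ with $|y|=2$, $|x|=1$; if $N-M$ were odd, then $\rho$ would be a scalar multiple of $xy^{(N-M-1)/2}$, and since $x^2=0$, multiplication by $\rho$ would vanish on every element of odd internal degree of $R$, contradicting high-degree bijectivity. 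Hence $N-M$ is even when $\ell$ is odd, i.e.\ $\ell(N-M)$ is even in all cases. The main technical point is this final $R$-module computation, but it is elementary given the explicit structure of $R$ recalled in \ref{sss}.
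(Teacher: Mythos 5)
Your proof is correct and follows essentially the same route as the paper's: d\'evissage to $G=\mathbb Z/\ell$, identification of the fiber of the restriction map with $R\Gamma(G,C)$ (you via total fibers of a square, the paper via a $9$-diagram, which are the same thing), boundedness of $H^*(G,C)$ via \eqref{(6.3.2)} and \ref{6.3.5}, the identification $H^*_G(X'/X)\simeq R(-N)$ from the triviality of the $G$-action on a one-dimensional $\F_\ell$-space, and then the structure of $R$ to get $M\le N$ and the parity of $N-M$. The only cosmetic differences are that you extract the rank-one conclusion for $H^*(Y'/Y)$ by a degreewise dimension count in the stable range rather than through Lemma~\ref{6.3.4}, and you phrase the final step as ``multiplication by $\rho\in R^{N-M}$ is an isomorphism in high degrees'' rather than as injectivity of the map $R(-N)\to R(-M)$; these are equivalent.
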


Relative cohomology is defined in \cite[6.3]{HodgeIII} and more
generally in \cite[III 4.10]{IllCC}. We have an exact triangle
\[R\Gamma(X'/X,\F_\ell)\to R\Gamma(X',\F_\ell)\to R\Gamma(X,\F_\ell)\to.\]

\begin{proof}
We may assume that $G$ is cyclic of order~$\ell$. Let $u\colon
X-X^G\hookrightarrow X$, $u'\colon X'-X'^G\hookrightarrow X'$. The
first line of the 9-diagram
\[\xymatrix{C\ar[r]\ar[d] & R\Gamma(X'/X)\ar[r]\ar[d] & R\Gamma(X'^G/X^G)\ar[r]\ar[d] &\\
R\Gamma(X',u'_!\F_\ell)\ar[r]\ar[d] & R\Gamma(X')\ar[r]\ar[d] & R\Gamma(X'^G)\ar[r]\ar[d]&\\
R\Gamma(X,u_!\F_\ell)\ar[r]\ar[d] & R\Gamma(X)\ar[r]\ar[d] & R\Gamma(X^G)\ar[r]\ar[d]&\\
&&}
\]
gives rise to a long exact sequence
\begin{equation}\label{e.es}
\dots \rightarrow H^*(G,C) \rightarrow  H^{*}_G(X'/X)
\rightarrow H^{*}_G(X'^G/X^G) \rightarrow
H^{*+1}(G,C) \rightarrow \dotsb,
\end{equation}
where $H^{*}_G(X'/X)=H^*(G,R\Gamma(X'/X))$,
$H^{*}_G(X'^G/X^G)=H^*(G,R\Gamma(X'^G/X^G))$. By \eqref{(6.3.2)} and
\ref{6.3.5}, $H^*(G,C)$ is of bounded degree. We have a spectral
sequence
\[
 E(X'/X) \mathpunct{:} E^{ij}_2 = H^i(G, H^j(X'/X)) \Rightarrow
 H^{i+j}_G(X'/X),
\]
analogous to $E(X)$, whose $E_2$ term is concentrated on the
horizontal line of degree~$N$, and therefore degenerates at $E_2$,
yielding isomorphisms $E^{i,N}_2 \longsimto H^{i+N}_G(X'/X)$. As $G$
can't act on ${\mathbb{F}}_{\ell}$ but trivially, we get $
H^*_G(X'/X) \simeq R \otimes H^N(X'/X)$, \ie $H^*_G(X'/X) \simeq R(-N)$,
where $(-)$ is the usual shift on graded modules. Then \eqref{e.es}
boils down to an exact sequence
\[
0 \rightarrow R(-N) \rightarrow R \otimes_{{\mathbb{F}}_{\ell}}
H^*(X'^G/X^G) \rightarrow
H^{*+1}(G,C) \rightarrow 0,
\]
which shows that $R \otimes H^*(X'^G/X^G)$ is (graded) free of rank
one on $R$, and therefore that $H^*(X'^G/X^G)$ is concentrated in one
degree $M\le N$, and of dimension one over ${\mathbb{F}}_{\ell}$. If
$\ell>2$, as the graded pieces of $R$ with odd degrees are killed by
$x$ (\ref{sss}), $N-M$ must be even.
\end{proof}

Let $Y$ be an algebraic subspace of $X$. We say the pair $(X,Y)$ is a
\emph{mod $\ell$ cohomology $N$-disk} if $R\Gamma(X/Y,\F_\ell)\simeq
\F_\ell[-N]$.

\begin{Corollary}
Let $X/k$ and $\ell$ be as in \ref{6.2}, $G$ be an $\ell$-group
acting on~$X$, $Y$ be a $G$-equivariant algebraic subspace of $X$.
Assume that $(X,Y)$ is a mod $\ell$ cohomology $N$-disk for some
$N\ge 0$. Then $(X^G,Y^G)$ is a mod $\ell$ cohomology $M$-disk for
some $0\le M\le N$ with $\ell(N-M)$ even.
\end{Corollary}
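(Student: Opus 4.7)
The corollary looks like a direct application of Theorem \ref{6.6} to the $G$-equivariant inclusion $f \colon Y \hookrightarrow X$. Indeed, with the notation of Theorem \ref{6.6} we take ``$X$'' to be our $Y$ and ``$X'$'' to be our $X$; the hypothesis $R\Gamma(X/Y,\F_\ell) \simeq \F_\ell[-N]$ (that $(X,Y)$ is a mod $\ell$ cohomology $N$-disk) is exactly the hypothesis $R\Gamma(X'/X,\F_\ell) \simeq \F_\ell[-N]$ of \ref{6.6}. So \ref{6.6} immediately yields an integer $M \le N$, with $\ell(N-M)$ even, such that
\[
R\Gamma(X^G/Y^G,\F_\ell) \simeq \F_\ell[-M],
\]
which is the definition of $(X^G, Y^G)$ being a mod $\ell$ cohomology $M$-disk.

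The only point not furnished literally by \ref{6.6} is the lower bound $M \ge 0$. This I would deduce from the defining exact triangle
\[
R\Gamma(X^G/Y^G,\F_\ell) \to R\Gamma(X^G,\F_\ell) \to R\Gamma(Y^G,\F_\ell) \to{},
\]
since étale cohomology of algebraic spaces over $k$ vanishes in negative degrees, the associated long exact sequence forces $H^i(X^G/Y^G,\F_\ell) = 0$ for $i < 0$; combined with the isomorphism $R\Gamma(X^G/Y^G,\F_\ell) \simeq \F_\ell[-M]$, this rules out $M < 0$.

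There is essentially no obstacle: the whole content is already packaged in Theorem \ref{6.6}, and the corollary is just the translation of that statement into the language of mod $\ell$ cohomology disks, together with the trivial nonnegativity observation. I would therefore present the proof as a two-sentence invocation of \ref{6.6} followed by the remark on vanishing of cohomology in negative degrees.
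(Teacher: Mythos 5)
Your proof is correct and is exactly what the paper intends: the corollary is a direct restatement of Theorem~\ref{6.6} in the language of mod~$\ell$ cohomology disks, applied to the inclusion $Y\hookrightarrow X$. The observation that $M\ge 0$ (which \ref{6.6} does not literally assert) is the only extra point, and your argument via the exact triangle $R\Gamma(X^G/Y^G,\F_\ell)\to R\Gamma(X^G,\F_\ell)\to R\Gamma(Y^G,\F_\ell)\to{}$ and vanishing of \'etale cohomology in negative degrees is the right one.
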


Let $X$ be an algebraic $k$-space separated of finite type. We write
\[R\tilde \Gamma(X,\F_\ell)=R\Gamma(\Spec k/X,\F_\ell)[1]\in D^{\ge -1}.\]
It is a cone of the adjunction morphism $\F_\ell=R\Gamma(\Spec
k,\F_\ell)\to R\Gamma(X,\F_\ell)$. We say that $X$ is a \emph{mod
$\ell$ cohomology $N$-sphere} if $R\Gamma(X,\F_\ell)\simeq
\F_\ell[-N]$. For $N=-1$ (resp.\ $N=0$, resp.\ $N\ge 1$), $X$ is a
mod $\ell$ cohomology $N$-sphere if and only $X$ is empty (resp.\
$R\Gamma(X,\F_\ell)\simeq \F_\ell\oplus \F_\ell$, resp.\
$H^0(X,\F_\ell)\simeq H^N(X,\F_\ell)\simeq\F_\ell$ and
$H^q(X,\F_\ell)=0$ for $q\neq 0,N$). Applying \ref{6.6} to the
structure morphism $X\to \Spec k$, we obtain the following.

\begin{Corollary}\label{c.1}
Let $X/k$ and $\ell$ be as in \ref{6.2}. Assume that an $\ell$-group
$G$ acts on~$X$ and that $X$ is a mod $\ell$ cohomology $N$-sphere
for some $N \ge -1$. Then $X^G$ is a mod $\ell$ cohomology
$M$-sphere with $-1\le M\le N$ and $\ell(N-M)$ even.
\end{Corollary}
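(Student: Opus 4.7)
The plan is to apply \ref{6.6} directly to the structural morphism $f\colon X\to \Spec k$, taking $X'=\Spec k$ (with trivial $G$-action) in the notation of \ref{6.6}. The whole argument is then an exercise in tracking a single shift by $1$: the relative cohomology $R\Gamma(X'/X,\F_\ell)$ that appears as the hypothesis of \ref{6.6} is, in this case, exactly $R\Gamma(\Spec k/X,\F_\ell)=R\tilde\Gamma(X,\F_\ell)[-1]$. Thus the $N$-sphere assumption $R\tilde\Gamma(X,\F_\ell)\simeq \F_\ell[-N]$ translates to
\[
R\Gamma(X'/X,\F_\ell)\simeq \F_\ell[-(N+1)].
\]

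I would then invoke \ref{6.6} with $N+1$ in place of its $N$, producing an integer $M'\le N+1$ with $\ell(N+1-M')$ even such that
\[
R\Gamma(X'^G/X^G,\F_\ell)=R\Gamma(\Spec k/X^G,\F_\ell)\simeq \F_\ell[-M'].
\]
Shifting by $1$ and setting $M=M'-1$ rewrites this as $R\tilde\Gamma(X^G,\F_\ell)\simeq \F_\ell[-M]$, so $X^G$ is a mod $\ell$ cohomology $M$-sphere with $M\le N$ and $\ell(N-M)=\ell(N+1-M')$ even.

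The only remaining point is the lower bound $M\ge -1$, equivalently $M'\ge 0$. This is automatic: $R\Gamma(\Spec k/X^G,\F_\ell)$ is by construction the fiber of the adjunction $\F_\ell\to R\Gamma(X^G,\F_\ell)$, so if $X^G$ is nonempty the adjunction is injective on $H^0$ and the fiber lives in degrees $\ge 1$ (so $M'\ge 1$), while if $X^G$ is empty the fiber is $\F_\ell$ in degree $0$ (so $M'=0$ and $M=-1$). Since all substantive work — in particular the parity constraint on $N-M$ — was already done in \ref{6.6}, there is no real obstacle here; the entire corollary reduces to bookkeeping around the definition of $R\tilde\Gamma$.
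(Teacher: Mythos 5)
Your argument is correct and matches the paper's, which derives \ref{c.1} by the same move — applying \ref{6.6} to the structure morphism $X\to\Spec k$ — and simply leaves implicit the shift-by-one bookkeeping ($R\Gamma(\Spec k/X,\F_\ell)=R\tilde\Gamma(X,\F_\ell)[-1]$, so the $N$-sphere condition becomes the $(N+1)$-hypothesis of \ref{6.6}, and $M=M'-1$) and the lower bound $M\ge -1$ (coming from $R\Gamma(\Spec k/X^G,\F_\ell)\in D^{\ge 0}$), which you make explicit.
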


These corollaries are analogues of \cite[III 5.1, 5.2]{Br}. \ref{c.1}
is an analogue of the main result in \cite{Bo}.

\section{A localization theorem}

In this section we fix a prime number $\ell$ and an algebraically
closed field $k$ of characteristic $p$.

In the situation of \ref{6.3}, with $G = {\mathbb{F}}_{\ell}$, the
restriction map $H^*_G(X) \rightarrow H^*_G(X^G)$ induces an
isomorphism of graded $R$-algebras $H^*_G(X)[y^{-1}] \longsimto
H^*_G(X^G)[y^{-1}]$ (resp. $H^*_G(X)[x^{-1}] \longsimto
H^*_G(X^G)[x^{-1}]$) for $\ell > 2$ (resp. $\ell = 2$). We give a
generalization in this section.

\begin{ssect}\label{7.1} Following Quillen's terminology \cite[\S~4]{Q1}, for $r \in {\mathbb{N}}$, by an \emph{elementary abelian $\ell$-group of rank $r$} we mean a group $G$
isomorphic to the direct product of $r$ cyclic groups of order
$\ell$, \ie the group underlying a vector space of dimension $r$
over ${\mathbb{F}}_{\ell}$ (such groups are sometimes called groups of
type $(\ell, \ell, \dots, \ell)$).  Consider the short exact sequence
\[0\to \F_\ell \to \Z/\ell^2 \Z \to \F_\ell \to 0,\]
with trivial $G$ actions. It induces an exact sequence
\[\Hom(G,\F_\ell)\xrightarrow{\sim} \Hom(G,\Z/\ell^2 \Z) \to \Hom(G,\F_\ell)\xrightarrow{\beta} H^2(G,\F_\ell),\]
where we have identified $H^1(G,{\mathbb{F}}_{\ell})$ with $\check G = \Hom(G,{\mathbb{F}}_{\ell})$ by the natural isomorphism.
It follows that the Bockstein operator $\beta$ is injective.
Recall (\loccit) that we have a natural identification of ${\mathbb{F}}_{\ell}$-graded
algebras
\[
H^*(G,{\mathbb{F}}_{\ell}) = \begin{cases}S(\check G) & \text{if $\ell = 2$} \\
\Lambda(\check G) \otimes S(\beta \check G)  & \text{if $\ell >
2$}\end{cases}\label{(7.1.1)}
\]
where $S$ (resp. $\Lambda$) denotes a symmetric (resp. exterior)
algebra over ${\mathbb{F}}_{\ell}$. In particular, if $\{x_1,\dots, x_r\}$
is a basis of $\check G$ over ${\mathbb{F}}_{\ell}$, then
\[
H^*(G,{\mathbb{F}}_{\ell}) = \begin{cases}{\mathbb{F}}_{\ell}[x_1,\dots,x_r] & \text{if $\ell = 2$} \\
\Lambda(x_1,\dots,x_r) \otimes {\mathbb{F}}_{\ell}[y_1,\dots,y_r]  & \text{if $\ell > 2$}\end{cases}\label{(7.1.2)}
\]
where $y_i = \beta x_i$.
\end{ssect}

We put $V_G=\Spec(S(\beta \check G))$. It is an affine space of
dimension $r$ over $\F_\ell$. For any subgroup $H$ of $G$, the
surjection $\check G\to \check H$ induces a closed immersion
$V_H\hookrightarrow V_G$. For $f\in S(\beta \check G)$, we denote by
$V_G(f)$ the closed subset of $V_G$ defined by $f$. Then
\[V_H=\bigcap_{x\in \Ker(\check G\to \check H)} V_G(\beta x).\]
For any $S(\beta \check G)$-module $M$, we denote by
$\Supp_G(M)\subset V_G$ the support of $M$ and by $\Suppc_G(M)$ the
Zariski closure of $\Supp_G(M)$.

The following is an analogue of the localization theorem of
Borel-Atiyah-Segal for actions of tori \cite[6.2]{GKM}. As in
\ref{3.12}, for any algebraic space $X$ separated of finite type over
$k$, endowed with an action of $G$, and any subgroup $H$ of $G$, we
put $X_H=X^H-\bigcup_{H'} X^{H'}$, where $H'$ runs over subgroups of
$G$ strictly containing $H$. Since $G$ is abelian, $G$ acts on $X^H$
and $X_H$. As in \ref{6.2} we do not assume $\ell$ invertible in $k$.

\begin{Theorem}\label{7.2}
Let $G$ be an elementary abelian $\ell$-group $G$, $Y\to X$ be a
$G$-equivariant closed immersion between algebraic spaces separated
and of finite type over~$k$. Let $j\colon U=X-Y\hookrightarrow X$ be
the complementary open immersion, $\fH$ be the set of subgroups $H$
of $G$ such that $R\Gamma(X/G,v_{H!}\F_\ell)\neq 0$, $v_{H}\colon
U_H/G\hookrightarrow X/G$ is the inclusion. Let $T= \bigcup_{H\in
\fH} V_H\subset V_G$.
\begin{enumerate}
\item We have
\[\Suppc_G(H^*_G(X,j_!\F_{\ell U}))=T.
\]

\item If $T\neq V_{\{0\}}$, then
    \[\Suppc_G(H^{2*}_G(X,j_!\F_{\ell U}))=T.\]

\item For any $e\in S(\beta \check G)$ such that $V_G(e)\supset
    T$, the restriction map $H^*_G(X,\F_\ell) \rightarrow
    H^*_G(Y,\F_\ell)$ induces an isomorphism
\[
H^*_G(X,\F_\ell)[e^{-1}] \longsimto
H^*_G(Y,\F_\ell)[e^{-1}]
\]
of graded $H^*(G,\F_\ell)[e^{-1}]$-algebras.
\end{enumerate}
\end{Theorem}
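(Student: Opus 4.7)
The plan is to reduce the theorem to a Künneth-type computation of $R\Gamma_G(X, j_{U_H!}\F_\ell)$ on each fixed-point stratum $U_H$, assemble these via the stratification $U=\coprod_{H\le G}U_H$ into exact triangles, and then localize at the generic points of the irreducible components of $T$.

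\emph{Step 1: Each stratum.} For $H\le G$, write $j_{U_H}\colon U_H\hookrightarrow X$ for the locally closed inclusion and $f_H\colon U_H\to U_H/G$ for the induced quotient. Since $G$ is elementary abelian the short exact sequence $1\to H\to G\to G/H\to 1$ splits, so on $U_H$ the subgroup $H$ acts trivially while $G/H$ acts freely. Proper base change along the finite map $f\colon X\to X/G$ gives $f_*j_{U_H!}\F_\ell\simeq v_{H!}(f_H)_*\F_\ell$, with $(f_H)_*\F_\ell$ a lisse $G$-equivariant sheaf locally isomorphic to the induced module $\mathrm{Ind}_H^G\F_\ell=\F_\ell[G/H]$. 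The $G/H$-monodromy on this local system corresponds, via Shapiro's isomorphism $H^*(G,\mathrm{Ind}_H^G\F_\ell)\simeq H^*(H,\F_\ell)$, to the conjugation action of $G/H$ on $H^*(H,\F_\ell)$, which is trivial since $G$ is abelian. This yields a canonical isomorphism
\[
R\Gamma_G(X,j_{U_H!}\F_\ell)\simeq R\Gamma(X/G,v_{H!}\F_\ell)\otimes_{\F_\ell}H^*(H,\F_\ell),
\]
in which $H^*(G,\F_\ell)$ acts through the restriction to $H^*(H,\F_\ell)$ on the second factor. Consequently $\Supp_G(H^*_G(X,j_{U_H!}\F_\ell))$ equals $V_H$ if $H\in\fH$, and is empty otherwise.

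\emph{Step 2: Assembly and (a).} Order the subgroups of $G$ as $H_1,\dots,H_n$ refining reverse-inclusion (largest first), set $F_i=\bigcup_{k\le i}X^{H_k}$ (a closed $G$-stable subspace with $F_i\setminus F_{i-1}=X_{H_i}$), and $V_i=U\setminus F_i$. The distinguished triangles
\[
j_{V_i!}\F_\ell\to j_{V_{i-1}!}\F_\ell\to j_{U_{H_i}!}\F_\ell\to[1],
\]
together with $j_{V_n!}\F_\ell=0$ and $j_{V_0!}\F_\ell=j_!\F_\ell$, present $R\Gamma_G(X,j_!\F_\ell)$ as an iterated extension of the terms of Step 1. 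The inclusion $\Suppc_G(H^*_G(X,j_!\F_\ell))\subset T$ is immediate. For the reverse, pick a maximal $H_0\in\fH$ and localize at the prime $\fp_{H_0}\subset S(\beta\check G)$ corresponding to the generic point of $V_{H_0}$: a stratum $H_i$ contributes nontrivially only if $H_i\in\fH$ and $V_{H_i}\supset V_{H_0}$, i.e., $H_i\supset H_0$; maximality then forces $H_i=H_0$, whose contribution $H^*(X/G,v_{H_0!}\F_\ell)\otimes H^*(H_0,\F_\ell)$ has nonzero $\fp_{H_0}$-localization. Hence $V_{H_0}\subset\Suppc_G(H^*_G(X,j_!\F_\ell))$; unioning over maximal elements of $\fH$ recovers $T$.

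\emph{Step 3: (b), (c), and main obstacle.} For (c), the triangle $j_!\F_{\ell U}\to\F_\ell\to i_*\F_{\ell Y}\to[1]$ reduces the claim to $H^*_G(X,j_!\F_\ell)[e^{-1}]=0$; each graded piece is finite-dimensional over $\F_\ell$, hence a finitely generated $S(\beta\check G)$-module with support in $T\subset V_G(e)$ by (a), so is killed by a power of $e$. For (b), the Step 2 localization applies provided the maximal $H_0\in\fH$ picked is nonzero, which is ensured by the hypothesis $T\neq V_{\{0\}}$ (if every maximal element of $\fH$ were trivial then $\fH=\{\{0\}\}$ and $T=V_{\{0\}}$). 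For any such $H_0\neq\{0\}$, the subring $S(\beta\check H_0)$ sits in $H^{2*}(H_0,\F_\ell)$ as a polynomial ring in positive-degree generators, so the Künneth factor $H^*(H_0,\F_\ell)$ has nonzero parts in all sufficiently large even degrees and the localization of $H^{2*}_G(X,j_!\F_\ell)$ at $\fp_{H_0}$ is nonzero. The only genuinely nonformal step is the Künneth isomorphism of Step 1: its content is that the local system $R\Gamma(G,(f_H)_*\F_\ell)$ on $U_H/G$ is globally a constant sheaf, which relies on the splitting $G\simeq(G/H)\times H$ (yielding $[U_H/G]\simeq U_H/G\times BH$) and on the triviality of conjugation in the abelian case.
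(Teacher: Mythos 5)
Your overall plan is the paper's: stratify $U$ by inertia, get a K\"unneth-type description $H^*_G(X,j_{H!}\F_\ell)\simeq H^*(H)\otimes H^*(X/G,v_{H!}\F_\ell)$ on each stratum, then localize at the generic points $\fp_{H_0}$ of $V_{H_0}$ for $H_0$ maximal in $\fH$. However, there is a gap in Step~1. After establishing the isomorphism, you assert that $H^*(G,\F_\ell)$ acts through the restriction $H^*(G)\to H^*(H)$ on the $H^*(H)$-factor, and from this you read off $\Supp_G=V_H$. This assertion is not what the Shapiro-lemma computation gives and is not true in general. Writing $G=H\times H'$, the $H^*(G)$-module structure on $H^*_G(X,j_{H!}\F_\ell)$ comes from cup product via the map $[X/G]\to BG$, so $H^*(H')$ acts on the factor $H^*(X/G,v_{H!}\F_\ell)\simeq H^*_{H'}(X^H,u_!\F_\ell)$ through the classifying map $H^*(H')\to H^*_{H'}(X^H)$; this is \emph{not} the ``fiberwise'' action on $R\pi_*$ and need not kill positive degrees. (Already for a free $\Z/\ell$-action on $\G_m$, with $H=\{1\}$, the class $x\in H^1(G)$ acts nontrivially on $H^*_G(\G_m)=H^*(\G_m)$.) What is true, and what the paper's proof of (a$'$) actually uses, is only that the $S(\beta\check H')$-action is \emph{nilpotent}: since $\cd_\ell(X^H/H')<\infty$ (Lemma~\ref{6.3.5}), $H^*_{H'}(X^H,u_!\F_\ell)$ is of bounded degree, so $H^*_{H'}(X^H,u_!\F_\ell)[(\beta y)^{-1}]=0$ for every $0\ne y\in\check H'$, giving $\Supp_{H'}\subset V_{\{0\}}$; combined with freeness of $H^*(H)$ over $S(\beta\check H)$ and Lemma~\ref{l.supp} this yields (a$'$). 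You should replace the module-structure claim by this bounded-degree argument. The same adjustment is needed for (b$'$): one must exhibit a free rank-one $S(\beta\check H_0)$-submodule sitting in the even-degree part (take $\alpha\otimes 1$ or $\alpha\otimes x$, $x\in\check H_0\setminus\{0\}$, according to the parity of $\deg\alpha$) and again invoke Lemma~\ref{l.supp}; your sentence about $H^*(H_0)$ having nonzero even degrees does not by itself justify the nonvanishing of the localization.

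A second, smaller slip: in (c) you argue that each graded piece of $H^*_G(X,j_!\F_\ell)$ is finite-dimensional over $\F_\ell$, hence finitely generated over $S(\beta\check G)$. This can fail when $\ell=p$ (already $H^1(X,\F_p)$ is infinite-dimensional for $X$ affine), and the theorem allows $\ell=p$. It is also unnecessary: over the Noetherian ring $S(\beta\check G)$, $\Supp(M)\subset V_G(e)$ already implies $M[e^{-1}]=0$ for an arbitrary module $M$, so (c) follows from (a) with no finiteness input, as the paper states.
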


Note that $\fH$ is a subset of the set $\fH'$ of subgroups $H$ of $G$
such that $U_H\neq\emptyset$. For any maximal element $H$ of $\fH'$,
$U_H=U^H$ is closed in $U$. Therefore, if $Y=\emptyset$, then $\fH$
and $\fH'$ have the same maximal elements, so that $T=\bigcup_{H\in
\fH'}V_H$.

\begin{proof}
By the long exact sequence of equivariant cohomology
\begin{equation}\label{(e.7.2)}
\dots\to
H^*_G(X,j_!\F_{\ell U})\to H^*_G(X)\to H^*_G(Y)\to \dotsb,
\end{equation}
(c) is equivalent to saying
\[
H^*_G(X,j_!\F_{\ell U})[e^{-1}]=0,
\]
which is a consequence of (a).

For every subgroup $H$ of $G$, we denote by $j_H\colon
U_H\hookrightarrow X$ the immersion. We will show the following.
\begin{itemize}
\item[(a')]
\[
\Supp_G (H^*_G(X,j_{H!}(\F_\ell)_{U_H}))=
\begin{cases}
V_H&\text{if $H\in \fH$,}\\
\emptyset&\text{if $H\not\in \fH$.}
\end{cases}
\]

\item[(b')] For $H\in \fH$ satisfying $H\neq \{0\}$, we have
    \[\Supp_G(H^{2*}_G(X,j_{H!}(\F_\ell)_{U_H}))=V_H.\]
\end{itemize}
Let us first prove that (a') and (b') imply (a) and (b). Note that
$j_!(\F_{\ell U})$ is a successive extension of
$j_{H!}(\F_\ell)_{U_H}$, $H$ running through subgroups $H$ of $G$.
Thus (a') implies
\begin{equation}\label{(7.2.i)}
  \Supp_G(H^*_G(X,j_!\F_{\ell U}))\subset \bigcup_{H}\Supp_G (H^*_G(X,j_{H!}\F_\ell)) = T.
\end{equation}
For any $H$, we denote by $\fp_H$ the generic point of $V_H$,
considered as a prime ideal of $S(\beta \check G)$. The generic
points of $T$ are $\fp_H$, $H$ running through maximal elements of
$\fH$. For any such $H$, (a') implies
\[H^*_G(X,j_{H'!}\F_\ell)_{\fp_{H}}=0\]
for all $H'\neq H$. Thus
\[H^*_G(X,j_!\F_{\ell U})_{\fp_{H}}\simeq H^*_G(X,j_{H!}\F_\ell)_{\fp_{H}}\neq 0.\]
In other words, $\Supp_G(H^*_G(X,j_!\F_{\ell U}))$ contains all
generic points of $T$. Combing this with \eqref{(7.2.i)}, we obtain
(a). If $T\neq V_{\{0\}}$, the origin of $V_G$ is not a generic point
of $T$. Then, as above, one deduces from (b') that
$\Supp_G(H^{2*}_G(X,j_!\F_{\ell U}))$ contains all generic points of
$T$, which proves (b).

To show (a') and (b'), choose a subgroup $H'$ of $G$ such that
$G=H\oplus H'$ and consider the Cartesian square
\[
\xymatrix{U_{H} \ar[r]^{u} \ar[d]^{f'} & X^H \ar[d]^f\\
U_{H}/H' \ar[r]^{v} & X^H/H'}
\]
We have $H^*(G)\simeq H^*(H)\otimes_{\F_\ell} H^*(H')$ and
isomorphisms of $H^*(G)$-modules
\begin{equation}\label{(7.2.3)}
H^*_G(X,j_{H!}\F_\ell)= H^*_G(X^H,u_!\F_\ell)\simeq H^*(H)\otimes_{\F_\ell} H^*_{H'}(X^H,u_!\F_\ell).
\end{equation}
Here $\F_\ell=(\F_\ell)_{U_H}$. Since $f'$ is a Galois \'{e}tale
cover of group $H'$, we have
\begin{equation}\label{(7.2.4)}
R\Gamma_{H'}(X^H,u_{!}{\mathbb{F}}_{\ell}) \simeq
R\Gamma_{H'}(X^H/H',f_*u_!{\mathbb{F}}_{\ell})) \simeq R\Gamma(X^H/H',R\Gamma(H',v_!f'_*{\mathbb{F}}_{\ell}))\simeq R\Gamma(X^H/H',v_!\F_\ell).
\end{equation}
As $\cd_{\ell}(X^H/H')$ is finite (\ref{6.3.5}), it follows that
$H^*_{H'}(X^H,u_!{\mathbb{F}}_{\ell})$ is of bounded degree. Thus,
for every $y\in {H'}\spcheck$,
\[
  H^*_{H'}(X^H,u_!\F_\ell)[(\beta y)^{-1}]=0.
\]
It follows that
\[\Supp_{H'}(H^*_{H'}(X^H,u_!\F_\ell))=
\begin{cases}
V_{\{0\}}&\text{if $H\in \fH$,}\\
\emptyset&\text{if $H\not\in \fH$,}
\end{cases}\]
which implies (a') by \ref{l.supp} (a) below applied to the
projection $V_G\to V_{H'}$. Now let $H$ be a nonzero element of
$\fH$. For nonzero elements $x\in \check H$ and $\alpha\in
H^i_{H'}(X^H,u_!\F_\ell)$, $1\otimes \alpha$ and $x\otimes \alpha$
in the right-hand side of \eqref{(7.2.3)} generate free sub-$S(\beta
\check H)$-modules of rank 1 of $H^{2*+i}_G(X,j_{H!}\F_\ell)$ and
$H^{2*+i+1}_G(X,j_{H!}\F_\ell)$, respectively. Thus
\[\Supp_H(H^{2*}_G(X,j_{H!}\F_\ell))=V_H.\]
Since $\Supp_G(H^{2*}_G(X,j_{H!}\F_\ell))\subset V_H$ by (a'), this
implies (b') by \ref{l.supp} (b) applied to the projection $V_G\to
V_H$.
\end{proof}

\begin{Lemma}\label{l.supp}
  Let $f\colon Y\to Z$ be a morphism of schemes.
  \begin{enumerate}
    \item Assume $f$ is flat. Then for any quasi-coherent sheaf
        $\cG$ on $Z$, $\Supp(f^*\cG)=f^{-1}(\Supp(\cG))$.

    \item Assume $f$ is affine. Let $\cF$ be a quasi-coherent
        sheaf $\cF$ on $Y$ of support contained in a subscheme
        $Y_0$ of $Y$ such that $f|Y_0\colon Y_0\to Z$ is
        universally closed. Then $\Supp(f_*\cF)=f(\Supp(\cF))$.
  \end{enumerate}
\end{Lemma}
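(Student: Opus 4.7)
The plan is to prove (a) via faithful flatness of flat local homomorphisms, and to prove (b) by reducing to the affine case and exploiting universal closedness section by section.

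For (a), since the statement is local on both $Y$ and $Z$, one reduces to $Y=\Spec B$, $Z=\Spec A$, $\cG=\tilde N$, and checks that for each $y\in Y$ with image $z=f(y)$, the stalk $(f^*\cG)_y=N_z\otimes_{\cO_{Z,z}}\cO_{Y,y}$ vanishes if and only if $N_z=0$. This is because $\cO_{Z,z}\to\cO_{Y,y}$ is a flat local homomorphism, hence faithfully flat. Taking $y\in\Supp(f^*\cG)\Leftrightarrow z\in\Supp(\cG)\Leftrightarrow y\in f^{-1}(\Supp\cG)$ then gives the equality.

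For (b), the statement is local on $Z$, so I would assume $Z=\Spec A$, and by affineness of $f$, $Y=\Spec B$; write $\cF=\tilde M$, so that $f_*\cF=\tilde{M_A}$, with $M_A$ denoting $M$ regarded as an $A$-module. The inclusion $\Supp(f_*\cF)\supset f(\Supp\cF)$ is elementary: for $\fq\in\Supp_B M$ with $\fp=\fq\cap A$, the module $M_\fq$ is a further localization of $M_\fp$, so $M_\fp\neq 0$. For the reverse inclusion, given $\fp\in\Supp_A M$, one picks $m\in M$ with $\mathrm{Ann}_A(m)\subset\fp$, so $\fp\in V(\mathrm{Ann}_A(m))$. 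Since $\mathrm{Ann}_A(m)=\mathrm{Ann}_B(m)\cap A$, the ring homomorphism $A/\mathrm{Ann}_A(m)\hookrightarrow B/\mathrm{Ann}_B(m)$ is injective, so the induced morphism $g\colon V(\mathrm{Ann}_B(m))\to V(\mathrm{Ann}_A(m))$ is dominant. The closed subset $V(\mathrm{Ann}_B(m))$ of $Y$ is contained in $\Supp_B M\subset Y_0$, so the closedness of $f|Y_0$ (implied by universal closedness) ensures that $g(V(\mathrm{Ann}_B(m)))$ is closed in $\Spec A$. A dominant closed morphism is surjective, yielding $\fq\in V(\mathrm{Ann}_B(m))\subset\Supp_B M$ with $f(\fq)=\fp$, hence $\fp\in f(\Supp\cF)$.

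The main obstacle I anticipate is the absence of any finite-generation hypothesis on $\cF$: the global annihilator $\mathrm{Ann}_B M$ may be zero and thus fail to cut out $\Supp\cF$, which is what forces the section-by-section argument using individual $m\in M$. This is precisely where the hypothesis ``$\Supp\cF\subset Y_0$ with $f|Y_0$ universally closed'' plays its role, since each $V(\mathrm{Ann}_B(m))$ sits in $\Supp_B M\subset Y_0$ and thereby inherits the closedness property of $f|Y_0$ needed to combine dominance with closedness to get surjectivity.
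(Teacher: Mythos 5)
Your proof of part (a) is correct and coincides with the paper's: both reduce to the faithful flatness of the flat local homomorphism $\cO_{Z,f(y)}\to\cO_{Y,y}$, so I will not comment further on it.

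Your proof of part (b) is correct but takes a genuinely different route from the paper's. The paper fixes a point $\fq=\fp\in\Spec(B)$ (with $Y=\Spec A$, $Z=\Spec B$ in its notation), observes that $\fq\in\Supp(f_*\cF)$ is equivalent to $M\otimes_B B_\fq$ having a nonzero localization at some maximal ideal $\fp'$ of $A\otimes_B B_\fq$, and then invokes universal closedness — applied to the base change $Y_0\times_Z\Spec B_\fq\to\Spec B_\fq$ — to ensure that every such maximal ideal $\fp'$ (which is a closed point of a closed subset of that base change) lies over the closed point $\fq B_\fq$; from there the equivalence is immediate. Your argument instead works section by section: you identify $\Supp(\cF)$ and $\Supp(f_*\cF)$ with the unions $\bigcup_m V(\mathrm{Ann}_B(m))$ and $\bigcup_m V(\mathrm{Ann}_A(m))$ respectively, note that $A/\mathrm{Ann}_A(m)\hookrightarrow B/\mathrm{Ann}_B(m)$ makes $V(\mathrm{Ann}_B(m))\to V(\mathrm{Ann}_A(m))$ dominant, and use that $V(\mathrm{Ann}_B(m))$ is a closed subset of $Y_0$ — so its image under $f$ is closed — to conclude that this dominant map with closed image is surjective. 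This avoids base change entirely; in particular you only need the unbased closedness of $f|Y_0$, whereas the paper's passage to $\Spec B_\fq$ genuinely requires a base change of $f|Y_0$ to remain closed, hence uses the universal closedness hypothesis as stated. Your version is thus marginally more general. Two small points of precision: you should say that $\mathrm{Ann}_A(m)$ is the \emph{preimage} of $\mathrm{Ann}_B(m)$ under $A\to B$ (you write $\mathrm{Ann}_B(m)\cap A$, which tacitly assumes $A\to B$ injective), and what you establish is that the \emph{image} of $g$ is closed, which together with dominance gives surjectivity — you do not quite show $g$ itself is a closed morphism, but that is not needed.
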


\begin{proof}
  (a) For any point $y$ of $Y$, since $\cO_{Y,y}$ is faithfully flat
  over $\cO_{Z,f(y)}$,
  \[x\in \Supp(f^*\cG) \Longleftrightarrow \cG_{f(y)}\otimes_{\cO_{Z,f(y)}}\cO_{Y,y} = (f^*\cG)_y\neq 0
  \Longleftrightarrow \cG_{f(y)}\neq 0 \Longleftrightarrow f(y)\in \Supp(\cG).\]

  (b) We may assume $Y=\Spec(A)$, $Z=\Spec(B)$. Let $\fq\in
  \Spec(B)$. Then $\fq\in \Supp(f_*\cF)$, i.e.\ $\cF\otimes_B
  B_{\fq}\neq 0$, if and only if $(\cF\otimes_B B_{\fq})_{\fp}\neq 0$ for
  some maximal ideal $\fp$ of $A\otimes_B B_{\fq}$. By assumption, for any such
  $\fp$, $(f\otimes_B B_{\fq})(\fp)$ is the closed point $\fq$ of
  $\Spec(B_{\fq})$. Thus $\fq\in \Supp(f_*\cF)$ if and only if $\cF_{\fp}\neq
  0$ for some $\fp\in f^{-1}(\fq)$, i.e.\ $\fq\in f(\Supp(\cF))$.
\end{proof}

Applying \ref{7.2} (c) to $j\colon X-X^G\hookrightarrow X$, we obtain
the following analogue of Quillen's localization theorem
\cite[4.2]{Q1}.

\begin{Corollary}\label{7.c}
Let $X$ be an algebraic space separated and of finite type over~$k$,
endowed with an action of an elementary abelian $\ell$-group $G$ of
rank $r$. Let $e=\prod_{x\in \check G-\{0\}}\beta x\in
H^{2(\ell^r-1)}(G,\F_\ell)$. Then the restriction map
$H^*_G(X,\F_\ell) \rightarrow H^*_G(X^G,\F_\ell)$ induces an
isomorphism
\[
H^*_G(X,\F_\ell)[e^{-1}] \longsimto
H^*_G(X^G,\F_\ell)[e^{-1}]
\]
of graded $H^*(G,\F_\ell)[e^{-1}]$-algebras.
\end{Corollary}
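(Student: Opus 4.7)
The plan is to deduce \ref{7.c} directly from \ref{7.2} (c) applied to the $G$-equivariant closed immersion $Y=X^G\hookrightarrow X$, with complementary open immersion $j\colon U=X-X^G\hookrightarrow X$. For this I only need to verify that the element $e=\prod_{x\in \check G-\{0\}}\beta x$ satisfies $V_G(e)\supset T$, where $T=\bigcup_{H\in\fH} V_H$ is the set attached in \ref{7.2} to this immersion.

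First I observe that $U$ contains no $G$-fixed point, so $U_G=U^G=\emptyset$ and $v_{G!}\F_{\ell}=0$; hence $G\notin \fH$, and every $H\in\fH$ is a proper subgroup of $G$. For such an $H$, the restriction map $\check G\to \check H$ is surjective with kernel $(G/H)\spcheck$, and since $G/H\neq 0$ this kernel contains a nonzero element $x_H\in \check G$. By the description
\[
V_H=\bigcap_{x\in\Ker(\check G\to\check H)} V_G(\beta x)
\]
recalled just before \ref{7.2}, we have $V_H\subset V_G(\beta x_H)$. Since $\beta x_H$ is one of the factors of $e$, this gives $V_G(\beta x_H)\subset V_G(e)$, and therefore $V_H\subset V_G(e)$ for every $H\in\fH$. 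Taking the union over $H\in\fH$ yields $T\subset V_G(e)$.

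Having checked the hypothesis $V_G(e)\supset T$, the restriction morphism $H^*_G(X,\F_\ell)\to H^*_G(X^G,\F_\ell)$ becomes an isomorphism after inverting $e$, by \ref{7.2} (c). The resulting map is a morphism of graded $H^*(G,\F_\ell)[e^{-1}]$-algebras because the restriction $H^*_G(X,\F_\ell)\to H^*_G(X^G,\F_\ell)$ is already a morphism of graded $H^*(G,\F_\ell)$-algebras, and inverting the central element $e$ preserves this structure. The degree of $e$ is $2(\ell^r-1)$, as there are $\ell^r-1$ nonzero elements in $\check G$, each contributing a factor $\beta x$ of degree~$2$.

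No step is a real obstacle here: the content is entirely in \ref{7.2} (c), and the only thing to check is the elementary combinatorial fact that the product of the $\beta x$ over nonzero $x\in\check G$ vanishes on every proper linear subspace $V_H\subset V_G$, which is automatic since any proper $H\subsetneq G$ is cut out by at least one nonzero linear form on $G$.
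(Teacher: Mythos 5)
Your proof is correct and follows exactly the route the paper takes: the paper simply says ``Applying \ref{7.2} (c) to $j\colon X-X^G\hookrightarrow X$'' and leaves the verification that $V_G(e)\supset T$ implicit, which you have spelled out correctly (every $H\in\fH$ is proper since $U_G=\emptyset$, and each proper $H$ is killed by at least one factor $\beta x_H$ of $e$).
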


\begin{Remark}\label{7.3}
(a) In \ref{7.2}, if $T=V_{\{0\}}$, it may happen that
$H^{2*}_G(X,j_!\F_{\ell U})=0$. In fact, if $G=\{1\}$, $X$ is mod
$\ell$ acyclic (\ref{6.2}), and $Y$ is the disjoint union of $n$
rational points, then $R\Gamma(X,j_!\F_{\ell U})=\F_\ell^{n-1}[-1]$.

(b) In the situation of \ref{7.c}, assume $G$ has rank 1. Since
$H^*_G(X,j_{!}{\mathbb{F}}_{\ell})$ is of bounded degree,
\eqref{(e.7.2)} implies that the map
\[
\rho \colon H^*_G(X) \rightarrow H^*_G \otimes H^0(X^G),
\]
defined by the restriction $H^*_G(X) \rightarrow H^*_G(X^G) = H^*_G
\otimes H^*(X^G)$ \eqref{e.Kun} composed with the projection onto
$H^*_G \otimes H^0(X^G)$, has the following property: there exists
an integer $N$ such that, for any element $z \in \Ker \rho$ (resp.
$z \in H^*_G \otimes H^0(X^G)$) of positive degree, $z^N = 0$ (resp.
$z^N \in \Img \rho$).  In a future paper \cite{IZ}, we will discuss
a generalization of this fact, analogous to Quillen's theorem
\cite[6.2]{Q1}.

(c) In the situation of \ref{7.2}, assume $\ell \neq p$. Then
\eqref{(7.2.3)} and \eqref{(7.2.4)} imply that
$H^*_G(X,j_{H!}\F_\ell)$ is a finitely generated $S(\beta\check
G)$-module, because $H^*(X^H/H',v_!\F_\ell)$ is a finite-dimensional
vector space (cf.\ \ref{d.fin}). It follows that $H^*_G(X,j_!\F_{\ell
U})$ is a finitely generated $S(\beta\check G)$-module and
\[\Suppc_G(H^*_G(X,j_!\F_{\ell
U}))=\Supp_G(H^*_G(X,j_!\F_{\ell U})).
\]
In the future paper, we will prove a finiteness result for more
general groups $G$, analogous to Quillen's finiteness theorem
\cite[2.1]{Q1}.
\end{Remark}

\subsection*{Acknowledgements} This paper grew out of questions of
J.-P.~Serre. We thank him heartily for invaluable comments and
suggestions. We also thank A.~A.~Beilinson for kindly communicating
to us Rouquier's note \cite{Ro}, G.~Laumon for discussions on
equivariant cohomology, M.~Olsson for giving us a proof of
\ref{6.3.5}, A.~Tamagawa for pointing out an error in a previous
formulation of \ref{3.7}, and T.~Saito for suggestions on \S~2. The
second author is grateful to A.~Abbes for an invitation to
l'Universit\'e de Rennes 1, where part of this work was done. We
warmly thank the referee for a very careful reading of the
manuscript and many helpful remarks.

%\bibliographystyle{alphaabbrv}
%\bibliography{oddsandends}
%\end{document}

\end{document}